\newcommand{\co}{\mbox{$\mathcal{O}$}}
\newcommand{\BQ}{\begin{quote}}
\newcommand{\EQ}{\end{quote}}
\newcommand{\ITEM}{\begin{description}}
\newcommand{\offITEM}{\end{description}}
\newtheorem{Theorem}{Theorem}
\newtheorem*{theorem}{Theorem}
\newtheorem{maintheorem}{Theorem}
\newtheorem{T}{Theorem}[section]
\newtheorem{Corollary}[T]{Corollary}
\newtheorem{Proposition}[T]{Proposition}
\newtheorem{Lemma}[T]{Lemma}
\newtheorem{Notation}[T]{Notation}
\newtheorem{Remark}[T]{Remark}
\newtheorem{Definition}[T]{Definition}
\newtheorem*{claim}{Claim}
\newtheorem{Claim}{Claim}
\def \BB {{\mathbb B}}
\def \RR {{\mathbb R}}
\def \ZZ {{\mathbb Z}}
\def \NN {{\mathbb N}}
\def \EE {{\mathbb E}}
\def \UU {{\mathbb U}}
\def \XX {{\mathbb X}}
\def \QQ {{\mathbb Q}}
\def \cl {\mathcal{L}}
\def \cf {\mathcal{F}}
\def \cp {\mathcal{P}}
\def \cu {\mathcal{U}}
\def \co {\mathcal{O}}
\def \cn {\mathcal{N}}
\def \ck {\mathcal{K}}
\def \cw {\mathcal{W}}
\def \cR {\mathscr{R}}
\newcommand{\dem}{\begin{proof}}
\newcommand{\cqd}{\end{proof}}
\newcommand{\leb}{\operatorname{Leb}}
\newcommand{\per}{\operatorname{Per}}
\newcommand{\period}{\operatorname{period}}
\newcommand{\interior}{\operatorname{interior}}
\newcommand{\diameter}{\operatorname{diameter}}
\begin{document}

\author{Paulo Brand\~ao}
\address{Impa, Estrada Dona Castorina, 110, Rio de Janeiro, Brazil.} \email{paulo@impa.br}

\date{\today}

\thanks{Partially supported by FAPERJ, CNPq and CAPES}

\title{Topological Attractors of Contracting Lorenz Maps}

\maketitle

\begin{abstract}

We study the non-wandering set of contracting Lorenz maps. We show that if such a map $f$ doesn't have any attracting periodic orbit, then there is a unique topological attractor. Precisely, there is a compact set $\Lambda$ such that $\omega_f(x)=\Lambda$ for a residual set of points $x \in [0,1]$. Furthermore, we classify the possible kinds of attractors that may occur.

\end{abstract}

\tableofcontents

\section{Introduction}

In \cite{Lor} Lorenz studied the solution of the system of differential equations~(\ref{eqlorenz}) in $\mathbb{R}^3$, originated by truncating the Navier-Stokes equations for modeling atmospheric conditions

\begin{eqnarray}
\label{eqlorenz}
\dot{x} & = & -10 x  + 10 y  \\
\dot{y} & = & 28 x  -y -xz \nonumber\\
\dot{z} & = & - \frac{8}{3} z  + x y  \nonumber 
\end{eqnarray}

\noindent
He observed what was thought to be an attractor with features that led to the present concept of a strange attractor. V.S. Afraimovich, V.V. Bykov, L.P. Shil'nikov, in \cite{ABS}, and Guckenheimer and  Williams, in \cite{GW}, introduced the idea of Lorenz-like attractors: dynamically similar models that also displayed the characteristics of the Lorenz strange attractor.

These models consist of a hyperbolic singularity with
one-dimensional unstable manifold such that, in a linearizable neighborhood, these separatrices can be considered as one of the coordinate axes, say $x$, in such a way that both components of $x\setminus \{0\}$ return to this neighborhood cutting transversally the plane $z = constant$, with the eigenvalues $\lambda_2 < \lambda_3< 0 < \lambda_1 $ (see Figure~\ref{LorenzFluxo}), and the expanding condition $ \lambda_3 + \lambda_1 > 0 $. We consider the Poincar\'e map of the square $Q=\left\{ \left| x\right| \leq cte;\left| y\right| \leq cte;z=cte\right\} $ into itself, having the returns as indicated in Figure~\ref{LorenzFluxo} and we can exhibit in $Q$ a foliation by one dimensional leaves, invariant by the Poincar\'e map, and such that it exponentially contracts the leaves. In \cite{GW} Guckenheimer and Williams show that given such a system, in a neighborhood $U$ the system is structurally  stable in $ codim \, 2 $, and in any representative family there is only a single attractor  attracting the neighborhood constructed.

%%%%%%%%%%%%%%%%%%%%%%%%%%%%%%%%%%%%%%%%%%%%%%
\begin{figure}
  \begin{center}\includegraphics[scale=.40]{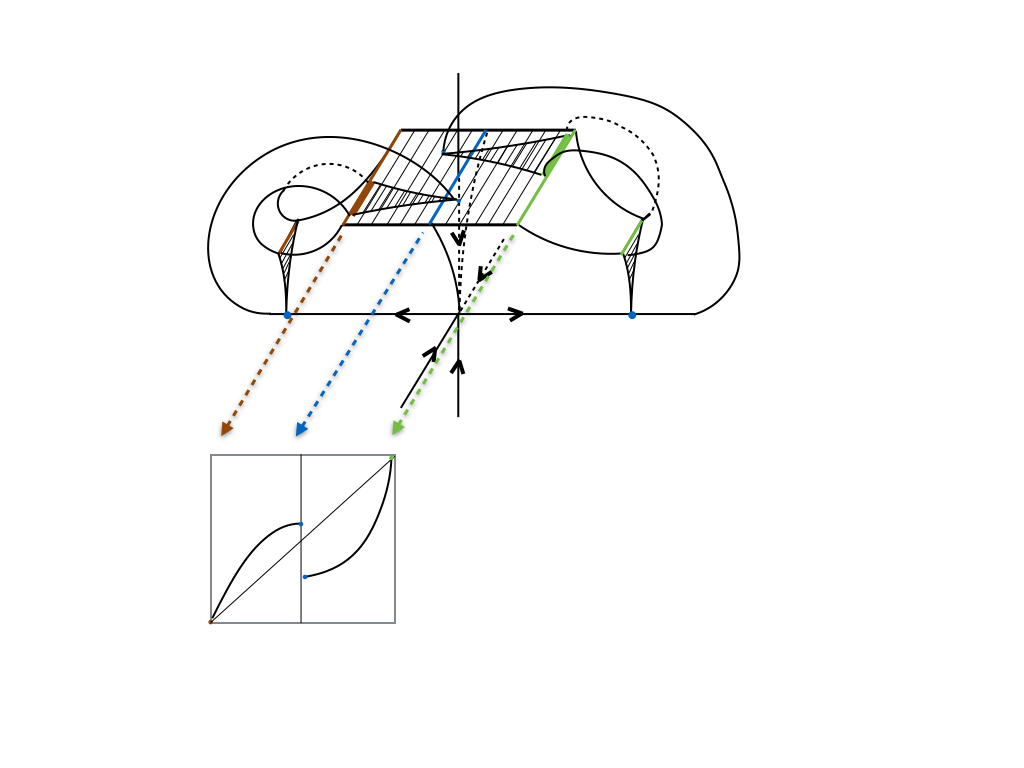}\\
  \caption{Lorenz-like Flow and Associated One-dimensional Dynamics}\label{LorenzFluxo}
  \end{center}
\end{figure}
%%%%%%%%%%%%%%%%%%%%%%%%%%%%%%%%%%%%%%%%%%%%%%

In \cite{ACT}, Arneodo, Coullet and Tresser studied similar systems, just modifying the relation between the eigenvalues of the singularity, taking  $ \lambda_3 + \lambda_1 < 0 $: the so-called {\em contracting} Lorenz attractors. In this case the induced one-dimensional map is as displayed in Figure~\ref{LorenzFluxo}. 

Critical points and  critical values play fundamental roles in the study of dynamics of maps of the interval and from this point of view Lorenz maps are of hybrid type. Indeed, these maps have a single critical point, as unimodal maps do, but two critical values, as bimodal ones have. Because of this, it could perhaps happen that two different attractors would occur, but indeed we prove in Theorem~\ref{atratortopologico} that there is only one single topological attractor. That is, the behavior of contracting Lorenz maps looks like the one of unimodal maps, instead of the behavior of  bimodal maps, that admits up to two attractors. 

More specifically, we prove that, for contracting Lorenz maps, the possible long-term behavior scenarios for orbits of generic points are either periodic orbits, that only can be one or two of them, or a single attractor that can be one of the following types: cycle of intervals that forms a single chaotic attractor, Cherry attractor, Solenoid, or yet a subset of a chaotic Cantor set coexisting with wandering intervals. This last possibility, however, is expected not to occur, as conjectured by Martens and de Melo.

\section{Statement of the Main Results}\label{MainResults}
\label{mainresults}
We say an open interval $I$ is {\em of trivial dynamics} (up to some iterate) if $\exists n \in \NN$ such that $f^n|_I \equiv$ id.

\begin{Definition}[Lorenz Maps]
We say that a $C^2$ map $f:[0,1]\setminus \{c\} \rightarrow[0,1]$, $0<c<1$, is a {\em Lorenz map} if $f(0)=0$, $f(1)=1$, $f'(x)>0$, $\forall\,x\in[0,1]\setminus \{c\}$. A Lorenz map is called {\em contracting} if $\lim_{x\to c}f'(x)=0$ and there is no interval of trivial dynamics.
\end{Definition}

Given $n\ge1$, define $f^{n}(c_{\pm})=\lim_{0<\epsilon \to 0}f^{n}(c \pm \epsilon)$. The critical values of $f$ are $f(c_{-})$ and $f(c_{+})$. If $x\in\{f(c_{-}),f(c_{+})\}$ set $f^{-1}(x)=\{c\}\cup\{y\in[0,1]\,;\,f(y)=x\}$. Given a set $X\subset[0,1]$, define $f^{-1}(X)=\bigcup_{x\in X}f^{-1}(x)$. Inductively, define $f^{-n}(x)=f^{-1}(f^{-(n-1)}(x))$, where $n\ge2$.
The {\em pre-orbit} of a point $x\in[0,1]$ is the set $\co_{f}^{-}(x):=\bigcup_{n\ge0}f^{-n}(x)$, where $f^{0}(x):=x$. Denote the positive orbit of a point $x\in[0,1]\setminus\co_{f}^{-}(c)$ by $\co_f^+(x)$, i.e., $\co_{f}^{+}(x)=\{f^j(x);j\ge0\}$. If $\exists p \ge 1$ such that $f^{p}(c_{-})=c$, we take $p \in \NN$ as being minimal with this property and define $\co_{f}^+(c_{-})=\{f^{j}(c_{-})\,;\, 1\le j \le p\}$. Otherwise, if $\nexists p \ge 1$ such that $f^{p}(c_{-})=c$, we define $\co_{f}^+(c_{-})=\{f^{j}(c_{-})\,;\,j\ge0\}$. Similarly we define $\co_{f}^+(c_{+})$. If $x\in\co_{f}^{-}(c)$, let $\co_{f}^{+}(x)=\{x,f(x),\cdots,f^{m_{x}-1}(x),c\} \cup  \co_{f}^+(c_{-})  \cup   \co_{f}^+(c_{+})  $, with $m_x$ minimum such that $f^{m_{x}}(x)=c$. Also, $\co_{f}^{+}(X)$ denotes the positive orbit of $X$ by $f$, that is,
$\co_{f}^{+}(X)=\bigcup_{x \in X} \co_f^+(x)  $.

A point $x$ is said to be {\em non-wandering} if for any neighborhood $U \ni x$, $\exists n \ge 1$ such that $f^n(U)\cap U\ne \emptyset$. The set of all non-wandering points is the {\em non-wandering set} $\Omega(f)$. The set of accumulation points of the positive orbit of $x\in[0,c)\cup\{c_{-},c_{+}\}\cup(c,1]$ is denoted by $\omega_f(x)$, the $\omega$-limit set of $x$.  The $\alpha$-limit set of $x$, $\alpha_{f}(x)$, is the set of points $y$ such that $y=\lim_{j\to\infty}x_{j}$ for some sequence $x_{j}\in f^{-n_{j}}(x)$ with $n_{j}\to+\infty$.

Following Milnor \cite{Milnor:1985ut}, a  compact set $A$ is a {\em topological attractor} if 
its basin $\beta(A) = \{x; \omega_{f}(x) \subset A\}$ is residual in an open set and if each closed forward invariant subset $A'$ which is strictly contained in $A$ has a topologically smaller basin of attraction, i.e., $\beta(A) \setminus \beta(A')$ is residual in an open set. (Similarly, $A$ is a {\em metrical attractor} if $\leb \beta(A)> 0$ and $\leb \big(\beta(A) \setminus \beta(A')\big)>0$, $\forall A'$ closed forward invariant $A'\subsetneq A$).

Given a periodic point $p$, say $f^n(p)=p$, we say that its periodic orbit $\co_f^+(p)$ is an {\em attracting periodic orbit} if $\exists \epsilon >0$ such that $(p,p+\epsilon)$ or $(p-\epsilon,p)\subset \beta(\co_f^+(p))$. A {\em periodic attractor} is a finite set $\Lambda$ such that $\interior(\{x\,;\,\omega_{f}(x)=\Lambda\})\ne\emptyset$, and it can be either an attracting periodic orbit, or a {\em super-attractor}: a finite set $\Lambda=\{p_{1},\cdots,p_{n},c\}$ such that $f(p_{i})=p_{i+1}$ for $1\le i<n$, $f(p_{n})=c$ and $\lim_{0<\varepsilon\downarrow0}f(c+\varepsilon)=p_{1}$ or $\lim_{0<\varepsilon\downarrow0}f(c-\varepsilon)=p_{1}$. A {\em weak repeller} is a periodic point $p$ of $f$ such that it is non-hyperbolic and it is not a periodic attractor.

We say $I$ is {\em a wandering interval} of $f$ if $f^n|_I$ is a homeomorphism for $\forall n\ge1$, $f^i(I)\cap f^j(I)=\emptyset$ for $i\ne j >0$ and $I$ doesn't intersect the basin of an attracting periodic orbit.

We say that an attractor (topological or metrical) $\Lambda$ is a {\em chaotic attractor} if $\Lambda$ is transitive,  periodic orbits are dense in it ($\overline {Per(f)\cap \Lambda}=\Lambda$), its topological entropy $h_{top}(f|_\Lambda)$ is positive and $\exists \lambda >0$ and a dense subset of points $x \in \Lambda$ such that their {\em Lyapounov exponents}, $\exp_f(x)$, are greater than $\lambda$, where $\exp_f(x):=\liminf \frac{1}{n}\log |Df^n(x)|$.

A {\em cycle of intervals} is a transitive finite union of non-trivial disjoint closed intervals.

A {\em gap map} is a continuous and injective map $g:S^1\setminus\{c\}\to S^1$, where $S^1=\RR/\ZZ$ is the circle and $c$ is any point of it. It is a known fact that such a map has a well defined rotation number $\rho(g)$. Furthermore, if $\rho(g)\notin\QQ$, then $g$ is semi-conjugated to an irrational rotation. In this case there exists a minimal set $\Lambda$ containing $c$ such that $\omega_g(x)=\Lambda$ for every $x\in S^1$ (if $x\in\bigcup_{j\ge0}g^{-j}(c)$ we consider $\omega_g(x_{\pm})$ instead of $\omega_g(x)$). 

We say that a Lorenz map $f$ is a {\em Cherry map} if there is a neighborhood $J$ of the critical point such that the first return map to $J$ is conjugated to a gap map with an irrational rotation.
It follows from \cite{GT85} that a Lorenz map $f$ is a Cherry map if and only if $f$ does not admit super-attractors and there exists a neighborhood $J$ of the critical point $c$ such that $c\in\omega_f(x_{\pm})$, $\forall\,x\in J$. If $f$ is a Cherry map, $\Lambda:=\omega_f(c_{-})=\omega_f(c_+)$ is called a {\em Cherry attractor} and it is a minimal compact set containing the critical point $c$ in the interior of its basin of attraction.

A {\em renormalization interval} for $f$ is an open interval $J=(a,b)\ni c$ such that the first return map to $[a,b]$  is conjugated to a Lorenz map. Their points of boundary are always periodic points and $f^{\period(a)}([a,c))\subset[a,b]\supset f^{\period(b)}((c,b])$. Further properties of intervals of this type  will be studied in Section \ref{renormecherry}.

Given a renormalization interval $J=(a,b)$, define the {\em renormalization cycle} associated to $J$ (or generated by $J$) as $$U_J=\bigg(\bigcup_{i=0}^{\period(a)}f^{i}((a,c))\bigg)\cup\bigg(\bigcup_{i=0}^{\period(b)}f^{i}((c,b))\bigg).$$

Given $J\subset[0,1]$ an open set with $c\in J$, define $\Lambda_{J}:=\{x\in [0,1] \,;\,\co^{+}_{f}(x)\cap J=\emptyset\}$. We call a {\em gap of $\Lambda_{J}$} any connected component of $[0,1]\setminus \Lambda_J$. We also define the set $K_J$, the {\em nice trapping region associated to J}, as being the set formed by the union of gaps of $\Lambda_J$ such that each of these gaps contains one interval of the renormalization cycle.

We say that $f$ is {\em $\infty$-renormalizable} if $f$ has infinitely many different renormalization intervals. An attractor $\Lambda$ of a contracting Lorenz map $f$ is a {\em Solenoidal attractor} (or {\em Solenoid})  if  $\Lambda\subset\bigcap_{n=0}^\infty K_{J_n}$, and $\{J_n\}_{n}$ is an infinite nested chain of renormalization intervals.

A Contracting Lorenz map $f:[0,1]\setminus\{c\} \to [0,1]$ is called {\em non-flat} if there exist constants $\alpha$,$\beta>1$, $a,b\in[0,1]$ and $C^2$ orientation preserving  diffeomorphisms $\phi_{0}:[0,c]\to[0,a^{1/\alpha}]$ and $\phi_{1}:[c,1]\to[0,b^{1/\beta}]$ such that $$f(x)=\begin{cases}a-(\phi_{0}(c-x))^\alpha&\text{ if }x<c\\
1-b+(\phi_{1}(x))^\beta&\text{ if }x>c\end{cases}.$$

\begin{maintheorem}[The Solenoid attractor]\label{SOLENOIDETH}
Let $f$ be a $C^{2}$ non-flat contracting Lorenz map without periodic attractors. If $f$ is $\infty$-renormalizable, then there is a compact minimal set $\Lambda$, with $c\in\Lambda\subset\bigcap_{J\in\cR }K_J$ such that $\omega_f(x)=\Lambda$, $\forall\,x\in[0,1]$ with $c\in\omega_f(x)$, where $\cR $ is the set of renormalization intervals $J$ of $f$ and $K_J$ their corresponding nice trapping regions. 
\end{maintheorem}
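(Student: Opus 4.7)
My approach is to realize the attractor as the intersection of the nice trapping regions associated with a nested sequence of renormalizations. Using $\infty$-renormalizability, I would fix a strictly decreasing chain of renormalization intervals $J_0 \supsetneq J_1 \supsetneq \cdots$, all containing $c$ (two renormalization intervals are always nested, as both contain $c$ and their boundary periodic orbits are ``nice''). I would then set
$$\Lambda \;:=\; \bigcap_{n \ge 0} \overline{K_{J_n}}.$$
Since $c \in J_n \subset K_{J_n}$ for every $n$, the set $\Lambda$ is nonempty, compact, and contains $c$. The trapping property $f(\overline{K_{J_n}}) \subset \overline{K_{J_n}}$, immediate from the definition of a renormalization interval $J_n = (a_n,b_n)$ (so that $f^{\period(a_n)}([a_n,c])\subset[a_n,b_n]$ and symmetrically for $b_n$) combined with the description of $K_{J_n}$ as the union of gaps of $\Lambda_{J_n}$ covering the cycle $U_{J_n}$, makes $\Lambda$ forward invariant. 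Since any $J \in \cR$ is comparable with and contains some $J_n$, nesting $K_{J_n} \subset K_J$ of the trapping regions yields $\Lambda \subset \bigcap_{J \in \cR} K_J$.

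\textbf{The equality $\omega_f(x) = \Lambda$.} Assume $c \in \omega_f(x)$. For each $n$, the orbit of $x$ enters the open set $J_n \ni c$; by the trapping property, the tail of the orbit lies in $\overline{K_{J_n}}$, so $\omega_f(x) \subset \overline{K_{J_n}}$, and intersecting over $n$ gives $\omega_f(x) \subset \Lambda$. For the opposite inclusion, the key observation is that $c \in \omega_f(c_\pm)$, because each renormalized first-return map $R_n$ to $[a_n,b_n]$ is itself a non-flat contracting Lorenz map without periodic attractors, so its critical values re-enter the deeper renormalization interval $J_{n+1}$ and therefore the orbits of $c_\pm$ under $f$ visit each $J_n$; the previous argument applied to $x=c_\pm$ then yields $\omega_f(c_\pm) \subset \Lambda$. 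Next, a standard subsequence argument: choose $n_k$ with $f^{n_k}(x) \to c_+$ (resp.\ $c_-$) to get $f^{n_k+j}(x) \to f^j(c_+)$ (resp.\ $f^j(c_-)$) for every fixed $j$, giving $\omega_f(c_+) \cup \omega_f(c_-) \subset \omega_f(x)$. Thus the equality $\omega_f(x) = \Lambda$ reduces to the density claim $\omega_f(c_+) = \omega_f(c_-) = \Lambda$.

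\textbf{Density, minimality, and main obstacle.} To prove density I would fix $y \in \Lambda$ and a neighborhood $V \ni y$; then $y$ lies in a gap $G_n(y)$ of $\Lambda_{J_n}$ inside $K_{J_n}$ which is the pre-image, under some iterate of $f$, of one of the two critical gaps adjacent to $c$ in $K_{J_n}$. Two ingredients must be combined: non-flatness, through Koebe-type bounded-distortion estimates on the long monotone branches realizing this pull-back, to force $\diameter(G_n(y)) \to 0$; and the absence of periodic attractors, applied recursively to each renormalized Lorenz map $R_n$, to rule out wandering intervals inside the renormalization cycles at each level so that the orbit of $c_\pm$ visits every relevant gap. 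Minimality of $\Lambda$ then follows from density together with the observation that every $y \in \Lambda$ has orbit returning to $J_n$ for each $n$ (since $y \in K_{J_n}$ and $K_{J_n}$ is a finite union of gaps cyclically permuted through the critical gap $J_n$), which yields $c \in \omega_f(y)$ and hence, by the earlier equality, $\omega_f(y) = \Lambda$. The hardest step will be this density claim: coordinating the distortion estimates with the renormalization combinatorics across all scales, which is where both the non-flatness hypothesis and the recursive use of the absence of periodic attractors are really needed, drawing on the renormalization structure developed in the earlier sections of the paper.
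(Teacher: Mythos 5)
There is a genuine gap, and it sits exactly where you flag the ``hardest step.'' You define the attractor as $\Lambda:=\bigcap_{n}\overline{K_{J_n}}$ and reduce the theorem to the density claim $\omega_f(c_\pm)=\Lambda$. But that equality is not what the theorem asserts and is false in general: the statement only claims $\Lambda\subset\bigcap_{J\in\cR}K_J$, precisely because the full intersection $\Delta:=\bigcap_n K_{J_n}$ may contain nondegenerate connected components, and (as the paper shows in a claim inside its proof) the interior of any such component is a wandering interval, hence disjoint from every $\omega$-limit set. Your proposed mechanism for excluding this --- ``the absence of periodic attractors, applied recursively to each renormalized map, to rule out wandering intervals'' --- is a non sequitur: the homterval lemma (Lemma~\ref{homtervals}) gives the dichotomy wandering interval \emph{or} basin of a periodic attractor/preorbit of periodic points, so absence of periodic attractors does not exclude wandering intervals; indeed the whole paper (Theorem~\ref{baciastopologicas}(3)(b), Theorem~\ref{atratortopologico}(4)) is written so as not to assume their non-existence. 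Likewise, the Koebe-type bounded-distortion route to $\diameter(G_n(y))\to 0$ would require real bounds for the renormalizations in this $C^2$ setting, which is neither available in the paper nor a routine step; negative Schwarzian is not even assumed in Theorem~\ref{SOLENOIDETH}.

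The paper circumvents all of this by \emph{defining} $\Lambda$ not as $\Delta$ but as the set of $y\in\Delta$ accumulated by components of the regions $K_{J_n}$ with diameters tending to $0$. The inclusion $\Lambda\subset\omega_f(x)\subset\Delta$ for every $x$ with $c\in\omega_f(x)$ then follows from Proposition~\ref{Lemma1110863} and Lemma~\ref{Corollary545g55} (the orbit of $x$ meets every component of every $K_{J_n}$), and the remaining work is to show $\omega_f(x)\cap(\Delta\setminus\Lambda)=\emptyset$: interior points of nondegenerate components of $\Delta$ lie in wandering intervals, and boundary points are eliminated by a purely combinatorial argument (such a point would be isolated in $\omega_f(x)$, and maximal monotone branches pulled back to it would produce a homterval with a pre-periodic endpoint, contradicting Lemma~\ref{homtervals}) --- no distortion estimates are used anywhere. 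Your first two paragraphs (nesting of renormalization intervals, trapping, $\omega_f(x)\subset\bigcap_n\overline{K_{J_n}}$, $c\in\omega_f(c_\pm)$, and $\omega_f(c_\pm)\subset\omega_f(x)$) are fine and agree with the paper; but as written the proposal proves the theorem only under the additional, unjustified assumption that the components of $\Delta$ degenerate to points, i.e.\ that there are no wandering intervals inside the solenoidal nest.
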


\begin{maintheorem}
\label{baciastopologicas}

If $f$  is a $C^{2}$ non-flat contracting Lorenz map without periodic attractors, then $f$ has a transitive topological attractor $\Lambda$. Furthermore, $\beta(\Lambda)$ is a residual set in the whole interval, and $\Lambda$ is one and only one of the following types:

\begin{enumerate}

\item {\em Cherry attractor} and in this case $\omega_{f}(x)=\Lambda$ in an open and dense set of points $x\in [0,1]$. 
\item {\em Solenoidal attractor} and in this case $\omega_{f}(x)=\Lambda$ in a residual set of points $x\in [0,1]$. 
\item {\em Chaotic attractor} that can be of two kinds:
\begin{enumerate}
\item {\em Cycle of intervals}, in this case $\omega_{f}(x)=\Lambda$ in a residual set of points $x\in [0,1]$. 
\item {\em Cantor set} and in this case there are wandering intervals. 

\end{enumerate}

\end{enumerate}

\end{maintheorem}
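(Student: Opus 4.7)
The plan is to decompose the classification into a tree of dichotomies. First, split on whether $f$ is $\infty$-renormalizable. Second, in the finitely-renormalizable case, reduce to the deepest renormalization and split on whether the induced first-return map is Cherry. Third, in the non-Cherry chaotic case, split on the existence of wandering intervals. At each node one matches the corresponding alternative of the statement, obtaining mutually exclusive cases that cover all possibilities.

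For the $\infty$-renormalizable case, Main Theorem A directly supplies a minimal compact set $\Lambda\subset\bigcap_{J\in\cR}K_J$ containing $c$ with $\omega_f(x)=\Lambda$ for every $x$ satisfying $c\in\omega_f(x)$. Identification of the basin reduces to showing that $\{x:c\in\omega_f(x)\}$ is residual in $[0,1]$: since $f$ has no periodic attractor and the boundaries of renormalization intervals are periodic, their pre-orbits form a meagre set $M$, and any $x\notin M$ not trapped in a wandering interval must satisfy $c\in\omega_f(x)$, yielding case (2).

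For the finitely-renormalizable case, let $J_*\ni c$ be the deepest renormalization interval (or $J_*=(0,1)$ when $f$ is non-renormalizable) and $R$ the first-return map to $\overline{J_*}$, which is conjugate to a non-renormalizable Lorenz map. By the Gambaudo--Tresser criterion cited in the paper, $R$ is Cherry if and only if $c\in\omega_f(x_{\pm})$ for every $x$ in a neighborhood of $c$. In that event the Cherry attractor $\Lambda=\omega_f(c_-)=\omega_f(c_+)$ is minimal, attracts $J_*$, and its pre-orbit under $f$ is dense (using the absence of periodic attractors and non-flatness), producing the open and dense basin of case (1). Otherwise the critical orbit eventually escapes some fixed neighborhood of $c$, and a standard Ma\~n\'e-type expansion argument away from $c$, combined with non-flatness, produces positive topological entropy, dense periodic orbits and uniformly positive Lyapunov exponents on a dense subset. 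If $R$ has no wandering intervals, the closure of the critical orbit is a transitive finite cycle of intervals (case 3a); if $R$ admits wandering intervals, $\Lambda$ is instead a Cantor subset of the non-wandering set coexisting with them (case 3b).

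The hardest step is promoting the basin from the trapping region to the whole interval in cases (2) and (3b): one must exclude open sets $U\subset[0,1]$ whose orbits never visit the trap. This uses the absence of periodic attractors (no nontrivial open invariant set can remain isolated from the critical orbit) together with non-flatness (which grants enough hyperbolicity outside any neighborhood of $c$ to force density of pre-orbits of the trapping set). Mutual exclusivity of the four alternatives is then immediate: Cherry minimality has zero entropy, Solenoid requires infinite renormalization, and the Cantor versus cycle-of-intervals alternative is dictated by the wandering-interval dichotomy. Transitivity of $\Lambda$ in every case follows from minimality (Cherry, Solenoid) or from the direct description via the closure of the critical orbit (chaotic cases).
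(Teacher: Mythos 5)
Your case tree (Cherry / Solenoid / chaotic, with the Cantor-versus-intervals subdichotomy) mirrors the paper's decomposition, but the chaotic branch --- which is the real content of the theorem --- is asserted rather than proved, and the substitute arguments you offer do not work at this level of generality. First, ``a standard Ma\~n\'e-type expansion argument away from $c$'' is not available: $f$ is only $C^{2}$, no Schwarzian hypothesis is made, and the absence of periodic attractors does not exclude weak repellers, which Ma\~n\'e's theorem requires one to rule out (the paper itself flags exactly this caveat right after Lemma~\ref{Remark98671oxe}); moreover, even where it applies, Ma\~n\'e gives expansion off a critical neighborhood, not the existence of a single transitive $\Lambda$ with open and dense basin, dense periodic orbits and positive entropy. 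The paper obtains these through a different mechanism: the set $\EE=\{x:\alpha_f(x)\ni c\}$ and the trapping region $\UU$ (Lemma~\ref{vizinhanca}, Proposition~\ref{Proposition008345678}), the identification of the components of $\UU\setminus\Omega(f)$ as wandering intervals (Corollary~\ref{compon}), strong transitivity of $f|_{\Omega(f)\cap\UU}$ via $\alpha$-limits (Proposition~\ref{transitivo}), and then Theorem~\ref{cicloint} plus Lemma~\ref{perdenso}; Lyapunov exponents come cheaply from pre-orbits of a repelling periodic point, not from uniform expansion. Second, your identification of $\Lambda$ as ``the closure of the critical orbit'' in case (3a), from which you deduce transitivity, is false in general: the critical values may be preperiodic to repelling orbits (a Misiurewicz-type situation) while the attractor is a whole cycle of intervals, so $\overline{\co_f^+(c_-)\cup\co_f^+(c_+)}$ can be a finite set. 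Relatedly, ``non-Cherry implies the critical orbit eventually escapes a fixed neighborhood of $c$'' is wrong: non-Cherry only says some nearby point fails to have $c$ in its $\omega$-limit, and in the chaotic cycle case the critical orbit typically does accumulate on $c$.

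There is also a flaw in your Solenoid step. The set of points whose orbits never enter a renormalization interval $J_n$ is $\Lambda_{J_n}$, which is in general an uncountable invariant Cantor set, not merely the pre-orbit of the periodic boundary points together with wandering intervals; so ``any $x\notin M$ not trapped in a wandering interval must satisfy $c\in\omega_f(x)$'' is simply not true. What saves the conclusion is that the set of points visiting $J_n$ is open and dense (Lemma~\ref{Remark98671oxe}, i.e.\ the homterval lemma~\ref{homtervals} combined with Lemma~\ref{LemmaWI}), so $\bigcap_n V_n$ is residual and Theorem~\ref{SOLENOIDETH} applies to it --- this is the paper's argument, and your counting of exceptional points should be replaced by it. The same remark applies to your final ``promotion of the basin'' step: density of the preimages of the trap follows from the homterval lemma, not from ``hyperbolicity outside a neighborhood of $c$,'' which again is not justified for $C^2$ maps possibly having weak repellers.
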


\begin{maintheorem}
\label{teoalfalim}
Let $f$ be a $C^{2}$ non-flat contracting Lorenz map without periodic attractors and $\Lambda$ its single topological attractor as obtained in Theorem \ref{baciastopologicas}. Then, $f$ has no wandering interval if and only if $\alpha_{f}(x)=[0,1], \forall x \in \Lambda$.

\end{maintheorem}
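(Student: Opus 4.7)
The plan is to prove the two directions separately. The $(\Leftarrow)$ implication is a short disjointness argument, while $(\Rightarrow)$ requires the structural classification from Theorem~\ref{baciastopologicas}.

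For the $(\Leftarrow)$ direction I argue by contrapositive. Suppose $W$ is a wandering interval. Fix any $y \in \operatorname{int}(W)$ and any $x \in \Lambda$. Were $y$ to belong to $\alpha_f(x)$, there would exist a sequence $x_j \in f^{-n_j}(x)$ with $n_j \to \infty$ and $x_j \to y$; for $j$ large enough $x_j \in W$, whence $x = f^{n_j}(x_j) \in f^{n_j}(W)$. Since the forward iterates $\{f^n(W)\}_{n \geq 0}$ of a wandering interval are pairwise disjoint, $x$ lies in at most one of them, so $(n_j)$ takes only finitely many values---contradicting $n_j \to \infty$. Hence $\alpha_f(x) \neq [0,1]$.

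For the $(\Rightarrow)$ direction, assume $f$ has no wandering intervals. By Theorem~\ref{baciastopologicas}, $\Lambda$ is a Cherry attractor, a Solenoid, or a cycle of intervals, and in each case $c \in \Lambda$. Fix $x \in \Lambda$ and an arbitrary non-empty open interval $U \subset [0,1]$; the goal is to produce preimages of $x$ inside $U$ at arbitrarily late times. The key step is the following: there exists $N$ with $c \in \operatorname{int}(f^N(U))$. Indeed, were this to fail, $f^n|_U$ would be a homeomorphism onto its image for every $n \geq 0$, which combined with the absence of periodic attractors would force $U$ to be wandering---contradicting the hypothesis. Once $c \in \operatorname{int}(f^N(U))$, each $f^{N+k+1}(U)$ contains a one-sided neighborhood of $f^k(c_{\pm})$, and by transitivity of $\Lambda$ these iterates propagate to meet the whole of $\Lambda$.

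In the cycle-of-intervals case this argument yields $f^M(U) \supset \Lambda \ni x$ for all $M$ sufficiently large, providing preimages of $x$ inside $U$ at arbitrarily late times, and hence $y \in \alpha_f(x)$ for every $y \in [0,1]$. The main obstacle is the Cantor cases (Cherry and Solenoid), where no single $f^m(U)$ can cover $\Lambda$. I overcome this in two steps. First, using the iterates-of-$U$ lemma together with $c \in \omega_f(c_{\pm})$, I establish the special case $\alpha_f(c) = [0,1]$: once $f^N(U)$ engulfs $c$, a preimage of $c$ lies in $U$, and combining with the recurrence of $c$ inside $\Lambda$ one finds such preimages at arbitrarily late times. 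Second, I exploit that every $x \in \Lambda$ has $c$ in its pre-orbit (via the convention $c \in f^{-1}(f(c_{\pm}))$) in the Cherry case through the irrational-rotation semi-conjugacy of the first return map, and in the Solenoid case through the nested renormalization intervals supplied by Theorem~\ref{SOLENOIDETH}, both cases yielding that pre-iterates of any $x \in \Lambda$ accumulate on pre-iterates of $c$; formally $\alpha_f(x) \supset \alpha_f(c) = [0,1]$, concluding the proof.
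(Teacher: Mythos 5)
Your $(\Leftarrow)$ direction is correct (and in fact more explicit than the paper, which leaves it implicit): since interior points of a wandering interval $W$ never hit $c$, a point $y\in\interior(W)$ in $\alpha_f(x)$ would force $x\in f^{n_j}(W)$ for infinitely many distinct $n_j$, contradicting the pairwise disjointness of the forward images of $W$. Your key step in the $(\Rightarrow)$ direction --- that absence of wandering intervals and of periodic attractors forces $c\in\interior(f^N(U))$ for some $N$, for every open interval $U$ --- is also sound (Lemma~\ref{homtervals} plus the fact that $\co_f^{-}(\per(f))$ contains no interval), and it does give $\alpha_f(c)=[0,1]$, since every open interval then contains infinitely many points of $\co_f^{-}(c)$, necessarily of unbounded depth because each $f^{-n}(c)$ is finite.

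The genuine gaps are in the passage from $\alpha_f(c)=[0,1]$ to $\alpha_f(x)=[0,1]$ for \emph{every} $x\in\Lambda$, which is where the real content of the theorem lies. In the cycle-of-intervals case you assert that transitivity of $\Lambda$ yields $f^M(U)\supset\Lambda$ for all large $M$. Transitivity only provides a dense orbit, hence points of $f^M(U)$ arbitrarily close to $x$, never containment of $x$: the images $f^M(U)$ are unions of many short intervals repeatedly cut at $c$, there is no expansion available for a contracting Lorenz map, and when the cycle has more than one component the claim as stated is even false (each image lies in a single component). This covering property is essentially equivalent to what is to be proved; the paper obtains the conclusion not from transitivity but from its $\alpha$-limit machinery (Lemma~\ref{216} giving $\alpha_f(x)\supset\Omega(f)$ whenever $c\in\alpha_f(x)$, Corollary~\ref{sobrealfa}, Proposition~\ref{Proposition008345678}), followed by density of $\bigcup_{j\ge0}f^{-j}(\Lambda)$ and closedness of $\alpha$-limit sets. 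In the Cherry and Solenoid cases your final step rests on the claim that every $x\in\Lambda$ has $c$ in its pre-orbit, i.e.\ lies on the (generalized) forward orbit of $c_\pm$; this is false, since that orbit is countable while $\Lambda$ is uncountable (moreover, under the no-wandering-interval hypothesis the Cherry attractor is a finite union of non-trivial intervals, not a Cantor set). If instead you meant only $c\in\alpha_f(x)$, that is neither established for all $x\in\Lambda$ nor does it formally give $\alpha_f(x)\supset\alpha_f(c)$ without an extra propagation argument. The paper treats general $x$ differently: in the Cherry case the absence of wandering intervals upgrades the semi-conjugacy to a conjugacy with an irrational rotation, giving $\alpha_F(x)=[a,b]$ for all $x$ in the return interval, and in the Solenoid case it uses Lemma~\ref{Lemma549164} together with the density of $\bigcup_n\Lambda_{J_n}$; some substitute for these arguments is needed to close your proof.
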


The next theorem goes deeper in the classification provided by Theorem~\ref{baciastopologicas}, as it distinguishes two possible situations for item (3)(b) of that theorem. Observe that item (3)(b) didn't state that the Cantor set $\Lambda$ is equal to $\omega_{f}(x)$ for a residual set of $x \in [0,1]$, but only that the basin $\beta(\Lambda)$ contains a residual set. That is, (3)(b) can split into two situations. In the first one, $\Lambda$ attracts a residual set whose $\omega$-limit coincides with $\Lambda$. In the case this doesn't happen, under some additional hypothesis we can have that $\Lambda$ properly contains another Cantor set $\Lambda'$ such that its basin $\beta(\Lambda')$ is residual in $[0,1]$ and $\forall x \in \beta(\Lambda')$ is such that $\omega(x)=\Lambda'$.

We say that a $C^3$ map $f$  has negative Schwarzian derivative, denoted by $Sf$, if $Sf$ is negative in every point $x$ such that $Df(x)\ne0$, where

\begin{equation}\label{defschwarz}
Sf(x)=\frac{D^3 f(x)}{D f(x)}-\frac{3}{2}\bigg( \frac{D^2 f(x)}{D f(x)} \bigg)^2
\end{equation}

\begin{maintheorem}
\label{atratortopologico}
Let $f$ be a $C^3$ non-flat contracting Lorenz map with negative Schwarzian derivative.
If $f$ has a periodic attractor $\Lambda$, then either $\beta(\Lambda)$ is an open and dense set or there is another periodic attractor $\Lambda'$ such that $\beta(\Lambda)\cup\beta(\Lambda')$ is open and dense.

If $f$ does not have any periodic attractor, then there is a single topological attractor $\Lambda$ with $\omega_{f}(x)=\Lambda$ for a residual set of points $x \in [0,1]$ and it is one of the following types:

\begin{enumerate}
\item {\em $\Lambda$ is a Cherry attractor}; 
\item {\em $\Lambda$ is a solenoidal attractor}; 
\item {\em $\Lambda$ is a chaotic cycle of intervals};
\item {\em $\Lambda=\overline{\co_f^+(c_{+}) \cup\co_f^+(c_{-})  }$ and it is contained in a chaotic Cantor set whose gaps are wandering intervals}
\end{enumerate}

\end{maintheorem}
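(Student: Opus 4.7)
The proof splits naturally into the periodic-attractor case and the non-periodic-attractor case, and in both the negative Schwarzian hypothesis is used to upgrade the conclusions of Theorem~\ref{baciastopologicas}. For the first case, I would establish the Lorenz analogue of Singer's theorem: the immediate basin of each attracting (or super-attracting) periodic orbit of $f$ must accumulate, in at least one direction, onto one of the one-sided critical values $f(c_-)$ or $f(c_+)$. The standard argument uses the Koebe principle together with $Sf<0$ to rule out pairs of boundary fixed points of $f^{kn}$ both of which are repellers of the immediate basin; since $f$ has exactly two one-sided critical orbits, this forces at most two periodic attractors $\Lambda,\Lambda'$.

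To show $\beta(\Lambda)\cup\beta(\Lambda')$ is open and dense, I would argue by contradiction: if the complement contained an open interval $U$, then the set of points whose forward orbit never enters $\beta(\Lambda)\cup\beta(\Lambda')$ would be forward invariant and, after removing a neighborhood of $c$, bounded away from the critical point. Applying Ma\~n\'e's hyperbolicity theorem, which extends to contracting Lorenz maps with negative Schwarzian derivative, this set is uniformly expanding. Combined with the absence of wandering intervals in this regime (a consequence of $Sf<0$ together with non-flatness), every point in $U$ is eventually captured by one of the two basins, contradicting the choice of $U$.

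For the second case (no periodic attractor), Theorem~\ref{baciastopologicas} already provides a single topological attractor $\Lambda$ in one of four forms. In cases (1), (2) and (3)(a) the statement $\omega_f(x)=\Lambda$ on a residual set is part of Theorem~\ref{baciastopologicas}, so these translate directly into items (1)--(3) here. The substantive work lies in case (3)(b), where $\Lambda$ is a Cantor set and wandering intervals exist. Set $\Lambda_0:=\overline{\co_f^+(c_+)\cup\co_f^+(c_-)}$. The plan is to show that $\Lambda_0$ is exactly the topological attractor and that it sits inside a chaotic Cantor set whose gaps are wandering intervals. The inclusion $\omega_f(I)\subseteq\Lambda_0$ for every wandering interval $I$ will follow from a Koebe-distortion argument: if the forward iterates of $I$ remained bounded away from the critical orbits infinitely often, negative Schwarzian would yield uniformly bounded distortion for $f^{n_k}|_I$ along a subsequence, contradicting $\sum_n|f^n(I)|<\infty$ which is forced by the wandering condition.

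Once this is established, pre-images of wandering intervals form a residual set on which $\omega_f(x)=\Lambda_0$, making $\Lambda_0$ the topological attractor by minimality. The fact that $\Lambda_0$ is contained in a chaotic Cantor set whose complement in the appropriate invariant cycle of intervals consists of wandering intervals then comes from Theorem~\ref{baciastopologicas}(3)(b) combined with the identification of the gaps (since there are no periodic attractors, no gap can be a basin of a periodic orbit, so every gap is a wandering interval). The hardest step will be the control of wandering intervals: proving $\omega_f(I)\subseteq\Lambda_0$ with the Koebe--Schwarzian machinery is the genuinely new ingredient beyond Theorem~\ref{baciastopologicas}, and it is where the $C^3$ plus negative Schwarzian plus non-flat hypotheses are essential.
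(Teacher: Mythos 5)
The non-periodic half of your proposal follows essentially the paper's route: items (1)--(3) are imported from Theorem~\ref{baciastopologicas}, and for item (4) you are in effect reproving Lemma~\ref{trescinco} and Corollary~\ref{stpremovido} (via the denseness of wandering intervals, Lemma~\ref{DenWanInt}). Your Koebe/summability variant can be made to work, but be aware that the Koebe space does not come for free: one must show that the maximal monotone branch through a late iterate of the wandering interval maps \emph{onto} the whole gap of $\overline{\co_f^+(c_{+})\cup\co_f^+(c_{-})}$ containing the putative accumulation point (boundary points of maximal branches have iterates hitting $c$, hence land in that closure), and that the corresponding pull-backs shrink to the wandering interval, which requires taking it maximal. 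These are exactly the two steps your sketch omits, and they are where the work in Lemma~\ref{trescinco} lies.

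The genuine gap is in the periodic-attractor half. First, your assertion that non-flatness together with $Sf<0$ excludes wandering intervals is false for Lorenz maps: the discontinuity at $c$ is precisely why only Lemma~\ref{LemmaWI} (accumulation on both sides of $c$) is available, and item (4) of the very theorem you are proving concerns non-flat maps with $Sf<0$ that \emph{do} have wandering intervals. What is true, and what you need, is that once a periodic attractor exists, Singer's theorem places a one-sided neighborhood $(c,c+\delta)$ or $(c-\delta,c)$ inside an immediate basin, so a wandering interval, accumulating on both sides of $c$, would enter that basin, a contradiction. Second, the Ma\~n\'e-expansion step fails exactly in the case where it is most needed: if only one one-sided neighborhood of $c$ lies in $\beta(\Lambda)$ and the second attractor has not yet been produced, the forward-invariant set of points avoiding $\beta(\Lambda)$ may accumulate on $c$ from the other side, where $|f'|\to 0$, so it is not bounded away from the singularity and no uniform expansion is available; moreover, your argument never explains how $\Lambda'$ is obtained, which is the substance of the dichotomy in the statement. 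The paper's Proposition~\ref{DOISMAX} settles precisely this point elementarily: a component $T$ of the complement of $\overline{\beta(\Lambda)}$ can never map over $c$ (it would fall into the one-sided basin), hence is a homterval; it cannot be wandering by Lemma~\ref{LemmaWI}; so the homterval lemma (Lemma~\ref{homtervals}, using $Sf<0$) forces $T$ to meet the basin of a second periodic attractor, and iterating the argument shows the two basins form an open dense set. You should either replace the Ma\~n\'e step by this homterval argument or supply both the production of $\Lambda'$ and the missing expansion control near $c$.
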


Theorem \ref{atratortopologico} allows us to compare between the metrical and topological attractors. Indeed we can conclude that (1) the topological attractor contains the metrical one, and (2) If the topological attractor is not a cycle of intervals, then the topological attractor and the metrical one coincide. The existence and classification of metrical attractors can be found in \cite{StP}.

Results on contracting Lorenz maps and flows date from the beginning of the 1980's. In this decade and the first half of the 1990's, we mention C. Tresser, A. Arneodo, L. Alsed\`a, A. Chenciner, P. Coullet, J-M. Gambaudo, M. Misiurewicz, A. Rovella, R.F. Williams (see \cite{ACT,CGT84,GT85,Gambaudo:1986p2422,Tresser:1993uf,Rov93}).
Later on, main contributions include  M. Martens and W. de Melo \cite{MM}, G. Keller and M. St. Pierre \cite{Keller:2011p1585,StP}, D. Berry and B. Mestel \cite{BM}, and R. Labarca and C. G. Moreira \cite{Labarca:2010p1505,Labarca:2006p1486}.

\section{Preliminary Results}

A {\em homterval} is an open interval $I=(a,b)$ such that $f^n|_I$ is a homeomorphism for $n\ge1$ or, equivalently, $I\cap\co_f^-(c)=\emptyset$. 

Let us denote by $\BB_0(f)$ the union of the basins of attraction of all periodic attractors of $f$.

\begin{Lemma}[Homterval Lemma, see \cite{MvS}]\label{homtervals}
Let $f:[0,1]\setminus\{c\}\to[0,1]$ be a $C^{2}$ non-flat contracting Lorenz map and $I=(a,b)$ be a homterval of $f$. If $I$ is not a wandering interval, then $I\subset\BB_0(f)\cup\co_f^{-}(\per(f))$. Furthermore, if $f$ is $C^3$ with $Sf<0$, and $I$ is not a wandering interval, then the set $I\setminus\BB_0$ has at most one point.
\end{Lemma}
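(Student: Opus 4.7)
The plan is to follow the classical Homterval Lemma strategy for $C^2$ non-flat interval maps, adapted to the asymmetric Lorenz setting. Since $I$ is a homterval, every iterate $f^n(I)$ is a nondegenerate interval avoiding $c$, and in fact $f^n(I)$ avoids every preimage $f^{-j}(c)$, $j\ge 0$, because otherwise $c\in f^{n+j}(I)$ would contradict $f^{n+j}|_I$ being a homeomorphism. Hence $f^m$ is continuous and strictly monotone on every $f^n(I)$ for all $m\ge 0$. Now I split into two cases. If the positive iterates $\{f^n(I)\}_{n\ge 1}$ are pairwise disjoint, then $\sum_{n\ge 1}|f^n(I)|\le 1$ forces $|f^n(I)|\to 0$, so $|f^n(y_1)-f^n(y_2)|\to 0$ for every $y_1,y_2\in I$ and thus $\omega_f(y_1)=\omega_f(y_2)$. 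Because $I$ is not wandering, the definition quoted in the excerpt yields $I\cap\BB_0(f)\ne\emptyset$, and the common $\omega$-limit must be a periodic attractor, so $I\subset\BB_0(f)$.

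In the remaining case, choose $0<n<m$ with $f^n(I)\cap f^m(I)\ne\emptyset$, set $J:=f^n(I)$ and $k:=m-n$, so that $J\cap f^k(J)\ne\emptyset$. Since $J$ and $f^k(J)$ avoid all preimages of $c$, the map $f^k$ is continuous and strictly monotone increasing on $J\cup f^k(J)$. Applying $f^k$ to a point of $J\cap f^k(J)$ and iterating yields $f^{jk}(J)\cap f^{(j+1)k}(J)\ne\emptyset$ for every $j\ge 0$, so $L:=\bigcup_{j\ge 0}f^{jk}(J)$ is an interval avoiding all preimages of $c$, $f^k$ is continuous and strictly monotone increasing on $L$, and $f^k(L)\subset L$. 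A monotone orbit in the bounded interval $L$ converges to a (possibly one-sided) fixed point of the continuous extension of $f^k$ to $\bar L$, and using that $L$ itself avoids preimages of $c$ one checks that this limit is a periodic point $q\in\per(f)$ of period dividing $k$. If $q$ attracts orbits from the side of approach, then $y$ lies in the basin of the periodic orbit of $q$, so $y\in\BB_0(f)$; if $q$ is repelling from that side, a strictly monotone orbit can converge to $q$ only by landing on it in finitely many steps, hence $y\in\co_f^{-}(\per(f))$. This gives $J\subset\BB_0(f)\cup\co_f^{-}(\per(f))$, and since $f^n|_I$ is a bijection onto $J$ and both $\BB_0(f)$ and $\co_f^{-}(\per(f))$ are $f^{-1}$-invariant (the former because $\omega_f(y)=\omega_f(f(y))$), pulling back gives $I\subset\BB_0(f)\cup\co_f^{-}(\per(f))$.

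For the refinement under $C^3$ with $Sf<0$, the bijection $f^n\colon I\to J$ preserves $\BB_0$ in both directions, so it suffices to prove $|J\setminus\BB_0|\le 1$. Strict monotonicity of every $f^{jk}|_J$ combined with $f^k(q)=q$ at a fixed point $q$ forces the preimages in $J$ of any single periodic point of $f^k$ to collapse to one point: if $y_1<y_2$ in $J$ both mapped to the same $q$ under some iterate of $f^k$, strict monotonicity of the relevant iterate of $f^k|_J$ would contradict the fact that $q$ is fixed. It then remains to exclude that $J$ meets orbits of two distinct non-attracting periodic points of $f^k$. This is where $Sf^k<0$ together with Singer's theorem is essential: between any two non-attracting fixed points of the monotone map $f^k|_L$ must lie an attracting fixed point whose immediate basin reaches one of the repellers and contains a critical value or a boundary point of $[0,1]$, and the fact that $L$ is generated from $J$ by positive iterates of $f^k$ forbids $J$ to straddle two such repellers simultaneously. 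This last step is the main technical obstacle of the proof: upgrading the general finiteness inclusion to the sharp bound ``at most one point'' requires a careful Singer-type analysis of the immediate basins of attracting periodic orbits of $f^k|_L$, which is where negative Schwarzian derivative must be exploited in full.
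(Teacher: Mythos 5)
The paper itself gives no proof of this lemma (it is quoted from de Melo--van Strien \cite{MvS}), so your attempt must stand on its own. Your first half is essentially the classical argument and is nearly complete, with one imprecision: when the $f^k$-orbit of a point of $J$ converges monotonically, the limit $q$ lies in $\overline{L}$, not necessarily in $L$, so the phrase ``using that $L$ itself avoids preimages of $c$ one checks that this limit is a periodic point'' is not justified. The limit may lie in $\co_f^{-}(c)$ (it may even be $c$), in which case one does not get a periodic point but the super-attractor situation; the conclusion survives because such a point then belongs to $\BB_0(f)$ (basins of super-attractors are part of $\BB_0$), but this case has to be treated rather than dismissed.

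The genuine gap is in the $Sf<0$ refinement. You correctly reduce it to showing that $J$ cannot contain two points whose $f^k$-orbits land on two distinct non-attracting fixed points $q_1<q_2$ of $f^k$, but you then only gesture at a Singer-type argument and yourself concede that this step is ``the main technical obstacle''; as written it is not a proof. Moreover the sketch is doubtful on its own terms: the lemma does not assume the absence of periodic attractors, so exhibiting an attracting fixed point between $q_1$ and $q_2$ whose immediate basin meets $c$ or $\partial[0,1]$ is not by itself contradictory, and the claim that ``$L$ being generated from $J$ by positive iterates of $f^k$ forbids $J$ to straddle two such repellers'' is exactly the statement to be proved, asserted without argument. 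The missing idea is the minimum principle for maps with negative Schwarzian derivative. The landing points $q_1,q_2$ are images of points of $J$ under iterates of $f^k$, hence lie in $L$; thus $[q_1,q_2]\subset L$ avoids $\co_f^{-}(c)$, so $f^k$ is a $C^3$ orientation-preserving diffeomorphism of $[q_1,q_2]$ onto itself with $Sf^k<0$, and non-attraction forces $Df^k(q_i)\ge1$. The minimum principle then gives $Df^k(x)>\min\{Df^k(q_1),Df^k(q_2)\}\ge1$ for all $x\in(q_1,q_2)$, whence
\begin{equation*}
q_2-q_1=f^k(q_2)-f^k(q_1)=\int_{q_1}^{q_2}Df^k(x)\,dx>q_2-q_1,
\end{equation*}
a contradiction. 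Combined with your injectivity observation (which already rules out $q_1=q_2$), this yields $\#\big(I\setminus\BB_0\big)\le1$ directly, with no appeal to Singer's theorem or to immediate basins.
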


\begin{Lemma}\label{LemmaWI}If $f:[0,1]\setminus\{c\}\to[0,1]$ is a $C^{2}$ non-flat contracting Lorenz map, then every wandering interval accumulates on both sides of the critical point. In particular, a wandering interval cannot contain any interval of the form $(-r,c)$ or $(c,r)$.
\end{Lemma}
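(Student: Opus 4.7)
The plan is to enlarge $I$ to a maximal homterval and then exploit the combinatorics of its endpoints to force iterates of $I$ to cluster near $c$ from both sides.

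First I would replace $I$ by the maximal open interval $\hat I\supset I$ on which every iterate of $f$ is continuous, i.e.\ the maximal homterval containing $I$. If $\hat I$ were not wandering, Lemma~\ref{homtervals} would place it in $\BB_0(f)\cup \co_f^{-}(\per(f))$; since $\BB_0(f)$ is open while $\co_f^{-}(\per(f))$ is countable, the non-empty open $\hat I$ would have to meet $\BB_0(f)$, and openness of $\BB_0(f)$ would then force $I\subset \hat I$ to meet it too, contradicting that $I$ is wandering. So $\hat I$ itself is wandering; writing $\hat I=(a,b)$, a short disjointness argument rules out $a=0$ and $b=1$ (iterates sharing the fixed endpoint $0$ or $1$ would overlap near it), so $a,b\in \co_f^{-}(c)$. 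Let $k_a,k_b\ge 1$ be minimal with $f^{k_a}(a)=c=f^{k_b}(b)$; since $f$ is increasing on each branch, every $f^n|_I$ is an increasing homeomorphism, whence $f^{k_a}(I)=(c,f^{k_a}(b))$ and $f^{k_b}(I)=(f^{k_b}(a),c)$, and for every $n\ge \max\{k_a,k_b\}$
\[
f^n(I)\;=\;\bigl(f^{\,n-k_a}(c_+),\ f^{\,n-k_b}(c_-)\bigr).
\]
Pairwise disjointness of the $f^n(I)$ makes $\sum_n|f^n(I)|\le 1$ and hence $|f^n(I)|\to 0$, so the orbits of $c_+$ and $c_-$ asymptotically shadow these endpoints, yielding $\omega_f(c_+)=\omega_f(c_-)\subset \omega_f(x)$.

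Next I would show $c\in \omega_f(x)$. Otherwise $\omega_f(x)$ would be a compact forward-invariant set disjoint from $\{c\}$ and from every attracting periodic orbit (since $I\cap \BB_0(f)=\emptyset$ and $\BB_0(f)$ is open). Ma\~n\'e's hyperbolicity theorem for $C^{2}$ maps of the interval then makes $\omega_f(x)$ uniformly expanding, and standard bounded-distortion reasoning applied to the iterates of $I$ accumulating on $\omega_f(x)$ contradicts $|f^n(I)|\to 0$.

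The main obstacle is the last step, where I rule out one-sided accumulation at $c$. Suppose $\omega_f(x)\cap (c,c+r)=\emptyset$ for some $r>0$. Using $c\in \omega_f(x)$, pick $n_j\to \infty$ with $f^{n_j}(I)$ Hausdorff-converging to $\{c\}$; the homterval property prevents $c$ from lying in the interior of $f^{n_j}(I)$, so both endpoints $f^{n_j-k_a}(c_+)$ and $f^{n_j-k_b}(c_-)$ approach $c$ from the same side, necessarily the left by the contradiction hypothesis. Taking $f$ yields $f(c_-)\in \omega_f(x)$, and the proof is then closed by a case analysis: $f(c_-)\in (c,c+r)$ is an immediate contradiction; $f(c_-)\le c$ traps every later iterate of $I$ in $[0,c]$, where $f$ is monotone increasing and orbits can only converge to fixed points, so $I$ must lie in the basin of a periodic or one-sided super-attractor, contradicting either wandering or the no-trivial-dynamics clause of the contracting Lorenz hypothesis; $f(c_-)\ge c+r$ reduces after one further iteration to one of the previous two cases via the return dynamics on $[c+r,1]$. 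The ``in particular'' clause is then immediate: if $I$ contained $(c,c+r)$, the just-proved accumulation of $\omega_f(x)$ from the right would put a point of $\omega_f(x)$ in $\interior(I)$, contradicting that $\omega_f(x)\subset \overline{\bigcup_{n\ge 1}f^n(I)}$ is disjoint from $\interior(I)$ by pairwise disjointness of the iterates.
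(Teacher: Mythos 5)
Your proposal has genuine gaps at three load-bearing points, and the last of them is exactly the heart of the lemma. First, the structure you impose on the maximal homterval $\hat I=(a,b)$ is not justified: maximality only tells you that $a$ and $b$ lie in the closure of $\co_f^{-}(c)$ (or in $\{0,1\}$); preimages of $c$ can accumulate on an endpoint without the endpoint itself being a preimage, so the existence of finite times $k_a,k_b$ with $f^{k_a}(a)=c=f^{k_b}(b)$, and hence the formula $f^n(\hat I)=(f^{n-k_a}(c_+),f^{n-k_b}(c_-))$ and the conclusion $f(c_-)\in\omega_f(x)$, are unsupported. (The preliminary claim that $\hat I$ is wandering is also argued incorrectly --- $\co_f^{-}(\per(f))$ need not be countable and ``$\hat I$ meets $\BB_0(f)$ hence $I$ does'' is a non sequitur --- though that part is repairable: a point of $I$ in $\co_f^{-}(\per(f))$ would violate disjointness of the iterates of $I$.) Second, the appeal to Ma\~n\'e's theorem ignores its hypothesis that the compact invariant set contain no non-hyperbolic periodic points; the lemma you are proving assumes nothing that rules out weak repellers or neutral periodic orbits inside $\omega_f(x)$, and the paper itself flags precisely this caveat right after Lemma~\ref{Remark98671oxe}. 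Third, and most seriously, the one-sided case is not closed: your contradiction hypothesis $\omega_f(x)\cap(c,c+r)=\emptyset$ does not force the shrinking intervals $f^{n_j}(\hat I)$ to approach $c$ from the left (orbit points may tend to $c$ from the right without producing $\omega$-limit points inside the open interval $(c,c+r)$), so ``necessarily the left'' is unjustified; and the case $f(c_-)\ge c+r$ --- which is the generic Lorenz configuration $f(c_-)>c$ --- is dispatched with ``reduces after one further iteration via the return dynamics,'' which is not an argument. So the two-sidedness, which is the actual content of the statement, remains unproved.

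For comparison, the paper's proof sidesteps all of this with one trick: if the orbit of the wandering interval never enters a one-sided neighborhood $(c,c+\varepsilon)$, redefine $f$ only on that avoided interval so as to obtain a $C^2$ map of the interval with non-flat critical point which coincides with $f$ along the orbit of the wandering interval; the modified map then has a wandering interval, contradicting the no-wandering-intervals theorem (Theorem A, Chapter IV of \cite{MvS}). If you want to salvage a direct argument along your lines, you would essentially have to reprove a Denjoy--Schwartz type estimate on the orbit closure away from $c$, which is what the citation is doing for you.
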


\dem
Suppose we have a wandering interval $J$ that doesn't accumulate on the right side of the critical point, say, it never enters a neighborhood $(c,c+\varepsilon)$. So, we can modify $f$ to coincide with the original function outside this interval, but being $C^2$ and non-flat in this interval (see Figure~\ref{errantepng}). In this way, the modified function is a  $C^{2}$ map displaying a wandering interval, but it is a known fact that this can't happen with a $C^2$ map with non-flat critical points (see Theorem A, Chapter IV of \cite{MvS}).

%%%%%%%%%%%%%%%%%%%%%%%%%%%%%%%%%%%%%%%%%%%%
\begin{figure}
\begin{center}
\includegraphics[scale=.2]{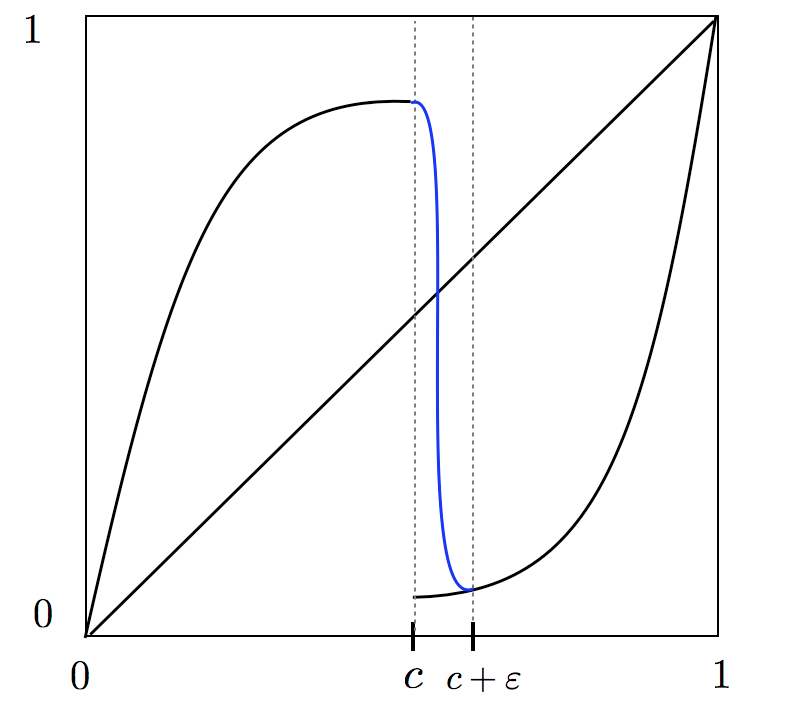}\\
\caption{}\label{errantepng}
\end{center}
\end{figure}
%%%%%%%%%%%%%%%%%%%%%%%%%%%%%%%%%%%%%%%%%%%%%%

\cqd

One can adapt the well known Singer's Theorem to our context, with $f:[0,1]\setminus\{c\}\to[0,1]$ being a $C^{3}$ non-flat contracting Lorenz map with negative Schwarzian derivative, and obtain that the immediate basin of any attracting periodic orbit of this map contains in its border either its critical point or a boundary point of  $[0,1]$. From this we obtain that $f$ can have, at most, two attractors of periodic type (one can also obtain that each neutral periodic point is an attracting periodic orbit and that there exists no interval of periodic points). We can go even further and state:

\begin{Proposition}
\label{DOISMAX}
Let $f:[0,1]\setminus\{c\}\to[0,1]$ be a $C^{3}$ non-flat contracting Lorenz map with negative Schwarzian derivative. Then $f$ can have at most two periodic attractors and, when it has a periodic attractor, the union of the basins of the periodic attractors is always an open and dense set. 
\end{Proposition}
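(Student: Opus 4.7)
My plan is to adapt the classical Singer theorem to the Lorenz setting, use it to bound the number of periodic attractors by two, and then establish density of the union of their basins by means of the Homterval Lemma.

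Singer's theorem in its standard form asserts that, for a $C^3$ self-map of $[0,1]$ with $Sf<0$, the immediate basin of each attracting periodic orbit has in its closure either a critical point of $f$ or a boundary point of $[0,1]$. In the Lorenz setting, the sole ``critical point'' is the discontinuity $c$; treating $c^-$ and $c^+$ as two independent one-sided critical points, the standard argument---via Koebe's distortion principle on each smooth piece, together with the stability of $Sf<0$ under iteration on such pieces---carries through to yield: the closure of the immediate basin of each periodic attractor (including super-attractors) meets $\{c^-,c^+,0,1\}$. This gives four a priori ``slots'', and disjointness of basins means distinct attractors occupy distinct slots. To reduce to two, I observe that if the slot $0$ is occupied, the adjacent basin component is an interval of the form $[0,\delta)$ with $\delta\le c$; since $f(0)=0$ and $f$ is increasing on $[0,c)$, $f([0,\delta))=[0,f(\delta))$ is another component of the same immediate basin containing $0$, and by disjointness this forces the attractor to be the fixed point $\{0\}$ itself. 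Applying Singer again to the free endpoint $\delta$, either $\delta=c$ (so $\{0\}$ simultaneously occupies the slots $0$ and $c^-$) or $\delta=1$ (in which case $\beta(\{0\})\supseteq[0,1)$ and no other periodic attractor can exist). A symmetric analysis for the slot $1$ pairs it with $c^+$, and a straightforward case-by-case check then shows that at most two periodic attractors coexist.

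For the density of $\BB_0=\bigcup_\Lambda\beta(\Lambda)$, which is open by definition, fix any open interval $U\subset[0,1]$ and any periodic attractor $\Lambda$; by the adapted Singer, its immediate basin contains a one-sided neighborhood $(c-\delta,c)$ or $(c,c+\delta)$ of $c$. If $U$ contains a non-wandering homterval $V$, the enhanced form of the Homterval Lemma gives that $V\setminus\BB_0$ has at most one point, so $U\cap\BB_0$ is uncountable. A wandering homterval $V\subset U$ is ruled out because, by the preceding lemma on wandering intervals, its forward orbit accumulates at $c$ on both sides and so must eventually enter the one-sided neighborhood of $c$ lying in the immediate basin of $\Lambda$, contradicting the defining property that $V$ is disjoint from every such basin. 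If no homterval lies in $U$, then $\co_f^-(c)$ is dense in $U$; for preimages $x\in U$ of $c$ approaching $c$ from the side on which the immediate basin of $\Lambda$ sits, one has $\omega_f(x)=\omega_f(c_\pm)=\Lambda$, so $x\in\beta(\Lambda)\subset\BB_0$.

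The main technical obstacle is the adaptation of Singer's theorem across the discontinuity at $c$: iterates $f^n$ may cross $c$ several times, so to apply the Koebe distortion principle one must decompose each iterate into smooth branches and carefully track the one-sided critical behavior at each crossing. This is routine and closely parallels the unimodal treatment in Chapter II of de Melo--van Strien.
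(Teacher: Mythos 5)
Your overall architecture is the paper's: an adapted Singer theorem to place immediate basins, then the Homterval Lemma together with the fact that wandering intervals accumulate on both sides of $c$ to obtain density of the union of basins. The difference lies in how the boundary is treated, and that is exactly where your argument has a genuine gap. The paper simply asserts, as its adapted Singer statement, that the basin of \emph{every} attracting periodic orbit contains a one-sided neighborhood $(c-\delta,c)$ or $(c,c+\delta)$ of the critical point; you, more carefully, only get the four slots $\{c^-,c^+,0,1\}$ and must reduce four to two. Your reduction hinges on ``applying Singer again to the free endpoint $\delta$, either $\delta=c$ or $\delta=1$'', and this is not a consequence of Singer's theorem or of $Sf<0$. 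Singer's boundary exception is precisely that an immediate basin touching $\partial[0,1]$ need not reach the critical point, and the minimum principle yields no contradiction for the configuration $f(0)=0$, $f<\mathrm{id}$ on $(0,\delta)$, $f(\delta)=\delta<c$: on $[0,\delta]$ one only gets $Df(0)\le 1\le Df(\delta)$, and the minimum of $Df$ may sit at the endpoint $0$. Indeed, left branches with $Df(x)=Ce^{Bx-Dx^{2}}(c-x)^{\alpha-1}$ are non-flat, satisfy $Sf<0$ (since $\log Df$ is strictly concave), and for suitable constants have $Df(0)<1$ together with two further crossings of the diagonal; then $\{0\}$ is an attracting fixed point whose immediate basin $[0,\delta)$ stays away from $c$, while a second attracting fixed point lies just below $c$. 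So your case analysis does not close: nothing you prove excludes $\{0\}$, $\{1\}$ and one attractor on each side of $c$ coexisting, and your density argument inherits the same gap, because it needs some attractor whose basin contains a one-sided neighborhood of $c$ both to absorb wandering homtervals and to push preimages of $c$ into the union of basins; if the only attractor were an inessential $\{0\}$ with basin $[0,\delta)$ which no orbit from $[\delta,1]$ ever enters, density would simply fail.

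There is also a circularity in the preliminary step: you write the slot-$0$ component as $[0,\delta)$, which already assumes $0$ lies in the basin, i.e.\ that the attractor is $\{0\}$; if the attractor is not $\{0\}$, the component is $(0,\delta)$, and what your image argument actually yields is forward invariance $f((0,\delta))\subset(0,\delta)$, after which one must still exclude an attracting (necessarily fixed) point in the interior, or a semi-stable one at $\delta$ --- that part \emph{can} be rescued by the minimum principle, since then $Df\ge1$ at both endpoints of $[0,\delta]$ while $Df<1$ somewhere inside. For comparison, the paper's proof bypasses the whole issue by taking essentiality of every periodic attractor for granted (its first step is ``by Singer's theorem $(c,\delta)\subset\beta(\co_f^{+}(p))$''), so your write-up exposes the delicate point but does not resolve it: to complete the proposition along your lines you must either genuinely prove that attracting boundary fixed points with basins bounded away from $c$ cannot occur (which needs more than Singer), or add a hypothesis such as $f(c_-)>c>f(c_+)$, which the paper derives only in the absence of periodic attractors and therefore cannot be assumed here.
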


\dem

Let $p$ be so that $\co_{f}^{+}(p)$ is an attracting periodic orbit. Notice that $\beta(\co_{f}^{+}(p))$ is an open set. By Singer's theorem $(c, \delta) \subset \beta(\co_{f}^{+}(p))$ for some $\delta >0$ (or $(-\delta, c) \subset \beta(\co_{f}^{+}(p))$, which is similar). If $\overline{\beta(\co_{f}^{+}(p))} \ne [0,1]$, then there is a connected component  $T$ of $[0,1] \setminus \overline{\beta(\co_{f}^{+}(p))}$.

If $\exists j$ such that $f^j(T)\ni c$, $\exists y \in T$ such that $f^j(y)\in (c,\delta)$, then $y\in \beta(\co_{f}^{+}(p))$, leading to an absurd, as $y \in T$ and $T \subset [0,1] \setminus \overline{\beta(\co_{f}^{+}(p))}$.

In this way, for any given $j$, $f^j|_T$ is a homeomorphism, so $T$ is a homterval and then it is either a wandering interval or it intersects the basin of attraction of an attracting periodic orbit that can't be $\co_{f}^{+}(p)$ as $ T \subset [0,1] \setminus \overline{\beta(\co_{f}^{+}(p))}$. 

The first case can't occur, as $T$ cannot be a wandering interval, as its orbit would accumulate in $c$ by both sides (by Lemma \ref{LemmaWI}) and then there would be $j$ such that $f^j(T)\subset (c,\delta)$, leading again to an absurd. In the second case, $\exists q$ such that $\co_{f}^{+}(q)$ is an attracting periodic orbit, and $\co_{f}^{+}(q) \ne \co_{f}^{+}(p)$. Finally, if $\overline{\beta(\co_{f}^{+}(p))} \cup \overline{\beta(\co_{f}^{+}(q))}\ne [0,1]$, there would be a connected component of $[0,1] \setminus ( \overline{\beta(\co_{f}^{+}(p))} \cup \overline{\beta(\co_{f}^{+}(q))} )$ and we could show in the same way it is a homterval, that cannot be wandering. Also, it cannot be in the basin of a third periodic orbit, as this would have to have the critical point in its border, but both sides of it are already attracted to one or possibly two aforementioned orbits. 

\cqd

\begin{Lemma}\label{Remark98671oxe}Let $f:[0,1]\setminus\{c\}\to[0,1]$ be a $C^{2}$ contracting Lorenz map. If $f$ does not have any periodic attractor, then there is a residual set $U$ such that $$c\in\omega_{f}(x)\, \forall x \in U.$$
Furthermore, given any neighborhood $V$ of the critical point, the set of points that visit $V$ is an open and dense set.
\end{Lemma}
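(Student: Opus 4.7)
The plan is to prove the second statement first and then derive the residual set in the first statement from it by a Baire category argument.

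For the second statement, I would fix a neighborhood $V$ of $c$ and consider $W := \bigcup_{n \geq 0} f^{-n}(V)$, which is automatically open. To show $W$ is also dense, suppose for contradiction it is not; then $[0,1] \setminus \overline{W}$ contains a nondegenerate open interval $T$ with $f^n(T) \cap V = \emptyset$ for every $n \geq 0$. Since $c \in V$, this forces $c \notin f^n(T)$ for every $n$, so $T$ is a homterval, and the Homterval Lemma (Lemma \ref{homtervals}) leaves two alternatives. The first is that $T$ is a wandering interval, which is ruled out immediately by Lemma \ref{LemmaWI}: wandering intervals accumulate on both sides of $c$, so their forward iterates must eventually enter $V$. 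The second is $T \subset \BB_0(f) \cup \co_f^-(\per(f))$; since $f$ has no periodic attractors, $\BB_0(f) = \emptyset$, so $T \subset \co_f^-(\per(f)) = \bigcup_{N, q \geq 0}\{x \in T : f^{N+q}(x) = f^N(x)\}$. Each set in this countable union is closed in $T$, so the Baire Category Theorem produces a subinterval $I \subset T$ on which $f^{N+q} = f^N$; because $f^N|_I$ is a homeomorphism, $f^q$ must be the identity on the interval $f^N(I)$, violating the ``no interval of trivial dynamics'' clause in the definition of contracting Lorenz map. This Baire/trivial-dynamics step is where I expect the only real subtlety to lie.

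For the first statement, choose a countable basis $\{V_k\}_{k \geq 1}$ of neighborhoods of $c$ with $\bigcap_k V_k = \{c\}$ and set $W_{k,N} := \bigcup_{n \geq N} f^{-n}(V_k)$. Every $x \in \bigcap_{k, N \geq 1} W_{k,N}$ visits each $V_k$ at arbitrarily large times, hence satisfies $c \in \omega_f(x)$. Each $W_{k,N}$ is open, and $W_{k,N} \supset f^{-N}(W_{k,0})$, where $W_{k,0}$ is open and dense by the second statement already proved. Because $f$ is continuous and monotone on each side of $c$, it is a local homeomorphism off $c$, so pull-backs of open dense sets under $f^N$ remain open and dense (shrink any open interval to one side of $c$, apply density to its image under the homeomorphism, and pull back). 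Iterating, every $W_{k,N}$ is open and dense, and the Baire Category Theorem delivers the desired residual set $U$.
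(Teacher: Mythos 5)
Your proof is correct and follows essentially the same route as the paper's: both arguments show that the set of points whose orbit visits a given neighborhood of $c$ is open and dense by combining the homterval lemma (Lemma~\ref{homtervals}) with Lemma~\ref{LemmaWI}, and then obtain the residual set by a Baire intersection over shrinking neighborhoods of $c$. If anything you are a bit more thorough than the paper: you explicitly dispose of the alternative $T\subset\co_f^{-}(\per(f))$ via Baire category and the no-interval-of-trivial-dynamics clause (a case the paper's proof passes over in silence), and you force visits at arbitrarily large times before concluding $c\in\omega_{f}(x)$.
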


\dem Let $J_n=\{x\in[0,1]\|\co^+_f(x)\cap(c-1/n,c+1/n)\ne\emptyset\}$, $n\in \NN$, so $J_n$ is open and non-empty. If $J_n$ was not dense, then $\exists (a,b)\subset [0,1]\setminus \overline J_n$. By the homterval lemma, as $f$ has no periodic attracting orbit, there would be $\ell \ge 0$ such that $f^\ell\big( (a,b)\big) \ni c$ or $(a,b)$ would be a wandering interval. The first case would imply that $(a,b)\cap \overline J_n\ne \emptyset$. The second one also cannot happen, as otherwise iterates of $(a,b)$ would approach $c$,  by Lemma \ref{LemmaWI}, and this would lead to the same contradiction.
Then, $J=\cap_{n\ge 0}J_n$ is residual and we have that $c\in \omega(x)$, $\forall x \in J$.

\cqd

A metrical version of this lemma also can be obtained as a consequence of \cite{Man85} if we add the hypothesis that the map has no weak repellers.

\begin{theorem}[Koebe's Lemma \cite{MvS}] 

For every $\varepsilon>0$, $\exists K>0$ such that the following holds: let $M$, $T$ be intervals in $[0,1]$ with $M\subset T$ and denote respectively by $L$ and $R$ the left and right components of $T\setminus M$ and let $f:[0,1]\to[0,1]$ be a map with negative Schwarzian derivative. If $f^{n}|_T$ is a diffeomorphism for a given $n\ge1$ and 
$$ |f^{n}(L)|\ge\varepsilon|f^{n}(M)| \text{ and } |f^{n}(R)|\ge\varepsilon|f^{n}(M)|,$$ then $\frac{|Df^{n}(x)|}{|Df^{n}(y)|}\le K$ for $x,y\in M$.

\end{theorem}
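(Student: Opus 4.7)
The key tool is the cross-ratio inequality for $C^3$ maps with negative Schwarzian derivative. My plan is to work with the cross-ratio
\[
B(T, M) = \frac{|T|\,|M|}{|T_L \cup M|\,|M \cup T_R|},
\]
where $T_L$ and $T_R$ are the components of $T\setminus M$, and to establish its monotonicity: whenever $g:T\to g(T)$ is a $C^3$ diffeomorphism with $Sg\le 0$, one has $B(g(T), g(M))\ge B(T, M)$. The mechanism is that $Sg \le 0$ is equivalent to $|Dg|^{-1/2}$ being convex on $T$ (a direct computation: $((Dg)^{-1/2})'' = -\tfrac{1}{2}(Dg)^{-1/2}\, Sg$), and this convexity yields the cross-ratio expansion after an explicit integration.

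Next, I would apply this monotonicity to $g = f^n|_T$. Using the chain rule $S(g \circ h)(x) = Sg(h(x))\,|Dh(x)|^2 + Sh(x)$, a composition of finitely many maps with $Sf < 0$ has non-positive Schwarzian on any interval where $Df^n$ does not vanish; combined with the hypothesis that $f^n|_T$ is a diffeomorphism, this gives $B(f^n(T), f^n(M)) \ge B(T, M)$. The Koebe hypothesis $|f^n(L)|, |f^n(R)| \ge \varepsilon\,|f^n(M)|$ forces $B(f^n(T), f^n(M))$ to be bounded below by an explicit function of $\varepsilon$ alone, so the same lower bound transfers to $B(T, M)$.

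To turn this cross-ratio control into the required distortion bound, I would fix $x, y \in M$ and apply monotonicity to auxiliary subintervals $M^x_\delta \ni x$ and $M^y_\delta \ni y$ of length $\delta$ inside $T$. As $\delta \to 0$, the quotient $B(f^n(T), f^n(M^x_\delta))/B(T, M^x_\delta)$ converges to a scalar multiple of $|Df^n(x)|$ by a factor depending only on $|T|$ and $|f^n(T)|$, and symmetrically at $y$; taking the ratio of the two limits and invoking the uniform cross-ratio bound derived above produces $|Df^n(x)|/|Df^n(y)| \le K(\varepsilon)$ for some $K$ depending only on $\varepsilon$. The main obstacle is the cross-ratio monotonicity itself: the Schwarzian chain rule, the estimate of $B$ in terms of $\varepsilon$, and the limiting argument identifying the derivative are routine bookkeeping, but the inequality $B(g(T), g(M)) \ge B(T, M)$ rests on the delicate passage from $Sg \le 0$ to the convexity of $|Dg|^{-1/2}$ along $T$ and its integrated form.
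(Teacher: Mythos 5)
The paper does not actually prove this statement --- it is quoted verbatim from de Melo--van Strien \cite{MvS} --- so what you are attempting is a reproof of the cited theorem. Your starting ingredients are sound: the identity $\big((Dg)^{-1/2}\big)''=-\tfrac12 (Dg)^{-1/2}\,Sg$, the chain rule $S(g\circ h)=\big(Sg\circ h\big)\,(Dh)^2+Sh$, and the monotonicity $B(g(T),g(M))\ge B(T,M)$ for diffeomorphisms with non-positive Schwarzian are all correct and are indeed the standard tools. But the two steps that are supposed to turn this into the distortion bound both fail.

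First, the inequalities in your middle step are backwards. Writing $|f^n(L)|=\lambda|f^n(M)|$ and $|f^n(R)|=\rho|f^n(M)|$, one has $B(f^n(T),f^n(M))=\frac{1+\lambda+\rho}{(1+\lambda)(1+\rho)}$, which is \emph{not} bounded below by a function of $\varepsilon$ (it tends to $0$ as $\lambda,\rho\to\infty$); the hypothesis $\lambda,\rho\ge\varepsilon$ yields an \emph{upper} bound $B\le\frac{1+2\varepsilon}{(1+\varepsilon)^2}<1$. Moreover, since monotonicity says $B(\text{image})\ge B(\text{domain})$, a bound on the image cross-ratio can only be transferred to the domain in the upper-bound direction, not the lower-bound direction you invoke. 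Second, and more seriously, the limiting argument does not produce the distortion estimate. For $M^x_\delta\ni x$ of length $\delta$ one gets $B(T,M^x_\delta)\approx \frac{|T|\,\delta}{|L_x||R_x|}$ and $B(f^n(T),f^n(M^x_\delta))\approx \frac{|f^n(T)|\,|Df^n(x)|\,\delta}{|f^n(L_x)||f^n(R_x)|}$, where $L_x,R_x$ are the components of $T\setminus\{x\}$; so the quotient converges to $|Df^n(x)|$ times a factor depending on $x$ through $|L_x|,|R_x|,|f^n(L_x)|,|f^n(R_x)|$, not only on $|T|$ and $|f^n(T)|$, and these $x$- and $y$-dependent factors do not cancel when you divide. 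Worse, monotonicity only says each such quotient is $\ge 1$: the degenerate limit gives the pointwise \emph{lower} bound $|Df^n(x)|\ge \frac{|f^n(L_x)||f^n(R_x)|}{|f^n(T)|}\cdot\frac{|T|}{|L_x||R_x|}$, and the analogous lower bound at $y$; a ratio of two lower bounds gives no upper bound on $|Df^n(x)|/|Df^n(y)|$. This is precisely where the real content of the Koebe principle lies: one must obtain a \emph{two-sided} comparison of $|Df^n|$ on $M$ with $|f^n(M)|/|M|$, which in \cite{MvS} is achieved by applying the cross-ratio expansion to the configurations determined by the four points $a,x,y,b$ (so that the unknown lengths cancel), together with the space hypothesis and the minimum principle --- not by dividing two degenerate-limit inequalities taken with the same pair $(T,M)$. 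As written, your argument establishes only one-sided pointwise bounds and therefore does not prove the theorem.
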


\begin{Lemma} \label{trescinco}Let $f:[0,1]\setminus\{c\}\to[0,1]$ be a $C^{3}$ non-flat contracting Lorenz map with negative Schwarzian derivative. If $I$ is a wandering interval, $\forall y \in I$, $\omega_f(y)= \overline{\co_f^+(c_{+}) \cup\co_f^+(c_{-})  }$.
\end{Lemma}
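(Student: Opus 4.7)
The plan is to prove the claimed equality in three stages. First, because $I$ is wandering, its forward iterates $f^n(I)$ are pairwise disjoint subintervals of $[0,1]$, so $|f^n(I)|\to 0$; consequently $\omega_f(y)$ is independent of $y\in I$, and I denote this common closed set by $\Omega$. Second, to obtain $\overline{\co_f^+(c_+)\cup\co_f^+(c_-)}\subseteq\Omega$, I would appeal to Lemma \ref{LemmaWI}: the orbit of $I$ accumulates on both sides of $c$, so subsequences $f^{n_k^\pm}(I)\to c$ from the left and right exist, and continuity of $f$ on each side of $c$ gives $f^{n_k^\pm+1}(y)\to f(c_\mp)$, whence $f(c_-),f(c_+)\in\Omega$. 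Forward $f$-invariance of $\Omega$ (where $f$ is defined) together with its closedness yield the inclusion.

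The substantive step is the reverse inclusion $\Omega\subseteq\overline{\co_f^+(c_+)\cup\co_f^+(c_-)}$. Suppose for contradiction there is $z\in\Omega\setminus\overline{\co_f^+(c_+)\cup\co_f^+(c_-)}$, and consider first $z\in(0,1)$. I pick $\delta>0$ with $V:=(z-\delta,z+\delta)\subset(0,1)$ disjoint from $\overline{\co_f^+(c_+)\cup\co_f^+(c_-)}$, and set $V':=(z-\delta/2,z+\delta/2)$. A subsequence $n_k\to\infty$ with $f^{n_k}(y)\to z$ forces $f^{n_k}(I)\subset V'$ for large $k$. For each such $k$, let $T_k\supset I$ be the maximal open interval on which $f^{n_k}$ is monotone; each endpoint of $f^{n_k}(T_k)$ is then in $\{0,1\}\cup\co_f^+(c_+)\cup\co_f^+(c_-)$ (the image of a boundary point of $T_k$, which is either in $\partial[0,1]$ or a preimage of $c$), hence outside $V$, so $f^{n_k}(T_k)\supseteq V$. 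Setting $M_k:=(f^{n_k}|_{T_k})^{-1}(V')$, each component of $f^{n_k}(T_k)\setminus V'$ has length at least $\delta/2=\tfrac{1}{2}|V'|$; Koebe's Lemma with $\varepsilon=1/2$ then yields a constant $K$ \emph{independent of $k$} controlling the distortion of $f^{n_k}$ on $M_k$. The mean value theorem combined with this bounded distortion gives
$$|M_k|\ \ge\ \frac{1}{K}\cdot\frac{|V'|\,|I|}{|f^{n_k}(I)|}\ =\ \frac{\delta\,|I|}{K\,|f^{n_k}(I)|}\ \longrightarrow\ \infty,$$
contradicting $M_k\subseteq[0,1]$.

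For the boundary case $z\in\{0,1\}$ I would run a one-sided version of the same pullback/distortion scheme, exploiting that $0$ (respectively $1$) is a fixed point which, since $I$ is wandering, cannot be a periodic attractor, so that the local expansion coming from $C^3$-non-flatness with $Sf<0$ provides the needed substitute for the missing Koebe space on one side. \textbf{The main obstacle} is securing a Koebe distortion constant $K$ that is uniform in $k$; this is precisely the role of passing from $V$ to the strictly smaller $V'$, which provides a uniform $\delta/2$ of Koebe space on each side of $f^{n_k}(M_k)=V'$ independent of $k$, and thereby forces $|M_k|\to\infty$ and closes the contradiction.
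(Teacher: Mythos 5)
Your main argument is correct, and it follows the same skeleton as the paper's proof — lower inclusion via Lemma \ref{LemmaWI} plus continuity, then a pull-back of a neighborhood of a hypothetical point $z\in\omega_f(y)\setminus\overline{\co_f^+(c_{+})\cup\co_f^+(c_{-})}$ along the maximal monotone branch through $I$, whose image must cover the whole complementary component because the branch endpoints land in $\{0,1\}\cup\overline{\co_f^+(c_{+})\cup\co_f^+(c_{-})}$, followed by Koebe — but you close the contradiction differently, and in a cleaner way. The paper takes $I$ maximal among wandering intervals, argues that the nested pull-backs $J_\epsilon$ of $B_\epsilon(p)$ intersect down to a homterval which must equal $I$, and then uses distortion to show $f^{n_\epsilon}(I)$ occupies more than half of $B_\epsilon(p)$, so $p\in f^{n_\epsilon}(I)$, a contradiction. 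You avoid the maximality device and the identification $\bigcap_\epsilon J_\epsilon=I$ altogether: fixing the target $V'$ once and for all guarantees uniform Koebe space (your $\varepsilon=1/2$), and then disjointness of the iterates of $I$ gives $|f^{n_k}(I)|\to0$, so bounded distortion plus the mean value theorem forces $|M_k|\ge |V'|\,|I|/(K|f^{n_k}(I)|)\to\infty$, which is absurd. This trade (shrinking image of $I$ against a fixed-size target, rather than shrinking target against an exactly-located pull-back) is a genuine simplification of the paper's argument.

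The one loose end is your boundary case $z\in\{0,1\}$. The justification you sketch is not the right mechanism: non-flatness is a hypothesis about the critical point $c$, not about the endpoint fixed points, and ``local expansion'' at $0$ or $1$ is neither guaranteed in the form you need nor the natural tool here. The case is, however, elementary: if $0\in\omega_f(y)$ but $0\notin\overline{\co_f^+(c_{+})\cup\co_f^+(c_{-})}$, then $f(c_{+})$ is bounded away from $0$, so visits of the orbit of $y$ to a small interval $(0,s)$ can only be produced by the left branch; if $f<\mathrm{id}$ near $0$ the orbit would converge to $0$, contradicting $f(c_{-})\in\omega_f(y)$ (or the fact that $I$ is disjoint from basins of periodic attractors), while if $f>\mathrm{id}$ near $0$ (or if fixed points accumulate at $0$) a monotonicity argument shows the orbit of $y$ cannot enter $(0,s')$ for $s'<y$ — so $0\notin\omega_f(y)$ after all; the point $1$ is symmetric. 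To be fair, the paper's own proof silently ignores this case as well (its Koebe step needs space on both sides of $B_\epsilon(p)$ inside $T$), so with this small patch your proof is complete and, in the interior case, tighter than the original.
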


\dem

We have shown in Lemma \ref{LemmaWI} that 
the orbit of any given wandering interval $I$ accumulates in the critical point by both sides, and then, by continuity we have  
$\omega_f(I)\supset \overline{\co_f^+(c_{+}) \cup\co_f^+(c_{-})  }$. 
Now, suppose there is $p \in \omega_f(I)$ such
that $p \not\in \overline{\co_f^+(c_{+}) \cup\co_f^+(c_{-})  }$.
We can also suppose without loss of generality
that $I$ is maximal, in the sense that there is no bigger wandering interval that contains $I$ properly.
Let $T$ be a connected component of $[0,1] \setminus \big(\overline{\co_f^+(c_{+}) \cup\co_f^+(c_{-})  }\big)$ containing $p$.
Given $\epsilon>0$, let $n_{\epsilon}$ be the
minimum $j$ such that $f^j (I) \subset B_{\epsilon}(p)$.
Let $T_{\epsilon}$ be the maximal interval
containing I such that $f^{n_{\epsilon}}(T_{\epsilon}) \subset T$
 and that $f^{n_{\epsilon}}|_{T_{\epsilon}}$ is a diffeomorphism.

Notice that $f^{n_{\epsilon}}(T_{\epsilon})=T$ for otherwise, there would exist $y \in T$ such that
$y= f^{n_{\epsilon}}(a)$, where $a \in \partial T_{\epsilon}$. And as $f^{n_{\epsilon}}|_{T_{\epsilon}}$ cannot be monotonously extended to a bigger interval, then $\exists 0 \le j < {n_{\epsilon}}$ such that $f^j(a) =c$,
which would lead to an absurd, as
$f^{({n_{\epsilon}-j})}(a_+) \in T  \subset [0,1] \setminus\overline{\co_f^+(c_{+}) \cup\co_f^+(c_{-})  }$ (or this would occur for $f^{({n_{\epsilon}-j})}(a_-)$).

Let $J_{\epsilon}=(f^{n_{\epsilon}}|_{T_{\epsilon}}) ^{-1}    (B_{\epsilon}(p))$
and $U = \cap_{\epsilon>0} J_{\epsilon}$.
As ${\epsilon} \to 0$ implies ${n_{\epsilon}}\to \infty$, and as every $f^{n_{\epsilon}}$ is a diffeomorphism onto its image, $\forall{n_{\epsilon}}$, it follows that $f^j$ is a diffeomorphism in $U$, $\forall j$. In this way, $U$ is a homterval and then either  $U$ is a wandering interval or $U \in \co ^-(Per(f)) \cup \BB_0(f)$.
As $U \supset I$ it cannot be as in the second case for $I$ being wandering implies there is no periodic attractor, and as $I$ was taken as maximal, we have necessarily that $U=I$. 

We can take $\epsilon_0$ small enough such that the left and right connected components of $T\setminus 
B_{\epsilon}(p)$ are as big as we want compared to $|B_{\epsilon}(p)|$, in such a way that Koebe's Lemma ensures that given any $\epsilon > 0$ such that $\epsilon < \epsilon_0$, $\exists K >0$ such that $\frac{|Df^{n_\epsilon}(x)|}{|Df^{n_\epsilon}(y)|}\le K$, $\forall x, y \in J_\epsilon$, $\forall \epsilon \in (0,\epsilon)$. Recall that $B_{\epsilon}(p)=f^{n_{\epsilon}}(J_{\epsilon})$ and $|f^{n_{\epsilon}}(J_{\epsilon})|\ge (1/K) m |J_\epsilon|$ where $m=|Df^{n_\epsilon}(x_0)|$, to some $x_0 \in M$ and also $|f^{n_{\epsilon}}(J_\epsilon\setminus I)|\le K m |J_\epsilon \setminus I|$. So, we have the following inequality

$$
\frac{|B_{\epsilon}(p) \setminus f^{n_{\epsilon}}(I)|}{|B_{\epsilon}(p)|}=
\frac{|f^{n_{\epsilon}}(J_\epsilon\setminus I)|}{|f^{n_{\epsilon}}(J_\epsilon)|}\le
K^2 \frac{ |J_\epsilon\setminus I|}{|J_\epsilon|}<1/2.
$$

The last inequality follows from the fact that the collection of $J_\epsilon$ cannot have subsequences whose limit would be bigger than $I$, for otherwise the intersection of them would generate a bigger wandering interval, in contradiction to the maximality of $I$. So, we can calculate these estimates on a nested subsequence of $J_\epsilon$ whose intersection is $I$, and so we can take $\epsilon$ small enough such that 
$$
\frac{|J_\epsilon\setminus I|}{|J_\epsilon|}<\frac{1}{2 K ^2}
$$
then
$$
\frac{|f^{n_{\epsilon}}(I)|}{|B_{\epsilon}(p)|}>1/2
$$
and then $p \in f^{n_{\epsilon}}(I)$, which is a contradiction, as $p$ was chosen as belonging to $\omega_f(I)$ where $I$ is a wandering interval.

\cqd

\begin{Lemma}[Denseness of wandering intervals, when they exist]\label{DenWanInt}
Let $f:[0,1]\setminus\{c\}\to[0,1]$ be a $C^{2}$ non-flat contracting Lorenz map without periodic attractors.  If $f$ has a wandering interval $I$, then $\cw$ is an open and dense set, where $\cw$ is the union of all open wandering intervals of $f$.
\end{Lemma}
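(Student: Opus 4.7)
The set $\cw$ is a union of open intervals and is therefore open; my plan is to show density. I will fix an arbitrary non-empty open interval $(a,b)\subset[0,1]$ and exhibit an open wandering interval contained in $(a,b)$, splitting into two cases according to whether $(a,b)$ contains a homterval.

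If some open subinterval $J\subset(a,b)$ is a homterval, then by Lemma \ref{homtervals} either $J\subset\BB_0(f)\cup\co_f^{-}(\per(f))$ or $J$ is a wandering interval. Since $f$ has no periodic attractor we have $\BB_0(f)=\emptyset$, and $\co_f^{-}(\per(f))$ is countable (each of the countably many periodic orbits has countable preorbit, using that the prohibition on intervals of trivial dynamics forces $\fix(f^n)$ to be at most countable for every $n$), so the uncountable open interval $J$ cannot lie inside it. Hence $J$ is wandering and $J\subset(a,b)\cap\cw$.

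Otherwise $(a,b)$ contains no homterval, so $\co_f^{-}(c)$ is dense in $(a,b)$. Let $\ell\ge0$ be minimal with $c\in f^{\ell}((a,b))$ (allowing $\ell=0$ if $c\in(a,b)$). Minimality forces $c\notin f^{i}((a,b))$ for $0\le i<\ell$, so $f^{\ell}|_{(a,b)}$ is continuous and injective, hence a homeomorphism onto the open interval $f^{\ell}((a,b))$, which contains $c$ in its interior. Pick $\delta>0$ with $(c-\delta,c+\delta)\subset f^{\ell}((a,b))$. Since $\{f^{k}(I)\}_{k\ge0}$ consists of pairwise disjoint open intervals in $[0,1]$, $|f^{k}(I)|\to0$; combined with the two-sided accumulation on $c$ given by Lemma \ref{LemmaWI}, I may pick $k$ with $f^{k}(I)\subset(c-\delta,c+\delta)\setminus\{c\}$, and then set
\[
J:=\bigl(f^{\ell}|_{(a,b)}\bigr)^{-1}\bigl(f^{k}(I)\bigr)\subset(a,b),\qquad f^{\ell}(J)=f^{k}(I).
\]
For $\ell=0$ one has $J=f^{k}(I)\in\cw$ directly, so from here on assume $\ell\ge1$.

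The identity $f^{n}(J)=f^{n-\ell+k}(I)$ for $n\ge\ell$, combined with $f^{\ell}|_{(a,b)}$ being a homeomorphism, shows that $f^{n}|_{J}$ is a homeomorphism for every $n\ge1$. For the disjointness of iterates $f^{i}(J)\cap f^{j}(J)=\emptyset$ with $i<j$: the case $i\ge\ell$ is immediate since the two sets are distinct iterates of $I$; when $j\le\ell$, applying $f^{\ell-j}$ to a common point yields a point of $f^{k}(I)\cap f^{\ell-j+i}(J)$, and a further application of $f^{j-i}$ gives a point of $f^{k}(I)\cap f^{k+(j-i)}(I)$, a contradiction; the remaining case $i<\ell\le j$ is handled analogously by applying $f^{\ell-i}$ directly to obtain a point of $f^{k}(I)\cap f^{k+(j-i)}(I)$. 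Since $\BB_0(f)=\emptyset$, no further condition is needed, so $J$ is a wandering interval in $(a,b)$. I expect the last disjointness verification to be the main technical point, requiring the careful index tracking above; the remainder is a direct combination of the Homterval Lemma with the accumulation property of the wandering interval near $c$.
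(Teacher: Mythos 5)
Your argument is correct in substance but follows a genuinely different route from the paper's. The paper proves the lemma by contradiction: if $\cw$ were not dense, the complement of $\overline{\cw}$ would contain an open interval $I$ that is not wandering; the Homterval Lemma (Lemma~\ref{homtervals}, plus the absence of periodic attractors) yields $n$ with $f^{n}|_{I}$ a homeomorphism and $c\in f^{n}(I)$, and since $\cw$ is fully invariant ($f^{-1}(\cw)=\cw$) this places $c$ in the interior of $[0,1]\setminus\overline{\cw}$, contradicting Lemma~\ref{LemmaWI}. You instead argue directly: inside an arbitrary open interval $(a,b)$ you either find a homterval, which must then be wandering, or you pull back, through the first branch $f^{\ell}|_{(a,b)}$ whose image covers $c$, a small far-along iterate $f^{k}(I)$ of the given wandering interval, and verify by index bookkeeping that the pullback $J$ is itself a wandering interval contained in $(a,b)$. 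Your version is longer but more constructive (it exhibits a wandering interval inside every open set, which is essentially what the invariance of $\cw$ hands the paper for free); the paper's version is shorter because invariance plus Lemma~\ref{LemmaWI} does all the work. The homeomorphism and disjointness verifications in your construction are correct.

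Two small repairs are needed. First, your justification that $\co_{f}^{-}(\per(f))$ is countable is not valid as stated: the absence of intervals of trivial dynamics alone does not force $\fix(f^{n})$ to be countable, since an increasing branch of $f^{n}$ could a priori fix a Cantor set without fixing an interval. What saves the step is the standing hypothesis that $f$ has no periodic attractor: if a branch of $f^{n}$ had two fixed points with none strictly between them, the sign of $f^{n}(x)-x$ on the gap would make one of them an attracting periodic orbit; hence each of the finitely many branches of $f^{n}$ carries at most one fixed point, $\per(f)$ is countable, and so is its preorbit. Alternatively, all you actually need is that $\co_{f}^{-}(\per(f))$ contains no interval (the very fact the paper invokes in the proof of Proposition~\ref{Lemma1110863}), which for your homterval $J$ also follows from a Baire category argument combined with the exclusion of intervals of trivial dynamics. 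Second, in the disjointness bookkeeping you should choose $k\ge1$ (the accumulation argument clearly permits this), so that every index of an iterate of $I$ appearing in your three cases is positive and the disjointness clause in the paper's definition of wandering interval applies verbatim; with $k=0$ the case $i=\ell$ would involve $I$ itself, which that definition does not directly cover.
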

\begin{proof}
If $\cw$ is not dense, then $[0,1]\setminus\overline{\cw}$ contains some open interval $I$. Clearly, $I$ is not a wandering interval. As $f$ does not have periodic attractors, we can apply Lemma~\ref{homtervals} and conclude that there is $n\in\NN$ such that $f^{n}|_{I}$ is a homeomorphism and that $f^{n}(I)\ni c$. As $\cw$ is invariant ($f^{-1}(\cw)=\cw$), $[0,1]\setminus\overline{\cw}$ is also invariant. Thus, $c\in\interior([0,1]\setminus\overline{\cw})$, that is, there is no wandering interval in a neighborhood of $c$. And this is not possible, by Lemma~\ref{LemmaWI}.

\end{proof}

\begin{Corollary}
\label{stpremovido}
Let $f:[0,1]\setminus\{c\}\to[0,1]$ be a $C^{3}$ non-flat contracting Lorenz map with negative Schwarzian derivative displaying no periodic attractors.
If $f$ has a wandering interval $I$, then there is an open and dense set $U$ such that any given $x \in U$, $\omega_f(x) = \overline{\co_f^+(c_{+}) \cup\co_f^+(c_{-})  }$.

\end{Corollary}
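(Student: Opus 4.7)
The strategy is essentially to combine the two preceding lemmas directly. Let $\cw$ denote the union of all open wandering intervals of $f$, as in Lemma~\ref{DenWanInt}. Since $f$ has at least one wandering interval $I$ by hypothesis and has no periodic attractors, Lemma~\ref{DenWanInt} applies and gives that $\cw$ is an open and dense subset of $[0,1]$. I would take $U := \cw$.

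To verify the required property of $U$, I would pick an arbitrary $x \in U$. By definition of $\cw$, there is some wandering interval $W \ni x$. Now Lemma~\ref{trescinco} asserts that for any wandering interval $W$ and any $y \in W$ one has $\omega_f(y) = \overline{\co_f^+(c_+) \cup \co_f^+(c_-)}$; applying this with $y = x$ gives precisely
\[
\omega_f(x) \;=\; \overline{\co_f^+(c_+) \cup \co_f^+(c_-)},
\]
which is what the corollary demands.

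There is no real obstacle here: the two ingredients we need have been established separately. Lemma~\ref{DenWanInt} requires only the $C^2$ non-flat hypothesis and the absence of periodic attractors (both of which hold under our stronger $C^3$ negative Schwarzian assumption), while Lemma~\ref{trescinco} is where the $C^3$ and $Sf<0$ hypotheses are actually consumed, via the Koebe distortion estimate used in its proof. Thus the corollary follows at once by taking the union of wandering intervals as the open and dense set $U$.
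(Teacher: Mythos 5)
Your proposal is correct and is essentially identical to the paper's own argument: the paper also takes $U=\cw$ from Lemma~\ref{DenWanInt} and concludes via Lemma~\ref{trescinco}. Your remark about which hypotheses are consumed where is accurate and adds nothing problematic.
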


\dem
Taking $U$ as the set $\cw$ of Lemma~\ref{DenWanInt}, $U$ satisfies the required condition by applying Lemma \ref{trescinco}.
\cqd

\section{Periodic Points}\label{Section98767}

Given an interval $J=(a,b)$ and a map $f$ defined in $J$, denote the {\em first return map} to $J$ by $\cf_{J}:J^{*}\to J$. That is, $\cf_{J}(x)=f^{R(x)}(x)$, where $J^{*}=\{x\in J$ $;$ $\co^{+}_{f}(f(x))\cap J\ne\emptyset\}$ and $R(x)=\min\{j\ge1$ $;$ $f^{j}(x)\in J\}$, that is called the {\em first return time}. Let $\cp_{J}$ be the collection of connected components of $J^{*}$.

An open interval $I=(a,b)$ containing the critical point $c$
is called a  {\em nice interval} of $f$ if
$\co_{f}^{+}(\partial I)\cap I=\O$ and $a$ and $b\not\in \beta(\co^+(p))\setminus\co^+(p)$, $p$ a periodic attractor. We will denote the {\em set of nice intervals of $f$} by
$\mathcal{N}=\mathcal{N}(f)$ and the set of nice intervals whose
borders belong to the set of periodic points of $f$ by
$\mathcal{N}_{per}=\mathcal{N}_{per}(f)$, that is,
$\mathcal{N}_{per}=\{I\in\mathcal{N}\ \|\ \partial I\subset Per(f)\}$.

\begin{Lemma}\label{Lemma8388881a}
Let $f:[0,1]\setminus\{c\}\to[0,1]$ be a $C^{2}$ non-flat contracting Lorenz map and let $J=(a,b)$ be a nice interval, with first return map $\cf_{J}:J^{*}\to J$.
The following statements are true.
\begin{enumerate}
\item $\big((p,q)\in\cp_{J}\text{ and }p\ne c\big) \Rightarrow \cf_{J}((p,q))=(a,f^{R|_{(p,q)}}(q));$
\item $\big((p,q)\in\cp_{J}\text{ and }q\ne c\big) \Rightarrow \cf_{J}((p,q))=(f^{R|_{(p,q)}}(p),b);$
\item $\big(I\in\cp_{J}\text{ and }c\notin\partial I\big) \Rightarrow \cf_{J}(I)=J.$
\end{enumerate}
\end{Lemma}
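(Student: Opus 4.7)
The plan is to first verify that $\cf_J$ restricted to $(p,q)$ is an orientation-preserving diffeomorphism onto an open subinterval of $J$, then to identify its two endpoints using the niceness of $J$. Writing $R:=R|_{(p,q)}$, I would note that the definition of the first return time forces $f^j(x)\notin J$ for every $x\in(p,q)$ and $1\le j\le R-1$; since $c\in J$, this excludes $c$ from every orbit segment $\{f(x),\dots,f^{R-1}(x)\}$, so $f^R|_{(p,q)}$ is a composition of orientation-preserving $C^2$ diffeomorphisms. Consequently $\cf_J((p,q))=(\alpha,\beta)$ with $\alpha:=\lim_{x\to p^+}f^R(x)$, $\beta:=\lim_{x\to q^-}f^R(x)$, $\alpha<\beta$, and $\alpha,\beta\in[a,b]$.

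To establish (1), the goal is to show $\alpha=a$ whenever $p\ne c$, and I would split into two subcases. If $p=a$, then niceness gives $\co_f^+(a)\cap J=\emptyset$; tracking one-sided iterates (if some $f^k(a)=c$, orientation-preservation of $f^k$ near $a^+$ forces $f^k(x)>c$ for $x$ slightly larger than $a$, so the right-sided limit $f^R(a^+)$ follows the branch $\co_f^+(c_+)\subset\co_f^+(a)$) one checks $\alpha=f^R(a^+)\in\co_f^+(a)$, hence $\alpha\notin J$; combined with $\alpha\in[a,b]$ and $\alpha<\beta\le b$, this forces $\alpha=a$. If instead $p>a$, so $p\in J$, I would use the fact that components of $J^*$ are pairwise disjoint open intervals to deduce that $p\notin J^*$ (any component containing $p$ would overlap $(p,q)$ and therefore equal it by maximality, contradicting $p\notin(p,q)$); hence $f^j(p)\notin J$ for all $j\ge 1$, no iterate of $p$ meets $c$, and $f^R$ is continuous at $p$ with $\alpha=f^R(p)\notin J$. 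Again $\alpha\in\{a,b\}$ and orientation forces $\alpha=a$.

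Statement (2) is the mirror of (1) and follows by an identical argument applied to the right endpoint $\beta$. Statement (3) is then immediate: if $c\notin\partial I=\{p,q\}$, both $p\ne c$ and $q\ne c$, so (1) and (2) together give $\cf_J(I)=(a,b)=J$.

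The main technical hurdle will be the case $p=a$ when the orbit of $a$ passes through the critical point: I will need to check that the one-sided limit $f^R(a^+)$ really lies in the orbit $\co_f^+(a)$ (as defined with its one-sided branches through $c$), so that the niceness hypothesis applies; this is a small bookkeeping argument that follows the orientation-preserving chain $a^+\mapsto\cdots\mapsto c^+\mapsto\cdots$, after which niceness closes the argument.
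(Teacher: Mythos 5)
Your argument is correct and follows essentially the same endpoint analysis as the paper's proof: compute the one-sided limit of $f^{R}$ at the left endpoint, exclude values in $J$ via niceness when $p=a$ and via maximality of the component (your observation that $p\notin J^{*}$) when $p>a$, and use monotonicity together with $\alpha<\beta\le b$ to force $\alpha=a$, with (2) symmetric and (3) a combination of the two. One small simplification: the sub-case you flag as the main hurdle, $f^{k}(a)=c$ for some $k<R$, is in fact vacuous, since then $f^{k}(x)$ would lie in $(c,b)\subset J$ for $x$ slightly above $a$, contradicting that $R$ is the first return time on $(a,q)$.
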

\dem\,
Assume that $I=(p,q)\in\cp_{J}$ and $p\ne c$. Let $n=R|_{I}$. 

If $p=a$, then 
 
(i) If $f^{n}(p)<a$, then $f^{n}(p+\varepsilon)<a$ for $\varepsilon>0$ sufficiently small. This is an absurd, as $n$ is a return time of $p+\varepsilon\in I$. 
 
(ii) If $f^{n}(p)\ge b$, as $f$ preserves orientation, $f^{n}(p+\varepsilon)\ge b$, that will also be in contradiction with the fact that $n$ is a return time of $(p+\varepsilon)\in I$.
 
(iii) $f^{n}(p)\in(a,b)$ also leads to a contradiction, because $J$ is nice. So, $f^{n}(p)=a$ whenever $p=a$.

Consider now $a<p$ and $p\ne c$. Cases (i) and (ii) can be proved as before, and the remaining case, if $f^{n}(p)\in(a,b)$, $\exists \varepsilon$ sufficiently small such that, $(p,p+\varepsilon)$ doesn't return until $n$, as $n$ is the first return time of $I$ to $(a,b)$, $f^{j}(I)\cap (a,b)=\emptyset$ for every $0<j<n$.
Thus,  $f^{j}(p)\ne c$, $\forall\,0\le j<n$.
Thus, $f^{n}$ is continuous in $(p-\delta,p+\delta)$ for a sufficiently small $\delta>0$.
As a consequence, if $a<f^{n}(p)<b$, then, taking $\delta>0$ small, $n$ will be the first return time for $(p-\delta,q)$ to $(a,b)$, contradicting $I\in\cp_{J}$.
So, we necessarily have $f^{n}(p)=a$, proving (1).

Similarly, (2) follows from the same kind of reasoning, and (3) is a consequence of (1) and (2)
\cqd

\begin{Corollary}\label{Corollary8388881b}
Let $f:[0,1]\setminus\{c\}\to[0,1]$ be a $C^{2}$ non-flat contracting Lorenz map and let $J=(a,b)$ be a nice interval, with first return map $\cf_{J}:J^{*}\to J$.
If $J=(a,b)$ is a nice interval and $f$ is a contracting Lorenz map defined in $J$, then the following statements are true:
\begin{enumerate}
\item $a\in\partial I\text{ for some }I\in\cp_{J}\Leftrightarrow\,\,a\in Per(f).$
\item $b\in\partial I\text{ for some }I\in\cp_{J}\Leftrightarrow\,\,b\in Per(f).$
\end{enumerate}
\end{Corollary}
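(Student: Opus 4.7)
My plan is to prove statement (1); statement (2) follows by a mirror-symmetric argument, using $b-\varepsilon$ and Lemma~\ref{Lemma8388881a}(2) in place of $a+\varepsilon$ and Lemma~\ref{Lemma8388881a}(1). The two implications of the biconditional require quite different treatments: one direction is essentially an immediate readout of Lemma~\ref{Lemma8388881a}, while the other needs a small explicit construction of a component of $\cp_J$ whose left endpoint sits exactly at $a$.

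For the forward direction, I assume $a\in\partial I$ for some $I\in\cp_J$. Since $c$ lies in the interior of $J=(a,b)$, the left endpoint $a$ of $I$ is different from $c$, so $I$ meets the hypothesis of Lemma~\ref{Lemma8388881a}(1). Looking into the proof of that lemma, its sub-case ``$p=a$'' already rules out $f^n(p)<a$, $f^n(p)\geq b$, and $f^n(p)\in(a,b)$ and concludes that $f^n(a)=a$, where $n=R|_I$. This exhibits $a$ as a periodic point.

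For the reverse direction, suppose $a\in\per(f)$ with minimal period $p$. Because $c\notin\dom(f)$, the orbit $\{f^j(a)\}_{j=0}^{p-1}$ avoids $c$, so $f^p$ is continuous and orientation-preserving on a one-sided neighborhood of $a$. I will argue that $R(a+\varepsilon)=p$ for every sufficiently small $\varepsilon>0$; once this is established, $(a,a+\varepsilon_0)\subset J^*$ for some $\varepsilon_0>0$, and the component of $J^*$ containing this sub-arc -- being contained in $J=(a,b)$ -- is forced to have $a$ as its left endpoint. To establish $R(a+\varepsilon)=p$, I note that for each $1\leq j<p$ niceness of $J$ gives $f^j(a)\notin J$ and minimality of $p$ gives $f^j(a)\neq a$. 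If $f^j(a)\neq b$ either, continuity yields $f^j(a+\varepsilon)\notin J$ for small $\varepsilon$; if $f^j(a)=b$, orientation preservation forces $f^j(a+\varepsilon)>b$, again outside $J$. At step $p$, $f^p(a)=a$ together with orientation preservation places $f^p(a+\varepsilon)$ just to the right of $a$, hence inside $J=(a,b)$.

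The single delicate point -- and the only thing that needs real care -- is the possibility $f^j(a)=b$ at some intermediate iterate, because $b\in\partial J$ and continuity alone cannot decide on which side of $b$ the nearby point $f^j(a+\varepsilon)$ falls. It is precisely the orientation-preserving property of $f$ (a standing part of the definition of a Lorenz map) that resolves this ambiguity and makes the argument go through.
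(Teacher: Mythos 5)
Your proof is correct and takes essentially the same route as the paper's: the forward direction reads off $f^{R|_{I}}(a)=a$ from the $p=a$ case of Lemma~\ref{Lemma8388881a}, and the reverse direction shows that points just to the right of $a$ have first return time equal to the period of $a$, using niceness, minimality of the period, and orientation preservation (your explicit treatment of the case $f^{j}(a)=b$ is the same point the paper covers with its ``$f^{j}(a)\notin[a,b)$'' and monotonicity remark).
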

\dem
If $I=(a,q)\in\cp_{J}$ (the case $I=(q,b)$ is analogous) and $n=R|_{I}$, it follows from Lemma~\ref{Lemma8388881a} that 
$f^n(a)=\cf_{J}(a)=a$. That is, $a$ is a periodic point.

Now suppose that $a\in Per(f)$ or $a$ is a super-attractor (the proof for $b$ is analogous).
Thus, there is $n>0$ such that $\lim_{\delta \downarrow 0} f^{n}(a+\delta)=a$ and $f^{j}(a)\notin[a,b)\ni c$, $\forall0<j<n$.
As $f^n$ is well defined, continuous and monotone on $(a,a+\varepsilon)$ for some $\varepsilon>0$ and as $f$ preserves orientation, we get $f^{n}(x)\in(a,b)$ for every $x>a$ sufficiently close to $a$ and that $f^{j}(x)\notin(a,b)$, $\forall  0<j<n$. Thus, there is some $I=(a,q)\in\cp_{J}$.
\cqd

\begin{Lemma}\label{LemmaHGFGH54}
Let $f:[0,1]\setminus\{c\}\to[0,1]$ be a $C^{2}$ non-flat contracting Lorenz map. If $J=(a,b)$ is a nice interval, then there are sequences $a_{n},b_{n}\in \overline{J}\cap Per(f)$ such that
\begin{enumerate}
\item $\lim_{n}a_{n}=a$ and $\lim_{n}b_{n}=b$;
\item $\co^{+}_{f}(a_{n})\cap(a_{n},b)=\emptyset$ and $\co^{+}_{f}(b_{n})\cap(a,b_{n})=\emptyset$.
\end{enumerate}
\end{Lemma}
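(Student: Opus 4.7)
I construct the sequence $a_n$ converging to $a$; the sequence $b_n$ is obtained by the symmetric argument on the right endpoint. If $a\in\per(f)$ (in particular if $a=0$), set $a_n=a$ for every $n$; the niceness condition $\co_f^+(a)\cap(a,b)=\emptyset$ immediately gives condition~(2). Henceforth assume $a\notin\per(f)$.

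The strategy is to extract periodic points of $f$ from the full branches of the first return map $\cf_J:J^*\to J$. By Lemma~\ref{Lemma8388881a}(3), every component $I\in\cp_J$ with $c\notin\partial I$ is a \emph{full branch}: $\cf_J(I)=J$. Since $I\subset J$, the intermediate value theorem yields a fixed point $x_I\in I$ of $\cf_J$, which is a periodic point of $f$ of period equal to the first return time $R|_I$. Because $R|_I$ is the \emph{first} return time, the intermediate iterates $f^j(x_I)$ with $0<j<R|_I$ lie outside $J$, so $\co_f^+(x_I)\cap J=\{x_I\}$, and in particular $\co_f^+(x_I)\cap(x_I,b)=\emptyset$. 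It therefore suffices to produce a sequence of full-branch components $I_n\in\cp_J$ with $\sup I_n\to a$; then $a_n:=x_{I_n}$ will do.

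The main obstacle is ruling out the alternative that full-branch components of $\cp_J$ fail to accumulate at $a$. Suppose, for contradiction, that there exists $\delta>0$ with $(a,a+\delta)\cap J^*=\emptyset$; then no point of $(a,a+\delta)$ ever returns to $J$, so its forward iterates avoid $c\in J$, making $(a,a+\delta)$ a homterval. Lemma~\ref{LemmaWI} rules out the wandering case, since a wandering interval must accumulate on both sides of $c$ and so would re-enter $J$; therefore Lemma~\ref{homtervals} places $(a,a+\delta)$ in $\BB_0(f)\cup\co_f^-(\per(f))$. As $\co_f^-(\per(f))$ is countable, some sub-interval lies in the basin of a periodic attractor $\co_f^+(p)$. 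Let $\cb=(a,\beta)$ be the connected component of $\beta(\co_f^+(p))$ containing that sub-interval; by the niceness assumption $a\notin\cb$. A boundary-of-basin analysis for the monotone iterate $f^k$ (applied directly if $\cb$ is an immediate basin component, or after following $\cb$ forward through finitely many preimages until it lands in one) shows that $a$ must be a fixed point of $f^k$, hence a periodic point of $f$ --- contradicting our standing assumption. Consequently, full-branch components of $\cp_J$ accumulate at $a$, and the sequence $a_n=x_{I_n}$ fulfills the requirements.
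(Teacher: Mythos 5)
Your core mechanism is exactly the paper's: extract a periodic point as a fixed point of each full branch of the first return map (Lemma~\ref{Lemma8388881a}(3)), observe that first-return fixed points satisfy $\co_f^+(a_n)\cap(a,b)=\{a_n\}$, hence condition (2), and handle $a\in Per(f)$ separately. Where you differ is that you try to \emph{prove} that full-branch components of $\cp_J$ accumulate at $a$, a point the paper's proof simply asserts (it takes for granted a component $I_0\subset(a,c)$ and, at each step, a component inside $(a,p_n)$). That is the right instinct, but your justification has a genuine gap precisely at its decisive step.

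Three issues, in increasing order of seriousness. First, the negation of ``full branches accumulate at $a$'' is not ``$\exists\delta>0$ with $(a,a+\delta)\cap J^*=\emptyset$''; you need the small extra reduction that, since $a\notin Per(f)$, no component of $J^*$ has $a$ as an endpoint (Corollary~\ref{Corollary8388881b}) and at most one component has $c$ on its boundary, so if $J^*$ meets every right-neighbourhood of $a$ then infinitely many pairwise disjoint full branches do, and they shrink to $a$. Second, ``$\co_f^-(Per(f))$ is countable'' is not justified: $Per(f)$ is a priori only a closed set (the fixed-point set of $f^n$ can be a Cantor set), so this step needs a different argument (e.g.\ Baire plus the ``no interval of trivial dynamics'' clause to exclude an interval of eventually periodic points). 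Third, and most seriously, the connected component of $\beta(\co_f^+(p))$ containing your sub-interval need not have $a$ as its left endpoint (the sub-interval was obtained by discarding points, so the component's left endpoint may lie strictly between $a$ and it), and even when it does, pushing the component forward into an immediate-basin component only shows that some iterate $f^m(a)$ lands on the \emph{boundary} of that component; the standard boundary analysis then gives that $f^m(a)$ is periodic, precritical, or an endpoint of $[0,1]$ --- i.e.\ that $a$ is precritical or \emph{pre}periodic, not that $a$ is periodic. The precritical case does contradict niceness ($\co_f^+(a)\cap J=\emptyset$), and the case where the boundary point belongs to the attracting orbit itself contradicts $a\notin\beta(\co_f^+(p))\setminus\co_f^+(p)$; but the case where $f^m(a)$ is a non-attracted (repelling) periodic point on the basin boundary, with $a$ merely preperiodic, contradicts none of your stated hypotheses, so the announced contradiction with ``$a\notin Per(f)$'' does not follow as written. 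Since this is exactly the configuration in which $(a,a+\delta)$ would contain no periodic points at all, it is the heart of the step you added, and it needs an actual argument (or the additional standing assumption, present in all the paper's applications of this lemma, that $f$ has no periodic attractors, in which case $\BB_0(f)=\emptyset$ and the homterval alternative is ruled out by Baire and the no-trivial-dynamics clause alone).
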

\dem
We will show the existence of a sequence $a_{n}\in \overline{J}\cap Per(f)$ with $\lim_{n}a_{n}=a$ such that $\co^{+}_{f}(a_{n})\cap(a_{n},b)=\emptyset$.  Assume that $a\notin Per(f)$, otherwise take $a_{n}=a$. Let $I_{0}=(p_{0},q_{0})\in\cp_{J}$ such that $I_{0}\subset(a,c)$. By Lemma~\ref{Lemma8388881a}, as $a$ is not periodic we get $p_{0}\ne a$. Thus, there is some $I_{1}\in\cp_{J}$ with $I_{1}\subset (a,p_{0})$. In particular, $c\ne\partial I_{1}$. Again by Lemma~\ref{Lemma8388881a} we get $\cf_{J}(I_{1})=f^{n_{1}}(I_{1})=J$. Thus, there is a fixed point $a_{1}\in\overline{I_{1}}$ of $f^{n_{1}}|_{\overline{I_{1}}}$. As $n_{1}=R_{J}(I_{1})$ it follows that $f^{j}(a_{1})\notin(a,b)$ for every $0<j<n$ and so, $\{a_{1}\}=\co^{+}(a_{1})\cap(a,b)$. From this we get $\co^{+}(a_{1})\cap(a_{1},b)=\emptyset$. Again, writing $I_{1}=(p_{1},q_{1})$, it follows as before that $a\ne p_{1}$ and so there is some $I_{2}\in\cp_{J}$ such that $I_{2}\subset(a,p_{1})$. Proceeding as before, we get a periodic point $a_{2}\in\overline{I_{2}}$ satisfying the statement. Inductively, we get a sequence $a_{n}\searrow a$ of periodic points with $\co^{+}(a_{n})\cap(a_{n},b)=\emptyset$.
Similarly, one can get the sequence $b_{n}\nearrow b$.

\cqd

\begin{Lemma}
\label{noitenoite}
Let $f:[0,1]\setminus\{c\}\to[0,1]$ be a $C^{2}$ non-flat contracting Lorenz map.
If $Per(f)\cap (0,1)=\emptyset$, then either $f$ has an attracting periodic orbit (indeed, at least one of the fixed points is an attractor) or $\omega_{f}(x)\ni c$, $\forall\, x \in (0,1)$.
\end{Lemma}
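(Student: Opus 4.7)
My plan is to split the argument according to whether $f$ admits an attracting periodic orbit. Since any such orbit must be a periodic orbit of $f$, and by hypothesis $Per(f)\cap(0,1)=\emptyset$, the only possibilities are $\{0\}$ or $\{1\}$; this yields the first alternative. So I may assume throughout that $f$ has no attracting periodic orbit, and the goal becomes showing $c\in\omega_f(x)$ for every $x\in(0,1)$.

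First I would fix the sign of $f-\mathrm{id}$ on each side of $c$. Since $f$ is continuous and strictly increasing on $(0,c)$ with $f(0)=0$ and (by hypothesis) no interior fixed points, $f-\mathrm{id}$ has constant nonzero sign on $(0,c)$; if that sign is negative, then $(0,c)$ is forward invariant and every orbit decreases monotonically to $0$, making $0$ an attracting fixed point, a contradiction. Thus $f>\mathrm{id}$ on $(0,c)$ and symmetrically $f<\mathrm{id}$ on $(c,1)$, giving $f(c_-)\geq c\geq f(c_+)$. In the sub-case $f(c_-)=c$, the interval $(0,c)$ is forward invariant, so for any $y\in(0,c)$ the orbit is strictly increasing and bounded above by $c$, hence convergent to a fixed point of $f$ in $[0,c]$; by strict monotone increase the limit must equal $c$. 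Orbits starting in $(c,1)$ either stay there and decrease monotonically to $c$, or cross into $(0,c)$ and then converge to $c$ as above, yielding $c\in\omega_f(x)$ for all $x\in(0,1)$. The symmetric sub-case $f(c_+)=c$ is identical.

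The substantive case is the \emph{bouncy} sub-case $f(c_-)>c$ and $f(c_+)<c$, in which the unique preimages $\hat c\in(0,c)$ and $\tilde c\in(c,1)$ of $c$ exist. Assuming for contradiction that some $x_0\in(0,1)$ has $c\notin Y:=\omega_f(x_0)$, I would fix $\varepsilon>0$ with $(c-\varepsilon,c+\varepsilon)\cap Y=\emptyset$. A one-step induction using $f(c+\varepsilon)>f(c_+)\geq 0$ and $f(c-\varepsilon)<f(c_-)\leq 1$ together with $f>\mathrm{id}$ on $[0,c-\varepsilon]$ shows that the orbit is eventually bounded away from $\{0,1\}$, so $\{0,1\}\cap Y=\emptyset$. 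If $Y\subset(0,c)$ then $Y\subset(0,c-\varepsilon]$ and any forward orbit in $Y$ would be strictly increasing and bounded above, convergent to a fixed point of $f$ in $(0,c)$, contradicting the hypothesis; symmetrically $Y\not\subset(c,1)$. Thus $p:=\max(Y\cap[0,c-\varepsilon])$ and $q:=\min(Y\cap[c+\varepsilon,1])$ both lie strictly between $c$ and the endpoints, and since $f(p)>p$ with $f(p)\in Y$ cannot land in $Y\cap[0,c-\varepsilon]$, we have $f(p)\geq q$; symmetrically $f(q)\leq p$.

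The final step, and the main obstacle, is to convert these inequalities into a periodic point of $f$ in $(0,1)$, contradicting the hypothesis. The plan is to apply the intermediate value theorem to $f^2-\mathrm{id}$ on an appropriate continuity piece of $f^2$ contained in $(\hat c,c)$ or $(c,\tilde c)$: the endpoint values of $f^2$ on such pieces lie among $f(c_\pm)$, $f(f(c_\pm))$, and $c$, and together with the inequalities $f(p)\geq q\geq c+\varepsilon$, $f(q)\leq p\leq c-\varepsilon$ and the positions of $\hat c,\tilde c$, a sign change of $f^2-\mathrm{id}$ is produced on some piece, yielding a fixed point of $f^2$ and hence a periodic orbit of $f$ of period at most $2$ in $(0,1)$, contradicting the hypothesis. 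The hardest technical point will be the case distinction on the relative positions of $f(c_\pm),\hat c,\tilde c$: in the residual configurations where the IVT on $f^2$ fails directly, I would iterate on the first-return map of $f$ to $(0,c)$, which is itself a Lorenz-type map inheriting the hypothesis of no interior periodic points, and close the contradiction after finitely many renormalizations.
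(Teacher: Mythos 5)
Your reduction to the case without attracting periodic orbits, the sign analysis giving $f>\mathrm{id}$ on $(0,c)$ and $f<\mathrm{id}$ on $(c,1)$, and the conclusion that $Y=\omega_f(x_0)$ must meet both sides of $c$, with $f(p)\ge q$ and $f(q)\le p$ for the extremal points $p<c<q$ of $Y$ adjacent to the gap, are all correct (this part runs parallel to Lemma~\ref{Lemma545g55}). The genuine gap is the final step, which you explicitly leave as a plan, and it is exactly where the content of the lemma lies. The inequalities $f(p)\ge q\ge c+\varepsilon$ and $f(q)\le p\le c-\varepsilon$, together with the positions of $f(c_\pm)$, $\hat c$, $\tilde c$, cannot by themselves force a sign change of $f^{2}-\mathrm{id}$ on a continuity piece: for a Lorenz map whose dynamics near $c$ is that of a gap map semi-conjugate to an irrational rotation, one can choose $p<c<q$ satisfying exactly these inequalities while the map has no periodic point of any period in the relevant region, so no intermediate-value argument applied to $f^{2}$ (or to any fixed iterate $f^{n}$) can close the contradiction from this data alone. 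Put differently, the period of the periodic orbit that the completed argument must produce is governed by the return time to the gap $(p,q)$, which is unbounded over the class of maps under consideration, so there is no ``period at most $2$'' shortcut; the argument must use the invariant set $Y$ itself, not merely the two inequalities.

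Your fallback is also not sound as stated: the first-return map of $f$ to $(0,c)$ is in general not a Lorenz-type map (it has countably many branches, not two), and the assertion that the contradiction closes ``after finitely many renormalizations'' is unjustified -- it is precisely what fails for Cherry-like or infinitely renormalizable combinatorics, and nothing in your setup excludes these. What is missing is the mechanism the paper uses. Note that your $(p,q)$ is in fact a nice interval: $p,q\in Y$, $Y$ is forward invariant, and $Y\cap(p,q)=\emptyset$, so $\mathcal{O}_f^{+}(\{p,q\})\cap(p,q)=\emptyset$ (the paper works with the component $(a,b)\ni c$ of $[0,1]\setminus\overline{\mathcal{O}_f^{+}(x_0)}$, which is the same idea). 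The decisive step is then the structure of the first return map to a nice interval: by Lemma~\ref{Lemma8388881a} each branch of the return map not adjacent to $c$ maps onto the whole interval (and the branches adjacent to the endpoints map onto intervals with endpoint $a$ or $b$), and Lemma~\ref{LemmaHGFGH54} converts this into periodic points of $f$ in $\overline{(a,b)}\subsetneq[0,1]$ accumulating on both endpoints, contradicting $Per(f)\cap(0,1)=\emptyset$. Until you supply an argument of this kind -- producing a periodic point of some, possibly large, period from the return map to the gap determined by $Y$ -- the proof is incomplete.
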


\dem
Under these hypotheses, if $f$ has a periodic attractor, it has to be the point $0$, or $1$ or both. If none of these occur, $f$ does not have a periodic attractor. Suppose we can choose a point $x \in (0,1)$ such that $\omega_{f}(x)\not\ni c $. Let $(a,b)$ be the connected component of $[0,1]\setminus\overline{\co^+_f(x)}$ containing $c$. If $\exists n$ such that $f^n(a)\in(a,b)$, then, as $a\in\overline{\co^+_f(x)}$, $f^n(a)\in\overline{\co^+_f(x)}$, in contradiction with the fact that $(a,b) \subset [0,1]\setminus\overline{\co^+_f(x)}$. The same reasoning applies to point $b$, and then $(a,b) \subsetneq (0,1)$ is a nice interval and so $Per(f)\cap (a,b)\ne\emptyset$ (Lemma~\ref{LemmaHGFGH54}), which is a contradiction.
\cqd

\begin{Lemma}\label{Lemma545g55} 
Let $f:[0,1]\setminus\{c\}\to[0,1]$ be a non-flat $C^{2}$ contracting Lorenz map. 
If $f(x)>x$, $\forall x \in{(0,c)}$, $f(x)<x$, $\forall x \in{(c,1)}$ and $\lim_{x \uparrow c}f(x)>c > \lim_{x\downarrow c}f(x)$, then  $$\co^{+}_{f}(x)\cap(0,c)\ne\emptyset\ne\co^{+}_{f}(x)\cap(c,1), \hspace{0.2cm} \forall\,x\in (0,1)\setminus\co^{-}_f(c).$$
\end{Lemma}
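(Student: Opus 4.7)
The plan is to argue by contradiction, exploiting the symmetry between the two branches of $f$. Fix $x\in(0,1)\setminus\co_{f}^{-}(c)$, so that $f^{n}(x)\ne c$ for every $n\ge 0$. If $x\in(0,c)$ then $x=f^{0}(x)\in\co_{f}^{+}(x)\cap(0,c)$, and it only remains to show $\co_{f}^{+}(x)\cap(c,1)\ne\emptyset$; the case $x\in(c,1)$ is handled by the symmetric argument using $f(y)<y$ on $(c,1)$ together with $\lim_{y\downarrow c}f(y)<c$.

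Assume $x\in(0,c)$ and, for contradiction, that no iterate $f^{n}(x)$ lies in $(c,1)$. Since the orbit also avoids $c$, every iterate lies in $[0,c)\cup\{1\}$. Because each branch of $f$ is strictly monotone with $f(0)=0$ and $f(1)=1$, one checks that $f^{-1}(\{0\})\subset\{0,c\}$ and $f^{-1}(\{1\})\subset\{c,1\}$; hence if some $f^{n}(x)\in\{0,1\}$, iterating backwards would force $x\in\{0,1\}\cup\co_{f}^{-}(c)$, a contradiction. Consequently every iterate lies in $(0,c)$, and since $f(y)>y$ on $(0,c)$ the sequence $(f^{n}(x))_{n\ge 0}$ is strictly increasing and bounded above by $c$. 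Let $\ell=\lim_{n}f^{n}(x)\in(x,c]$. If $\ell<c$ then $f$ is continuous at $\ell$ and the limit equation yields $f(\ell)=\ell$, contradicting $f(y)>y$ on $(0,c)$; hence $\ell=c$. But then
\[
\lim_{n\to\infty}f^{n+1}(x)=\lim_{y\uparrow c}f(y)>c,
\]
which is incompatible with $f^{n+1}(x)\in(0,c)$ for every $n$. This contradiction completes the first case, and the second case is analogous.

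The only delicate step is excluding the possibility that iterates reach a boundary fixed point without first passing through $c$; this is the short preimage computation above and is the only place where the hypothesis $x\notin\co_{f}^{-}(c)$ is used beyond ensuring continuity of $f$ at each point of the forward orbit. Otherwise the argument is a straightforward monotone-bounded-sequence argument combined with the jump discontinuity of $f$ at the critical point, so I do not expect any genuine obstacle.
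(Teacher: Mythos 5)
Your proof is correct and takes essentially the same route as the paper's: assuming the forward orbit never leaves $(0,c)$, the condition $f(x)>x$ on $(0,c)$ makes the orbit increasing and bounded, and its limit would have to be a fixed point in $(0,c]$, which is excluded by $f(x)>x$ together with the jump $\lim_{x\uparrow c}f(x)>c$; the paper's argument is the same, only stated more tersely. Your explicit treatment of the limit $\ell=c$ and the check that iterates cannot land on the endpoints $0,1$ are harmless refinements of the same idea.
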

\dem
Suppose, for instance, that there is $y\in(0,c)\setminus\co^{-}_f(c)$ such that $f^n(y)\in(0,c)$, $\forall\,n\ge0$. That is, $0<f^{n}(y)<c$ for all $n\ge0$.
As $f|_{(0,c)}$ is an increasing map, we get $f(0)=0<y<f(y)<f^{2}(y)<\cdots<f^{n}(y)\cdots<c$. This implies that $\lim_{n\to\infty}f^{n}(y)$ is a fixed point for $f$, contradicting our hypothesis. Thus, there is some $n>0$ such that $f^{n}(y)\in(c,1)$.
\cqd

\begin{Lemma}\label{AcumulacaoDePer}
If $f:[0,1]\setminus\{c\}\to[0,1]$ is a non-flat $C^{2}$ contracting Lorenz map without periodic attractors, then either $\exists\,\delta>0$ such that $c\in\omega_{f}(x)$, $\forall\,x\in(c-\delta,c+\delta)$ or $$\overline{Per(f)\cap(0,c)}\ni c\in\overline{(c,1)\cap Per(f)}.$$

\end{Lemma}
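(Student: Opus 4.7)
The plan is to argue the contrapositive: assuming the first alternative fails, I will construct periodic points of $f$ accumulating at $c$ from both sides. Suppose then there is a sequence $x_n \to c$ with $c \notin \omega_f(x_n)$ (we may take $x_n \notin \co_f^-(c)$, so that $c \notin \overline{\co_f^+(x_n)}$). For each $n$ let $J_n = (a_n, b_n)$ be the connected component of $[0,1] \setminus \overline{\co_f^+(x_n)}$ containing $c$. The forward orbits of $a_n$ and $b_n$, being contained in $\overline{\co_f^+(x_n)}$, are disjoint from $J_n$, so $J_n$ is a nice interval (the condition involving basins of periodic attractors is vacuous under our hypothesis). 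Since $x_n \in \overline{\co_f^+(x_n)}$ we have $x_n \notin J_n$, which combined with $x_n \to c$ forces either $a_n \to c^-$ (when $x_n < c$) or $b_n \to c^+$ (when $x_n > c$) along suitable subsequences.

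Next I would apply Lemma \ref{LemmaHGFGH54} to each $J_n$: its endpoints are accumulated from inside by periodic points in $\overline{J_n} \cap Per(f)$. When $a_n$ is close to $c$ from below, the periodic points approaching $a_n$ eventually lie in $(a_n, c)$; a diagonal selection then produces a sequence in $Per(f) \cap (0,c)$ converging to $c$ from below, giving the left half of the second alternative. The symmetric argument for $b_n \to c^+$ supplies the right half. If the original sequence $x_n$ has infinitely many terms on each side of $c$, both halves follow at once and the proof concludes.

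The main obstacle is the case when all $x_n$ accumulate at $c$ on only one side, say the left, so that only left-hand periodic points $p_n \to c^-$ are produced immediately. To obtain the right-hand accumulation I would exploit the Lorenz structure $f(c_-) > c$: each $f(p_n)$ lies to the right of $c$, so each orbit $\co_f^+(p_n)$ admits a minimal right-hand point $q_n := \min\{y \in \co_f^+(p_n) : y > c\}$, which is again periodic, and the interval $(L_n, q_n)$ with $L_n := \max\{y \in \co_f^+(p_n) : y < c\} \ge p_n$ is a nice interval containing $c$. Showing $q_n \to c^+$ is the crux: if $q_n$ were bounded away from $c$ from above, the nested nice intervals $(L_n, q_n)$ with $L_n \to c^-$ would converge to a nondegenerate limiting interval $[c, q^*]$ with $q^* > c$, and the first-return analysis of Lemma \ref{Lemma8388881a} and Corollary \ref{Corollary8388881b}, together with the absence of periodic attractors used in the reasoning of Proposition \ref{DOISMAX}, should yield a contradiction. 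This one-sided-to-two-sided propagation step is the principal difficulty in the argument.
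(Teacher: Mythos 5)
Your opening step is sound and is a genuinely different route to the easy half of the lemma: instead of first getting $Per(f)\cap(0,c)\ne\emptyset\ne Per(f)\cap(c,1)$ from Lemmas~\ref{noitenoite} and~\ref{Lemma545g55} and then analysing $a=\sup Per(f)\cap(0,c)$ and $b=\inf Per(f)\cap(c,1)$ as the paper does, you extract nice intervals $J_n$ from the complements of the orbit closures of the witnesses $x_n$ and let Lemma~\ref{LemmaHGFGH54} produce periodic points near their endpoints; this does give accumulation of $Per(f)$ at $c$ on every side on which the witnesses accumulate. (One small unjustified point: the negation of the first alternative only provides $x_n$ with $c\notin\omega_f(x_n)$, and you assert without argument that they may be chosen outside $\co_f^{-}(c)$; if $x_n\in\co_f^{-}(c)$ then, with the paper's conventions, $c\in\overline{\co_f^{+}(x_n)}$ and your $J_n$ is not even defined.)

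The proof is, however, not complete, and the gap is exactly where you place it: the one-sided-to-two-sided propagation. That step is the real content of the lemma --- it is precisely the case $a=c<b$ (mirror of $a<c\le b$) in the paper's proof --- and your sketch contains no argument for it. Invoking Lemma~\ref{Lemma8388881a}, Corollary~\ref{Corollary8388881b} and the reasoning of Proposition~\ref{DOISMAX} names tools but no mechanism: those statements only describe the boundary behaviour of first-return branches and cannot by themselves manufacture periodic points in $(c,c+\eta)$; moreover the intervals $(L_n,q_n)$ come from different periodic orbits and need not be nested, so the ``limiting interval $[c,q^*]$'' is not available as stated. What actually kills the one-sided configuration in the paper is a first-return analysis of the gap itself: with $b=\inf Per(f)\cap(c,1)>c$, the interval $(c,b)$ is not a homterval (Lemmas~\ref{LemmaWI} and~\ref{homtervals}), the orbit of $b$ avoids $(c,b)$, so at the first time $r$ with $c\in f^{r}((c,b))$ the image covers $(c,b)$ and spills over to the left of $c$; one then pulls a left-hand periodic point back through the branch $f^{r}|_{(c,b)}$ to obtain a nice interval $(q,q')$ with $c<q'<b$, and Lemma~\ref{LemmaHGFGH54} plants periodic points inside $(c,q')$, contradicting the definition of $b$. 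In the genuinely two-sided case $a<c<b$ the paper further shows $(a,b)$ is a renormalization interval and re-uses the standing hypothesis (failure of the first alternative) through Lemma~\ref{noitenoite} applied to the first-return map to produce periodic points inside $(a,b)$. Neither ingredient --- the pull-back of a periodic point through a return branch to create a nice interval strictly inside the gap, nor the renewed use of the hypothesis via Lemma~\ref{noitenoite} --- appears in your proposal, so as written the lemma remains unproved in its hardest case.
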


\dem
Suppose that $f$ does not have periodic attractors and suppose also that $\nexists\delta>0$ such that $c\in\omega_{f}(x)$, $\forall\,x\in(c-\delta,c+\delta)$. In this case, by Lemma~\ref{noitenoite}, $Per(f)\cap(0,1)\ne\emptyset$. As $f$ does not have periodic attractors, $f(x)>x$, $\forall x \in{(0,c)}$, $f(x)<x$, $\forall x \in{(c,1)}$ and $\lim_{x \uparrow c}f(x)>c > \lim_{x\downarrow c}f(x)$, and then $\co_{f}^{+}(x)\cap(0,c)\ne\emptyset\ne(c,1)\cap\co_{f}^{+}(x)$, $\forall\,x\in(0,1)\setminus\co^{-}_f(c)$, by Lemma \ref{Lemma545g55}. Thus, $Per(f)\cap(0,c)\ne\emptyset\ne(c,1)\cap Per(f)$.

Let $a=\sup Per(f)\cap(0,c)$ and $b=\inf Per(f)\cap(c,1)$. We know that $0<a\le c\le b<1$. If $a=b$ the proof is done. So suppose that $a\ne b$. We may assume that $0<a<c\le b<1$ (the other case is analogous).

We claim that $\co_{f}^{+}(a_{-})\cap(a,b)=\emptyset=(a,b)\cap\co_{f}^{+}(b_{+})$. Indeed, if there is a minimum $\ell\ge1$ such that $f^{\ell}(a_{-})\in(a,b)$, then $\emptyset\ne f^{\ell}((a-\varepsilon,a)\cap Per(f))\subset(a,b)$, contradicting the definition of $a$ and $b$. With the same reasoning we can show that $\co_{f}^{+}(b_{+})\cap(a,b)=\emptyset$.

Notice that $\exists n>0$ such that $f^{n}((a,c))\cap(a,c)\ne\emptyset$. Indeed, $(a,c)$ can not be  a wandering interval (Lemma~\ref{LemmaWI}) and as $f$ does not have periodic attractors, it follows from the homterval lemma (Lemma~\ref{homtervals}) that $f^{n}((a,c))\ni c$ for some $n\ge1$. Let $\ell$ be the smallest integer bigger than $0$ such that $c \in f^{\ell}((a,c))$. As $\co_{f}^{+}(a)\cap(a,c)=\emptyset$, we get $f^{\ell}((a,c))\supset(a,c)$. Thus, there is a periodic point $p\in[a,c)$ with period $\ell$. By the definition of $a$, it follows that $p=a$.

We claim that $f^{\ell}((a,c))\subset(a,b)$. If not, let $q_{0}\in f^{\ell}((a,c))\cap Per(f)\cap[b,1)$. Let $q=\min \co_{f}^{+}(q_{0})\cap(c,1)$ and $q'=(f^{\ell}|_{(a,c)})^{-1}(q)$. Clearly, $a<q'<c<q$ and $(q',q)$ is a nice interval. Thus, by Lemma~\ref{LemmaHGFGH54}, $Per(f)\cap(q',c)\ne\emptyset$ and this contradicts the definition of $a$.

Notice that $f^{\ell}((a,c))\ni c$, otherwise $f$ would have periodic attractors. As a consequence of this and of the claim above, $b>c$. 

As $b>c$, $(a,b)$ is a nice interval. We already know that $f^{\ell}(a)=a$. Moreover, by the definition of $b$ and Lemma~\ref{LemmaHGFGH54}, $b$ also must be a periodic point. So, let $r=\period(b)$. From the same reasoning of the claim above, we get $f^{r}((c,b))\subset((a,b))$.

Thus, the first return map to $[a,b]$ is conjugated to a contracting Lorenz map $g:[0,1]\setminus\{c_{g}\}\to[0,1]$. As $\nexists\delta>0$ such that $c\in\omega_{f}(x)$, $\forall\,x\in(c-\delta,c+\delta)$, it follows that $\exists\,x\in[0,1]$ such that $c_{g}\notin\omega_{g}(x)$. So, it follows from Lemma~\ref{noitenoite} that $Per(g)\cap(0,1)\ne\emptyset$. As a consequence, $Per(f)\cap(a,b)\ne\emptyset$. This contradicts the definition of $a$ and $b$, proving the lemma.

\cqd

\begin{Lemma}\label{omegaemc}
Let $f:[0,1]\setminus\{c\}\to[0,1]$ be a non-flat $C^{2}$ contracting Lorenz map without periodic attractors and such that $Per(f)\cap (0,1)=\emptyset$. If $x\in (0,1)$ is such that $c \in \omega_f(x)$, then $\overline{\co_f^+(x)\cap(0,c)}\ni c\in \overline{(c,1)\cap\co_f^+(x)}$.
\end{Lemma}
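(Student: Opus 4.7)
My plan is to argue by contradiction: assume without loss of generality that $c\notin\overline{\co_f^+(x)\cap(c,1)}$, i.e.\ $\co_f^+(x)\cap(c,c+\delta)=\emptyset$ for some $\delta>0$, and derive a forbidden combinatorial rigidity for the orbit of the critical value $L:=f(c_-)$. Under the standing hypotheses, the arguments in the proof of Lemma~\ref{AcumulacaoDePer} already show that $f(y)>y$ on $(0,c)$, $f(y)<y$ on $(c,1)$, and $L>c>f(c_+)$ (so Lemma~\ref{Lemma545g55} applies), and the exclusion of super-attractors (which are periodic attractors) guarantees that $f^j(L)$ never equals $c$ for any $j\ge 1$.

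The first main step is to trap the forward orbit of $L$ away from the right half-neighborhood of $c$. Because $c\in\omega_f(x)$ with only left accumulation, a subsequence gives $f^{n_k}(x)\nearrow c$ and hence $f^{n_k+1}(x)\nearrow L$, so $L\in\omega_f(x)$ and $L\ge c+\delta$. Writing $V:=[0,c]\cup[c+\delta,1]$, the orbit of $x$ lies in $V$ and so does $\omega_f(x)$; using forward invariance of $\omega_f(x)$ away from $c$, the entire orbit of $L$ sits inside $V$. Lemma~\ref{noitenoite} applied to $L$ then yields $c\in\omega_f(L)$, and the $V$-confinement forces this accumulation to be from the left only: there are times $k_i\nearrow\infty$ with $L_{k_i}:=f^{k_i}(L)\nearrow c$.

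The contradiction comes from analyzing the first-return map to a left half-neighborhood $I=(c-\eta,c)$ of $c$, with $\eta$ chosen generically so that $c-\eta$ avoids the countable orbit of $L$. Setting $k_1:=\min\{m\ge 0:L_m\in I\}$, for $y\in I$ close enough to $c^-$ the first-return time is exactly $k_1+1$, because each of the finitely many points $L_0,\ldots,L_{k_1-1}$ sits at positive distance from $\overline I$; thus $\cf_I(y)=f^{k_1+1}(y)$, and continuity of $f^{k_1+1}$ at $c_-$ gives $\cf_I(y)\to L_{k_1}$ as $y\to c^-$. Taking $y=L_{k_i}$ for large $i$ yields $L_{k_{i+1}}=\cf_I(L_{k_i})\to L_{k_1}$ on the one hand, and $L_{k_{i+1}}\to c$ (as a tail of $L_{k_j}\to c$) on the other, forcing $L_{k_1}=c$ and contradicting $L_{k_1}\in I$. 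The opposite hypothesis $c\notin\overline{\co_f^+(x)\cap(0,c)}$ is treated by swapping the roles of $c_-$ and $c_+$.

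The subtle point I expect to be the main obstacle is pinning down that the first-return time to $I$ of the points $L_{k_i}$ very close to $c^-$ is \emph{exactly} $k_1+1$ and not some larger, possibly $i$-dependent integer; that is why the genericity of $\eta$ (so that $c-\eta\notin\{L_j\}_{j\ge 0}$, and $L_0,\ldots,L_{k_1-1}$ lie strictly outside $\overline I$) is essential, together with the fact that the orbit of $L$ avoids $c$ altogether, which is ensured by the absence of super-attractors.
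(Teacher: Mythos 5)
Your reduction steps are fine (trapping the orbit of $L=f(c_-)$ in $\omega_f(x)\subset[0,c)\cup[c+\delta,1]$, ruling out $f^j(L)=c$ via super-attractors, and the computation that for $y\in I=(c-\eta,c)$ close enough to $c^-$ the first return time to $I$ is exactly $k_1+1$ with $\cf_{I}(y)\to L_{k_1}$ as $y\to c^-$). The genuine gap is in the final contradiction. You use simultaneously that $L_{k_{i+1}}=\cf_{I}(L_{k_i})$ and that $L_{k_{i+1}}\to c$, but these require incompatible definitions of $(k_i)$: if $(k_i)$ is the subsequence along which $f^{k_i}(L)\nearrow c$, then the next visit of the orbit to $I$ after time $k_i$ occurs at time $k_i+k_1+1$, which has no reason to equal $k_{i+1}$, so the identity $L_{k_{i+1}}=\cf_I(L_{k_i})$ fails; if instead $(k_i)$ enumerates \emph{all} visits to $I$, then the identity holds but $L_{k_i}\to c$ is exactly what you do not know (only some subsequence of visits tends to $c$). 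What your computation actually proves is that the return of any point of $I$ sufficiently close to $c^-$ lands near the fixed point $L_{k_1}$, hence away from $c$ — and this is not contradictory by itself: the orbit of $L$ can perfectly well re-enter arbitrarily small intervals $(c-\epsilon,c)$ through returns of points in the middle part $(c-\eta,c-\epsilon_0]$ of $I$, since branches of the return map away from $c$ can map onto a full one-sided neighborhood of $c$. Ruling out precisely this mechanism is the real content of the lemma, so the argument cannot be closed by the continuity estimate alone. A secondary gap: applying Lemma~\ref{noitenoite} to $L$ requires $L\in(0,1)$, and the case $f(c_-)=1$ (respectively $f(c_+)=0$ in the symmetric case) is not excluded by anything you invoke.

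For comparison, the paper's proof avoids the critical-value orbit entirely: under the contradiction hypothesis it sets $v=\inf \overline{(c,1)\cap\co_f^+(x)}>c$ and shows that no iterate of $J=(c,v)$ can contain $c$ (otherwise either $f^j(J)\supset J$, producing a periodic point in $(0,1)$, or $f^j(v)\in J$, contradicting the definition of $v$ via invariance of $\omega_f(x)$); then $J$ is a homterval, hence by Lemma~\ref{homtervals} (no periodic attractors, $Per(f)\cap(0,1)=\emptyset$) a wandering interval, which contradicts Lemma~\ref{LemmaWI} since no wandering interval can contain an interval of the form $(c,r)$. If you want to salvage your setup, the efficient route is this homterval/wandering-interval dichotomy applied to a one-sided interval at $c$, rather than a quantitative analysis of the first-return map on the left side.
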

\begin{proof}
As $c \in \omega_f(x)$, $x$ is under the hypotheses of Lemma \ref{Lemma545g55}, then $\co_f^+(x)\cap(0,c)\ne\emptyset\ne(c,1)\cap\co_f^+(x)$. Also, $c \in \omega_f(x)$ implies that $c \in \overline{\co_f^+(x)\cap(0,c)}$ or $c\in \overline{(c,1)\cap\co_f^+(x)}$. Suppose one of these do not occur. For instance, suppose $c\not\in \overline{(c,1)\cap\co_f^+(x)}$. Then, defining $v=\inf \overline{(c,1)\cap\co_f^+(x)}$, we have that $J=(c,v)$ is such that $\nexists j$ such that $c\in f^j(J)$, for otherwise, either $J\subset f^j(J)$, that would imply the existence of a periodic repeller, what is in contradiction to the hypothesis, or $f^j(v) \in J$, and as $v \in \omega_f(x)$ and $\omega_f(x)$ is a positively invariant set, this is in contradiction with the definition of $J$.
So, as $\nexists j \in \NN$ such that $c \in f^j(J)$ and we are supposing there are no periodic attractors, Lemma \ref{homtervals} implies that $J$ is a wandering interval. But we know from Lemma \ref{LemmaWI} that $(c,v)$ cannot be a wandering interval, leading to an absurd. 

\end{proof}

\begin{Lemma}[Variational Principle]\label{LemmaVarPric}
Let $f:[0,1]\setminus\{c\}\to[0,1]$ be a non-flat $C^{2}$ contracting Lorenz map without periodic attractors. Suppose that $\nexists\delta>0$ such that $c\in\omega_{f}(x)$, $\forall\,x\in(c-\delta,c+\delta)$.
Given $\varepsilon>0$, there exists a unique periodic orbit minimizing the period of all periodic orbits intersecting $(c-\varepsilon,c)$. Similarly, there exists a unique periodic orbit minimizing the period of all periodic orbits intersecting $(c,c+\varepsilon)$.
\end{Lemma}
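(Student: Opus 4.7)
My plan is to establish existence trivially via a minimization argument on a nonempty set of positive integers, and then prove uniqueness by contradiction using a nice-interval first-return-map analysis together with the no-attractor hypothesis. For existence, Lemma~\ref{AcumulacaoDePer} gives $Per(f)\cap(c-\varepsilon,c)\ne\emptyset$ under the standing hypothesis, so the set of periods of periodic orbits meeting $(c-\varepsilon,c)$ is a nonempty subset of $\NN$ and admits a minimum $n$; the statement for $(c,c+\varepsilon)$ will follow by the symmetric argument.

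For uniqueness, I will argue by contradiction: suppose $\co_f^+(p)\ne\co_f^+(q)$ are distinct periodic orbits of period $n$ both meeting $(c-\varepsilon,c)$. After replacing $p$ and $q$ by the rightmost points of their orbits in $(c-\varepsilon,c)$, I may assume $\co_f^+(p)\cap(p,c)=\co_f^+(q)\cap(q,c)=\emptyset$, and WLOG $p<q<c$. For $\varepsilon$ small enough that $f(p)>c$, I define $b:=\min(\co_f^+(p)\cap(c,1))$, which is periodic of period $n$. The interval $J:=(p,b)$ is then a nice interval containing $c$ with $p,b\in Per(f)$: the identity $\co_f^+(p)\cap J=\emptyset$ follows from the rightmost property and the definition of $b$, and $\co_f^+(b)=\co_f^+(p)$. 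Using Corollary~\ref{Corollary8388881b} and the argument in the proof of Lemma~\ref{LemmaHGFGH54}, the leftmost element $(p,s)\in\cp_J$ has first return time exactly $n$, and $\cf_J|_{(p,s)}=f^n|_{(p,s)}$ extends continuously and monotonically to a map $[p,s]\to[p,b]$ sending $p\mapsto p$ and $s\mapsto b$.

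In the main case $q\in(p,s)$, both $p$ and $q$ are fixed points of the continuous monotone increasing map $f^n$ on $[p,s]$, so by monotonicity $[p,q]$ is $f^n$-invariant and $f^n|_{[p,q]}$ is a self-map. Setting $F:=\{x\in[p,q]:f^n(x)=x\}$, a closed set containing $\{p,q\}$, denseness of $F$ in any subinterval would give $f^n\equiv\text{id}$ there, contradicting the exclusion of intervals of trivial dynamics; hence the components $(\alpha,\beta)$ of $[p,q]\setminus F$ are nonempty open intervals with $\alpha,\beta\in F$, each periodic of $f$-period $n$ by minimality. On each such component, $f^n-\text{id}$ has constant nonzero sign, and monotone iteration of the self-map $f^n|_{[\alpha,\beta]}$ converges to one endpoint, exhibiting it as a one-sided attracting periodic orbit of $f$ and contradicting the no-periodic-attractor hypothesis.

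In the remaining case $s<q<c$, I will pass to the analogous nice interval $J_q:=(q,b_q)$ with $b_q:=\min(\co_f^+(q)\cap(c,1))$ and its leftmost component $(q,s_q)\in\cp_{J_q}$, again of return time $n$. Since $q$ is the overall rightmost period-$n$ point in $(c-\varepsilon,c)$, the subinterval $(q,c)\supset(q,s_q)$ contains no period-$n$ point, so $f^n|_{(q,s_q)}$ has no fixed point in its interior. The boundary identifications $f^n(q)=q$ and $f^n(s_q^-)=b_q>s_q$ (or $f^n(c_-)=c$ if $s_q=c$, which would produce a super-attractor forbidden by the hypothesis) together with the no-attractor condition at $q$ should then force the contradiction after a more delicate analysis, since $[q,s_q]$ is no longer $f^n$-invariant and $f^n$-iterates may escape the leftmost component. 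This last point is the principal obstacle of the proof, and I expect to resolve it by tracking the full $\cf_{J_q}$-dynamics (an orbit that escapes $(q,s_q)$ must still be a $\cf_{J_q}$-orbit, so its eventual convergence to any $\cf_{J_q}$-periodic orbit produces a forbidden attracting periodic orbit of $f$) or, alternatively, by iterating the construction inside a smaller renormalization-type interval. The statement for $(c,c+\varepsilon)$ will follow by the completely symmetric argument.
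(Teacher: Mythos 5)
Your existence argument and your ``main case'' ($q$ in the leftmost return branch) are fine and essentially reproduce what the paper does when $f^n$ is monotone between the two orbit points: invariance of the interval bounded by two $f^n$-fixed points plus the no-attractor and no-trivial-dynamics hypotheses force a contradiction. But the ``remaining case'' that you explicitly leave open is not a technicality --- it is the heart of the lemma, and your proposed repairs do not close it. Tracking the full first-return dynamics on $J_q$ does not help, because an orbit of the return map that escapes the leftmost branch has no reason whatsoever to converge to a periodic orbit of $\cf_{J_q}$ (generic return-map orbits of these maps are non-convergent), so no attracting periodic orbit of $f$ is produced; and passing to a ``smaller renormalization-type interval'' is circular here, since no renormalization structure has been established. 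Note also that nowhere in your uniqueness argument is the minimality of $n$ used in an essential way --- that is the missing ingredient.

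The paper resolves exactly this case without any return-map machinery. With your labels $p<q<c$ (each the rightmost point of its orbit in $(c-\varepsilon,c)$), either $f^n$ is monotone on $(p,q)$ --- then $f^n([p,q])=[p,q]$ and your main-case argument applies regardless of whether $q<s$ --- or there is $0<j<n$ such that $f^j$ is monotone on $(p,q)$ and $c\in f^j\big((p,q)\big)$. In the latter case $f^j(p)<c$ and $\co_f^+(p)\cap(p,c)=\emptyset$ with $f^j(p)\ne p$ (as $j<n=\period(p)$) force $f^j(p)<p$, while $f^j(q)>c>q$; hence $f^j\big((p,q)\big)\supset(p,q)$ and the intermediate value theorem gives a fixed point of $f^j$ in $[p,q]\subset(c-\varepsilon,c)$, i.e.\ a periodic point of period at most $j<n$ there, contradicting the minimality of $n$. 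Two further slips worth fixing: you may not ``take $\varepsilon$ small enough that $f(p)>c$'' --- $\varepsilon$ is given, and shrinking it changes which orbits compete for the minimal period (in any case $\co_f^+(p)\cap(c,1)\ne\emptyset$ holds automatically, so $b$ is well defined without this); and the claim that the leftmost branch extends with $s\mapsto b$ uses Lemma~\ref{Lemma8388881a}(3) and therefore requires $s\ne c$, which you do not verify (when $s=c$ the branch need not be full).
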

\dem
As $Per(f)\cap(c-\varepsilon,c)\ne\emptyset$ (Lemma~\ref{AcumulacaoDePer}), let
$$
n=\min\{\period(x)\,;x\in Per(f)\cap(c-\varepsilon,c)\}
$$
and suppose that there are $p_0,q_0\in Per_n(f)\cap(c-\varepsilon,c)$ such that $\co_f^+(p_0)\ne\co_f^+(q_0)$. Let $p=\max\{\co_f^+(p_0)\cap(c-\varepsilon,c)$ and $q=\max\{\co_f^+(q_0)\cap(c-\varepsilon,c)$. Thus, $\co_f^+(p)\cap(p,c)=\emptyset=\co_f^+(q)\cap(q,c)$. We may assume that $q<p$.

%%%%%%%%%%%%%%%%%%%%%%%%%%%%%%%%%%%%%%%%%%%%%%
\begin{figure}[H]
\begin{center}\label{Fig47488}
\includegraphics[scale=.25]{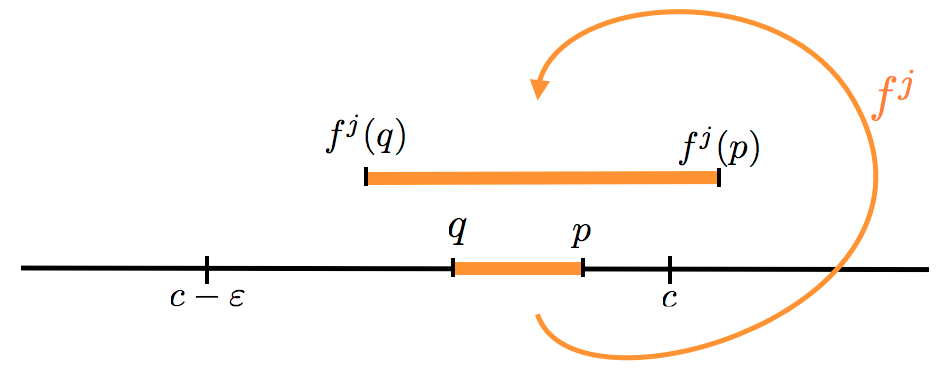}\\
\caption{}\label{Fig47488}
\end{center}
\end{figure}
%%%%%%%%%%%%%%%%%%%%%%%%%%%%%%%%%%%%%%%%%%%%%%

Notice that $f^n$ can not be monotone on $(q,p)$. Indeed, otherwise, if $f^n$ is monotone on $(q,p)$, then $f^n([q,p])=[q,p]$. As  $f^n$ can not be the identity on $[q,p]$,  $f^n([q,p])=[q,p]$ would imply the existence of an attracting fixed point for $f^n$ on $[q,p]$. But this is impossible, as we are assuming that $f$ does not have a finite attractor.

As $f^n$ is not monotone on $(q,p)$, there is $0<j<n$ such that $f^j$ is monotone on $(q,p)$ and $c\in f^j((q,p))$. Thus, $f^j(q)<c<f^j(p)$. Moreover, $f^j(q)<q$ (because $\co_f^+(q)\cap(q,c)=\emptyset$ and $j<n$). Thus, $f^j((q,p))\supset(q,p)$ (see Figure~\ref{Fig47488}) and this implies in the existence of a periodic point $a\in[q,p]\subset(c-\varepsilon,c)$ with period $j<n$, contradicting the minimality of $n$.

The proof for the case $(c,c+\varepsilon)$ is analogous.

\cqd

\section{Renormalization and Cherry maps}
\label{renormecherry}

\begin{Definition}[Left and right renormalizations]
Let $f$ be a contracting Lorenz map, $J=(a,b)\in\cn$ and let $F:J^{*}\to J$ be the map of first return to $J$. We say that $f$ is renormalizable by the left side with respect to $J$ (or, for short, $J$-left-renormalizable) if $(a,c)\subset J^{*}$ (this means that $F|_{(a,c)}=f^{n}|_{(a,c)}$ for some $n\ge1$).
Analogously, we define $f$ to be renormalizable by the right side with respect to $J$ (or, for short, $J$-right-renormalizable) if $(c,b)\subset J^{*}$.
\end{Definition}

If the first return map to an interval $\overline{J}\ne[0,1]$, $F$, is conjugated to a Lorenz map,  $f$ is called {\em renormalizable} with respect to $J$. The renormalization of $f$ (with respect to $J$) is the map $g:[0,1]\setminus \{\frac{c-a}{b-a}\}\to[0,1]$ given by $$g(x)=A^{-1}\circ F \circ A(x)$$ where $A(x)=(b-a) x + a$.

Notice that $f$ is renormalizable with respect to $J$ if and only if $J\in\cn_{per}$ and $f$ is renormalizable by both sides (left and right) with respect to $J$. Moreover, using Corollary~\ref{Corollary8388881b}, it is easy to check the following result:

\begin{Lemma}\label{Lemma09090863}
Let $J=(a,b)\in\cn$. The following statements are equivalent:
\begin{enumerate}
\item $f$ is renormalizable with respect to $J$.
\item $(\,\overline{J}\,)\,^{*}=[a,c)\cup(c,b]$.
\item $c\in\partial I$, $\forall\,I\in (\overline J)^{*}$.
\item $a$ and $b$ are periodic points, $$f^{\period(a)}([a,c))\subset[a,b]\supset f^{\period(b)}((c,b]).$$
\end{enumerate}
\end{Lemma}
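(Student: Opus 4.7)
The plan is to prove the chain $(1)\Rightarrow(2)\Rightarrow(3)\Rightarrow(4)\Rightarrow(1)$, with the first two implications essentially formal. For $(1)\Rightarrow(2)$, the fact that $g=A^{-1}\circ F\circ A$ is a Lorenz map (hence defined on all of $[0,1]$ minus its critical point) forces $F$ to be defined on all of $[a,c)\cup(c,b]$, and since $c\notin(\overline J)^{*}$ we get equality. For $(2)\Rightarrow(3)$, the set $[a,c)\cup(c,b]$ has exactly two connected components, each containing $c$ in its closure.

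For $(3)\Rightarrow(4)$, the key step is to pin down the structure of $\cp_{\overline J}$. By (3) there are at most two components, necessarily of the form $(p,c)$ and $(c,q)$ for some $p\in[a,c)$, $q\in(c,b]$. I would show $p=a$ by contradiction: if $p>a$, Lemma~\ref{LemmaHGFGH54} produces a sequence of periodic points $a_n\searrow a$ in $\overline J\cap Per(f)$; each lies in $(\overline J)^{*}$ since a periodic point returns to itself, so for large $n$ the component of $(\overline J)^{*}$ containing $a_n$ is contained in $[a,p)$ and cannot have $c$ on its boundary, contradicting (3). Hence $p=a$, and Corollary~\ref{Corollary8388881b} implies $a\in Per(f)$. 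Applying Lemma~\ref{Lemma8388881a}(1) to the component $(a,c)$ with return time $n:=R|_{(a,c)}$ gives $\cf_J(a_+)=a$, hence $f^n(a)=a$; minimality of $\period(a)$ together with continuity of $f^n$ near $a$ then forces $n=\period(a)$, so $f^{\period(a)}([a,c))\subset[a,b]$. The symmetric argument on the right gives $f^{\period(b)}((c,b])\subset[a,b]$.

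For $(4)\Rightarrow(1)$, the hypothesis guarantees that every point of $[a,c)$ returns to $[a,b]$ within $\period(a)$ steps, so $(a,c)$ is contained in $J^{*}$ and, being connected and unable to cross $c$, forms a single component; the same minimality argument as above shows the return time is exactly $\period(a)$, giving $F|_{(a,c)}=f^{\period(a)}|_{(a,c)}$, and analogously $F|_{(c,b)}=f^{\period(b)}|_{(c,b)}$. Since $J$ is nice, the intermediate iterates $f^{j}((a,c))$ for $0<j<\period(a)$ avoid $J$ and therefore avoid $c$, so $F$ is a composition of monotone $C^2$ pieces of $f$ and is itself monotone increasing and $C^2$ on each branch. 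Together with $F(a)=a$, $F(b)=b$, and the contracting behavior at $c_{\pm}$ inherited from $\lim_{x\to c}f'(x)=0$, the conjugate $g=A^{-1}\circ F\circ A$ satisfies the axioms of a Lorenz map.

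The main obstacle is the implication $(3)\Rightarrow(4)$: excluding the a priori possibility that the components of $(\overline J)^{*}$ accumulate on $a$ (or $b$) without extending all the way to it, leaving gaps of non-returning points, is what actually requires work. The decisive input ruling this out is Lemma~\ref{LemmaHGFGH54}, which produces periodic points accumulating on the boundary of $J$. Once this is in hand, the rest follows mechanically from the component structure described by Lemma~\ref{Lemma8388881a} and Corollary~\ref{Corollary8388881b}, which is what the authors have in mind when they remark that the result follows easily from the corollary.
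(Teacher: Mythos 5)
Your argument is correct and runs along exactly the lines the paper intends: the paper omits the proof, remarking only that the lemma is easy to check using Corollary~\ref{Corollary8388881b}, and your cycle $(1)\Rightarrow(2)\Rightarrow(3)\Rightarrow(4)\Rightarrow(1)$ fills this in with the paper's own tools (Lemma~\ref{Lemma8388881a}, Corollary~\ref{Corollary8388881b}, and Lemma~\ref{LemmaHGFGH54} to rule out a gap of non-returning points near $\partial J$, which is indeed the only delicate point). The spots where you compress are at the same level of rigor the paper itself adopts (constancy of the first return time on components of the return domain, and passing between $J^{*}$ and $(\,\overline{J}\,)^{*}$), so I see no genuine gap.
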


The interval involved in a (left/right) renormalization is called an interval of  (left/right) renormalization. A map $f$ is {\em non-renormalizable} if it does not admit any interval of renormalization.

In what follows, given a renormalization interval  $J $, we will refer to some concepts that were previously introduced. Namely, of its renormalization cycle $U_J$, the nice trapping region $K_{J}$ associated to J, and gaps of sets $\Lambda_{J}$ ($\Lambda_{J}$ also already defined, being the set of points whose orbits never reach an open set  $J \ni c$). These definitions were given before the statement of Theorem \ref{SOLENOIDETH} in Section \ref{mainresults}.

\begin{Lemma}\label{Corollary545g55}  Let $f:[0,1]\setminus\{c\}\to[0,1]$ be a $C^{2}$ contracting Lorenz map without periodic attractors.
For any given $J=(a,b)$ renormalization interval of $f$, we have that  $$\co^{+}_{f}(x)\cap(a,c)\ne\emptyset\ne\co^{+}_{f}(x)\cap(c,b)\hspace{0.5cm}\forall\,x\in J\setminus\co^{-}_f(c).$$
Therefore, the positive orbit $\co^{+}_{f}(x)$ of any $x\in J\setminus\co^{-}_f(c)$ intersects each connected component of the renormalization cycle $U_J$ (and also each connected component of the nice trapping region $K_{J}$).
\end{Lemma}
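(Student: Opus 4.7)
The plan is to argue by contradiction: assume there is $x\in J\setminus\co_f^-(c)$ with, say, $\co_f^+(x)\cap(c,b)=\emptyset$ (the opposite case is symmetric). Since $x\in J\setminus\{c\}$ and $x\in\co_f^+(x)$, we must have $x\in(a,c)$. Because $J$ is a renormalization interval, Lemma~\ref{Lemma09090863} gives that $(a,c)$ is a single connected component of $J^*$ with constant first-return time $\period(a)$, so $F|_{(a,c)}=f^{\period(a)}|_{(a,c)}$. The $f$-orbit of $x$ returns to $J$ and every return avoids $c$ and $(c,b)$, hence $F^k(x)\in(a,c)$ for all $k\ge 0$. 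Since $F|_{(a,c)}$ is continuous and increasing with $F(a)=a$, the sequence $F^k(x)$ is monotone and therefore converges to some $\ell\in[a,c]$.

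I would then analyze the limit $\ell$. If $\ell\in[a,c)$, continuity gives $F(\ell)=\ell$, so $\ell$ is a periodic point of $f$; the monotone one-sided convergence of $F^k(x)$ to $\ell$, combined with the monotonicity of $F$, forces a whole one-sided neighborhood of $\ell$ inside $(a,c)$ to be attracted to $\co_f^+(\ell)$, which makes $\co_f^+(\ell)$ an attracting periodic orbit of $f$, contradicting the hypothesis. If $\ell=c$, continuity yields $F(c^-)=c$, so that $f^{\period(a)}(c^-)=c$ and the critical orbit is pre-periodic. A further monotonicity analysis of $F|_{(a,c)}$, whose graph now joins $(a,a)$ to $(c,c)$, produces the dichotomy: either $F$ has a fixed point $\ell'\in(a,c)$ attracting a one-sided neighborhood (again an attracting periodic orbit), or $F>\mathrm{id}$ on all of $(a,c)$, in which case every $z\in(a,c)$ satisfies $F^k(z)\to c$ and hence $\omega_f(z)=\Lambda$, where $\Lambda=\{c,f(c^-),\ldots,f^{\period(a)-1}(c^-)\}$; this $\Lambda$ is then a super-attractor of $f$ in the sense of Section~\ref{mainresults}, again contradicting the absence of periodic attractors.

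The delicate step is the case $\ell=c$: one must verify that $\Lambda$ genuinely satisfies the super-attractor definition, checking both the orbit relations $f(p_i)=p_{i+1}$, $f(p_n)=c$, $\lim_{\varepsilon\downarrow 0}f(c-\varepsilon)=p_1$, and the nonempty-interior-of-basin condition $\interior(\{z\,;\,\omega_f(z)=\Lambda\})\ne\emptyset$, rather than merely exhibiting a single orbit with $\omega$-limit equal to $\Lambda$. Once both $\co_f^+(x)\cap(a,c)\ne\emptyset$ and $\co_f^+(x)\cap(c,b)\ne\emptyset$ are established, the claim that $\co_f^+(x)$ intersects every connected component of $U_J$, and hence every component of $K_J$ (since each component of $K_J$ contains exactly one interval of the renormalization cycle), follows at once: a visit of $\co_f^+(x)$ to $(a,c)$ at time $m$ forces visits to each $f^i((a,c))$ at times $m+i$ for $0\le i\le \period(a)$, because the orbit cannot return to $J$ before time $m+\period(a)$; the symmetric statement for $(c,b)$ completes the argument.
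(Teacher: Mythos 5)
Your overall route is essentially the paper's one run by hand: the paper first proves that $f^{\period(a)}(y)>y$ for every $y\in(a,c)$ (and $f^{\period(b)}<\mathrm{id}$ on $(c,b)$, plus $f^{\period(a)}(c_-)>c>f^{\period(b)}(c_+)$ from the absence of super-attractors) and then quotes Lemma~\ref{Lemma545g55} for the renormalized map, whose proof is exactly your monotone-orbit argument. The genuine gap is in your case $\ell\in[a,c)$. Your contradiction rests on the claim that the monotone convergence $F^k(x)\to\ell$, together with monotonicity of $F$, forces a one-sided neighborhood of $\ell$ into the basin of $\co_f^+(\ell)$. This is correct only when the sequence $F^k(x)$ is strictly monotone. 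Since $F|_{(a,c)}$ is injective, the degenerate alternative is precisely $F(x)=x$: then $x$ is a periodic point of $f$ lying in $(a,c)$ whose $f$-orbit meets $J$ only at $x$, so $\co_f^+(x)\cap(c,b)=\emptyset$, i.e.\ $x$ is exactly a point for which the lemma must be proved; nothing in your setup excludes it, the sequence is constant, no neighborhood attraction follows (a priori $x$ could be a repelling fixed point of $F$, whose existence does not by itself violate ``no periodic attractor''), and your argument produces no contradiction. Ruling out this configuration is the heart of the lemma and needs the extra ingredient the paper uses: $f^{\period(a)}(a)=a$, $f^{\period(a)}(x)=x$, $f^{\period(a)}$ is increasing on $[a,x]$ and, by the no-interval-of-trivial-dynamics clause in the definition of a contracting Lorenz map, is not the identity there; hence some gap of its fixed-point set in $[a,x]$ has a one-sidedly attracting endpoint, which is an attracting periodic orbit (or, if the endpoint is $c$, a super-attractor), contradiction. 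Equivalently, establish $f^{\period(a)}>\mathrm{id}$ on $(a,c)$ \emph{before} starting the orbit analysis, as the paper does.

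Two smaller remarks. In your case $\ell=c$ the dichotomy ``either $F$ has a one-sidedly attracting fixed point in $(a,c)$ or $F>\mathrm{id}$ on all of $(a,c)$'' is not exhaustive as stated: $F$ may have interior fixed points while the attracting endpoint of a gap of $\{F=\mathrm{id}\}$ is $a$ or $c$; this is harmless, since then $\co_f^+(a)$ is an attracting periodic orbit or one gets the super-attractor at $c$, but the statement should be phrased that way (and you rightly flag that the super-attractor definition, including the basin condition, must be checked). Also note that $F(c^-)=c$ in that case deserves the one-line justification that $F(c^-)>c$ would push $F^{k+1}(x)$ into $(c,b)$ for large $k$, contrary to your standing assumption. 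The final step, from visiting both $(a,c)$ and $(c,b)$ to visiting every component of $U_J$ and of $K_J$, is fine.
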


\dem
Let $\ell=\period(a)$ and $r=\period(b)$. As $f$ has no periodic attractors, it doesn't have any super-attractor, then $\lim_{x \uparrow c}f^{\ell}(x)>c > \lim_{x\downarrow c}f^{r}(x)$. 
If there is $x\in(a,c)$ such that $f^{\ell}(x)\le x$, then $f^{\ell}|_{[a,x]}$ will have an attracting fixed point, as $f^{\ell}|_{[a,x]}$ is not the identity, but this contradicts the hypothesis. The same reasoning can be done for $f^{r}|_{(c,b)}$, and therefore applying Lemma \ref{Lemma545g55} to the renormalization of $f$ with respect to $J$, we conclude the proof. 
\cqd

We say that two open intervals $I_{0}$ and $I_{1}$ are linked if $\partial I_{0}\cap I_{1}\ne\emptyset\ne I_{0}\cap\partial I_{1}$.

\begin{Lemma}\label{renormalinks} Let $f:[0,1]\setminus\{c\}\to[0,1]$ be a $C^{2}$ non-flat contracting Lorenz map without periodic attractors. Then, two renormalization intervals of $f$ can never be linked. Moreover, if $J_{0}$ and $J_{1}$ are two renormalization intervals and $J_{0}\ne J_{1}$, then  either $\overline{J_{0}}\subset J_{1}$ or $\overline{J_{1}}\subset J_{0}$. In particular, $\partial J_{0}\cap\partial J_{1}=\emptyset$.

\end{Lemma}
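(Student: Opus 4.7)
The plan is to combine two properties that every renormalization interval enjoys. First, niceness: for any renormalization interval $J$ one has $\co_f^+(\partial J)\cap J=\emptyset$. Second, Corollary~\ref{Corollary545g55}: for every $x\in J\setminus\co_f^-(c)$ the forward orbit $\co_f^+(x)$ meets \emph{both} connected components of $J\setminus\{c\}$. Since every renormalization interval contains $c$, any two of them intersect; hence for distinct renormalization intervals $J_0$ and $J_1$, the only combinatorial options are (a) they are linked, or (b) one is properly contained in the other. I will rule out (a) and then upgrade (b) to closure-containment.

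To rule out linking, write $J_0=(a_0,b_0)$ and $J_1=(a_1,b_1)$, and, relabelling if needed, assume $a_0<a_1<c<b_0<b_1$, so that $a_1\in J_0$ and $b_0\in J_1$. By Lemma~\ref{Lemma09090863}, $a_1$ is a periodic point. Because $f$ has no periodic attractor it also has no super-attractor, so no periodic orbit of $f$ passes through $c$; in particular $a_1\notin\co_f^-(c)$. Applying Corollary~\ref{Corollary545g55} to $J_0$ at $x=a_1$ gives some $k\ge 1$ with $f^k(a_1)\in(c,b_0)$. But $(c,b_0)\subset(a_1,b_1)=J_1$, contradicting niceness of $J_1$, which forces $\co_f^+(a_1)\cap J_1=\emptyset$.

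Now suppose $J_0\subsetneq J_1$ and, for contradiction, that they share an endpoint; say $a_0=a_1$ (the case $b_0=b_1$ is symmetric). Then $c<b_0<b_1$, so $b_0\in J_1$; $b_0$ is periodic by Lemma~\ref{Lemma09090863}, hence not in $\co_f^-(c)$ by the same super-attractor argument. Applying Corollary~\ref{Corollary545g55} to $J_1$ at $x=b_0$ yields an iterate $f^k(b_0)\in(a_1,c)=(a_0,c)\subset J_0$, contradicting niceness of $J_0$. Thus $\partial J_0\cap\partial J_1=\emptyset$ and $\overline{J_0}\subset J_1$. The only mild subtlety is the recurrent step asserting that periodic endpoints of renormalization intervals are not in $\co_f^-(c)$; this rests on the absence of periodic attractors (hence of super-attractors), which forbids any periodic orbit through the critical point. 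Once that is in hand, the proof is a direct clash between niceness and Corollary~\ref{Corollary545g55}.
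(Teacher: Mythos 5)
Your proof is correct and follows essentially the same route as the paper: rule out linking by applying Lemma~\ref{Corollary545g55} to the interval containing the other's endpoint and contradicting niceness, then eliminate a shared endpoint by the mirror-image of the same clash (your aside that periodic endpoints avoid $\co_f^-(c)$ is a harmless extra justification). No gaps to report.
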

\dem

Write $J_0=(a_0,b_0)$ and $J_1=(a_1,b_1)$. First note that $J_{0}$ and $J_{1}$ can not be linked. Indeed, if they were linked, we would either have $a_{0}<a_{1}<c<b_{0}<b_{1}$ or $a_{1}<a_{0}<c<b_{1}<b_{0}$. We may suppose that $a_{0}<a_{1}<c<b_{0}<b_{1}$. In this case, $a_{1}\in J_{0}$ and by Lemma~\ref{Corollary545g55}.
$\emptyset\ne\co_{f}^{+}(a_{1})\cap(c,b_{0})\subset\co_{f}^{+}(a_{1})\cap(a_{1},b_{1})=\co_{f}^{+}(a_{1})\cap J_{1}$ contradicting the fact that $J_{1}$ is a nice interval.

As $J_{0}\cap J_{1}\ne\emptyset$ (because both contains the critical point) and as $J_{0}$ and $J_{1}$ are not linked, it follows that either $J_{0}\supset J_{1}$ or $J_{0}\subset J_{1}$. We may suppose that $J_{0}\supset J_{1}$. In this case, as $J_{0}\ne J_{1}$ we have three possibilities: either $a_{0}< a_{1}<c<b_{1}= b_{0}$ or $a_{0}= a_{1}<c<b_{1}< b_{0}$ or $J_{0}\supset\overline{J_{1}}$. If $a_{0}< a_{1}<c<b_{1}= b_{0}$, we can use again Lemma~\ref{Corollary545g55} to get  $\co_{f}^{+}(a_{1})\cap J_{1}\ne\emptyset$. On the other hand, if $a_{0}= a_{1}<c<b_{1}< b_{0}$, the same  Lemma~\ref{Corollary545g55} implies that $\co_{f}^{+}(b_{1})\cap J_{1}\ne\emptyset$. In both cases we get a contradiction to the fact that $J_{1}$ is a nice interval. Thus, the remaining possibility is the only valid one.

\cqd

A periodic attractor $\Lambda$ is called {\em essential} if its local basin contains $c^{-}$ or $c^{+}$. Precisely, if $\exists\,p\in\Lambda$ such that $(p,c)$ or $(c,p)$ is contained in $\beta(\Lambda)=\{x\,;\,\omega_{f}(x)\subset\Lambda\}$ (the basin of $\Lambda$). If a periodic attractor in not essential, it is called {\em inessential}. Notice that if $f$ is $C^{3}$ and has negative Schwarzian derivative, then, by Singer's Theorem, $f$ does not admit inessential periodic attractors.

\begin{Proposition}
\label{Lemma1110863}Suppose that $f:[0,1]\setminus\{c\}\to[0,1]$ is a $C^{2}$ non-flat contracting Lorenz map that does not admit inessential periodic attractors. 
If $J_{n}$ is an infinite sequence of renormalization intervals with $J_{n}\supsetneqq J_{n+1}$, then $\bigcap_{n}J_{n}=\{c\}$.
\end{Proposition}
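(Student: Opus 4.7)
The plan is to set $a := \lim_{n} a_n$ and $b := \lim_{n} b_n$ where $J_n = (a_n, b_n)$. By Lemma~\ref{renormalinks}, the intervals are strictly nested with $\overline{J_{n+1}} \subset J_n$, so $a_n \uparrow a \le c \le b \downarrow b_n$. To prove $\bigcap_n J_n = \{c\}$ I rule out $a < c$ (the case $b > c$ being symmetric); assume $a < c$ and seek a contradiction by showing that $(a, c)$ would be a homterval, which is incompatible with the dynamics near $c$.

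First I argue that $\ell_n := \period(a_n) \to \infty$. Otherwise, pass to a subsequence with $\ell_n = N$ constant; the distinct periodic points $\{a_n\}$ then lie in $\fix(f^N)$ and accumulate at $a$. On each consecutive interval $[a_n, a_{n+1}]$, $f^N$ is $C^2$, strictly monotone, and pins both endpoints; by the absence of intervals of trivial dynamics, $f^N$ is not the identity on $[a_n, a_{n+1}]$. A standard monotone-interval argument then produces a one-sided attracting periodic point of $f$ inside $[a_n, a_{n+1}]$. Since this point lies bounded away from $c$, the resulting periodic attractor is inessential, contradicting the hypothesis.

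With $\ell_n \to \infty$, I check that $(a, c)$ is a homterval. By the renormalization property, $f^{\ell_n}|_{(a_n, c)}$ is a homeomorphism into $J_n$, and the first-return property yields $f^j((a_n, c)) \cap J_n = \emptyset$ for $0 < j < \ell_n$. Since $c \in J_n$ and $(a, c) \subset (a_n, c)$, this forces $c \notin f^j((a, c))$ for $0 < j < \ell_n$; letting $n \to \infty$ extends this to every $j \ge 1$, so $(a, c)$ is a homterval. As $(a, c)$ has the form $(c - r, c)$ with $r > 0$, Lemma~\ref{LemmaWI} forbids it from being a wandering interval. Hence by the Homterval Lemma~\ref{homtervals} one has $(a, c) \subset \BB_0(f) \cup \co_f^{-}(\per(f))$; and since essential periodic attractors are incompatible with an infinite nested sequence of renormalizations (their finite orbits would have to remain trapped inside $\bigcap_n J_n$ while simultaneously the critical orbit structure driving deep renormalization prevents this), $\BB_0(f) \cap (a, c) = \emptyset$. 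So every $x \in (a, c)$ is eventually periodic, and $\omega_f(x)$ is a finite periodic orbit not containing $c$.

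The contradiction is now immediate from Lemma~\ref{Remark98671oxe}: the residual set $\{x : c \in \omega_f(x)\}$ meets the open interval $(a, c)$, producing some $x \in (a, c)$ with $c \in \omega_f(x)$---incompatible with $\omega_f(x)$ being a finite periodic orbit disjoint from $c$. Therefore $a = c$; by the symmetric argument $b = c$, so $\bigcap_n J_n = \{c\}$. The main obstacle is that Lemmas~\ref{Remark98671oxe} and~\ref{homtervals} are stated under the assumption of no periodic attractors, whereas our hypothesis only excludes \emph{inessential} ones; a careful compatibility argument between essential periodic attractors and the infinite-renormalization structure is required to apply these lemmas in the present setting.
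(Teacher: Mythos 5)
Your skeleton is essentially the paper's: assume $a<c$, show $(a,c)$ is a homterval using the divergence of the return times of the boundary orbits, exclude the wandering alternative by Lemma~\ref{LemmaWI}, and feed $(a,c)$ into the Homterval Lemma~\ref{homtervals}; your attempt to actually prove $\period(a_n)\to\infty$ even addresses a point the paper leaves implicit. But the argument has genuine gaps exactly where the hypothesis ``no inessential periodic attractors'' has to do its work. (i) In the bounded-period step, the inference ``the attracting point lies bounded away from $c$, hence the attractor is inessential'' is invalid: essentiality asks whether $(p,c)$ or $(c,p)$ is contained in $\beta(\Lambda)$ for some point $p$ of the attractor, and an attractor whose orbit stays far from $c$ can perfectly well be essential. (ii) Your exclusion of $\BB_0(f)\cap(a,c)\ne\emptyset$ rests on the parenthetical claim that an essential attractor's finite orbit ``would have to remain trapped inside $\bigcap_n J_n$'', which is not an argument and is not even the right picture (such an orbit cannot meet $\bigcap_nJ_n$ at all, since points there never return). (iii) Your final contradiction invokes Lemma~\ref{Remark98671oxe}, whose hypothesis is that $f$ has \emph{no} periodic attractor whatsoever --- precisely what you cannot assume here; you flag this yourself and leave the ``compatibility argument'' unproved, but that argument is the crux of the proposition, not a detail. (By contrast, your parallel worry about Lemma~\ref{homtervals} is unfounded: it carries no such hypothesis.)

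The missing idea, which is how the paper closes all three holes at once, is to combine the definition of essentiality with the accumulation of the boundary periodic points: by Lemma~\ref{renormalinks} the endpoints $a_m\nearrow a$ and $b_m\searrow b$ are infinitely many pairwise distinct periodic points. Suppose some periodic attractor $\Lambda$ has basin meeting $(a,c)$ (this covers both the alternative $\BB_0(f)$ in the Homterval Lemma and the attractor produced in your bounded-period step). Since inessential attractors are excluded, there is $q\in\Lambda$ with $(q,c)\subset\beta(\Lambda)$ or $(c,q)\subset\beta(\Lambda)$. Now $q$ cannot lie in $[a,b]$: indeed $[a,b]\subset J_n$ for every $n$, every point of $J_n\setminus\{c\}$ has first return time to $J_n$ equal to $\ell_n$ or $r_n$, and these tend to infinity, whereas a periodic $q\in J_n$ would return to $J_n$ by time $\period(q)$. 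Hence the one-sided interval $(q,c)$ or $(c,q)$ contains infinitely many of the $a_m$ or of the $b_m$; each such periodic point, lying in $\beta(\Lambda)$, would have its entire orbit inside the finite set $\Lambda$, which is impossible for infinitely many distinct points. This single argument repairs (i), replaces (ii), and eliminates any need for Lemma~\ref{Remark98671oxe}; the remaining alternative $(a,c)\subset\co_{f}^{-}(\per(f))$ is then discarded as in the paper, since $\co_{f}^{-}(\per(f))$ contains no interval. Without this essentiality argument your proof does not close.
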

\dem

Let $J=\bigcap_{n}J_{n}$. Write $(a,b)=\interior J$. Suppose for example that $a\ne c$ (the case $b\ne c$ is analogous). Given $x\in(a,b)$, let $R(x)=\min\{j>0\,;\,f^{j}(x)\in(a,b)\}$. As $J_n=(a_n,b_n)$ are renormalization intervals, then $(a_n,c)$ only returns to  $J_n$ at $\period(a_n)$ (and $(c,b_n)$ at the period of $b_n$), that is, the first return is at the time $\period(a_n)$. So, as $R(x)\ge\min\{\period(a_n),\period(b_{n})\}\to\infty$. Thus, $R(x)=\infty$, $\forall\,x\in(a,b)$. 
As $f^{j}((a,c))\cap(a,b)=\emptyset$, $\forall\,j>0$ (because $R\equiv\infty$), then $f^{j}|_{(a,c)}$ is a homeorphism $\forall\,j$. By Lemma~\ref{LemmaWI}, $(a,c)$ is not a wandering interval. As $\co_f^-(Per(f))$ does not contain intervals, it follows from Lemma~\ref{homtervals} that there is a periodic attractor $\Lambda$ with $(a,c)\cap\beta(\Lambda)\ne\emptyset$. As $f$ does not have inessential  periodic attractors, there is some $q\in\Lambda$ such that $(q,c)$ or $(c,q)\subset\beta(\Lambda)$. 
As $q$ is periodic, $q\notin[a,b]$. Thus, $q<a_{n}<c$ for some $n$ or $c<b_{n}<q$. In any case, we get a contradiction for nor $a_n$ neither $b_n$ can be in the basin of a periodic attractor.

\cqd

\begin{Corollary}\label{Corolary989982}
Suppose that $f:[0,1]\setminus\{c\}\to[0,1]$ is a $C^{2}$ non-flat contracting Lorenz map that does not admit inessential periodic attractors. If there exists $p\in(0,1)$ such that $\alpha_{f}(p)\ni c\notin\overline{\co_{f}^{+}(p)}$, then $f$ is not an infinitely renormalizable map.
\end{Corollary}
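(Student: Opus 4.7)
The plan is to argue by contradiction: assume $f$ is infinitely renormalizable and deduce that $c\in\overline{\co_f^+(p)}$, which contradicts the hypothesis. By Lemma~\ref{renormalinks}, any two distinct renormalization intervals are strictly nested, so the renormalization intervals of $f$ form a strictly decreasing chain $J_n=(a_n,b_n)$, and Proposition~\ref{Lemma1110863} yields $\bigcap_n J_n=\{c\}$. In particular $a_n,b_n\to c$, and since for every $M$ the set $\bigcup_{k\le M}\fix(f^k)$ is finite (as $f^k$ is piecewise monotone), the periods $\period(a_n)$ and $\period(b_n)$ must tend to $\infty$.

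Next I would combine the two hypotheses on $p$. From $c\notin\overline{\co_f^+(p)}$, fix $\varepsilon>0$ so that $\overline{\co_f^+(p)}\cap(c-\varepsilon,c+\varepsilon)=\emptyset$, then choose $N$ with $J_N\subset(c-\varepsilon,c+\varepsilon)$; this gives $\co_f^+(p)\cap J_N=\emptyset$ and, in particular, $p\notin J_N$. From $c\in\alpha_f(p)$ I pick a preimage $y\in J_N\setminus\{c\}$ with $f^m(y)=p$ for some $m\ge 1$. The core argument then splits according to whether the orbit of $y$ visits the critical point before time $m$.

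In the generic subcase, where $f^k(y)\ne c$ for all $0\le k<m$, we have $y\in J_N\setminus\{c\}\subset U_{J_N}$, and the renormalization cycle $U_{J_N}$ is forward-invariant by construction, so the orbit $y,f(y),\dots,p$ stays in $U_{J_N}$; in particular $p\in U_{J_N}$. Since $p\notin J_N$, I can write $p=f^i(z)$ with $z\in(a_N,c)$ and $1\le i<\period(a_N)$ (the subcase $z\in(c,b_N)$ is symmetric); Lemma~\ref{Lemma09090863}(4) gives $f^{\period(a_N)}((a_N,c))\subset J_N$, so $f^{\period(a_N)-i}(p)=f^{\period(a_N)}(z)\in J_N$, contradicting $\co_f^+(p)\cap J_N=\emptyset$. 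In the degenerate subcase, where $f^k(y)=c$ for some $0\le k<m$, the point $p$ lies in $\co_f^+(c_{-})\cup\co_f^+(c_{+})$, so $\omega_f(p)=\omega_f(c_{-})$ or $\omega_f(c_{+})$; taking the one-sided limit $x\uparrow c$ in $f^{\period(a_n)}([a_n,c))\subset[a_n,b_n]$ from Lemma~\ref{Lemma09090863}(4) gives $f^{\period(a_n)}(c_{-})\in\overline{J_n}$, and since $J_n\to\{c\}$ and $\period(a_n)\to\infty$, this forces $c\in\omega_f(c_{-})$, and analogously $c\in\omega_f(c_{+})$. Hence $c\in\omega_f(p)\subset\overline{\co_f^+(p)}$, the desired contradiction.

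The step that I expect to demand the most care is the degenerate subcase, where I need to verify independently that $\co_f^+(c_{\pm})$ accumulates on $c$ for any infinitely renormalizable $f$, without invoking deeper results such as Main Theorem~\ref{SOLENOIDETH}; the remainder of the argument is essentially bookkeeping with the renormalization cycle $U_{J_N}$ and its forward-invariance.
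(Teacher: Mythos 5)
Your argument is essentially the paper's own: assume infinite renormalizability, use Proposition~\ref{Lemma1110863} to shrink the renormalization intervals to $\{c\}$, and then play the forward invariance of the renormalization cycle against the fact that $c\in\alpha_{f}(p)$ supplies preimages of $p$ inside an arbitrarily small renormalization interval, which forces the forward orbit of $p$ back into that interval and hence $c\in\overline{\co_f^+(p)}$. The paper states this more tersely (if $p\in U_n$ for every $n$ then $c\in\omega_f(p)$; otherwise $p\notin U_n$ for some $n$, which is incompatible with $\co_f^-(p)\cap T_n\ne\emptyset$ and the invariance of $U_n$) and does not single out your degenerate subcase, where the backward chain from $p$ passes through $c$; your explicit bookkeeping via Lemma~\ref{Lemma09090863}(4) and the separate treatment of that subcase is a refinement of the same route, not a different one.

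One justification is incorrect, though it is repairable and only enters the degenerate subcase. You assert $\period(a_n),\period(b_n)\to\infty$ because $\bigcup_{k\le M}\fix(f^k)$ is finite ``as $f^k$ is piecewise monotone''. That is not a valid reason: a piecewise increasing map can have an infinite (even Cantor) set of fixed points; the definition of contracting Lorenz map only rules out intervals of trivial dynamics. Fortunately you do not need the periods to diverge. If, say, $\period(a_n)=k$ for infinitely many $n$, then the same one-sided limit you already use gives $f^{k}(c_-)\in\overline{J_n}$ for all those $n$, hence $f^{k}(c_-)\in\bigcap_n\overline{J_n}=\{c\}$, so the orbit of $c_-$ hits $c$; combined with $p\in\co_f^+(c_\pm)$ (choosing the last passage of the backward chain through $c$, so that $f^i(c_\pm)\ne c$ up to the time reaching $p$) this again yields $c\in\overline{\co_f^+(p)}$, the desired contradiction. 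With this dichotomy (periods bounded versus unbounded on the relevant side) replacing the finiteness claim, your proof is complete and matches the paper's argument in substance.
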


\dem

Suppose that $T_{n}$ is a sequence of two by two distinct renormalizable intervals. By Proposition~\ref{Lemma1110863}, $\bigcap_{n}T_{n}=\{c\}$.  
For each $n\in\NN$, let $0<r_n,\ell_n\in\NN$ be such that $f^{\ell_n}(T_n\cap(0,c))\subset T_n$ and $f^{r_n}(T_n\cap(c,1))\subset T_n$ and let $$U_{n}=T_n\cup\bigg(\bigcup_{j=1}^{\ell_n-1}f^j((T_n\cap(0,c))\bigg)\cup\bigg(\bigcup_{j=1}^{r_n-1}f^j((T_n\cap(c,1))\bigg).$$

If $p\in U_{n}$, $\forall\,n \in \NN$, then $c\in\omega_{f}(p)$, contradicting our hypothesis. Thus, one can find some $n\ge0$ such that $p\notin U_{n}$. But this is not possible, because $c\in\alpha_{f}(p)$ and so, $\co_{f}^{-}(p)\cap T_{n}\ne\emptyset$.

\cqd

Now we have enough information on maps that are infinitely many times renormalizable in order to prove Theorem \ref{SOLENOIDETH}.

\dem[Proof of Theorem \ref{SOLENOIDETH}]

Write $\cR =\{J_n\}_{n\in\NN}$, with $J_1\supsetneqq J_2\supsetneqq J_3\supsetneqq\cdots$.
Notice that $J_n\supset\overline{J_{n+1}}$, $\forall\,n \in \NN$ and also
\begin{equation}
K_{J_n}=\interior(K_{J_n})\supset\overline{K_{J_{n+1}}},\;\;\forall\,n\in\NN.
\end{equation}
Thus, $$\Delta:=\bigcap_{n\in\NN}\overline{K_{J_n}}=\bigcap_{n\in\NN}K_{J_n}.$$

As each $K_n$ is a trapping region ($f(K_n)\subset K_n$), it is easy to see that $\omega_f(x)\subset\Delta$, whenever $c\in\omega_f(x)$. Indeed, if  $c\in\omega_f(x)$, then $\co^+_f(x)\cap J_n\ne\emptyset$ for every $n\in\NN$, because $\{c\}=\bigcap_{n}J_n$ (see Proposition~\ref{Lemma1110863}). Thus, $\omega_f(x)\subset\overline{K_n}$, $\forall\,n \in \NN$.

Let $\ck_n$ be the collection of connected components of $K_{J_n}$ and $\ck_n(y)$ be the element of $\ck_n$ containing $y$ (see Figure~\ref{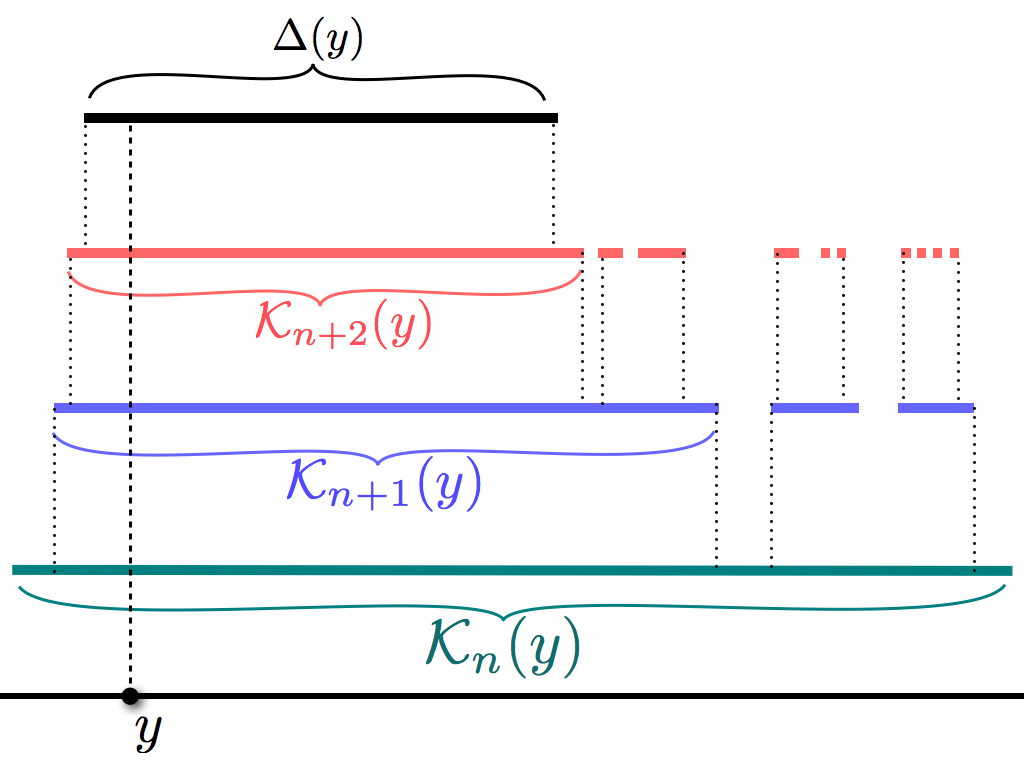}), for any given $y\in\Delta$.
Let $\Lambda$ be the (closed) set of points $y\in\Delta$ such that there is a sequence $\Delta\ni y_n\to y$ and $\NN\ni k_n\to\infty$ with $\lim_n\diameter(\ck_{k_n}(y_n))=0$. 
Given any $x\in[0,1]$ with $c\in\omega_f(x)$, we have $\co^+_f(x)\cap J_n\ne\emptyset$  $\forall\,n\in\NN$ and,  by Proposition~\ref{Lemma1110863} and Lemma~\ref{Corollary545g55}, $\co^+_f(x)$ intersects every element of $\ck_n$, $\forall\,n \in \NN$. As a consequence, any point $y \in \Lambda$ is accumulated by points of $\co^+_f(x)$ for any $x\in[0,1]$ with $c\in\omega_f(x)$.  That is,
\begin{equation}\label{eq6783453}
\Delta\supset\omega_f(x)\supset\Lambda\text{\, for every $x$ such that }c\in\omega_f(x).
\end{equation}

%%%%%%%%%%%%%%%%%%%%%%%%%%%%%%%%%%%%%%%%%%%%%%
\begin{figure}
\begin{center}\label{KnBasica.png}
\includegraphics[scale=.2]{KnBasica.png}\\
\caption{}\label{KnBasica.png}
\end{center}
\end{figure}
%%%%%%%%%%%%%%%%%%%%%%%%%%%%%%%%%%%%%%%%%%%%%%

\begin{claim} Define $\Delta(y)$ as the connected component of $\Delta$ containing $y$.
If $\interior(\Delta(y))\ne\emptyset$, $y\in\Delta$, then $\interior(\Delta(y))$ is a wandering interval.
\end{claim}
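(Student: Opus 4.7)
The approach is to show that $I := \interior(\Delta(y))$ is a homterval and then apply the Homterval Lemma (Lemma~\ref{homtervals}). Since $f$ has no periodic attractors, that lemma's alternative $I \subset \BB_0 \cup \co_f^{-}(\per(f))$ fails: $\BB_0 = \emptyset$ and $\co_f^{-}(\per(f))$ is countable, so it cannot contain the open interval $I$. Consequently $I$ will have to be a wandering interval.

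The key preparatory step will be to establish that $\Delta(c) = \{c\}$. For each $n$, the connected component $\ck_n(c)$ of $K_{J_n}$ containing $c$ is contained in the renormalization interval $J_n = (a_n,b_n)$ --- it is the central gap around the critical point --- and Proposition~\ref{Lemma1110863} gives $\bigcap_n J_n = \{c\}$. Hence $\Delta(c) = \bigcap_n \ck_n(c) \subset \bigcap_n J_n = \{c\}$. Because $\Delta$ is closed (it equals $\bigcap_n \overline{K_{J_n}}$, as already noted in the theorem's proof) its connected components are disjoint closed sets, so the assumption $I \ne \emptyset$ --- together with $\Delta(c)=\{c\}$ having empty interior --- forces $\Delta(y) \ne \Delta(c)$ and hence $c \notin \Delta(y)$; in particular $c \notin \overline{I}$.

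The heart of the argument is to rule out $c \in f^m(I)$ for every $m \ge 1$. Suppose, for contradiction, that $m \ge 1$ is minimal with $c \in f^m(I)$. Then for $0 \le j < m$ the iterate $f^j(I)$ is an open interval disjoint from $c$, and the forward invariance $f(K_{J_n}\setminus\{c\}) \subset K_{J_n}$ of each trapping region (intersected over $n$) yields inductively $f^j(I) \subset \Delta$ for $0 \le j \le m$. Since $f^{m-1}(I) \subset [0,c) \cup (c,1]$, the map $f$ is monotone and continuous on it, so $f^m(I) = f(f^{m-1}(I))$ is an open interval, and by hypothesis it contains $c$; thus $(c-\delta,c+\delta) \subset f^m(I) \subset \Delta$ for some $\delta > 0$. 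But this is a connected subset of $\Delta$ containing $c$, so it must lie in $\Delta(c) = \{c\}$, a contradiction. Therefore $c \notin f^m(I)$ for every $m \ge 1$, which combined with $c \notin I$ says $I$ is a homterval, and the first paragraph finishes the proof.

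The only mild subtlety is the forward-invariance bookkeeping in the induction: one needs that for each $j < m$ the iterate $f^j(I)$ does not contain $c$, which is exactly the minimality of $m$, so that $f^j(I)$ remains a connected open interval and the trapping property of $K_{J_n}$ propagates $I \subset \Delta$ to $f^j(I) \subset \Delta$. All other ingredients --- Proposition~\ref{Lemma1110863}, Lemma~\ref{homtervals}, and the trapping property of $K_{J_n}$ --- are already in place in the paper.
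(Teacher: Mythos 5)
Your proof is correct and takes essentially the same route as the paper's: both rest on Proposition~\ref{Lemma1110863} ($\bigcap_n J_n=\{c\}$) combined with the trapping/gap structure of the sets $K_{J_n}$ to rule out $c\in f^m(\interior(\Delta(y)))$ for every $m$, and both then conclude with the Homterval Lemma using the absence of periodic attractors (and that $\co_f^{-}(\per(f))$ cannot contain an interval). The only difference is bookkeeping: you first isolate $\Delta(c)=\{c\}$ and invoke forward invariance of $\Delta$, whereas the paper squeezes $f^s(\Delta(y))\subset f^s(\ck_n(y))\subset J_n$ level by level; the contradiction reached is the same.
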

\dem[Proof of the claim]

Suppose that $\interior(\Delta(y))\ne\emptyset$ and that $\exists s$ such that $c \in f^s(\Delta(y))$. Then, $\forall n$, $f^s(\Delta(y))\cap J_n\ne\emptyset$. But if $f^s(\Delta(y))\cap J_n\ne\emptyset$, then $f^s(\ck_n(y))\cap J_n\ne\emptyset$ and so, $f^s(\Delta(y))\subset f^s(\ck_n(y))\subset J_n$, $\forall n$.
Thus, if $c\in f^s(\Delta(y))$, then we have $f^s(\Delta(y))\subset \bigcap_nJ_n=\{c\}$ (Proposition~\ref{Lemma1110863}), a contradiction. This implies that $c\notin f^s(\Delta(y))$, $\forall s\in\NN$. From Lemma~\ref{homtervals}, we get that $\interior(\Delta(y))$ is a wandering interval.
\cqd

Now consider $y\in\Delta\setminus\Lambda$. We will show that if $c \in \omega_f(x)$, then $y \not\in \omega_f(x)$. 

Under the assumption of $y\in\Delta\setminus\Lambda$, there is some $\varepsilon>0$ such that $B_{\varepsilon}(y)\cap\Delta=B_{\varepsilon}(y)\cap\Delta(y)$.
Notice that $\Delta(y)\ne\{y\}$, otherwise $\lim_n\diameter(\ck_n(y))=0$ and $y\in\Lambda$.
So, $\interior(\Delta(y))\ne\emptyset$ and so, by the claim above, $\interior(\Delta(y))$ is a wandering interval. This implies that $\omega_f(x) \cap \interior(\Delta(y))=\emptyset$, $\forall x$. So, if $y\in\interior(\Delta(y))$ we have that 
$y \not\in \omega_f(x)$. 

Let's then consider $y\notin\interior(\Delta(y))$. Reducing $\epsilon$ if necessary, $B_{\varepsilon}(y)\cap\Delta\cap\Omega(f)=B_{\varepsilon}(y)\cap\Delta(y)\cap\Omega(f)\subset\{y\}$. Suppose that $y\in\omega_f(x)$ for some $x$ such that $c\in\omega_f(x)$.
In this case, as $\Delta\supset\omega_f(x)$, we conclude that $y$ is an isolated point of $\omega_f(x)$: indeed, as $\omega_f(x)\subset\Delta\cap\Omega(f)$, we have $y \in \omega_f(x)\cap B_{\varepsilon}(y)=\omega_f(x)\cap B_{\varepsilon}(y)\cap\Delta\cap\Omega(f)=\omega_f(x)\cap B_{\varepsilon}(y)\cap\Delta(y)\cap\Omega(f)\subset\{y\}$, then this set is $\{y\}$.

%%%%%%%%%%%%%%%%%%%%%%%%%%%%%%%%%%%%%%%%%%%%%%
\begin{figure}
\begin{center}\label{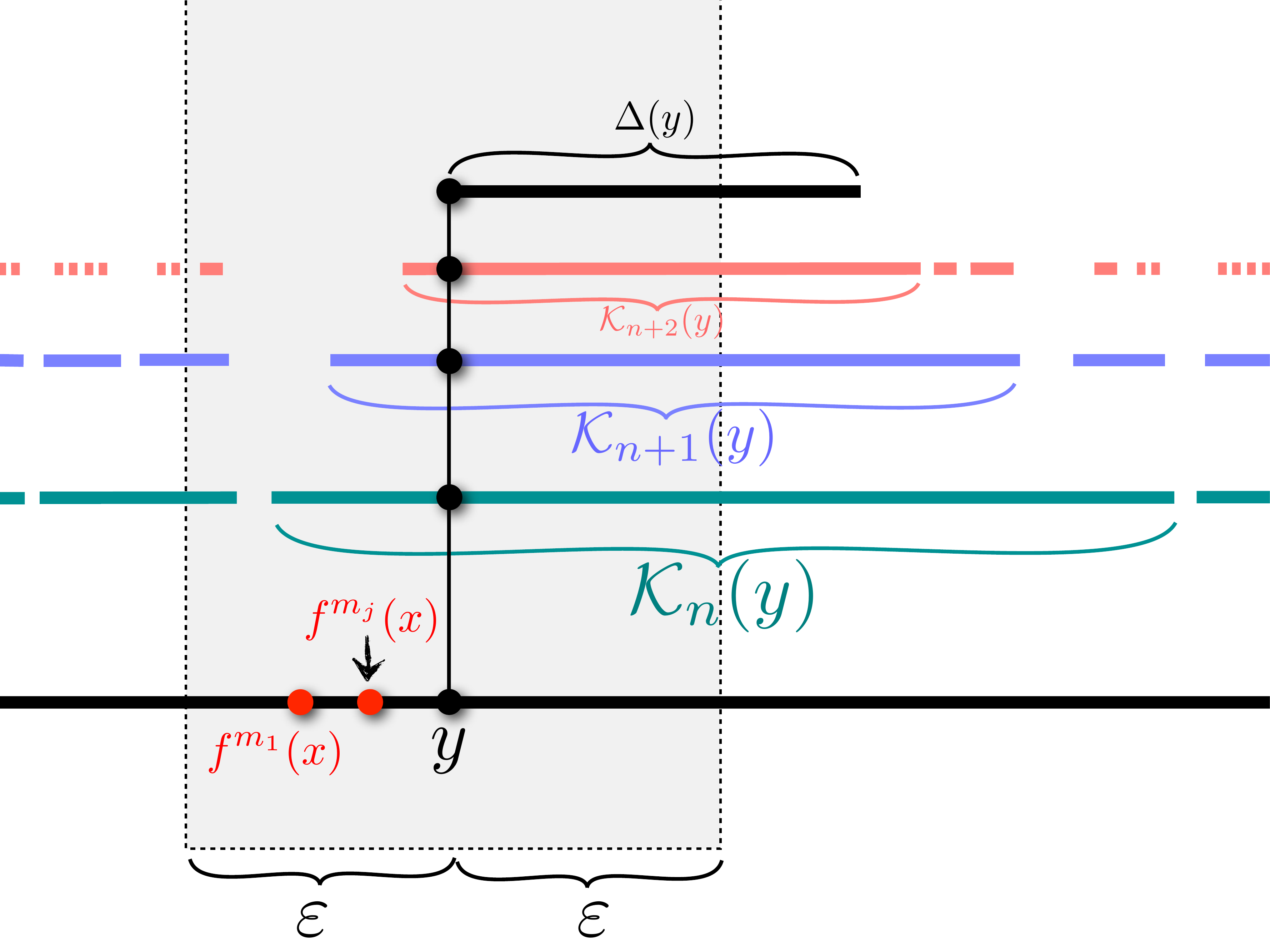}
\includegraphics[scale=.3]{Kn.pdf}\\
\caption{}\label{Kn.pdf}
\end{center}
\end{figure}
%%%%%%%%%%%%%%%%%%%%%%%%%%%%%%%%%%%%%%%%%%%%%%

Since $y\notin\interior(\Delta(y))$, we may suppose that $\Delta(y)=[y,b]$ (the case $\Delta(y)=[a,y]$ is analogous). Taking $\varepsilon>0$ small enough, we can assume that $y+\varepsilon<b$. Let $n\ge1$ be such that $y-\varepsilon<k_{n,0}(y)<y$, where $(k_{n,0},k_{n,1}):=K_n(y)$. Let $m_j\in\NN$ be such that $k_{n,0}<f^{m_1}(x)<f^{m_2}(x)<\cdots<f^{m_j}(x)\nearrow\,y$ and $\co_f^+(x)\cap(k_{n,0},y)=\{f^{m_1}(x),f^{m_2}(x),f^{m_3}(x),\cdots\}$ (See Figure~\ref{Kn.pdf}). 

Choose $j_0$ big enough so that $m_j>m_1$, $\forall\,j\ge j_0$. Given $j\ge j_0$, let $I_j=(t_j,f^{m_1}(x))$ be an interval contained in $(k_{n,0},f^{m_1}(x))$, maximal such that $f^{m_j-m_1}|_{I_j}$ is a homeomorphism. If $k_{n,0}<t_j$, there is some $1\le s<m_j-m_1$ s.t. $f^s((t_j,f^{m_1}(x)))=(c,f^{m_1+s}(x))$. As $f$ is infinitely renormalizable, Lemma \ref{Corollary545g55} says that the orbit of $x$ accumulates on $c$ by both sides, then $\#\co_f^+(x)\cap (c,f^{m_1+s}(x))=\infty$. As $K_{J_n}$ is positively invariant and 
$f^{m_j-m_1}(I_j)\cap K_n(y)\ne\emptyset$, we get that $f^{m_j-m_1}(I_j)\subset K_n(y)$. So, 
$f^{m_j-m_1}(I_j)\subset (k_{n,0},f^{m_j}(x))\subset (y-\varepsilon,f^{m_j}(x))$. Thus, $\#\co_f^+(x)\cap(y-\varepsilon,f^{m_j}(x))=\infty$, and this is an absurd, as $y$ was taken was the only non-wandering point in this neighborhood. 

Thus, $t_j=k_{n,0}$  and so, $I_j=(k_{n,0},f^{m_1}(x))\,\,\forall\,j\ge j_0.$

As a consequence, $f^j|_{(k_{_{n,0}},f^{m_1}(x))}$ is a homeomorphism $\forall\,j\in\NN$  because $f^{m_j-m_1}|_{(k_{_{n,0}},f^{m_1})}=f^{m_j-m_1}|_{I_j}$ is a homeomorphism, $\forall\,j\ge j_0$. But this contradicts the homterval lemma (Lemma~\ref{homtervals}), as $(k_{_{n,0}},f^{m_1}(x))$ cannot be a wandering interval ($k_{n,0}$ is pre-periodic, as $\partial K_{J_n}\subset \co^{-}_{f}(\partial J_n)$ ) and as $f$ does not have periodic attractors.

For short, if $c\in\omega_f(x)$, then $y\notin\omega_f(x)$ for all $y\in\Delta\setminus\Lambda$. So, by (\ref{eq6783453}), $\omega_f(x)=\Lambda$ when $c\in\omega_f(x)$. Finally, as $\Lambda\subset\bigcap_{J\in\cR }K_J$ and $c\in\omega_f(x)$ for every $x\in\bigcap_{J\in\cR }K_J$, then $\omega_f(x)=\Lambda$, $\forall\,x\in\Lambda$. That is, $\Lambda$ is minimal and so we conclude the proof.

\cqd

\begin{Remark}\label{Remark12327890}
Let $x,y\in[0,1]\setminus\{c\}$, $\delta>0$ and $j\in\NN$. If $f^{j}|_{(y-\delta,y+\delta)}$ is an homeomorphim, then $y\in\alpha_{f}(x)$ $\iff$ $f^{j}(y)\in\alpha_{f}(x)$.
\end{Remark}

\begin{Lemma}
\label{Lemma01928373} 
Let $f:[0,1]\setminus\{c\}\to[0,1]$ be a $C^{2}$ non-flat contracting Lorenz map without periodic attractor. If $c\in\alpha_{f}(p)$ for some $p\ne c$, then $\overline{\co^{-}_{f}(p)\cap[0,c)}\ni c \in\overline{(c,1]\cap\co^{-}_{f}(p)}$.
\end{Lemma}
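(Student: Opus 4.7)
I would argue by contradiction, proving the left-hand containment $c\in\overline{\co_f^-(p)\cap[0,c)}$; the other side is symmetric. Suppose there is $\varepsilon>0$ such that $T:=(c-\varepsilon,c)$ contains no preimage of $p$. The plan is first to show that $T$ cannot be a homterval, so some $y_0\in T$ satisfies $f^n(y_0)=c$ for a minimal $n\ge1$; then, using $c\in\alpha_f(p)$ together with the assumption that no preimage of $p$ lies in $T$, to pull a preimage of $p$ from the right side of $c$ back into $T$ via the local homeomorphism $f^n$ at $y_0$, producing the contradiction.

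To rule out $T$ being a homterval, first note that by Lemma~\ref{LemmaWI} it cannot be a wandering interval, since $T$ is of the form $(c-\varepsilon,c)$. If $T$ were a homterval, Lemma~\ref{homtervals} and the absence of periodic attractors ($\BB_0(f)=\emptyset$) would force $T\subset\co_f^-(\per(f))$. Setting
$$T_{n,k}=\{x\in T\,;\,f^{n+k}(x)=f^n(x)\},$$
each $T_{n,k}$ is closed in $T$ by continuity of $f^j|_T$ (homterval property), and $T=\bigcup_{n\ge0,\,k\ge1}T_{n,k}$ because every $x\in T$ eventually lands in a periodic orbit. Baire category then produces indices $n_0,k_0$ and an open subinterval $T'\subset T_{n_0,k_0}$; since $f^{n_0}|_T$ is a homeomorphism, $f^{n_0}(T')$ is an open interval on which $f^{k_0}$ equals the identity, contradicting the no-interval-of-trivial-dynamics clause in the definition of contracting Lorenz map.

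Since $T$ is not a homterval, pick $y_0\in T$ and minimal $n\ge1$ with $f^n(y_0)=c$. Then $f^k(y_0)\ne c$ for $0\le k<n$ and $f$ is a local diffeomorphism away from $c$, so $f^n$ is a local homeomorphism at $y_0$; choose an open neighborhood $U\subset T$ of $y_0$ such that $(c-\delta,c+\delta)\subset f^n(U)$ for some $\delta>0$. Because $c\in\alpha_f(p)$ and the contradiction assumption excludes preimages of $p$ from $(c-\varepsilon,c)$, preimages of $p$ must accumulate $c$ from the right, so we may select $z\in(c,c+\delta)$ with $f^m(z)=p$ for some $m\ge0$. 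Lifting $z$ through $f^n|_U$ yields $w\in U\subset T$ with $f^n(w)=z$, whence $f^{n+m}(w)=p$ and $w\in T\cap\co_f^-(p)$, the forbidden configuration. I expect the main difficulty to be the exclusion of the homterval case, since in the $C^2$ setting without negative Schwarzian one cannot invoke a Singer-type result or a priori countability of $\per(f)$; the Baire category packaging of $T\subset\co_f^-(\per(f))$ into a $k_0$-periodic open subinterval is the cleanest way I see to close this gap.
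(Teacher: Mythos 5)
Your argument is correct, but its mechanism is genuinely different from the paper's. The paper also assumes a one-sided absence of preimages (mirrored relative to yours) and then works with the set $\alpha_{f}(p)$ itself: it takes the connected component $(c,q)$ of $[0,1]\setminus\alpha_{f}(p)$ adjacent to $c$, shows via the no-attractor hypothesis and the pull-back of $\alpha$-limit points along monotone branches (Remark~\ref{Remark12327890}) that $(c,q)$ never meets its forward images and is in fact a wandering interval, and then contradicts Lemma~\ref{LemmaWI}. You never produce a wandering interval; instead you show that the supposedly preimage-free side $T=(c-\varepsilon,c)$ must contain a preimage of $c$, so that a branch of some $f^{n}$ maps a piece of $T$ onto a two-sided neighborhood of $c$, through which the preimages of $p$ accumulating on the other side (forced by $c\in\alpha_{f}(p)$ together with your contradiction hypothesis) are pulled back into $T$. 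The price of your route is the homterval case: in the $C^{2}$ setting Lemma~\ref{homtervals} only yields $T\subset\BB_0(f)\cup\co_f^{-}(\per(f))$, and your Baire-category step, converting $T\subset\co_f^{-}(\per(f))$ into an open interval on which some $f^{k_0}$ is the identity and thus contradicting the no-trivial-dynamics clause of the definition, is exactly the patch that is needed there; the paper never has to face this because proving the wandering property of $(c,q)$ directly already collides with Lemma~\ref{LemmaWI}, so the homterval dichotomy is not invoked at all. What your approach buys is a more direct transfer of accumulation across the critical point, with no use of compactness of $\alpha_{f}(p)$ or endpoint bookkeeping for complementary components; what the paper's buys is avoiding the Baire/preperiodic discussion. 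One caveat you share with the paper: both proofs read $c\in\alpha_{f}(p)$ as accumulation at $c$ by genuine preimage points on at least one side (the paper's ``we may suppose'' step makes the same tacit reduction), which is the intended non-degenerate reading of the definition.
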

\dem
Suppose that $c\in\alpha_{f}(p)$, $p\in[0,c)\cup(c,1]$.  We may suppose that $\co^{-}_{f}(p)\cap(-\delta,c)\ne\emptyset$, $\forall\delta>0$ and $\co^{-}_{f}(p)\cap(c,\delta_{0})=\emptyset$ for some $\delta_{0}>0$, the symmetrical case being analogous. As $\alpha_{f}(p)$ is compact, there is some $q>0$ such that $(c,q)$ is a connected component of $[0,1]\setminus\alpha_{f}(p)$.
\begin{claim}
$f^{j}\big((c,q)\big)\cap(c,q)=\emptyset$, $\forall\,j>0$.
\end{claim}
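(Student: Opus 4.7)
I would argue by contradiction. Let $j$ be the minimal positive integer with $f^{j}((c,q))\cap(c,q)\neq\emptyset$ (assuming one exists). The plan is to combine this minimality with the backward invariance of $\alpha_{f}(p)$ to trap $f^{j}((c,q))$ inside $(c,q)$, and then extract a periodic attractor from the resulting monotone self-map, contradicting the standing hypothesis.

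First, I would show that $c\notin f^{i}((c,q))$ for all $0\le i<j$. Otherwise, $f^{i}|_{(c,q)}$ (continuous by the same minimality applied to smaller indices) would be a monotone homeomorphism whose image contains an open neighborhood of $c$, hence meets $(c,q)$ itself, contradicting the minimality of $j$. Consequently $f^{j}|_{(c,q)}$ is a monotone increasing homeomorphism onto the open interval $f^{j}((c,q))$. I would then use the essential backward invariance of $\alpha_{f}(p)\setminus\{c\}$: if $z\neq c$ and $f(z)\in\alpha_{f}(p)$, then a sequence $y_{k}\in f^{-n_{k}}(p)$ converging to $f(z)$ with $n_{k}\to\infty$ lifts through the local inverse branch of $f$ at $z$ to a sequence $z_{k}\in f^{-(n_{k}+1)}(p)$ converging to $z$, so $z\in\alpha_{f}(p)$. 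Iterating this and using that no $f^{i}((c,q))$, $0\le i<j$, hits $c$, I conclude $f^{j}((c,q))\cap\alpha_{f}(p)=\emptyset$.

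Being a connected subset of the open set $[0,1]\setminus\alpha_{f}(p)$ that meets the component $(c,q)$, the image $f^{j}((c,q))$ must lie inside $(c,q)$. Its one-sided endpoints $f^{j}(c_{+})$ and $f^{j}(q_{-})$ lie in $\overline{(c,q)}\cap\alpha_{f}(p)=\{c,q\}$, so by monotonicity $f^{j}(c_{+})=c$ and $f^{j}(q_{-})=q$, forcing $f^{j}((c,q))=(c,q)$. Thus $f^{j}$ restricts to an increasing self-homeomorphism of $\overline{(c,q)}$ fixing $c$ and $q$ in the one-sided limit sense. Since contracting Lorenz maps forbid intervals of trivial dynamics, $f^{j}$ is not the identity on $(c,q)$, and a standard monotonicity argument produces an attracting fixed point: an interior one is a periodic attractor of $f$; an attracting endpoint at $q$ with $q\notin\co_{f}^{-}(c)$ is likewise a periodic attractor; and an attracting endpoint at $c$ (or $q\in\co_{f}^{-}(c)$) yields a super-attractor of the form $\{f(c_{+}),\dots,f^{j-1}(c_{+}),c\}$. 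Each case contradicts the no-periodic-attractors hypothesis.

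The main obstacle is the bookkeeping for this last step: the endpoints $c$ and $q$ are not bona fide two-sided fixed points of $f^{j}$, so classifying the attracting orbit hidden there---ordinary attracting periodic orbit versus super-attractor---requires distinguishing whether $q$ is pre-periodic to $c$. Once this is settled, the crux of the proof is that the backward invariance of $\alpha_{f}(p)$ together with the minimality of $j$ already forces the equality $f^{j}((c,q))=(c,q)$, from which the contradiction is immediate.
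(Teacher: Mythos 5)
Your argument is sound up to and including the conclusion that $f^{j}((c,q))\subset(c,q)$: the preliminary step that $c\notin f^{i}((c,q))$ for $0\le i<j$, the pull-back of pre-images of $p$ along the homeomorphic branch of $f^{j}|_{(c,q)}$ (this is exactly Remark~\ref{Remark12327890}), and the connected-component argument are all correct. The genuine gap is the next sentence: nothing forces the one-sided endpoints $f^{j}(c_{+})$ and $f^{j}(q_{-})$ to lie in $\alpha_{f}(p)$. The set $\alpha_{f}(p)$ is backward invariant along homeomorphic branches, but it is forward invariant at a point $y$ only when $f$ is continuous at $y$, and at $y=c$ only on the side from which pre-images of $p$ actually accumulate. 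In the case being treated one has assumed $\co_{f}^{-}(p)\cap(c,\delta_{0})=\emptyset$, so there is no reason why $f(c_{+})$, let alone $f^{j}(c_{+})$, should belong to $\alpha_{f}(p)$; similarly $f^{j}(q_{-})$ need not (for instance if the orbit of $q$ meets $c$ before time $j$). Hence the asserted equality $f^{j}((c,q))=(c,q)$ --- which you describe as the crux --- is not established: a priori the image could be a proper subinterval compactly contained in $(c,q)$, and this ``trapping'' configuration cannot be excluded by topology alone; it must be excluded dynamically.

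Fortunately the gap is inessential, because the inclusion $f^{j}((c,q))\subset(c,q)$ already yields your final contradiction: the one-sided-limit extension of $f^{j}|_{(c,q)}$ is an increasing map of $[c,q]$ into itself which is not the identity on any subinterval (no intervals of trivial dynamics), so every orbit converges monotonically to a fixed point, possibly an endpoint, and your own case analysis (interior attracting fixed point, attracting behaviour at $q$, attracting behaviour at $c$ or $q\in\co_{f}^{-}(c)$ giving a super-attractor) produces a periodic attractor or super-attractor, contradicting the hypothesis. With this repair your proof becomes a reorganization of the paper's: the paper splits into the case $f^{\ell}((c,q))\subset(c,q)$, ruled out by the no-attractor hypothesis, and the case where the image contains $c$ or $q$, ruled out by pulling pre-images of $p$ back into $(c,q)$ via Remark~\ref{Remark12327890}; you instead use the pull-back first to confine the image to $(c,q)$ and then apply the attractor argument. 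Just drop the claim that the endpoints lie in $\alpha_{f}(p)$ and run the monotonicity argument for a map of $(c,q)$ into itself rather than onto itself.
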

\dem[Proof of the Claim] Suppose there is a smallest $\ell>0$ such that $f^{\ell}\big((c,q)\big)\cap(c,q)\ne\emptyset$. In this case $f^{\ell}|_{(c,q)}$ is a homeomorphism.
If $f^{\ell}\big((c,q)\big)\subset(c,q)$, then $f$ admits a periodic attractor or a super-attractor, contradicting our hypothesis. Thus, there is some $x\in\{c,q\}\cap f^{\ell}\big((c,q)\big)$. As both $c$ and $q$ are accumulated by pre-images of $p$, it follows that $x$ is also accumulated by pre-images of $p$. So, $\alpha_{f}(p)\cap(c,q)\ne\emptyset$  (Remark~\ref{Remark12327890}), contradicting that $(c,q)$ is contained in the complement of $\alpha_{f}(p)$. 
(end of the proof of the Claim)\cqd

It follows from the Claim that $f^{j}|_{(c,q)}$ is a homeomorphism for every $j>0$. Moreover, $(c,q)$ is a wandering interval. Indeed, if $f^{j}\big((c,q)\big)\cap f^{k}\big((c,q)\big)\ne\emptyset$, with $j<k$, then $f^{j}\big((c,q)\big)\not\supset f^{k}\big((c,q)\big)$, since $f^{j}\big((c,q)\big)\supset f^{k}\big((c,q)\big)$ implies the existence of a periodic attractor or a super-attractor, contradicting again our hypothesis. Thus, there is $x\in\{f^{j}(c),f^{j}(q)\}$ belonging to $f^{k}\big((c,q)\big)$. As $f^{j}(c)$ and $f^{j}(q)\in\alpha_{f}(p)$ we get $\big(f^{j}|_{(c,q)}\big)^{-1}(x)\in\alpha_{f}(p)\cap(c,q)$ (Remark~\ref{Remark12327890}), contradicting again that $(c,q)$ is contained in the complement of $\alpha_{f}(p)$.

As $(c,q)$ being a wandering interval is a contradiction to Lemma~\ref{LemmaWI}, we have to conclude that $\co^{-}_{f}(p)\cap(c,\delta)\ne\emptyset$, $\forall\delta>0$.
\cqd

\begin{Lemma}\label{Lemma549164}
Let $f:[0,1]\setminus\{c\}\to[0,1]$ be a contracting Lorenz map without periodic attractors. Let $p\in(0,1)$ be such that $c\notin\overline{\co_{f}^{+}(p)}$ and let $(p_{1},p_{2})$ be the connected component of $(0,1)\setminus\overline{\co_{f}^{+}(p)}$ containing the critical point $c$. Given $y\in\co_{f}^{-}(p)$ and $\varepsilon>0$, we have $$\bigcup_{j\ge0}f^{j}(y,y+\varepsilon)\supset (p_{1},c)\,\,\text{ and }\,\,\bigcup_{j\ge0}f^{j}(y-\varepsilon,y)\supset (c,p_{2}).$$\end{Lemma}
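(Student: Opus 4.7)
The plan is to reduce the statement to showing that any small right-neighborhood of $p$ itself has iterates that eventually contain $(p_1,c)$, and then to exhibit this containment at the first moment the forward orbit of the neighborhood crosses the critical point. The two inclusions in the conclusion are symmetric, so I sketch only $\bigcup_{j\ge0}f^j(y,y+\varepsilon)\supset(p_1,c)$. Choose $n\ge 0$ with $f^n(y)=p$ and shrink $\varepsilon$ (allowable since the union is monotone in $\varepsilon$) so that $f^n|_{(y-\varepsilon,y+\varepsilon)}$ is a homeomorphism and the image lies on one side of $c$; orientation-preservation of $f$ on each branch then gives $f^n(y,y+\varepsilon)=(p,q_0)$ for some $q_0$ with $(p,q_0)$ disjoint from $\{c\}$. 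It suffices to prove $\bigcup_{j\ge0}f^j(p,q_0)\supset(p_1,c)$.

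Next I rule out the possibility that $(p,q_0)$ is a homterval. If it were, Lemma \ref{homtervals} together with the absence of periodic attractors would force $(p,q_0)$ to be a wandering interval, and Lemma \ref{LemmaWI} would then require its iterates to accumulate at $c$ from both sides. But under the homterval hypothesis, orientation-preservation gives $f^j(p,q_0)=(f^j(p),f^j(q_0))$ with $f^j(p)\in\co_f^+(p)\subset[0,p_1]\cup[p_2,1]$ (the last inclusion holds because $(p_1,p_2)$ is a gap of $\overline{\co_f^+(p)}$). Whenever $f^j(p)<c$ the whole interval $f^j(p,q_0)$ lies in $[0,c]$, and whenever $f^j(p)>c$ it lies in $[p_2,1]$; in either case the iterate misses the right half $(c,p_2)$ of the gap, contradicting the accumulation required by Lemma \ref{LemmaWI}.

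There is therefore a minimal $j_0\ge 1$ with $c\in f^{j_0}(p,q_0)$. By minimality, $f^{j_0-1}|_{(p,q_0)}$ is a homeomorphism onto an interval not containing $c$, so $f^{j_0}(p,q_0)=(f^{j_0}(p),f^{j_0}(q_0))$ is a single open interval with $f^{j_0}(p)<c<f^{j_0}(q_0)$. Since $f^{j_0}(p)\in\co_f^+(p)\subset[0,p_1]\cup[p_2,1]$ and $f^{j_0}(p)<c<p_2$, one must have $f^{j_0}(p)\le p_1$, and hence $f^{j_0}(p,q_0)\supset(p_1,c)$, which establishes the first inclusion. The symmetric analysis of $(y-\varepsilon,y)$ yields $\bigcup_{j\ge0}f^j(y-\varepsilon,y)\supset(c,p_2)$.

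The principal obstacle is the wandering-interval alternative: without ruling it out, the iterates of $(p,q_0)$ could pile up arbitrarily close to $c$ without ever sweeping across $(p_1,c)$. The decisive observation that defeats this scenario is that the forward orbit of the endpoint $p$ is confined to $[0,p_1]\cup[p_2,1]$; this pins every homterval iterate to one side of the gap $(p_1,p_2)$ and thereby blocks the right-side accumulation at $c$ demanded by Lemma \ref{LemmaWI}.
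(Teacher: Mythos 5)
Your proof is correct, and it relies on the same decisive mechanism as the paper's: after transporting a one-sided neighborhood of $y$ to an interval attached to $p$, the point is that the relevant endpoint of every iterated image lies on $\co_f^+(p)$, hence outside the gap $(p_1,p_2)$, so orientation preservation forces the image to sweep across the corresponding half of the gap as soon as it reaches the critical point. Where you differ is in how the image is made to reach $c$: the paper quotes Lemma~\ref{Remark98671oxe} to obtain, for each $\delta>0$, a first entry of $f^{j}((p,p+\varepsilon))$ into $(c-\delta,c+\delta)$ with homeomorphic restriction, deduces $f^{j}((p,p+\varepsilon))\supset(p_1,c-\delta)$, and takes the union over $\delta$; you instead apply the homterval dichotomy (Lemma~\ref{homtervals}) directly to $(p,q_0)$, kill the wandering alternative by noting that the gap confines all homterval iterates to one side of $c$, contradicting the two-sided accumulation of Lemma~\ref{LemmaWI}, and then read off the inclusion at the first iterate containing $c$. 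This bypasses Lemma~\ref{Remark98671oxe} and the $\delta\to0$ limit, at the cost of the extra (correct) wandering-interval exclusion, which in the paper is hidden inside the proof of Lemma~\ref{Remark98671oxe}; it also gives marginally more, namely a single iterate $f^{j_0}((p,q_0))$ already containing $(p_1,c)$ rather than only the union over $j$. Two small points, handled no more carefully in the paper's own proof than in yours: the homterval lemma also allows $I\subset\co_f^{-}(\per(f))$, which must be dismissed because preorbits of periodic points contain no intervals (there is no interval of trivial dynamics), and the homeomorphic restriction of $f^{n}$ near $y$ should be taken one-sided, since the chain from $y$ to $p$ may pass through $c$.
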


\dem

For any given $\delta>0$, Lemma~\ref{Remark98671oxe} says that, $\forall \varepsilon >0$,  $\exists j_{1},j_{2}\ge0$ such that $f^{j_{1}}((p-\varepsilon,p))\cap(c-\delta,c+\delta)\ne\emptyset\ne f^{j_{2}}((p,p+\varepsilon))\cap (c-\delta,c+\delta)$. Take $j_{1},j_{2}$ minima with such property such that $f^{j_{1}}|_{(p-\varepsilon,p)}$ and $f^{j_{2}}|_{(p,p+\varepsilon)}$ are homeomorphisms. Notice that $f^{j_{2}}|_{(p,p+\varepsilon)}\supset(p_{1},c-\delta)$ and $f^{j_{1}}((p-\varepsilon,p))\supset(c+\delta,p_{2})$, as $\co_{f}^{+}(p)\cap(p_{1},p_{2})=\emptyset$ and $f^{j_{1}}|_{(p-\varepsilon,p)}$ and $f^{j_{2}}|_{(p,p+\varepsilon)}$ preserve orientation.

As a consequence, $$\bigcup_{j\ge0}f^{j}\big((p,p+\varepsilon)\big)\supset \bigcup_{\delta>0}(p_{1},c-\delta)=(p_{1},c)$$ and $$\bigcup_{j\ge0}f^{j}\big((p-\varepsilon,p)\big)\supset\bigcup_{\delta>0}(c+\delta,p_{2})=(c,p_{2}).$$

Suppose that $y\in f^{-s}(p)$ for some $s\ge1$. There is $r>0$ such that $f^{s}|_{(y,y+r)}$ and $f^{s}|_{(y-r,y)}$
are homeomorphisms. As $f^{s}|_{(y,y+r)}$ is a homeomorphism, $f^{s}((y,y+r))=(p,p+\varepsilon)$ with $\varepsilon=f^{s}(y+r)-p$. Thus, $$\bigcup_{j\ge0}f^{j}\big((y,y+r)\big)\supset\bigcup_{j\ge0}f^{j}\big(f^{s}((y,y+r))\big)=\bigcup_{j\ge0}f^{j}\big((p,p+\varepsilon)\big)\supset(p_{1},c).$$
\cqd

\begin{Lemma}\label{jotaxrenormaliza}
Let $f:[0,1]\setminus\{c\}\to[0,1]$ be a contracting Lorenz map. Write $v_{1}=f(c_-)$ and $v_{0}=f(c_+)$.
Given any $x$, $v_{0} < x < v_{1}$, let $J_{x}=(x_{1},x_{2})$ be the connected component of $[0,1]\setminus\alpha_{f}(x)$ that contains the critical point $c$. If $J_{x}\ne\emptyset$, then $J_{x}$ is a renormalization interval and $\partial J_{x}\subset\alpha_{f}(x)$.
\end{Lemma}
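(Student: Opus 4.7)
My plan is to exploit the backward-continuity property of $\alpha_f(x)$ from Remark~\ref{Remark12327890}: whenever $f^j$ is a local homeomorphism at $y$, one has $y\in\alpha_f(x)\iff f^j(y)\in\alpha_f(x)$. Applying this at the endpoints $x_1,x_2$ of $J_x$ will yield, in turn, that (i) $\partial J_x\subset\alpha_f(x)$, (ii) the forward orbits of $x_1$ and $x_2$ avoid both $c$ and $J_x$ (so $J_x$ is a nice interval), and (iii) $x_1,x_2$ are periodic with the appropriate return structure; once these are in hand, Lemma~\ref{Lemma09090863}(4) will conclude the proof.

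\textbf{Nice interval.} I would first observe that, since $\alpha_f(x)$ is closed and $J_x$ is a connected component of its complement, each endpoint $x_i$ lies in $\alpha_f(x)\cup\{0,1\}$; the hypothesis $v_0<x<v_1$ ensures that $x$ has pre-images on both branches of $f$, and iterating these back (using Remark~\ref{Remark12327890}) shows that if $0$ or $1$ is an endpoint of $J_x$ it also lies in $\alpha_f(x)$. Hence $\partial J_x\subset\alpha_f(x)$. Next, if there were a minimal $n\ge 1$ with $f^n(x_1)=c$, then $f^n$ would be a homeomorphism on a one-sided neighborhood of $x_1$, and Remark~\ref{Remark12327890} would force $c\in\alpha_f(x)$, contradicting $c\in J_x$. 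Thus $f^n$ is continuous at $x_1$ for every $n\ge 0$, and a second application of Remark~\ref{Remark12327890} yields $\co_f^+(x_1)\subset\alpha_f(x)$, hence disjoint from $J_x$. The symmetric argument handles $x_2$, so $J_x$ is a nice interval.

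\textbf{Periodicity and renormalization.} To make $x_1$ periodic I would exhibit, by Corollary~\ref{Corollary8388881b}, a component $I\in\cp_{J_x}$ with $x_1\in\partial I$. Because $x_1\in\alpha_f(x)$, a sequence of pre-images of $x$ accumulates at $x_1$ from outside $J_x$; I would pull these back through the monotone inverse branch of $f|_{(x_1,c)}$, whose image is $(f(x_1),v_1)$ and contains $x_1$ in its interior after a short case analysis of the position of $f(x_1)\in\alpha_f(x)\cap([0,x_1]\cup[x_2,1])$ (using $v_1>c>x_1$). This produces pre-images of $x$ inside $(x_1,c)$ accumulating at $x_1$, each lying in $J_x^*$ since it iterates to $x\in\overline{J_x}$; the component of $J_x^*$ adjacent to $x_1$ on the right therefore has $x_1$ on its boundary, and Corollary~\ref{Corollary8388881b} gives $x_1\in\per(f)$. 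The same argument yields $x_2\in\per(f)$, and Lemma~\ref{Lemma8388881a}(1)--(2) applied to the two components $(x_1,q)$ and $(q',x_2)$ produces $f^{\period(x_1)}([x_1,c))\subset[x_1,x_2]\supset f^{\period(x_2)}((c,x_2])$. Lemma~\ref{Lemma09090863}(4) then concludes that $J_x$ is a renormalization interval.

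\textbf{Main obstacle.} The trickiest point is in the third step: producing pre-orbits of $x$ accumulating at $x_1$ \emph{from inside} $(x_1,c)$ rather than from outside, where $x_1\in\alpha_f(x)$ directly places them. The pull-back through $f|_{(x_1,c)}$ only succeeds once we know $x_1\in\interior(f((x_1,c)))$, which demands care in locating $f(x_1)$; the hypothesis $v_0<x<v_1$ is what binds the two branches of $f$ together and supplies the geometric control. Should the pull-back prove awkward in some degenerate case, a plausible alternative is to invoke Lemma~\ref{LemmaHGFGH54} to obtain periodic points converging to $x_1$ and argue separately that $x_1$ itself must be periodic.
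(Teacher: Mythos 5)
Your first two steps (that $\partial J_x\subset\alpha_f(x)$ and that $J_x$ is a nice interval, both via Remark~\ref{Remark12327890}) are sound and essentially the paper's own argument. The genuine gap is in the third step, where you try to verify condition (4) of Lemma~\ref{Lemma09090863} directly by first proving $x_1,x_2\in Per(f)$. Three things break there. First, your assertion that the pre-images of $x$ you produce inside $(x_1,c)$ belong to $J_x^*$ rests on the claim ``it iterates to $x\in\overline{J_x}$'': nothing in the hypotheses places $x$ in $\overline{J_x}$ (the point $x$ is an arbitrary point of $(v_0,v_1)$ and may lie far from $J_x$, e.g.\ inside $\alpha_f(x)$), so membership in the return domain is not established. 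Second, even if you had points of $J_x^*$ accumulating at $x_1$ from the right, that does not produce a single component $I\in\cp_{J_x}$ with $x_1\in\partial I$ — infinitely many small components (with unbounded return times) could accumulate at $x_1$ without $x_1$ being a boundary point of any of them — and Corollary~\ref{Corollary8388881b} needs an actual component with $x_1$ on its boundary. Third, even granting $x_1,x_2\in Per(f)$ and adjacent components $(x_1,q)$ and $(q',x_2)$, Lemma~\ref{Lemma8388881a}(1)--(2) does not give $f^{\period(x_1)}([x_1,c))\subset[x_1,x_2]$ unless $q=c$ and $q'=c$; ruling out that the branch adjacent to $x_1$ terminates before $c$ is exactly the substance of renormalizability and is never addressed. (Your appeal to $v_1>c>x_1$ is also an unproved extra assumption, and the fallback via Lemma~\ref{LemmaHGFGH54} does not help: periodic points accumulating on $x_1$ do not make $x_1$ periodic.)

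The paper sidesteps all of this by arguing by contradiction: if $J_x$ were not a renormalization interval, Lemma~\ref{Lemma09090863} gives a component $I=(t_1,t_2)\in\cp_{J_x}$ with $c\notin\partial I$; by Lemma~\ref{Lemma8388881a}(3) its return iterate maps $I$ onto all of $J_x$, so the endpoint of $I$ lying strictly inside $J_x$ is carried homeomorphically (on a small neighborhood) onto a point of $\partial J_x\subset\alpha_f(x)$, and Remark~\ref{Remark12327890} then puts that endpoint in $\alpha_f(x)\cap J_x=\emptyset$, a contradiction. Periodicity of $x_1,x_2$ then comes for free from the renormalization structure rather than being proved first. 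If you want to salvage your direct approach, the missing ingredient is precisely this argument applied to the inner endpoint of a non-full branch; as written, the proposal does not prove the lemma.
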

\dem
Firstly observe that $\alpha_{f}(x)\supset\{0,1\}$ because, as $x \in (v_{0},v_{1})$, $0=\lim_{n\to\infty}(f|_{[0,c)})^{-n}(x)$ and $1=\lim_{n\to\infty}(f|_{(c,1]})^{-n}(x)$. Thus, $J_{x}$ is an open interval. Moreover, $\partial J_{x}\subset\alpha_{f}(x)$.

We claim that $J_{x}$ is a nice interval. Otherwise, consider $n$ the smallest integer $n>0$ such that $f^{n}(\partial J_{x})\cap J_{x}\ne\emptyset$. Let $i\in\{1,2\}$ be so that $f^{n}(x_{i})\in J_{x}$. As $f^{j}(x_{i})\notin J_{x}$, $\forall0\le j<n$, there is $\varepsilon>0$ such that $f^{n}|_{(x_{i}-\varepsilon,x_{i}+\varepsilon)}$ is a homeomorphism. From Remark~\ref{Remark12327890} it follows that $f^{n}(x_{i})\in\alpha_{f}(x)$, contradicting $\alpha_{f}(x)\cap J_{x}=\emptyset$. Thus, $J_{x}\in\cn$.

Now let us check that $J_{x}$ is a renormalization interval. Suppose it is not the case, it follows from Lemma~\ref{Lemma09090863} that one can find a connected component $I=(t_{1},t_{2})$ of the domain of the first return map to $J_{x}$ such that $c\notin\partial I$. By Lemma~\ref{Lemma8388881a}, $f^{k}(I)=\cf_{J_{x}}(I)=J_{x}$, where $k=R_{J_{x}}(I)$. Notice that $t_{1}$ or $t_{2}\in(x_{1},x_{2})$. Suppose that $t_{1}\in(x_{1},x_{2})$ (the case $t_{2}\in(x_{1},x_{2})$ is similar). As $c\notin\partial I$ (and $f^{j}(t_{1})\notin J_{x}$, $\forall0<j<k$), there is some small $\delta>0$ such that $f^{k}|_{(t_{1}-\delta,t_{1}+\delta)}$ is a homeomorphism. As $f^{k}(t_{1})=x_{1}\in\alpha_{f}(x)$, it follows from Remark~\ref{Remark12327890} that $t_{1}\in\alpha_{f}(x)$. But this is impossible as $\alpha_{f}(x)\cap J_{x}=\emptyset$.
\cqd

\begin{Corollary}\label{Cor111} Let $f:[0,1]\setminus\{c\}\to[0,1]$ be a $C^{2}$ non-flat contracting Lorenz map without periodic attractors. If $p\in Per(f)$, then either
$\overline{\co^{-}_{f}(p)\cap(0,c)}\ni c \in\overline{(c,1)\cap\co^{-}_{f}(p)}$ or the connected  component of $[0,1]\setminus\alpha_{f}(p)$, $J_{p}$, is non-empty and it is a renormalization interval.\end{Corollary}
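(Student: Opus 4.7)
The plan is to split on whether the critical point $c$ lies in $\alpha_f(p)$.

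If $c\in\alpha_f(p)$, I first note that $c$ is not in the domain of $f$ and hence cannot be periodic, so $p\ne c$. Lemma~\ref{Lemma01928373} then applies directly and yields $\overline{\co_f^-(p)\cap[0,c)}\ni c\in\overline{(c,1]\cap\co_f^-(p)}$; since any sequence in $[0,c)$ converging to $c$ is eventually in $(0,c)$, and similarly on the right, this is exactly the first alternative of the corollary.

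If instead $c\notin\alpha_f(p)$, then $\alpha_f(p)$ being closed makes the connected component $J_p$ of $[0,1]\setminus\alpha_f(p)$ containing $c$ a non-empty open interval, and I will show it is a renormalization interval by invoking Lemma~\ref{jotaxrenormaliza}. That lemma requires the base point to lie in $(v_0,v_1)$, which a general periodic $p$ need not satisfy, so the plan is to replace $p$ by a suitably chosen iterate on its orbit. Since $f$ has no periodic attractors, $f(x)>x$ on $(0,c)$ and $f(x)<x$ on $(c,1)$; if $\co_f^+(p)$ were confined to $(0,c)$ it would be strictly increasing under the increasing map $f|_{(0,c)}$, contradicting periodicity, and similarly on the right. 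Hence there exists $k\ge 1$ with $f^{k-1}(p)\in(0,c)$ and $p':=f^k(p)\in(c,1)$; since $f((0,c))=(0,v_1)$, we get $p'\in(c,v_1)\subset(v_0,v_1)$.

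The final step is to transfer the conclusion from $p'$ back to $p$. Since $p$ and $p'$ lie on the same periodic orbit of period $n$, their pre-orbits coincide: any $y\in f^{-m}(p)$ satisfies $f^{m+k}(y)=p'$, so $y\in\co_f^-(p')$, and the reverse inclusion follows from $p=f^{n-k}(p')$. Consequently $\alpha_f(p)=\alpha_f(p')$ and $J_p=J_{p'}$, so Lemma~\ref{jotaxrenormaliza} applied to $p'\in(v_0,v_1)$ gives that $J_p=J_{p'}$ is a renormalization interval (with $\partial J_p\subset\alpha_f(p)$). I do not anticipate a significant obstacle; the one nontrivial move is the orbit-translation trick that brings the problem into the hypotheses of Lemma~\ref{jotaxrenormaliza}, and the identity $\alpha_f(p)=\alpha_f(p')$ is immediate from periodicity.
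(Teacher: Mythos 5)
Your proof is correct and follows essentially the route the paper intends for this corollary: Lemma~\ref{Lemma01928373} handles the case $c\in\alpha_{f}(p)$, and Lemma~\ref{jotaxrenormaliza} applied to the component $J_{p}$ handles the case $c\notin\alpha_{f}(p)$ (where, since $p$ is periodic, $c\notin\alpha_{f}(p)$ is exactly the failure of the first alternative). Your orbit-translation step --- replacing $p$ by an iterate $p'=f^{k}(p)\in(c,v_{1})\subset(v_{0},v_{1})$ and observing $\alpha_{f}(p)=\alpha_{f}(p')$, hence $J_{p}=J_{p'}$ --- correctly supplies the hypothesis $v_{0}<x<v_{1}$ of Lemma~\ref{jotaxrenormaliza} that the paper leaves implicit; the only caveat is that this step (like the statement itself, really) tacitly assumes $p\in(0,1)$ rather than $p\in\{0,1\}$.
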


\begin{Notation}[$\cl_{Per}$, $\cl_{Sol}$ and $\cl_{Che}$]
\label{persolche}
Let $\cl_{Per}$ denote the collection of contracting Lorenz maps having periodic attractors. The set of all $\infty$-renormalizable contracting Lorenz maps will be denoted by $\cl_{Sol}$. Let $\cl_{Che}$ be the set of all contracting Lorenz maps that are Cherry maps.
\end{Notation}

Recall that $f$ is a {\em Cherry map} if it does not have a periodic or super-attractor and there is $\delta>0$  such that $c\in\omega_{f}(x_{\pm})$ for every $x\in(c-\delta,c+\delta)$.

\begin{Lemma}
\label{vizinhanca}
If $f:[0,1]\setminus\{c\}\to[0,1]$ is a $C^{2}$ non-flat contracting Lorenz map and $f \notin \cl_{Per}\cup\cl_{Sol} \cup \cl_{Che}$, then $c\in\alpha_{f}(p)$ for some $p\in Per(f)$.
\end{Lemma}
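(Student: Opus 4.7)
My plan is to argue the contrapositive: assuming $c\notin\alpha_f(p)$ for every $p\in\per(f)$, I will show that $f\in\cl_{Per}\cup\cl_{Sol}\cup\cl_{Che}$. Without loss of generality assume $f\notin\cl_{Per}$, so the hypotheses of Lemma~\ref{AcumulacaoDePer} hold, yielding two alternatives. If the first holds, i.e.\ there is $\delta>0$ with $c\in\omega_f(x)$ for every $x\in(c-\delta,c+\delta)$, then, because $f\notin\cl_{Per}$ rules out super-attractors, the Gambaudo--Tresser characterization stated right after the definition of $\cl_{Che}$ (from \cite{GT85}) forces $f\in\cl_{Che}$, contradicting $f\notin\cl_{Che}$.

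In the remaining case, the second alternative of Lemma~\ref{AcumulacaoDePer} gives periodic points accumulating $c$ from both sides; pick $p_n\nearrow c$ in $\per(f)\cap(0,c)$. By our standing assumption $c\notin\alpha_f(p_n)$, and Corollary~\ref{Cor111} then says the connected component $J_{p_n}=(a_n,b_n)$ of $[0,1]\setminus\alpha_f(p_n)$ containing $c$ is a non-empty renormalization interval. Since each $p_n$ is periodic we have $p_n\in\alpha_f(p_n)$, hence $p_n\notin J_{p_n}$ and $a_n\ge p_n$; thus $a_n\to c^-$. By Lemma~\ref{renormalinks}, distinct renormalization intervals of $f$ are strictly nested, so the $J_{p_n}$ form a chain under inclusion. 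If only finitely many among them were distinct, some common value $J=(a,b)$ would satisfy $a\ge p_n$ for infinitely many $n$, forcing $a\ge c$, impossible. So $\{J_{p_n}\}$ contains infinitely many distinct renormalization intervals, which by strict nesting produce an infinite decreasing chain, i.e.\ $f\in\cl_{Sol}$---again a contradiction.

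The main obstacle I foresee is the clean handling of the first alternative above: Lemma~\ref{AcumulacaoDePer} is phrased in terms of $\omega_f(x)$ whereas the Gambaudo--Tresser characterization is phrased in terms of the one-sided $\omega_f(x_\pm)$. For $x\notin\co_f^-(c)$ these coincide, and for $x\in\co_f^-(c)$ the paper's convention for $\omega_f(x)$ already folds in $\omega_f(c_-)\cup\omega_f(c_+)$, so the passage does go through, but some care is required at pre-images of~$c$. The second-alternative argument, by contrast, is essentially combinatorial once Corollary~\ref{Cor111} and the nesting structure from Lemma~\ref{renormalinks} are in hand, and its only real input is the observation that a periodic point always lies in its own $\alpha$-limit set.
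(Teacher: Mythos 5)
Your proof is correct and reaches the conclusion by a genuinely different route than the paper's. The paper does not argue by contrapositive through Lemma~\ref{AcumulacaoDePer}: it takes $I=(0,1)$ if $f$ is non-renormalizable, or the smallest renormalization interval $I=(a,b)$ otherwise (which exists since $f\notin\cl_{Sol}$ and renormalization intervals are nested by Lemma~\ref{renormalinks}); it then uses Lemma~\ref{noitenoite} for the return map to $I$, together with $f\notin\cl_{Per}\cup\cl_{Che}$, to produce a single periodic point $p\in I$, notes $p\in\alpha_f(p)$, and observes that if $c\notin\alpha_f(p)$ then Corollary~\ref{Cor111} would yield a renormalization interval $J_p$ strictly inside $I$, contradicting minimality (or non-renormalizability). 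You instead extract from Lemma~\ref{AcumulacaoDePer} a sequence of periodic points $p_n\nearrow c$, and convert them, via $p_n\in\alpha_f(p_n)$ and Corollary~\ref{Cor111}, into renormalization intervals $J_{p_n}=(a_n,b_n)$ with $a_n\ge p_n\to c$, hence infinitely many distinct ones, i.e.\ $f\in\cl_{Sol}$. Both arguments pivot on Corollary~\ref{Cor111}; yours trades the smallest-renormalization-interval bookkeeping for the accumulation of periodic points at $c$ and needs no minimal interval, while the paper's gets away with a single periodic point.

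One caveat on your first case: the justification that the passage from $c\in\omega_f(x)$ for all $x\in(c-\delta,c+\delta)$ to the Gambaudo--Tresser condition $c\in\omega_f(x_\pm)$ works because the convention ``folds in'' $\omega_f(c_-)\cup\omega_f(c_+)$ points the wrong way: at a preimage of $c$ the characterization requires $c$ to lie in \emph{both} one-sided limit sets, whereas the hypothesis only gives the union. A clean repair is to note that this alternative forbids periodic points in $(c-\delta,c+\delta)$ (a periodic orbit is its own $\omega$-limit and cannot contain $c$), which is exactly the configuration the paper treats as the Cherry case in the first part of the proof of Theorem~\ref{baciastopologicas} (via a renormalization interval and Lemma~\ref{atcher}). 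In any event, the paper's own proof of the present lemma makes the same identification when it discards the second alternative of Lemma~\ref{noitenoite} using $f\notin\cl_{Che}$, so your treatment is at the same level of rigor as the text.
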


\dem
If $f$ is not renormalizable let $I=(0,1)$, otherwise let $I=(a,b)$ be the smallest renormalization interval of $f$ (we are assuming that $f\notin \cl_{Per}\cup\cl_{Sol} \cup \cl_{Che}$).
By lemma \ref{noitenoite} we can pick a point $p \in (a,b)$ that is periodic. So, we have that $ p \in \alpha_{f}(p)$. As a consequence, it follows from Corollary~\ref{Cor111} that $\overline{\co_{f}^{-}(p)\cap(0,c)}\ni c \in\overline{(c,1)\cap \co_{f}^{-}(p)}$. Indeed, if the pre-orbit of $p$ is not accumulating on $c$ by both sides, then $J_{p}\ne\emptyset$ is a renormalization interval. In this case, as $p\in \alpha_{f}(p)$, we get $J_{p}\subsetneqq(a,b)$. This is an absurd,  as $(a,b)$ is the smallest renormalization interval.

\cqd

\begin{Proposition}[Long branches lemma]\label{Proposition008345678}Let $f:[0,1]\setminus\{c\}\to[0,1]$ be a $C^{2}$ non-flat contracting Lorenz map.
Suppose that $f$ does not admit a periodic attractor. If $\alpha_{f}(p)\ni c \notin\omega_{f}(p)$ for some $p\ne c$, then there exists $\varepsilon>0$ such that $\overline{ \co^{-}_{f}(x)\cap(0,c)} \ni c \in \overline{ \co^{-}_{f}(x)\cap(c,1)}$ for every $0<|x-c|<\varepsilon$. Furthermore, $f$ is not $\infty$-renormalizable, $f$ is not a Cherry map and $Per(f)$ $\cap$ $(c-\delta,c)$ $\ne$ $\emptyset$ $\ne$ $Per(f)$ $\cap$ $(c,c+\delta)$, $\forall\delta>0$.
\end{Proposition}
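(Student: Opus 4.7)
The strategy is to first normalize $p$ so that its entire forward orbit avoids a neighborhood of $c$, and then to exploit the pre-orbit density forced by $c\in\alpha_f(p)$ through the ``long branch'' transport in Lemma~\ref{Lemma549164}. Since $c\notin\omega_f(p)$, some forward iterate $p':=f^N(p)$ satisfies $c\notin\overline{\co_f^+(p')}$; because $\alpha_f(f^k(q))\supset\alpha_f(q)$ for every $k\ge 0$, we retain $c\in\alpha_f(p')$, and after possibly one further iterate we may assume $p'\in(0,1)$. Corollary~\ref{Corolary989982} then gives directly that $f$ is not $\infty$-renormalizable, establishing~(2).

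Let $(p_1,p_2)$ denote the connected component of $[0,1]\setminus\overline{\co_f^+(p')}$ that contains $c$ and set $\varepsilon:=\min\{c-p_1,\,p_2-c\}>0$. By Lemma~\ref{Lemma01928373}, $\co_f^-(p')$ accumulates on $c$ from both sides. Fix $x$ with $0<|x-c|<\varepsilon$, so $x\in(p_1,p_2)$. Given any target one-sided neighborhood $U$ of $c$, I would pick $y\in\co_f^-(p')\cap U$ together with $\delta>0$ small enough that the adjacent half-interval $(y-\delta,y)$ (used when $x>c$) or $(y,y+\delta)$ (used when $x<c$) still lies in $U$. Lemma~\ref{Lemma549164} then yields $\bigcup_{j\ge0}f^j(y-\delta,y)\supset(c,p_2)\ni x$ in the first case or $\bigcup_{j\ge0}f^j(y,y+\delta)\supset(p_1,c)\ni x$ in the second, so in either case a point of $\co_f^-(x)$ lies in $U$. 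Shrinking $U$ on either side of $c$ gives~(1).

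For (3) and (4) the dichotomy of Lemma~\ref{AcumulacaoDePer} is the right tool. Its first alternative would provide some $\delta>0$ with $c\in\omega_f(z)$ for every $z\in(c-\delta,c+\delta)$; but $c\in\alpha_f(p')$ supplies some $y\in\co_f^-(p')\cap(c-\delta,c+\delta)$, whence $c\in\omega_f(y)=\omega_f(p')$, contradicting $c\notin\omega_f(p')$. Hence the second alternative holds and yields~(4). For~(3), were $f$ a Cherry map, the characterization recalled in Section~\ref{MainResults} would provide a neighborhood $J\ni c$ with $c\in\omega_f(z_\pm)$ for every $z\in J$; since $\co_f^-(c)$ is countable while $\co_f^-(p')$ accumulates on $c$, one may take $y\in J\cap\co_f^-(p')\setminus\co_f^-(c)$ and obtain $\omega_f(y_\pm)=\omega_f(y)=\omega_f(p')\ni c$, the same contradiction.

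The hard part is the bookkeeping in~(1): one must produce a \emph{single} $\varepsilon$ (depending only on the gap $(p_1,p_2)$) that works uniformly for every $x$ in the two-sided punctured neighborhood of $c$, and match the side of $c$ on which the auxiliary pre-image $y$ is taken with the correct half-interval $(y-\delta,y)$ or $(y,y+\delta)$ so that the image covered by Lemma~\ref{Lemma549164} contains the side of $c$ on which $x$ lies. Once this matching is recorded, the other items are essentially one-line consequences of the structural lemmas already proved.
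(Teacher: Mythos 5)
Your proof is correct, and for the main statement it follows essentially the paper's own route: both arguments rest on Lemma~\ref{Lemma549164}, applied to pre-images $y\in\co^{-}_{f}(p)$ accumulating on $c$ (supplied by Lemma~\ref{Lemma01928373}) together with the gap $(p_{1},p_{2})$ of $\overline{\co^{+}_{f}(p)}$ around $c$; you argue directly that every $x$ in the gap has pre-images in any one-sided neighborhood of $c$, whereas the paper argues by contradiction with the set $W$ of bad points, but the content and the resulting $\varepsilon$ are the same, item (2) is handled identically via Corollary~\ref{Corolary989982}, and item (3) by the same observation that points of $\co^{-}_{f}(p)$ near $c$ have $\omega$-limit equal to $\omega_{f}(p)\not\ni c$. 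Where you genuinely diverge is the last item: the paper takes the nice intervals $J_{n}$ given by the component of $(0,1)\setminus\bigcup_{j<n}f^{-j}(p)$ containing $c$, notes that $\partial J_{n}$ shrinks to $c$ because $c\in\alpha_{f}(p)$, and extracts periodic points on both sides from Lemma~\ref{LemmaHGFGH54}; you instead invoke the dichotomy of Lemma~\ref{AcumulacaoDePer} and rule out its first alternative using $c\notin\omega_{f}(p)$. Your route is shorter but leans on the heavier Lemma~\ref{AcumulacaoDePer}, while the paper's is more constructive, exhibiting periodic points in explicit nice intervals shrinking to $c$. Two minor points: your justification for choosing $y\in J\cap\co^{-}_{f}(p')\setminus\co^{-}_{f}(c)$ by countability is not an argument as stated (both sets are countable), but the choice is immediate because $c\notin\overline{\co^{+}_{f}(p')}$ forces $\co^{-}_{f}(p')\cap\co^{-}_{f}(c)=\emptyset$; and the normalization $p'=f^{N}(p)$ is only needed, and strictly speaking only problematic, when $p\in\co^{-}_{f}(c)$, an edge case the paper glosses over as well, since for $p\notin\co^{-}_{f}(c)$ one can simply take $p'=p$.
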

\dem
Suppose by contradiction that the main statement is not true. That is, $c\in\overline{W}$, where $$W=\{x\,;\,c\notin  \overline{ \co^{-}_{f}(x)\cap(0,c)} \text{ or }c\notin \overline{ \co^{-}_{f}(x)\cap(c,1)}\}.$$

By Lemma~\ref{Lemma01928373}, if $\co^{-}_{f}(x)$ accumulates on one side of $c$, then  $\co^{-}_{f}(x)$ will accumulate on $c$ by both sides. Then, $W=\{x; c \notin \alpha_f(x)\}$.

Let $(p_{1},p_{2})$ be the connected component of $[0,1]\setminus\overline{\co_{f}^{+}(p)}$ that contains $c$.  
Choose a sequence $\co_{f}^{-}(p)\ni y_{n}\to c$. As $f$ does not have a periodic attractor, taking a subsequence if necessary, we get by Lemma~\ref{Lemma549164} that 
\begin{equation}\label{EQ2345654a}
\bigcup_{j\ge0}f^{j}\big((y_{n},y_{n}+\varepsilon)\big)\supset (p_{1},c)\,\,\forall\varepsilon>0,\forall\,n>0
\end{equation} and that
\begin{equation}\label{EQ2345654b}
\bigcup_{j\ge0}f^{j}\big((y_{n}-\varepsilon,y_{n})\big)\supset (c,p_{2})\,\,\forall\varepsilon>0,\forall\,n>0.
\end{equation}

As $c$ is accumulated by $W$, say by the left side (the other case is similar), choose some $q\in(p_{1},c)\cap W$. It follows from (\ref{EQ2345654a}) that $\bigcup_{j\ge0}f^{j}\big((y_{n},c)\big)\supset (p_{1},c)\ni q$, $\forall\,n>0$ (we are taking $\varepsilon=|y_{n}-c|$ in (\ref{EQ2345654a})). Thus, there is a sequence $y_{n}<q_{n}<c$ and $i_{n}\to\infty$ such that $f^{i_{n}}(q_{n})=q$, $\forall\,n \in \NN$. This implies that $c\in\alpha_{f}(q)$. But this is an absurd because $q\in W$.

Therefore, we can not have $c\in\overline{W}$ and this proves the main part of the Proposition. By Corollary~\ref{Corolary989982}, $f$ cannot be $\infty$-renormalizable. As $\omega_{f}(y)=\omega_{f}(p)\not\ni c$ for all $y\in\co^{-}_{f}(p)$, it follows that $f$ cannot be a Cherry map. Finally, let us show that $Per(f)\cap(c-\delta,c)\ne\emptyset\ne Per(f)\cap(c,c+\delta)$, $\forall\delta>0$. For this, let $n\ge \period(p)$ and $J_{n}$ be the connected component of $(0,1)\setminus\bigcup_{j=0}^{n-1}f^{-j}(p)$ containing the critical point $0$. It is easy to see that $J_{n}$ is a nice interval, $\forall\,n \in \NN$. Also, as $\alpha_{f}(p)\ni c$, $\forall \delta>0$, $\exists n$ such that $\partial J_n \subset B_\delta(c)$. As it follows from Lemma~\ref{LemmaHGFGH54} that  $Per(f)\cap\overline{J_{n}}\cap(-\infty,c)\ne\emptyset\ne(c,+\infty)\cap\overline{J_{n}}\cap Per(f)$, $\forall\,n \in \NN$, we conclude the proof.

\cqd

Observe that it is also true that $f$ being a Cherry map implies that $Per(f) \cap (u,v)=\emptyset$,  $(u,v)$ being the last interval of renormalization.

\section{The structure of the Topological Attractors}
\label{TheSofTA}

We now study the topological attractors for the contracting Lorenz maps. The main result is Theorem \ref{cicloint}, from which we obtain (Section~\ref{ProofABCD}) the main theorems: Theorem \ref{baciastopologicas}, \ref{teoalfalim} and \ref{atratortopologico}.

In this Section, $f$ will be a $C^{2}$ non-flat contracting Lorenz map $f:[0,1]\setminus\{c\}\to[0,1]$.
\vspace{0.05cm}

\begin{Lemma}
\label{216}
If $f$ does not have periodic attractors, then 
$$
\alpha_f(x) \ni c \Rightarrow \alpha_f(x) \supset \Omega(f)
$$
\end{Lemma}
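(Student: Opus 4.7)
The plan is to show that the complement $[0,1]\setminus\alpha_f(x)$ is a disjoint union of wandering intervals; since wandering intervals consist entirely of wandering points, this immediately gives $\Omega(f)\cap([0,1]\setminus\alpha_f(x))=\emptyset$, i.e., $\Omega(f)\subset\alpha_f(x)$.

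To implement this I would first verify a backward invariance property of $\alpha_f(x)$: whenever $y\in\alpha_f(x)$ and $z\in f^{-1}(y)$ with $z\ne c$, we have $z\in\alpha_f(x)$. Indeed, $f$ is a local diffeomorphism at $z$, so any sequence $y_n=f^{-k_n}(x)\to y$ with $k_n\to\infty$ lifts through the local inverse to a sequence $z_n\to z$ with $z_n=f^{-(k_n+1)}(x)$. Applying this with $y=c$ (which lies in $\alpha_f(x)$ by hypothesis), and noting that for any $z\in f^{-1}(c)\setminus\{c\}$ the map $f$ carries a neighborhood of $z$ diffeomorphically onto a two-sided neighborhood of $c$, I obtain $f^{-1}(c)\setminus\{c\}\subset\alpha_f(x)$; iterating yields $\co_f^-(c)\subset\alpha_f(x)$.

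Next, let $(a,b)$ be any connected component of the open set $[0,1]\setminus\alpha_f(x)$. Since $\{c\}\cup\co_f^-(c)\subset\alpha_f(x)$, the interval $(a,b)$ is disjoint from $\bigcup_{n\ge0}f^{-n}(c)$, so $f^n|_{(a,b)}$ is a homeomorphism for every $n\ge 1$ and $(a,b)$ is a homterval. By Lemma~\ref{homtervals} and the absence of periodic attractors, either $(a,b)\subset\co_f^-(\per(f))$ or $(a,b)$ is a wandering interval. The first option is ruled out by a countability argument: $\per(f)$ is countable (each $\fix(f^n)$ is finite, because $f$ admits no interval of trivial dynamics), and each point has at most two preimages under $f$, so $\co_f^-(\per(f))$ is countable and cannot contain a nondegenerate open interval. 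Hence $(a,b)$ is a wandering interval. A component of the form $[0,b)$ or $(a,1]$ is excluded in the same spirit: it would contain the fixed point $0$ or $1$ in its image, so it cannot be wandering, and the Homterval Lemma together with the countability of $\co_f^-(\per(f))$ gives a contradiction; consequently $\{0,1\}\subset\alpha_f(x)$.

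Combining these steps yields $\Omega(f)\subset\alpha_f(x)$. I expect the main technical point to be the backward invariance at $c$, namely the inclusion $\co_f^-(c)\subset\alpha_f(x)$, which requires a careful use of local inverse branches of $f$ at each noncritical preimage of $c$ together with the observation that such a branch maps onto a two-sided neighborhood of $c$; the remainder of the argument is a direct application of the Homterval Lemma and a countability observation.
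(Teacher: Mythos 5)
Your argument is correct, but it is organized quite differently from the paper's. The paper works locally at each non-wandering point $y$: given a neighborhood $T$ of $y$, it uses the non-wandering property to pick $z\in T$ (chosen outside $\co_f^{-}(c)\cup\co_f^{-}(\per(f))$) with $f^{j}(z)\in T$, invokes the homterval lemma to find a first iterate $t$ with $c\in f^{t}\big((z,f^{j}(z))\big)$, and then pulls the preimages of $x$ accumulating at $c$ back into $(z,f^{j}(z))\subset T$, getting $\co_f^{-}(x)\cap T\ne\emptyset$ directly. You instead argue globally: backward invariance of $\alpha_f(x)$ gives $\co_f^{-}(c)\subset\alpha_f(x)$, so every component of the open set $[0,1]\setminus\alpha_f(x)$ is a homterval, and Lemma~\ref{homtervals} together with the emptiness of $\BB_0(f)$ and the countability of $\co_f^{-}(\per(f))$ forces each component to be a wandering interval, all of whose points are wandering. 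Your route buys a stronger structural statement (the complement of $\alpha_f(x)$ is a disjoint union of wandering intervals, which anticipates Corollary~\ref{compon} and Theorem~\ref{teoalfalim}), at the price of the extra backward-invariance step and the separate treatment of the boundary components at $0$ and $1$; the paper's route is shorter and purely local. One justification in your write-up should be repaired: the finiteness of $\fix(f^{n})$ does not follow from the absence of intervals of trivial dynamics alone (that hypothesis does not exclude, say, a Cantor set of fixed points of $f^{n}$); it follows from the absence of periodic attractors, because two adjacent fixed points of $f^{n}$ lying in a single monotonicity branch would bound a gap on which $f^{n}-\mathrm{id}$ has constant sign, producing a one-sidedly attracting periodic orbit. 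With that correction $\per(f)$, and hence $\co_f^{-}(\per(f))$, is countable, which is exactly what you need; note that the paper's own proof implicitly relies on the same fact when it says ``we may assume $z\notin\co_f^{-}(c)\cup\co_f^{-}(\per(f))$''.
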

\dem
Let $x$ such that $\alpha_f(x) \ni c$ and given $y \in \Omega(f)$ consider any neighborhood $T$ of $y$. As $y$ is non-wandering, there is $z \in T$, (we may assume $z \not \in \co^-_f (c) \cup \co^-_f(Per(f))$) and $j \in \NN$ such that $f^j(z) \in T$. It follows from the homterval lemma that there exists a smallest $t \in \NN$ such that $f^t((z,f^j(z))) \ni c$. 

As $x$ is such that $\alpha_f(x) \ni c$, we have that $\co^-_f (x) \cap f^t(z,f^j(z)) \ne \emptyset$ and, then, $\co^-_f (x) \cap T \supset \co^-_f (x) \cap (z,f^j(z)) \ne \emptyset$. As the chosen neighbourhood $T$ can be taken as small as wanted, we conclude that $y \in \alpha_f(x)$.
 
\cqd

For a Lorenz map $f \notin \cl_{Per}\cup\cl_{Sol} \cup \cl_{Che}$ as in \ref{persolche}, let us define $$\EE = \{x \in (0,1); \alpha_{f}(x) \ni c\}.$$
By Lemma~\ref{vizinhanca} and Proposition~\ref{Proposition008345678}, $\EE$ contains a neighborhood of $c$. In the next lemma, consider $(a,b) \subset \EE$ to be the maximal interval containing $c$.

\begin{Lemma}
$\exists \ell$ and $r>0$ such that $f^\ell((a,c))\subset(a,b)\supset f^r((c,b))$
\end{Lemma}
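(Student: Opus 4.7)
The plan is to find $\ell\ge 1$ with $f^{\ell}((a,c))\subset(a,b)$; the symmetric argument on $(c,b)$ will produce $r$. To begin, I would show that $(a,c)$ is not a homterval. Since it is a one-sided neighborhood of $c$, Lemma~\ref{LemmaWI} prevents it from being a wandering interval; also, $f\notin\cl_{Per}$ means $\BB_{0}(f)=\emptyset$, and $\co^{-}_f(\per(f))$ contains no intervals. Thus the homterval lemma (Lemma~\ref{homtervals}) forces $(a,c)$ not to be a homterval, so there is a minimal $\ell\ge 1$ such that $c\in f^{\ell}((a,c))$. By this minimality, $c\notin f^{j}((a,c))$ for $0\le j<\ell$, so $f^{\ell}|_{(a,c)}$ is continuous and strictly increasing (a composition of restrictions of $f$ to the monotonicity intervals $[0,c)$ and $(c,1]$). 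Its image $(A,C):=f^{\ell}((a,c))$ is then an open interval, and $c\in(A,C)$ forces $A<c<C$.

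The crux is to verify that $f^{\ell}((a,c))\subset\EE$. For any $y\in(a,c)\subset\EE$ put $z=f^{\ell}(y)$; applying $f^{\ell}$ to any $w\in\co^{-}_f(y)$ yields $f^{\ell}(w)\in\co^{-}_f(z)$, so $\co^{-}_f(y)\subset\co^{-}_f(z)$ and hence $\alpha_f(y)\subset\alpha_f(z)$. Since $c\in\alpha_f(y)$, we conclude $c\in\alpha_f(z)$, i.e.\ $z\in\EE$. Therefore $(A,C)\subset\EE$.

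Finally, $(A,C)$ is a connected open subinterval of $\EE$ containing $c$, so the maximality of $(a,b)$ as the largest such interval gives $f^{\ell}((a,c))=(A,C)\subset(a,b)$. The same reasoning applied to $(c,b)$ produces $r\ge 1$ with $f^{r}((c,b))\subset(a,b)$. I expect the main subtle step to be the preorbit inclusion $\co^{-}_f(y)\subset\co^{-}_f(f^{\ell}(y))$: it transports the $\alpha$-limit property from $y$ to $f^{\ell}(y)$ and shows that $\EE$ is forward-invariant under $f^{\ell}|_{(a,c)}$; once that is in hand, the maximality of $(a,b)$ closes the argument immediately.
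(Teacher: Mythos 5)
Your proof is correct and takes essentially the same approach as the paper: in both arguments the crux is the forward invariance of $\EE$ (the paper phrases it as invariance of $[0,1]\setminus\EE$) combined with the maximality of the interval $(a,b)$. The differences are minor — the paper finds the first iterate whose image meets $(a,b)$ via Lemma~\ref{Remark98671oxe} and derives a contradiction from the endpoint $b\in\overline{[0,1]\setminus\EE}$ landing in the open image, while you find the first iterate whose image contains $c$ directly from the homterval lemma (Lemma~\ref{homtervals} plus Lemma~\ref{LemmaWI}) and spell out, via the pre-orbit inclusion, the invariance of $\EE$ that the paper only asserts.
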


\dem

As $f((a,c))$ has non-empty interior, it follows from Lemma~\ref{Remark98671oxe} that some iterates of its points will intersect the neighborhood $(a,b)$ of the critical point. Take the minimum $k$ such that $f^k((a,c))\cap (a,b) \ne \emptyset$.

Suppose $f^k((a,c))\not\subset (a,b)$. For example, $b \in  f^k((a,c))$. As $b\in\overline{[0,1]\setminus\EE}$ and $[0,1]\setminus\EE$ is invariant, we get that $f^k((a,c))\cap([0,1]\setminus\EE)\ne\emptyset$ and so, $(a,c)\cap([0,1]\setminus\EE)\ne\emptyset$, which is an absurd.

\cqd

For a Lorenz map $f \notin \cl_{Per}\cup\cl_{Sol} \cup \cl_{Che}$ and $\ell$ and $r$ as given by the former lemma, we define 
\begin{equation}\label{regiaoarmadilha}
\UU = (a,b) \cup \bigg(\bigcup_{j=1}^{\ell-1}f^j((a,c))\bigg) \cup \bigg(\bigcup_{j=1}^{r-1}f^j((c,b))\bigg)\ni c
\end{equation}
and we have that $\UU$ is a trapping region, that is, $f( \UU \setminus \{c\}) \subset \UU$.

It's worth observing that given a non-renormalizable Lorenz map $f$ having a trapping region $\cu$, any point in $(0,1)$ eventually reaches this region when iterated by $f$. Also, the non-wandering set within $(0,1)$ is necessarily inside $\cu$.

\begin{Lemma} Let $f$ be a non-renormalizable Lorenz map  defined in $[0,1]\setminus\{c\}$ and  $\UU \subset [0,1]\setminus\{c\}$ so that $f(\UU)\subset \UU$, then $\forall x \in [0,1]\setminus\{c\}$ $\exists k >0$ such that $f^k(x) \in \UU$.  
\end{Lemma}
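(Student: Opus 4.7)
The plan is to argue by contradiction and exhibit a renormalization interval, violating the non-renormalizability hypothesis. Suppose there is $x\in[0,1]\setminus\{c\}$ with $f^{k}(x)\notin\UU$ for every $k\ge1$, and let $K=\overline{\co_{f}^{+}(x)}$. In the setting in which this lemma is applied, $\UU$ is the trapping region of equation~(\ref{regiaoarmadilha}) and in particular contains a punctured neighbourhood $(a,c)\cup(c,b)$ of the critical point. Then the closed set $K\subset[0,1]\setminus\UU$ cannot accumulate on $c$; hence $c\notin K$, and the connected component $I=(p,q)$ of $[0,1]\setminus K$ containing $c$ is a non-empty open interval with $I\supset(a,b)$ and $p,q\in K$.

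The first task would be to show that $I$ is a nice interval. Since $f$ is continuous on $[0,1]\setminus\{c\}$ and $c\notin K$, we have $f(K)\subset K$. The forward iterates of $p$ and $q$ (taking one-sided limits whenever an intermediate iterate meets $c$) therefore remain in $K$ and avoid $I$, so $\co_{f}^{+}(\partial I)\cap I=\emptyset$. The argument parallels the niceness part of the proof of Lemma~\ref{jotaxrenormaliza}, with $K$ in the role of $\alpha_{f}(x)$.

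The central task is to upgrade $I$ to a renormalization interval. By Lemma~\ref{Lemma09090863} this amounts to showing $\overline{I}^{*}=[p,c)\cup(c,q]$; the periodicity of $p$ and $q$ then follows from Corollary~\ref{Corollary8388881b}. Since $\UU\subset I$ on a neighbourhood of the critical point and $f(\UU)\subset\UU$, every point in $(p,c)\cup(c,q)$ that lands in $\UU$ remains there and in particular remains in $I$; this gives that every such point returns to $\overline{I}$. By Lemma~\ref{Lemma8388881a}, the first-return map then has two monotone branches adjacent to $c$ that map $[p,c)$ and $(c,q]$ onto $[p,q]$, which is exactly the condition for $I$ to be a renormalization interval.

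The main obstacle is ruling out any additional component of $\overline{I}^{*}$ not adjacent to $c$. Such a hypothetical component $(t_{1},t_{2})\subset(p,c)$ would, by Lemma~\ref{Lemma8388881a}, be mapped homeomorphically onto $\overline{I}$ by some iterate $f^{R}$, with $f^{R}(t_{1})=p$ and $f^{R}(t_{2})=q$. Excluding this requires combining the one-sided continuity of $f^{R}$ at $t_{1}$, the forward-invariance of $K$, and the fact that $K$ accumulates on $p$ strictly from the side $[0,p]$; otherwise, pulling back an approaching sequence inside $K$ through the monotone branch $f^{R}|_{(t_{1},t_{2})}$ would produce points of $K$ in $I$, a contradiction. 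Once this exclusion is in place, Lemma~\ref{Lemma09090863}(2) shows that $I$ is a renormalization interval, contradicting the hypothesis and completing the proof.
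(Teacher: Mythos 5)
There is a genuine gap, and it sits exactly at the step you yourself flag as ``the main obstacle''. (For calibration: the paper states this lemma without any proof, so you are only being measured against what a correct argument would require.) Working with $K=\overline{\co_{f}^{+}(x)}$ is the source of the trouble. First, your claim that ``the first-return map then has two monotone branches adjacent to $c$ that map $[p,c)$ and $(c,q]$ onto $[p,q]$'' does not follow from Lemma~\ref{Lemma8388881a}: that lemma describes the \emph{image} of a return-domain component once the component is given, but says nothing about how far the components adjacent to $c$ extend. You only know that points of $(a,c)\cup(c,b)\subset\UU$ return; for points of $(p,a]\cup[b,q)$ you have no information about their own forward orbits (they are merely not in the closure of the orbit of $x$), and showing that they enter $\UU$, hence return to $I$, is essentially the statement you are trying to prove, restricted to $I$. (Also, ``lands in $\UU$ \ldots\ in particular remains in $I$'' is not literally correct, since $\UU\not\subset I$.) Second, the proposed exclusion of a full branch $(t_1,t_2)$ with $c\notin\partial(t_1,t_2)$ cannot work as sketched: the mechanism that makes this work in Lemma~\ref{jotaxrenormaliza} is the backward invariance of $\alpha_{f}(x)$ along monotone branches (Remark~\ref{Remark12327890}), and $K=\overline{\co_{f}^{+}(x)}$ has no such property --- preimages of points of a forward orbit closure need not lie in it. Moreover, the points of $K$ that accumulate on $p$ lie in $[0,p]$, i.e.\ outside $(p,q)=f^{R}\big((t_1,t_2)\big)$, so they cannot even be pulled back through the branch; hence no ``points of $K$ in $I$'' contradiction is produced. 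So the central upgrade from ``nice interval'' to ``renormalization interval'' is unproved, and the tool you invoke for it does not apply to your choice of invariant set.

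The natural repair, entirely within the paper's toolkit, is to replace forward orbit closures by $\alpha$-limits. Since $f$ has no periodic attractors (implicit in the setting where $\UU$ as in~(\ref{regiaoarmadilha}) is defined), $f(y)>y$ on $(0,c)$, $f(y)<y$ on $(c,1)$ and $f(c_{-})>c>f(c_{+})$, so by the argument of Lemma~\ref{Lemma545g55} every orbit in $(0,1)$ crosses $c$ and therefore enters $(v_{0},v_{1})=(f(c_{+}),f(c_{-}))$. If some $z\in(v_{0},v_{1})$ had $c\notin\alpha_{f}(z)$, then $J_{z}\ne\emptyset$ and Lemma~\ref{jotaxrenormaliza} would give a renormalization interval, contradicting non-renormalizability; hence $(v_{0},v_{1})\subset\EE$, so by maximality $(v_{0},v_{1})\subset(a,b)$, and every orbit enters $(a,b)\setminus\{c\}\subset\UU$. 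This is where the non-renormalizability hypothesis genuinely enters, through $\alpha$-limits rather than through the gap of $\overline{\co_{f}^{+}(x)}$; your write-up, by contrast, never brings the hypotheses to bear in a way that closes the argument.
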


\begin{Corollary}
\label{sobrealfa}
For $f \notin \cl_{Per}\cup\cl_{Sol} \cup \cl_{Che}$,  we have that 
$\alpha_{f}(x) \supset \Omega(f)$ $ \forall x \in \UU$.
\end{Corollary}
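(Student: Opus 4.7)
The plan is to reduce the claim directly to Lemma~\ref{216} by checking that every $x\in\UU$ satisfies $c\in\alpha_f(x)$. Once this is established, the hypothesis $f\notin\cl_{Per}$ provides exactly the ``no periodic attractors'' assumption needed for Lemma~\ref{216}, which then immediately delivers $\alpha_f(x)\supset\Omega(f)$.

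To verify $c\in\alpha_f(x)$ for $x\in\UU$, I would split according to the decomposition (\ref{regiaoarmadilha}). If $x\in(a,b)$, this is immediate: the discussion preceding the definition of $\UU$ (using Lemma~\ref{vizinhanca} and Proposition~\ref{Proposition008345678}) establishes $(a,b)\subset\EE$, so $c\in\alpha_f(x)$ by the very definition of $\EE$. Otherwise $x=f^j(y)$ for some $y\in(a,c)\cup(c,b)\subset(a,b)\subset\EE$ and some $1\le j<\ell$ or $1\le j<r$. The key observation is the functorial inclusion $\alpha_f(f^j(y))\supset\alpha_f(y)$: any sequence $z_k\in f^{-n_k}(y)$ with $n_k\to\infty$ can be extended forward via the well-defined segment $y,f(y),\dots,f^j(y)=x$, showing $z_k\in f^{-(n_k+j)}(x)$, so every accumulation point of $\alpha_f(y)$, and in particular $c$, lies in $\alpha_f(x)$.

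No serious obstacle arises. The only minor point requiring care is that the forward segment $y,f(y),\dots,f^{j-1}(y)$ avoids the critical point, so that the iterate $f^j$ is unambiguously defined at $y$. This follows from the very choice of $\ell$ (respectively $r$) in the preceding lemma: if $c$ lay in $f^k((a,c))$ for some $k<\ell$, then $f^k((a,c))\cap(a,b)\ne\emptyset$ prematurely, contradicting the minimality in the proof of that lemma; the symmetric statement handles $(c,b)$ and $r$. With this verified, the two-line argument above closes the proof.
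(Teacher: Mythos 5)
Your proof is correct and takes essentially the same route as the paper: its own proof consists of invoking Lemma~\ref{216} together with the (merely asserted) inclusion $\UU\subset\EE$. Your verification that the forward images $f^j((a,c))$ and $f^j((c,b))$ lie in $\EE$, via $\alpha_f(f^j(y))\supset\alpha_f(y)$ and the minimality of $\ell$ and $r$, simply makes explicit the step the paper leaves implicit.
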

\dem
As Lemma \ref{216} states that $\alpha_f(x) \supset \Omega(f)$ for any $x$ such that $c \in \alpha_f(x)$, this holds for any $x$ in $\UU$, as this is contained in $\EE$.
\cqd

\begin{Lemma}
\label{0967809hp}
For $f \notin \cl_{Per}\cup\cl_{Sol} \cup \cl_{Che}$, if $\alpha_f(x) \ni c$, then $\alpha_f(x)\cap\UU\subset\Omega(f)\cap\UU$.
\end{Lemma}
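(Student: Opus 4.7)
The plan is to argue by contradiction. Suppose some $y \in \alpha_f(x) \cap \UU$ fails to lie in $\Omega(f)$. Since $y$ is then wandering, one can find an open interval $V \ni y$ with $f^n(V) \cap V = \emptyset$ for every $n \ge 1$. The strategy is to reduce to the case that $V$ sits inside a genuine wandering interval and then exploit $y \in \alpha_f(x)$ to find infinitely many preimages of $x$ in $V$, which will force $x$ into infinitely many of the pairwise disjoint iterates $f^n(V)$ — an immediate contradiction.

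The first step, which I expect to be the hard one, is to argue that every wandering point of $\UU$ necessarily lies in a bona fide wandering interval. Under the standing hypothesis $f \notin \cl_{Per} \cup \cl_{Sol} \cup \cl_{Che}$, I would combine the homterval lemma (Lemma \ref{homtervals}), Lemma \ref{LemmaWI}, Lemma \ref{DenWanInt}, and the structural properties of $\UU$ built in Section \ref{TheSofTA} to conclude that pre-orbits of the critical point (and of periodic points) intersected with $\UU$ are non-wandering; indeed, any wandering point in $\UU$ which is not in a wandering interval would give rise to a homterval meeting $\co_{f}^{-}(c)$, which cannot happen in the present setting. Thus the connected component $W$ of the wandering set containing $y$ is an open interval disjoint from $\co_{f}^{-}(c)$, on which $f^{n}$ is a homeomorphism for every $n \ge 1$ and whose forward iterates $\{f^n(W)\}_{n \ge 0}$ are pairwise disjoint.

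Second, I would pick an open interval $V$ with $y \in V \subset \overline{V} \subset W$; then $\{f^n(V)\}_{n \ge 0}$ inherit pairwise disjointness from $\{f^n(W)\}_{n \ge 0}$. Third, invoking $y \in \alpha_{f}(x)$, I would select preimages $y_j \in V$ with $f^{N_j}(y_j) = x$ and $N_j \to \infty$, so $x \in f^{N_j}(V)$ for every $j$. By pairwise disjointness, $x$ can lie in at most one of the sets $f^{N_j}(V)$, contradicting the fact that infinitely many of the $N_j$ are distinct. This yields $y \in \Omega(f)$, hence the desired inclusion $\alpha_f(x) \cap \UU \subset \Omega(f) \cap \UU$.

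The core obstacle is, as indicated, the initial reduction step — rigorously justifying that inside $\UU$ every wandering point belongs to a wandering interval, using only the accumulated structural results and the hypothesis that $f$ is neither Cherry, nor solenoidal, nor admits periodic attractors. Once this is in hand, the pigeonhole-style contradiction via the pairwise disjoint iterates is clean.
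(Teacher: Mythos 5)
Your overall strategy is genuinely different from the paper's, but as written it has a gap exactly at the step you yourself flag as the core obstacle, and that step is not a technicality. The claim that every point of $\UU\setminus\Omega(f)$ lies in a wandering interval is precisely Corollary~\ref{compon} of the paper, which is proved \emph{after} Lemma~\ref{0967809hp} and whose proof passes through Corollary~\ref{poiupoiu}, i.e.\ through the very lemma you are trying to establish; so invoking that structure here would be circular, and an independent argument is required. The lemmas you cite do not supply it: Lemma~\ref{homtervals} only applies once you already know that your wandering neighborhood $V$ of $y$ is a homterval, i.e.\ that $V\cap\co_f^{-}(c)=\emptyset$, and nothing in Lemmas~\ref{LemmaWI} or~\ref{DenWanInt} rules out preimages of $c$ inside $V$ (your phrase ``a homterval meeting $\co_f^{-}(c)$'' is vacuous, since a homterval is by definition disjoint from $\co_f^{-}(c)$, which signals that the actual content of the step is missing). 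Even granting the homterval conclusion, you must still discard the alternative $V\subset\BB_0(f)\cup\co_f^{-}(\per(f))$ of Lemma~\ref{homtervals}: $\BB_0(f)=\emptyset$ because there are no periodic attractors, but excluding an interval inside $\co_f^{-}(\per(f))$ needs the extra remark that an interval of eventually periodic points would produce an interval of trivial dynamics, which the definition of contracting Lorenz map forbids.

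The gap is fillable without circularity, but only by importing the mechanism the paper uses anyway: since $\UU\subset\EE$, every $z\in\UU$ has $c\in\alpha_f(z)$, so if a wandering neighborhood $V\subset\UU$ contained some $z$ with $f^{m}(z)=c$ ($m$ minimal), a small subneighborhood of $z$ would be mapped by $f^{m}$ homeomorphically onto a two-sided neighborhood $W$ of $c$; by Lemma~\ref{Lemma01928373} the set $\co_f^{-}(z)$ accumulates on $c$, so some $z'\in W\cap f^{-r}(z)$ pulls back to $v\in V$ with $f^{m+r}(v)=z\in V$, contradicting the disjointness of $V$ from its forward images. With that in hand your pigeonhole step is correct: preimages $y_j\in V$ of $x$ with $N_j\to\infty$ put $x$ in infinitely many pairwise disjoint sets $f^{N_j}(V)$. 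By contrast, the paper never passes through wandering intervals at all: it shows directly that every neighborhood $V\ni y$ returns to itself, by taking two preimages $x_2<x_1$ of $x$ in $V$, proving $f^{s}([x_1,x_2))\ni c$ for some $s$ (Claim B, using the absence of periodic attractors), and then pulling a preimage of $x_1$ back into $[x_1,x_2)\subset V$. So your route can be completed, but the completion essentially reproduces the paper's pull-back-through-$c$ argument, and as submitted the decisive step is asserted rather than proved.
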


\dem
Consider $x$ such that $\alpha_f(x) \ni c$. Given $y \in \alpha_f(x)$, consider any neighborhood $V$ of $y$. We may assume $V \subset \UU$. 

\begin{claim}[A]
$y \in \overline{(V\setminus\{y\})\cap\co^-_f(x)}$.
\end{claim}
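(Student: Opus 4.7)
The plan is to start from a witnessing sequence $x_j\in f^{-n_j}(x)$ with $x_j\to y$ and $n_j\to\infty$ (guaranteed by $y\in\alpha_f(x)$). If infinitely many of the $x_j$ differ from $y$, then, after passing to such a subsequence, $x_j\in V\setminus\{y\}$ for all large $j$ while $x_j\to y$, and the claim follows at once. So the substantive case is when $x_j=y$ for all sufficiently large $j$; in that situation the identity $f^{n_j}(y)=x$ holds for an unbounded set of times $n_j$, which forces $x$ to be periodic and $y\in\co^{-}_{f}(x)$ to be pre-periodic to the orbit of $x$.

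In this remaining case, $\co^{+}_{f}(x)$ is a finite set disjoint from $c$, so $c\notin\overline{\co^{+}_{f}(x)}$; let $(p_{1},p_{2})$ denote the connected component of $(0,1)\setminus\overline{\co^{+}_{f}(x)}$ containing $c$. I will then invoke Lemma~\ref{Lemma549164} with $p=x$ and the preimage $y\in\co^{-}_{f}(x)$ to obtain, for every $\varepsilon>0$,
\[
\bigcup_{j\ge0}f^{j}\big((y,y+\varepsilon)\big)\supset(p_{1},c)\qquad\text{and}\qquad\bigcup_{j\ge0}f^{j}\big((y-\varepsilon,y)\big)\supset(c,p_{2}).
\]
In parallel, Lemma~\ref{Lemma01928373} applied to $x$ (legitimate since $c\in\alpha_{f}(x)$, $x\ne c$, and $f$ has no periodic attractors) yields that $\co^{-}_{f}(x)$ accumulates on $c$ from both sides; in particular I can pick a preimage $\xi\in\co^{-}_{f}(x)\cap(p_{1},c)$ arbitrarily close to $c$, and, analogously, some $\xi'\in\co^{-}_{f}(x)\cap(c,p_{2})$.

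Given $\xi$, the first inclusion above furnishes $j\ge0$ and $\zeta\in(y,y+\varepsilon)$ with $f^{j}(\zeta)=\xi$, so $\zeta\in\co^{-}_{f}(x)$ and $\zeta\ne y$; for $\varepsilon$ small enough that $(y-\varepsilon,y+\varepsilon)\subset V$, this places $\zeta$ in $(V\setminus\{y\})\cap\co^{-}_{f}(x)$. The symmetric argument on the left side of $y$ using $\xi'$ yields another such $\zeta'\in(y-\varepsilon,y)\cap\co^{-}_{f}(x)$. Letting $\varepsilon\to0$ produces a sequence in $(V\setminus\{y\})\cap\co^{-}_{f}(x)$ converging to $y$, as required. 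The main obstacle is the pre-periodic case; it is overcome by coupling the forward-covering property of Lemma~\ref{Lemma549164} with the two-sided accumulation of $\co^{-}_{f}(x)$ at $c$ from Lemma~\ref{Lemma01928373}. The degenerate case $y=c$ is immediate from $c\in\alpha_{f}(x)$.
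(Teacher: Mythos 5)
Your proof is correct, and after the common first reduction it takes a genuinely different route from the paper's. Both arguments begin the same way: if the preimages witnessing $y\in\alpha_f(x)$ are eventually equal to $y$ itself, then $f^{n_2-n_1}(x)=x$, so $x$ is periodic and $y$ is an exact preimage of $x$. From there the paper argues by contradiction, assuming $B_\epsilon(y)\cap\co^-_f(x)=\{y\}$ and splitting into two cases: if no iterate of $B_\epsilon(y)$ meets $c$, it builds a forward-invariant one-sided interval at $x$, producing an attracting periodic orbit and contradicting the absence of periodic attractors; otherwise some $f^s(B_\epsilon(y))$ is a neighborhood of $c$, and the preimages of $x$ accumulating at $c$ pull back into $B_\epsilon(y)$, contradicting the assumption. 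You instead argue directly: since the orbit of $x$ is a finite set avoiding $c$, Lemma~\ref{Lemma549164} applies with $p=x$ and the preimage $y$, so the one-sided neighborhoods $(y,y+\varepsilon)$ and $(y-\varepsilon,y)$ eventually cover $(p_1,c)$ and $(c,p_2)$, while Lemma~\ref{Lemma01928373} supplies preimages $\xi\in\co^-_f(x)$ in those ranges; pulling $\xi$ back yields points of $\co^-_f(x)\setminus\{y\}$ arbitrarily close to $y$. Your version is constructive and delegates the role of the no-periodic-attractor hypothesis to the two quoted lemmas, whereas the paper's version is self-contained by contradiction; also, only one of your two sides is actually needed for the claim. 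Two minor remarks: the case $y=c$ is not quite ``immediate from $c\in\alpha_f(x)$'', since the witnessing preimages could a priori all equal $c$, but it does follow at once from Lemma~\ref{Lemma01928373}, which you already invoke; and, like the paper, you read $y\in f^{-n_j}(x)$ as $f^{n_j}(y)=x$, silently discarding backward chains passing through $c$, which is consistent with the paper's own level of detail.
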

\dem
If not, $\exists \epsilon >0$ such that $B_\epsilon(y)\cap\co^-_f(x)=\{y\}$. In this case, we have that $\exists n_1<n_2<...<n_j \to \infty$ such that $f^{n_j}(y)=x$. 
Then, $$x=f^{n_2}(y)=f^{n_2-n_1}(f^{n_1}(y))=f^{n_2-n_1}(x).$$

Observe that if $f^s(B_\epsilon(y))\not\ni c\, \forall s$, then writing $(\alpha, \beta)=f^{n_1}(B_\epsilon(y))$ we have
$$x \in (\alpha,\beta) \text{ and } f^{k(n_2-n_1)}((\alpha,\beta))\not\ni c \forall k.$$
Taking $(x,\gamma)=\bigcup_{k\ge1}f^{k(n_2-n_1)}((x,\beta))=\bigcup_{k\ge1}(x,f^{k(n_2-n_1)}(\beta))$, we have that $f^{n_2-n_1}|_{(x,\gamma)}$ is a homeomorphism and $f^{n_2-n_1}((x,\gamma))\subset(x,\gamma).$

But this would imply the existence of attracting periodic orbits, that are considered not to exist. Then, we necessarily have that $\exists s$ such that $f^s(B_\epsilon(y))\ni c$. 

As $c\in \alpha_{f}(x)$, we would have that $\#\co_f^-(x)\cap B_\epsilon(y)=\infty$. Again a contradiction, proving Claim (A).
\cqd
Because of the Claim we may assume that $y\in \overline{(y,1)\cap V\cap \co^-_f(x)}$ (the proof for the case $y\in$ $\overline{(0,y)\cap V\cap \co^-_f(x)}$ is analogous).

We may take $x_2<x_1\in(y,1)\cap V \cap\co_f^-(x)$ such that $f^{n_2}(x_2)=x=f^{n_1}(x_1)$ with $n_1<n_2$.

\begin{claim}[B]
$\exists s \in \NN$ such that $f^s([x_1,x_2))\ni c$
\end{claim}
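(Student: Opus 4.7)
My plan is to argue by contradiction: assume $c \notin f^s(I)$ for every $s \ge 0$, where $I$ denotes the open interval with endpoints $x_1$ and $x_2$. The first step is to conclude that $I$ is a wandering interval. Indeed, the assumption makes $f^s|_I$ a homeomorphism for every $s$, so $I$ is a homterval; since $f$ has no periodic attractors we have $\BB_0(f)=\emptyset$, and since $\co_f^{-}(Per(f))$ is countable it contains no interval, so Lemma~\ref{homtervals} forces $I$ to be wandering.

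Next I will exploit the identity $f^{n_1}(x_1)=x=f^{n_2}(x_2)$ to build a chain structure. Orientation preservation gives $f^{n_1}(I)=(a,x)$ with $a:=f^{n_1}(x_2)<x$ and $f^{n_2}(I)=(x,b)$ with $b:=f^{n_2}(x_1)>x$; disjointness of these two wandering iterates forces them onto opposite sides of their common boundary point $x$. Setting $m:=n_2-n_1$ we obtain $f^m(a)=x$ and $f^m(x)=b$, and inductively one gets disjoint wandering iterates $f^{n_1+km}(I)=(z_k,z_{k+1})$ with $z_0=a$, $z_1=x$ and $z_{k+1}=f^m(z_k)=f^{km}(x)$. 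The sequence $z_k$ is strictly increasing and bounded, hence converges to some $\ell\le 1$, and for every $t\in(a,\ell)$ one has $f^{jm}(t)\to\ell$ monotonically.

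The contradiction then comes from $\ell$, splitting on whether $f^m$ is continuous at $\ell$. If $f^j(\ell)\ne c$ for every $0\le j<m$, then $f^m$ is continuous at $\ell$ and $f^m(\ell)=\lim_k z_{k+1}=\ell$, so $\ell\in Per(f)$; moreover the left neighborhood $(a,\ell)$ lies in $\beta(\co_f^+(\ell))$, so $\co_f^+(\ell)$ is an attracting periodic orbit, contradicting the hypothesis on $f$. Otherwise let $j^*$ be minimal with $f^{j^*}(\ell)=c$; by minimality, $f^{j^*}$ is continuous at $\ell$ from the left, so orientation preservation gives $f^{j^*}(z_k)\to c^-$. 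Propagating the limit through the remaining $m-j^*-1$ iterates yields $\ell=f^{m-j^*}(c_-)$ and therefore $f^m(c_-)=f^{j^*}(\ell)=c$. This makes $\{f(c_-),f^2(c_-),\dots,f^{m-1}(c_-),c\}$ (or a suitable prefix, in case the orbit of $c_-$ already reaches $c$ before step $m$) a super-attractor of $f$, again contradicting the absence of periodic attractors.

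The main obstacle I expect is the discontinuity case: one must justify the limit $f^m(z_k)\to f^{m-j^*}(c_-)$ rigorously by carrying the continuity of intermediate iterates through the orbit, using the minimality of $j^*$, and then verify that $f^m(c_-)=c$ really matches the paper's definition of a super-attractor (handling the possibility that the orbit of $c_-$ terminates at $c$ at an earlier step, which in any case produces a super-attractor by the same mechanism). The chain identification $f^{n_1+km}(I)=(z_k,z_{k+1})$ is routine but rests on the standing hypothesis that no iterate of $I$ contains $c$, which is what keeps each $f^m$ between consecutive links a homeomorphism.
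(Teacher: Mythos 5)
Your argument is correct and is essentially the paper's own proof: both iterate $g=f^{n_2-n_1}$ on the image of the interval between $x_1$ and $x_2$, obtain a monotone orbit of $x$ whose images trap an adjacent interval, and contradict the absence of periodic attractors; you merely make explicit the dichotomy at the limit point (a fixed point of $g$ giving an attracting periodic orbit versus a critical hit giving a super-attractor) that the paper compresses into ``this would imply an attracting periodic orbit.'' Your opening step showing $I$ is a wandering interval is superfluous---orientation preservation alone places $f^{n_1}(I)$ and $f^{n_2}(I)$ on opposite sides of $x$---so its somewhat shaky justification (countability of $\co_f^{-}(Per(f))$, which the paper itself only asserts contains no interval) does no harm to the argument.
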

\dem
If $c\notin f^s([x_1x_2))$, $\forall\,s\ge0$, then $$f^{k(n_2-n_1)}([f^{n_2-n_1}(x),x))=f^{k(n_2-n_1)+n_2}([x_1,x_2))\not\ni c,\; \forall k \in \NN.$$

As $f$ preserves orientation, $f^{k(n_2-n_1)}|_{[f^{n_2-n_1}(x),x)}$ is a homeomorphism, $\forall x$, $\forall k\ge 0$, so we have $f^{(k+1)(n_2-n_1)}(x)<f^{k(n_2-n_1)}(x) $, $ \forall k\ge 0$.

Then, $\bigcup_{k\ge0}f^{k(n_2-n_1)}([f^{n_2-n_1}(x),x))$ is an interval $(\gamma,x)$. Besides that, $f^{n_2-n_1}|_{(\gamma,x)}$ is a homeomorphism and $f^{n_2-n_1}((\gamma,x))\subset(\gamma,x)$. 

But this is an absurd, because it would imply the existence of attracting periodic orbits, what proves Claim (B).
\cqd

Let $s\in \NN$ such that $f^s([x_1,x_2))\ni c$. As $x_1 \in \UU$, we have that $\co^-_f(x_1)$ accumulates in $c$ by both sides. Then, $\co_f^-(x_1)\cap f^s([x_1,x_2))\ne \emptyset$.

This implies that $\exists x_1' \in \co^-_f(x_1)\cap[x_1,x_2)\subset V$, say $x_1' \in f^{-t}(x_1)\cap V$. Then,
$$
f^t(V)\cap V \ne \emptyset
$$

As $V$ is a neighborhood of $y \in \UU$ that was arbitrarily taken, we may conclude that $y \in \Omega(f)$, proving Lemma \ref{0967809hp}.

\cqd

\begin{Corollary}
\label{poiupoiu} For $f \notin \cl_{Per}\cup\cl_{Sol} \cup \cl_{Che}$,
$\alpha_{f}(x)\cap \UU = \Omega(f)\cap \UU, \forall x \in \UU$.
\end{Corollary}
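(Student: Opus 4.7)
The plan is to prove Corollary \ref{poiupoiu} by combining two results already established in this section, namely Corollary \ref{sobrealfa} and Lemma \ref{0967809hp}, which together yield the two inclusions needed for the claimed equality. No new geometric work should be necessary; the statement is essentially a packaging of what has been proven.

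First I would fix an arbitrary $x \in \UU$ and observe that, by the very construction of $\UU$ in equation (\ref{regiaoarmadilha}), we have $\UU \subset \EE$, so in particular $c \in \alpha_{f}(x)$. This is the hypothesis needed to invoke Lemma \ref{0967809hp}, which immediately delivers the inclusion
\[
\alpha_{f}(x) \cap \UU \subset \Omega(f) \cap \UU.
\]

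For the reverse inclusion, I would apply Corollary \ref{sobrealfa} to the same point $x \in \UU$, which gives $\alpha_{f}(x) \supset \Omega(f)$. Intersecting both sides with $\UU$ yields
\[
\alpha_{f}(x) \cap \UU \supset \Omega(f) \cap \UU.
\]
Combining the two inclusions establishes the claimed equality $\alpha_{f}(x) \cap \UU = \Omega(f) \cap \UU$ for every $x \in \UU$.

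There is no real obstacle here: the genuine content lies in Lemma \ref{216}, Corollary \ref{sobrealfa} and Lemma \ref{0967809hp}, where the dynamical arguments (use of the trapping property of $\UU$, the homterval lemma, and the absence of periodic attractors to rule out monotone contractions of intervals onto themselves) were already carried out. The only point deserving a line of justification is the inclusion $\UU \subset \EE$, which follows directly from the definition of $\UU$ (a union of forward iterates of sets whose points have $c$ in their $\alpha$-limit) together with the forward invariance of $\EE$, so that membership in $\UU$ forces $c \in \alpha_{f}(x)$ and the hypothesis of Lemma \ref{0967809hp} is automatically available.
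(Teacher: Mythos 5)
Your proposal is correct and is exactly the argument the paper intends: the corollary is stated without proof precisely because it is the conjunction of Corollary~\ref{sobrealfa} (giving $\alpha_f(x)\supset\Omega(f)$ for $x\in\UU$) and Lemma~\ref{0967809hp} (giving $\alpha_f(x)\cap\UU\subset\Omega(f)\cap\UU$ once $c\in\alpha_f(x)$), with the inclusion $\UU\subset\EE$ supplying the hypothesis just as you note. Nothing further is needed.
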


\begin{Corollary}
\label{compon} For $f \notin \cl_{Per}\cup\cl_{Sol} \cup \cl_{Che}$, then any connected component of $\UU \setminus \Omega(f)$ is a wandering interval.
\end{Corollary}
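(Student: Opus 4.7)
The plan is to show that any connected component $I$ of $\UU\setminus\Omega(f)$ is a homterval; the Homterval Lemma (Lemma \ref{homtervals}) then finishes the argument. Indeed, because $f\notin\cl_{Per}$ we have $\BB_0(f)=\emptyset$, and a contracting Lorenz map carries no intervals of periodic points, so $\co_f^{-}(\per(f))$ is countable. Hence a homterval cannot be contained in $\BB_0(f)\cup\co_f^{-}(\per(f))$, which forces it to be wandering. Everything therefore reduces to proving $I\cap\co_f^{-}(c)=\emptyset$.

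To do this, I would argue by contradiction. Suppose some $y\in I$ satisfies $f^k(y)=c$ for a minimal $k\ge 1$. By this minimality, $f^j(I)\not\ni c$ for $0\le j<k$, so $f^k|_I$ is a homeomorphism and $f^k(I)$ is an open interval containing $c$ in its interior. Thus for every open neighborhood $V\subset I$ of $y$, the image $f^k(V)$ is an open neighborhood of $c$. The key inputs from the preceding results are (i) $c\in\Omega(f)$, which follows from $c\in\alpha_f(x)\cap\UU=\Omega(f)\cap\UU$ for any $x\in\UU\subset\EE$ by Corollary \ref{poiupoiu}, and (ii) $\Omega(f)\subset\alpha_f(y)$, which is Corollary \ref{sobrealfa} applied to $y\in\UU$. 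Combining these gives $c\in\alpha_f(y)$.

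The rest is bookkeeping. Choose preimages $w_n\in f^{-k_n}(y)$ with $w_n\to c$ and $k_n\to\infty$. For $n$ large, $w_n\in f^k(V)$, so pulling back through the branch $f^k|_V$ produces $v_n\in V$ with $f^{k+k_n}(v_n)=y\in V$. Hence $f^{k+k_n}(V)\cap V\ni y$ for arbitrarily large $k+k_n$ and for every neighborhood $V\subset I$ of $y$, forcing $y\in\Omega(f)$ and contradicting $y\in I\subset\UU\setminus\Omega(f)$. The main obstacle is more conceptual than technical: one must recognize that $c\in\alpha_f(y)$ combined with $f^k|_V$ surjecting onto a neighborhood of $c$ is precisely the mechanism needed to generate recurrence of $V$ to itself; once this is in place, the machinery assembled in the preceding lemmas delivers the wandering property immediately.
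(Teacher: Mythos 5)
Your argument is correct and follows essentially the same route as the paper's: reduce via the Homterval Lemma~\ref{homtervals} (using $\BB_0(f)=\emptyset$ and the fact that $\co_f^{-}(\per(f))$ contains no interval --- note that countability is neither needed nor obvious, only the no-interval property, which is also what the paper invokes) to the case where some $y\in I$ satisfies $f^{k}(y)=c$, and then use $\UU\subset\EE$ together with Corollaries~\ref{sobrealfa} and~\ref{poiupoiu} to contradict $I\cap\Omega(f)=\emptyset$. The only difference is cosmetic: you exhibit the recurrence of small neighborhoods of $y$ by hand (pulling preimages of $y$ accumulating on $c$ back through the branch $f^{k}|_{V}$), whereas the paper places the same point in $\Omega(f)$ by combining the backward invariance $f^{-1}(\alpha_f(x))\subset\alpha_f(x)$ with Corollary~\ref{poiupoiu}; both steps are valid and rest on the same mechanism.
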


\dem
Let $J=(a,b)$ connected component of $\UU \setminus \Omega(f)$. Suppose it is not a wandering interval. Then, Lemma~\ref{homtervals} says there will be $n$ for which $f^n(J)\ni c$. Lemma~\ref{vizinhanca} and Proposition~\ref{Proposition008345678} assures us that there are several points with $c$ in their $\alpha$-limits  inside this set $f^n(J)$. We know $f^{-1}(\alpha_{f}(x))\subset\alpha_{f}(x)$ and, then, Corollary~\ref{poiupoiu} assures us these points are in $\Omega(f)$, but they are inside $J$, that should not contain any point of $\Omega(f)$.
\cqd

\begin{Definition}[Strong Transitivity]\label{StTopTrans} Let $\XX$ be a compact metrical space. Given a continuous map $g:A\subset\XX\to\XX$, we say it is strongly (topologically) transitive if for any open set $V\subset\XX$ with $V\cap A\ne\emptyset$, we have $\bigcup_{j \ge 0}g^j(V)=A$.

Let us make precise the notation used in this definition: given $V\subset\XX$, let $g^{-1}(V)=\{x\in A\,;\,g(x)\in V\}$. We define inductively $g^{-n}(V)$, for $n\ge2$, by $g^{-n}(V)=g^{-(n-1)}(g^{-1}(V))$. We define for $n\ge1$, $g^{n}(V)=\{g^{n}(v)\,;\,v\in V\cap g^{-n}(A)\}$. 
\end{Definition}

\begin{Proposition} \label{transitivo} If $f \notin \cl_{Per}\cup\cl_{Sol} \cup \cl_{Che}$, then    
$f|_{\Omega\cap\UU}$ is strongly transitive.
In particular, $$f|_{\Omega\cap\UU}\text{ is transitive.}$$
\end{Proposition}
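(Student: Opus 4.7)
The plan is to establish strong transitivity of $g := f|_{A}$ where $A := \Omega(f) \cap \UU$; topological transitivity then follows immediately, since $\bigcup_{n \ge 0} g^{n}(U) = A$ for every open $U$ hitting $A$ already forces $g^{n}(U) \cap V \ne \emptyset$ for any pair of opens $U, V$ meeting $A$. Fix an open $V \subset [0,1]$ with $V \cap A \ne \emptyset$ and an arbitrary target $y \in A$. The goal reduces to producing $v \in V \cap A$ and $n \in \NN$ with $f^{n}(v) = y$ and the whole orbit $v, f(v), \dots, f^{n}(v)$ contained in $A$.

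The entry point is Corollary~\ref{poiupoiu} applied at $y \in \UU$: it gives $\alpha_{f}(y) \cap \UU = \Omega(f) \cap \UU = A$. Picking any $z \in V \cap A$ yields $z \in \alpha_{f}(y)$, so by the definition of the $\alpha$-limit set there is a sequence $z_{j} \to z$ and integers $n_{j} \to \infty$ with $f^{n_{j}}(z_{j}) = y$. Since $\co_{f}^{-}(c)$ is countable, I may also arrange $z_{j} \notin \co_{f}^{-}(c)$, so that every intermediate iterate $f^{k}(z_{j})$, $0 \le k \le n_{j}$, is unambiguously defined. Both $V$ and $\UU$ are open (the latter is a finite union of open intervals, by its explicit construction in~(\ref{regiaoarmadilha})) and both contain $z$, hence $z_{j} \in V \cap \UU$ for all large $j$. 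Forward invariance of $\UU$ then gives $f^{k}(z_{j}) \in \UU$ for every $0 \le k \le n_{j}$.

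The crux is promoting this to $f^{k}(z_{j}) \in A$, and here Corollary~\ref{compon} is the key tool: each connected component of $\UU \setminus \Omega(f)$ is a wandering interval. Suppose, toward contradiction, that some $f^{k}(z_{j})$ lies in a wandering interval $W$. Then $y = f^{n_{j}-k}(f^{k}(z_{j})) \in f^{n_{j}-k}(W)$, and $f^{n_{j}-k}(W)$ is itself a wandering interval: the homeomorphism property and pairwise disjointness of iterates transfer directly (if $f^{m}(f^{n_{j}-k}(W)) = f^{m+n_{j}-k}(W)$ met $f^{n_{j}-k}(W)$ or itself under another iterate, this would contradict the corresponding property for $W$), and $f^{n_{j}-k}(W)$ avoids every periodic-attractor basin because $W$ does. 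But wandering intervals contain no non-wandering points, since for any small $U \subset W$ one has $f^{n}(U) \cap U \subset f^{n}(W) \cap W = \emptyset$ for all $n \ge 1$; this contradicts $y \in \Omega(f)$. Hence $f^{k}(z_{j}) \in \Omega(f) \cap \UU = A$ for every $0 \le k \le n_{j}$.

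It follows that $z_{j} \in V \cap A$, the whole orbit from $z_{j}$ to $y$ stays in $A$, and $g^{n_{j}}(z_{j}) = y$, so $y \in g^{n_{j}}(V) \subset \bigcup_{n \ge 0} g^{n}(V)$. As $y$ was arbitrary, $A \subset \bigcup_{n \ge 0} g^{n}(V)$, which together with the trivial inclusion $g^{n}(V) \subset A$ gives the equality defining strong transitivity. The one nontrivial step is the third paragraph: Corollary~\ref{poiupoiu} alone produces pre-images of $y$ near $z$, but nothing a priori prevents their finite forward orbits from dipping into the gaps $\UU \setminus A$; the wandering-interval description of those gaps provided by Corollary~\ref{compon}, combined with the observation that an iterate of a wandering interval is itself wandering, is exactly what forces the whole orbit to remain inside $A$.
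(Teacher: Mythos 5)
Your argument is correct, and it reaches the conclusion by a genuinely different mechanism than the paper at the crucial step. Both proofs start the same way: Corollary~\ref{poiupoiu} applied at the target point produces preimages of it accumulating on a point of $V\cap\Omega(f)\cap\UU$, and forward invariance of $\UU$ keeps the connecting orbit inside $\UU$. Where you diverge is in promoting ``inside $\UU$'' to ``inside $\Omega(f)\cap\UU$'': the paper extracts from Corollary~\ref{poiupoiu} the backward-invariance inclusion $f^{-1}(\Omega(f)\cap\UU)\cap\UU\subset\Omega(f)\cap\UU$ and runs a finite induction along the backward orbit $x_t\mapsto x_{t-1}\mapsto\cdots\mapsto x_0$, whereas you invoke Corollary~\ref{compon}: if an intermediate iterate fell into a gap of $\UU\setminus\Omega(f)$, that gap is a wandering interval, its forward image is again a wandering interval, and a wandering interval cannot contain the non-wandering target $y$. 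Both routes rest on machinery already established in the paper; the paper's induction is slightly more economical (it never needs Corollary~\ref{compon} nor the observation that images of wandering intervals are wandering), while yours makes the geometric obstruction explicit --- orbits cannot dip into the gaps because the gaps are wandering --- and, as a by-product, shows directly that the chosen preimage point itself lies in $\Omega(f)$.

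One sentence deserves more care: ``since $\co_{f}^{-}(c)$ is countable, I may also arrange $z_{j}\notin\co_{f}^{-}(c)$'' is not by itself a proof, because a sequence converging to $z$ can perfectly well lie inside a countable set; the point you are really addressing is that, under the paper's extended preimage convention, a backward chain from $y$ may pass through $c$, in which case the forward iterates $f^{k}(z_j)$ are not literally defined. This is exactly the same silent assumption the paper makes when it picks $x_t\in f^{-t}(x)\cap V$ and forward-iterates it, so it is not a gap relative to the paper's own standard of rigor; still, it would be cleaner either to treat separately the case in which the chain meets $c$ (which forces $y$ to lie on the forward orbit of a critical value) or simply to state, as the paper implicitly does, that one works with genuine preimages.
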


\dem
We know that $f^{-1}(\alpha_{f}(x))\subset\alpha_{f}(x)$. We will show that $\bigcup_{j\ge0}f^j(V\cap\Omega(t)) = \Omega(f)\cap \UU, \forall V  \subset \UU$, $V$ open and $V \cap \Omega(f) \ne \emptyset$. It follows from the Corollary \ref{poiupoiu} that 
\begin{equation}
\label{inclusaoestrela}
f^{-1}(\Omega(f)\cap\UU)\cap\UU\subset\Omega(f)\cap\UU.
\end{equation}

Let $V\subset\UU$, $V$ any open set with $V \cap \Omega(f) \ne \emptyset$. Given $x \in \Omega(f)\cap\UU$, we have that $\alpha_{f}(x)\cap V\ne \emptyset$ and, then, $\co^-_f(x)\cap V \ne \emptyset$. Pick $x_t \in f^{-t}(x) \cap V$. Define $x_k=f^{t-k}(x_t)$ for $0 \le k \le t$. 
$$
x_t \stackrel{f}{\to} x_{t-1} \stackrel{f}{\to} \dots \stackrel{f}{\to} x_0=f^t(x_t) 
$$
As $\UU$ is a trapping region, we have that $x_k$ in $\UU, \forall 0 \le k \le t$. 

We claim that $x_t \in \Omega(f)\cap\UU$. Indeed, we have that $x_0 \in \Omega(f)\cap\UU$. Suppose it also works for $k-1$, that is, $x_{k-1} \in \Omega(f)\cap\UU$. We have that $x_k \in \UU$. Then $x_k \in f^{-1}(x_{k-1})\cap \UU$ and by (\ref{inclusaoestrela}) we have that $x_k \in \Omega(f)\cap \UU$.
It follows by induction that $x_t \in \Omega(f)\cap\UU$.

\cqd

\begin{Theorem} 
\label{cicloint}
Let $f:[0,1]\setminus\{c\}\to[0,1]$ be a $C^{2}$ non-flat contracting Lorenz map. 
If $f$ doesn't have a periodic attracting orbit, isn't a Cherry map nor $\infty$-renormalizable, then there is an open trapping region $U\ni c$ given by a finite union of open intervals such that $\Lambda:=\overline{U\cap\Omega(f)}$ satisfies the following statements.

\begin{enumerate}

\item$\omega_{f}(x)=\Lambda$ for a residual set of points of $\Lambda$ (in particular, $\Lambda$ is transitive).  
\item The basin of attraction of $\Lambda$, $\beta(\Lambda):=\{x;\omega_{f}(x)\subset \Lambda\}$, is an open and dense set.
\item $\exists \lambda >0$ such that $\lim_{n\to\infty} \frac{1}{n}\log |Df^n(x)|=\lambda$ for a dense set of points $x$ in $\Lambda$.
\item \label{opcoes} either $\Lambda$ is a finite union of intervals or it is a Cantor set.
\item if $\Lambda$ is a finite union of intervals, then $\omega_{f}(x)=\Lambda$ for a residual set of $x$ in $[0,1]$.
\item \label{cantor} $\Lambda$ is a Cantor set if and only if there is a wandering interval.

\end{enumerate}
\end{Theorem}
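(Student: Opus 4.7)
The plan is to take $U=\UU$ exactly as in~(\ref{regiaoarmadilha}) and set $\Lambda=\overline{U\cap\Omega(f)}$; the six assertions will then be derived from the strong transitivity of $f|_{\Omega\cap U}$ (Proposition~\ref{transitivo}), the identification of the components of $U\setminus\Omega(f)$ with wandering intervals (Corollary~\ref{compon}), and standard Baire-category arguments.

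First I would establish the basic trapping fact that every open interval $W\subset[0,1]$ has some iterate meeting~$U$. If $W$ is not a homterval, then some $f^n(W)$ contains~$c$ and therefore a full neighborhood of~$c$ inside~$U$. Otherwise the Homterval Lemma~\ref{homtervals}, combined with the absence of periodic attractors and the uncountability of~$W$, forces $W$ to be a wandering interval, and Lemma~\ref{LemmaWI} then shows that the orbit of~$W$ accumulates on~$c$, hence meets~$U$. Since $U$ is a trapping region, every $x$ in a dense open subset of $[0,1]$ satisfies $\omega_f(x)\subset\overline{U}\cap\Omega(f)$; the identity $\overline{U}\cap\Omega(f)=\overline{U\cap\Omega(f)}=\Lambda$, which uses that boundary points of~$U$ are pre-periodic or pre-critical and are accumulated by $U\cap\Omega(f)$ through strong transitivity, then yields claim~(2). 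For claim~(1), I would combine Proposition~\ref{transitivo} with Baire: fix a countable basis $\{V_k\}$ of open subsets of~$\Lambda$ intersecting $\Omega\cap U$; strong transitivity gives $\bigcup_{j\ge 0}f^{j}(V_k)\supset\Omega\cap U$, from which each set $\bigcup_{j\ge n}f^{-j}(V_k)\cap\Lambda$ is open and dense in~$\Lambda$, and the residual intersection consists precisely of the points $x\in\Lambda$ with $\omega_f(x)=\Lambda$.

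For claim~(3), I would produce a single hyperbolic repelling periodic point $p\in\Lambda$ with multiplier strictly exceeding~$1$, and set $\lambda=\frac{1}{\period(p)}\log|Df^{\period(p)}(p)|$. Lemma~\ref{AcumulacaoDePer} supplies periodic orbits in~$\Lambda$, and among them one selects one whose orbit stays at a positive distance from~$c$, so that $|Df^{\period(p)}(p)|>1$ (neutral or attracting multipliers being ruled out by the absence of periodic attractors). Strong transitivity then forces $\co_{f}^{-}(p)$ to be dense in~$\Lambda$, and along any $y\in f^{-n}(p)$ the bounded initial contribution is dominated by the repeated periodic tail, giving $\lim_{k\to\infty}\frac{1}{k}\log|Df^{k}(y)|=\lambda$. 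For claims~(4) and~(6) I would observe that $\Lambda$ is compact with no isolated points (transitivity rules them out), so either $\interior(\Lambda)\neq\emptyset$—in which case the finite-component structure of~$U$ forces $\Lambda$ to be a finite union of closed intervals—or $\interior(\Lambda)=\emptyset$ and $\Lambda$ is totally disconnected, hence a Cantor set; by Corollary~\ref{compon} emptiness of $\interior(\Lambda)$ is equivalent to $U\setminus\Omega(f)$ being dense in~$U$, and by Lemma~\ref{DenWanInt} this is equivalent to the existence of any wandering interval at all. Finally, for claim~(5), if $\Lambda$ is a finite union of intervals then by~(6) there are no wandering intervals, so $U\subset\Omega(f)$ by Corollary~\ref{compon} and $\Lambda=\overline{U}$; orbits from the dense basin in~(2) therefore eventually lie in~$\interior(\Lambda)$ and remain there, and intersecting with the pull-back of the residual set from~(1) delivers the required residual set in $[0,1]$.

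The main obstacle is claim~(3): producing a hyperbolic repelling periodic orbit with a uniform, quantitative expansion bound requires excluding the possibility that every periodic orbit in~$\Lambda$ passes arbitrarily close to~$c$, where $|Df|$ vanishes. A secondary technical point, needed for claim~(2), is the identity $\overline{U}\cap\Omega(f)=\overline{U\cap\Omega(f)}$: this is not automatic at the boundary of~$U$, and its verification rests on the detailed structure of $\partial U$ combined with strong transitivity to approximate boundary non-wandering points from inside~$U$.
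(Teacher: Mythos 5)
Your proposal follows essentially the same route as the paper: the same trapping region $\UU$ from (\ref{regiaoarmadilha}), the same $\Lambda=\overline{\UU\cap\Omega(f)}$, strong transitivity (Proposition~\ref{transitivo}) plus a Baire argument for (1), density of entry times into $\UU$ for (2), a repelling periodic orbit whose pre-orbit is dense in $\Lambda$ (Corollary~\ref{poiupoiu}) for (3), and the wandering-interval dichotomy via Corollary~\ref{compon} (together with Lemma~\ref{homtervals} and Lemma~\ref{DenWanInt}) for (4)--(6). The two obstacles you flag are handled in the paper by weaker statements than the ones you set up: for (2) one does not need the full identity $\overline{\UU}\cap\Omega(f)=\overline{\UU\cap\Omega(f)}$, only that each $q\in\omega_{f}(x)\cap\partial\UU$ is accumulated by $\Omega(f)\cap\UU$ (no wandering interval can have $q$ on its boundary because iterates of $x$ accumulate at $q$, then apply Corollary~\ref{compon}); and for (3) the paper likewise just sets $\lambda=\exp_{f}(p)$ for a periodic point supplied by Proposition~\ref{Proposition008345678}, so your concern about a quantitative expansion bound is no more (and no less) of an issue than in the paper's own argument.
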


\dem Set $\Lambda:=\overline{\Omega(f) \cap\UU}$ with $\UU$ as defined in (\ref{regiaoarmadilha}).
\begin{enumerate}

\item Lemma~\ref{dicotomia} of Appendix insures us it is true, as we have transitivity provided by Proposition~\ref{transitivo}.
\item 

By Lemma~\ref{Remark98671oxe}, the set $\cu = \{ x \in [0,1]\setminus\{c\}; \exists j \text{ such that } f^j(x)\in \UU\}$ is an open and dense set. We claim that any point $y$ in this set $\cu$ is also in $\beta(\Lambda)$. For some $k$, $f^k(y)=x \in \UU$, and we have two possible situations for a point $q\in\omega_{f}(x)=\omega_f(y)$. As $\UU$ is a trapping region, $q$ can be an interior point of $\UU$, and then it automatically belongs to $\Lambda=\overline{\Omega \cup \UU}$. If not an interior point, $q\in\partial\UU$. In this case, as $q\in\omega_{f}(x)$, there are infinitely many $f^{n_j}(x)$ accumulating in $q$. Then, there can be no wandering interval with border $q$ (as images of $x$ keep coming close to $q$). By Corollary~\ref{compon}, as $q$ can't be in the border of a wandering interval, it is not in the border of a connected component of $\UU\setminus\Omega(f)$, then it is accumulated by points of this set, that is, $q\in \Lambda=\overline{\Omega(f) \cap\UU}$.

\item Proposition (\ref{Proposition008345678}) says that repeller points $p \in Per(f)$ accumulate in $c$. As they are in $\Omega(f)$, the ones that are in $\UU$ are also in $\Lambda$, and it follows from Corollary(\ref{poiupoiu}) that $\co^-_f(p)$ is dense in $\Lambda$. 
Given any point $x\in\co^-_f(p)$, as it is eventually periodic, say $f^j(x)=p$ (and as there are infinitely many ones, we can pick one such that $c$ is not in its pre-orbit, in order to proceed with the following computation), we have
$$\lim_{n\to\infty} \frac{1}{n}\log |D(f^{n-j}\circ f^j)(x)|=\lim_{n\to\infty} \frac{1}{n}\log\big(|Df^{n-j}(p)|\big)+\lim_{n\to\infty} \frac{1}{n}\log|Df^j(x)|=$$
$$=\lim_{n\to\infty} \frac{n-j}{n(n-j)}\log\big(|Df^{n-j}(p)|\big)=\lim_{n\to\infty} \frac{1}{n-j}\log\big(|Df^{n-j}(p)|\big)=\exp_f(p)=:\lambda.$$

\item \label{item4} As $\Lambda$ is transitive, $\exists x \in \Lambda=\omega_{f}(x)$, then, by Lemma~\ref{aeroporto} of Appendix, it is a perfect set.
We have two possibilities: $\interior(\Lambda)= \emptyset$ or not.
As $\Lambda$ is a subset of $\RR$, if it has empty interior, it is totally disconnected. Consequently, it will be a Cantor set (as we already proved it is compact and perfect).
Suppose, then, $\interior(\Lambda) \ne \emptyset$. Let $I$ be an open interval, $I \subset \Lambda$ and it can't be a wandering interval, as it is a subset of $\Lambda \subset \Omega(f)$. Then, by Lemma~\ref{homtervals}, $\exists j$ such that $f^{j}(I)\ni c$, and so, $c \in \interior\Lambda$. This forbids the existence of wandering intervals. Indeed, if there is a wandering interval $J$, it has to accumulate in the critical point (by Lemma~\ref{LemmaWI}), but this would imply that $f^{n}(J)\cap\Omega(f)\ne\emptyset$ for $n$ sufficiently big. An absurd. So, as we cannot have wandering intervals, Corollary~\ref{compon}, $\UU\setminus\Omega(f)$ has to be an empty set. As $\UU$ is an orbit of intervals, it proves the claim of the Theorem.
\item Let $\Lambda'=\{x\in\UU;\omega_{f}(x)=\Lambda\}$.
Observe that $x \in \bigcup_{j\ge0}f^{-j}(\Lambda')$ implies that $\omega_{f}(x)=\Lambda$.
As $\Lambda'$ is residual in $\UU$, there exist $A_n$, $n\in\NN$, open and dense sets in $\UU$ such that $\Lambda'=\bigcap_{n\in \NN}A_n$.
On the other hand, for every $n\in\NN$ we have that $\bigcup_{j\ge0}f^{-j}(A_n)$ is an open dense set in $[0,1]$.
Then, $\bigcap_{n\in\NN}\big(\bigcup_{j\ge0}f^{-j}(A_n)\big)$ is residual in $[0,1]$.
So we have that $\bigcup_{j\ge0}f^{-j}\big(\Lambda'\big)=\bigcup_{j\ge0}f^{-j}\big(\bigcap_{n\in\NN}A_n\big)=\bigcap_{n\in\NN}\big(\bigcup_{j\ge0}f^{-j}(A_n)\big)$ is residual.

\item It follows straightforwardly from the former construction: $\Lambda$ being a Cantor set implies that $\UU\setminus\Omega(f)$ has non-trivial connected components, that Lemma~\ref{homtervals} says it is a wandering interval. The converse, for as $\Lambda$ is compact and perfect, if we suppose $\interior(\Lambda)\ne\emptyset$, following the same reasoning of (\ref{item4}), there would be an interval $I$ such that $f^j(I)\ni c$ for some $j$, contradicting the existence of wandering interval.

\end{enumerate}

\cqd

\begin{Lemma} 
\label{perdenso} Let $f:[0,1]\setminus\{c\}\to[0,1]$ be a $C^{2}$ non-flat contracting Lorenz map. 
Suppose that $f \notin \cl_{Per}\cup\cl_{Sol} \cup \cl_{Che}$. Then $\overline {Per(f)\cap \Lambda}=\Lambda$, with $\Lambda$ as obtained in Theorem \ref{cicloint}.

\end{Lemma}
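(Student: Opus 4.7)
I will show that every nonempty open $V \subset [0,1]$ meeting $\Lambda = \overline{\Omega(f) \cap \UU}$ contains a periodic point of $f$; since $\Lambda \subset \overline{\UU}$ and $Per(f)$ is contained in the forward‑invariant set of nonwandering points, we may assume $V \subset \UU$ and $V \cap \Omega(f) \ne \emptyset$. The strategy has two steps: first, establish density of $Per(f)$ inside a suitable nice interval $J \ni c$ with $\overline{J} \subset \UU$; second, spread this density to arbitrary $V$ using the strong topological transitivity of $f|_{\Omega(f) \cap \UU}$ furnished by Proposition~\ref{transitivo}.

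\textbf{Local density in a nice interval.} By Proposition~\ref{Proposition008345678}, periodic points accumulate on $c$ from both sides; combining this with Lemma~\ref{LemmaHGFGH54} and Corollary~\ref{Corollary8388881b}, one chooses a nice interval $J = (a,b) \ni c$ with $a, b \in Per(f)$ and $\overline{J} \subset \UU$. By Lemma~\ref{Lemma8388881a}, every component $I \in \cp_J$ with $c \notin \overline{I}$ is mapped by $f^{R|_I}$ as a monotone homeomorphism onto $J \supset I$, so the intermediate value theorem produces a fixed point of $f^{R|_I}$ in $I$, which is a periodic point of $f$. The same IVT argument applied inductively to $\cf_J^n$ shows that each \textbf{good cylinder} at level $n$ --- a maximal subinterval whose itinerary $(I_1, \dots, I_n) \in \cp_J^n$ satisfies $c \notin \overline{I_k}$ for every $k$ --- carries a periodic point of $f$. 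A recursion on levels, relying on the absence of wandering intervals inside $\Lambda$ in the interval case of Theorem~\ref{cicloint}(\ref{opcoes}) and on their relegation to the gaps of $\Lambda$ in the Cantor case of Theorem~\ref{cicloint}(\ref{cantor}), together with Lemma~\ref{LemmaWI}, shows that the good cylinders are dense in $J \cap \Lambda$ and that their diameters shrink to zero; hence $Per(f) \cap \Lambda$ is dense in $J \cap \Lambda$.

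\textbf{Spreading to arbitrary $V$.} For the given $V$, strong transitivity (Proposition~\ref{transitivo}) yields $n \ge 0$ and $x \in J \cap \Omega(f)$ with $f^n(x) \in V$. Choose an open neighborhood $W \subset J$ of $x$ avoiding the finite set $\{c, f^{-1}(c), \dots, f^{-(n-1)}(c)\}$; then $f^n|_W$ is continuous, and by continuity we shrink $W$ further so that $f^n(W) \subset V$. By the previous step, $W$ --- open and meeting $J \cap \Lambda$ at $x$ --- contains a periodic point $p$ of $f$. Then $f^n(p) \in V$ lies on the periodic orbit of $p$, hence is itself periodic, producing the required periodic point in $V \cap \Lambda$ and completing the proof.

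\textbf{Main obstacle.} The delicate step is the density of good cylinders in $J \cap \Lambda$ near $c$: the two branches of $\cp_J$ touching $c$ carry no periodic points at the first‑return level and must be refined via $\cf_J^n$. The recursion argument rests on showing that cylinders that are bad at every level either reduce to a single point or to a nontrivial interval; the latter, being an invariant homterval under $\cf_J$, is excluded in the presence of $\Lambda$ by combining Lemma~\ref{homtervals} (no nontrivial homtervals outside wandering intervals and basins of periodic attractors, and $f$ has no periodic attractors here) with Lemma~\ref{LemmaWI} (wandering intervals must accumulate on both sides of $c$, forcing them into the gaps of $\Lambda$ rather than into its support).
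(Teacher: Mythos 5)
There is a genuine gap, and it sits exactly at the heart of the lemma. Your second step is fine: applying Proposition~\ref{transitivo} to the open set $J$ to find $x\in J\cap\Omega(f)\cap\UU$ and $n$ with $f^{n}(x)\in V$, shrinking a neighborhood $W\ni x$ so that $f^{n}(W)\subset V$, and pushing a periodic point of $W$ forward works, and any periodic point landing in $V\subset\UU$ is automatically in $\Lambda=\overline{\Omega(f)\cap\UU}$. The problem is the first step. The claim that the good cylinders (those whose itinerary avoids, at every level, the two branches of $\cp_{J}$ adjacent to $c$) are dense in $J\cap\Lambda$ with diameters tending to zero is asserted rather than proved, and the reduction you offer does not yield it. Your dichotomy (``bad at every level'' components are points or homtervals, and nontrivial homtervals are wandering intervals lying in gaps of $\Lambda$) only controls the always-bad set. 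It says nothing about the rest of the complement of the good cylinders: by Lemma~\ref{Lemma8388881a} the two branches touching $c$ are not full (their images are $(a,f^{R}(c_{-}))$ and $(f^{R}(c_{+}),b)$), so inside them the pullback of the partition is truncated at images of the critical values, and there are in addition the gap intervals of points that never return. A point of $\Lambda\cap J$ may, for all your sketch shows, be accumulated at every scale only by truncated cylinders and gaps; nothing forces a full cylinder to appear in each small neighborhood of each point of $\Lambda\cap J$, nor are the shrinking diameters of nested good cylinders justified. In other words, the ``local density'' claim is essentially equivalent to the lemma itself, and the recursion you invoke does not close it.

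The paper's proof is built precisely to dodge the non-full branches at $c$. It argues by contradiction: if $\Lambda\setminus\overline{Per(f)}\ne\emptyset$, take a connected component $I$ of $\UU\setminus\overline{Per(f)}$ meeting $\Lambda$ and study the first return map to $I$, not to a nice neighborhood of $c$. The absence of periodic points in $I$ is converted into a structural fact: if some return branch to $I$ were not onto $I$, an intermediate image would be cut at $c$, and since periodic points accumulate on $c$ from both sides (Proposition~\ref{Proposition008345678}) their forward images would place infinitely many periodic points inside $I$ --- a contradiction; hence every branch is full. One then rules out the single-branch case (uncountably many points of $I\cap\Lambda$ have $\omega$-limit $\Lambda$, by Theorem~\ref{cicloint}, hence return infinitely often, which a single monotone full branch cannot sustain), and two or more full monotone branches produce infinitely many periodic points in $I$, again a contradiction. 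If you want to rescue your direct approach, you need an analogue of this mechanism near $c$ --- something that manufactures full return branches arbitrarily close to every point of $\Lambda\cap J$ --- and that is exactly the missing ingredient.
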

\dem
Notice that $ {Per(f)\cap \overline\UU}= Per(f)\cap \Lambda$, thus $\Lambda\setminus\overline {Per(f)}=\Lambda\setminus\overline {Per(f)\cap \Lambda}$. Suppose that $\Lambda\setminus\overline {Per(f)}\ne\emptyset$. Let $I$ be connected component of $\UU\setminus\overline {Per(f)}$ such that $I\cap\Lambda\ne\emptyset$. As $\Lambda$ is perfect and compact we have that $I\cap\Lambda$ is uncountable. Moreover, as $\{x\in\Lambda;\omega_f(x)=\Lambda\}$ is residual in $\Lambda$, we have that $\{x\in\Lambda;\omega_f(x)=\Lambda\}\cap I$ is uncountable.
Then, the set of points that return infinitely many times to $I$ (that is, $\bigcap_{j\ge0}f^{-j}(I))$ is uncountable.
Let $I^*=\{x\in I;\co^+_f(f(x))\cap I\ne\emptyset\}$ be the set of points that return to $I$ and $F:I^*\to I$ the first return map. 
Observe that the set of points that return infinitely many times to $I$ is given by
$$
\{x; \#(\co^+_f(x)\cap I)=\infty\}=\bigcap_{j\ge0}F^{-j}(I)
$$
This way
\begin{equation}
\label{estdavi}
\bigcap_{j\ge0}F^{-j}(I) \text{ is uncountable.} 
\end{equation}

\begin{claim}[a]
If $J$ is connected component of $I^*$, then $F(J)=I$.
\end{claim}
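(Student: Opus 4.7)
The strategy is to adapt the proof of Lemma~\ref{Lemma8388881a}, replacing the role of a nice interval by the observation that $I$ is a connected component of $\UU\setminus\overline{Per(f)}$ and exploiting that its endpoints lie in $\overline{Per(f)}\cup\partial\UU$. Fix a connected component $J=(p,q)$ of $I^*$ and let $n=R|_J$ be the (constant) first return time on $J$. By Proposition~\ref{Proposition008345678} periodic points accumulate at $c$ from both sides, so $c\in\overline{Per(f)}$ and hence $c\notin I$; moreover, for every $x\in J$ and every $0\le j<n$ we must have $f^j(x)\ne c$ (otherwise the return time would fail to be constant on the connected set $J$), so $f^n|_J$ is an orientation-preserving homeomorphism onto an open subinterval $(s,t)\subset I=(\alpha,\beta)$.

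Supposing for contradiction that $(s,t)\ne I$, by symmetry I may assume $t<\beta$ (the case $s>\alpha$ being entirely analogous, and will be dealt with at the end by a symmetric argument at the left endpoint $p$). I analyze the right endpoint $q$ in cases. \textbf{If $q<\beta$ and $f^n$ extends continuously across $q$} (no $f^j(q)=c$ for $0\le j<n$), then $t=f^n(q)\in\overline I$: if $t\in\interior(I)$ then a two-sided neighborhood of $q$ lies in $I^*$ with return time $n$, contradicting the maximality of the component $J$; if $t\in\partial I$ then orientation-preservation forces $t=\beta$, contradicting $t<\beta$. \textbf{If $q<\beta$ but $f^{j_0}(q)=c$ for the minimal $0\le j_0<n$}, then by orientation-preservation $f^{j_0}(x)\to c^-$ as $x\uparrow q$, so $t=f^{n-j_0}(c_-)$; meanwhile the right one-sided limit $t':=f^{n-j_0}(c_+)$ controls the return-time behavior just to the right of $q$, and one shows that the presence of this discontinuity together with $t<\beta$ contradicts $q$ being a boundary point of a component of $I^*$ (using, as in Lemma~\ref{Lemma8388881a}, that otherwise either the component would extend or a periodic point would fall inside $I$).

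\textbf{If $q=\beta$}, the endpoint lies in $\overline{Per(f)}\cup\partial\UU$. When $\beta\in Per(f)$, the orbit $\co_f^+(\beta)$ avoids $I$ (else $I\cap Per(f)\ne\emptyset$), and tracing its itinerary shows $t=\lim_{x\uparrow\beta}f^n(x)\in\partial I$, whence orientation gives $t=\beta$. When $\beta\in\overline{Per(f)}\setminus Per(f)$, take periodic points $\beta_k\searrow\beta$ from outside $I$; their orbits avoid $I$, and passing to the limit (with the usual care at critical discontinuities) of $f^n(\beta_k)$ forces $t\in\partial I$, hence $t=\beta$. The case $\beta\in\partial\UU$ follows by the same reasoning, using that $\UU$ is a trapping region so that $\beta$'s orbit stays in $\overline\UU\setminus I$. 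A symmetric analysis at the left endpoint $p$ shows $s=\alpha$, completing the proof.

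The main obstacle I expect is the sub-case where $q$ is interior to $I$ but some $f^j(q)=c$: the left and right limits of $f^n$ at $q$ may well differ, and to force a contradiction one must carefully propagate the critical orbit through the first-return dynamics, while using both that $I$ is disjoint from $\overline{Per(f)}$ and that periodic orbits must avoid $I$.
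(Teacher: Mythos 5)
Your case analysis defers exactly the case that carries the content of the claim. When the endpoint $q$ of $J$ satisfies $f^{j_0}(q)=c$ for some minimal $0\le j_0<n$, you only say that ``one shows'' a contradiction by propagating the critical orbit through the first-return dynamics, and at the end you flag this sub-case as the main expected obstacle. That sub-case is where the paper's proof actually operates, and it is dispatched in one stroke, with no analysis of the one-sided limits $f^{n-j_0}(c_{\pm})$, no use of $t<\beta$, and no discussion of whether the component of $I^*$ could extend: by Proposition~\ref{Proposition008345678} (which is in force here, as you yourself note when proving $c\notin I$), periodic points accumulate on both sides of $c$; hence the interval $f^{j_0}(J)$, which has $c$ as an endpoint, contains infinitely many periodic points. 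Since $f^{n}|_{J}$ is a homeomorphism onto its image, $f^{n-j_0}$ is injective on $f^{j_0}(J)$ and sends these to infinitely many (distinct) periodic points inside $f^{n}(J)=F(J)\subset I$, contradicting the fact that $I$ is a connected component of $\UU\setminus\overline{Per(f)}$. So the ``discontinuity'' case is the easy one once the accumulation of $Per(f)$ at $c$ is pushed forward along the return branch; the paper reduces the whole claim to this single mechanism (if $F(J)\ne I$, the relevant endpoint of $J$ must meet $c$ before time $n$ --- your non-critical cases are essentially that reduction), whereas your write-up elaborates the reduction and leaves the decisive step unproved.

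A secondary weak point: in the sub-case $q=\beta\in\partial\UU$ you claim the conclusion ``follows by the same reasoning, using that $\UU$ is a trapping region so that $\beta$'s orbit stays in $\overline{\UU}\setminus I$''. Forward invariance of $\UU$ gives no reason for the orbit of $\beta$ to avoid $I$ --- that is essentially what is to be proved --- so as written this begs the question. If $\beta$ lies on the critical part of $\partial\UU$, say $\beta=f^{j}(c_{-})$ or $\beta=f^{j}(c_{+})$, the same push-forward of the periodic points accumulating at $c$ shows that $\beta\in\overline{Per(f)}$ (in the first case they even accumulate on $\beta$ from inside $I$, which is already absurd), reducing to the periodic-endpoint case you did treat; the remaining boundary points of $\UU$ would still need a word. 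Most of this endpoint taxonomy becomes unnecessary if you organize the proof, as the paper does, around the observation that any critical cut of the branch injects the periodic points clustering at $c$ into $I$.
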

{\em Proof of the Claim.}
Let $I=(i_0,i_1)$. If $F(J)\ne I$, then let $(t_0,t_1)=F(J)$ and in this case $t_0\ne i_0$ or $t_1\ne i_1$. Suppose $t_0\ne i_0$ (the other case is analogous). Let $n=R(J)$.
As $t_0\ne i_0$, there is $0\le s< n$ such that $f^s(t_0)=c$. Then we have that
$$
\#\big(Per(f)\cap f^s(J)\big)=\#\big(Per(f)\cap (c,f^s(t_1))\big)=\infty,
$$
as the periodic points accumulate in both sides of the critical point (Proposition~\ref{Proposition008345678}).
Then $\#\big(Per(f)\cap I\big)\ge\#\big(Per(f)\cap f^n(J)\big)=\infty$, contradicting the fact that $I$ is connected component of $\UU\setminus\overline {Per(f)}$.
$\square $(end of the proof of the Claim (a))  

\begin{claim}[b]
$I^*$ has more than one connected component.
\end{claim}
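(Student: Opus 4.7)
The plan is to argue by contradiction, supposing that $I^{*}$ is the single connected component $J$. Claim (a) gives $F(J)=I$, and since $F=f^{n}$ (with $n=R|_{J}$) and $f^{j}|_{J}$ is a homeomorphism for every $0\le j\le n$, the restriction $F|_{J}\colon J\to I$ is an orientation-preserving homeomorphism. I would then study the nested open subintervals $(a_{k},b_{k}):=F^{-k}(I)\subset J$ and derive a contradiction from the uncountability of $\bigcap_{k}F^{-k}(I)$ asserted in (\ref{estdavi}).

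The first step is to dispose of the case $J=I$. Here $F$ extends to an orientation-preserving self-homeomorphism of $\overline{I}$, so $i_{0}$ and $i_{1}$ are fixed by $F$ and hence $n$-periodic for $f$. Since $I\cap\overline{Per(f)}=\emptyset$, $F$ has no interior fixed points on $I$, and monotonicity then forces either $F(x)>x$ for every $x\in I$ or $F(x)<x$ for every $x\in I$. In either case the iterates of every $x\in I$ converge monotonically to an endpoint $i_{\epsilon}\in\{i_{0},i_{1}\}$, exhibiting a one-sided neighborhood of $i_{\epsilon}$ contained in $\beta(\co_{f}^{+}(i_{\epsilon}))$. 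Thus $\co_{f}^{+}(i_{\epsilon})$ is an attracting periodic orbit of $f$, contradicting $f\notin\cl_{Per}$.

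For the remaining case $J\subsetneq I$, the continuous extension of $F^{-1}$ to $\overline{I}$ is an orientation-preserving map onto $\overline{J}\subsetneq\overline{I}$ fixing any endpoint shared with $\overline{I}$ and strictly pulling the other endpoint inward. Consequently $(a_{k})$ is non-decreasing, $(b_{k})$ is non-increasing, and at least one of the two sequences is strictly monotone. Were $\alpha:=\lim a_{k}$ equal to $\beta:=\lim b_{k}$, the intersection $\bigcap_{k}F^{-k}(I)$ would be at most one point, contradicting (\ref{estdavi}); hence $\alpha<\beta$. Passing to the limit in $a_{k+1}=F^{-1}(a_{k})$ and $b_{k+1}=F^{-1}(b_{k})$ shows that both $\alpha$ and $\beta$ are fixed by the extended $F^{-1}$. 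A short endpoint-by-endpoint check (using $F^{-1}(j_{0})>j_{0}$ when $j_{0}>i_{0}$ and $F^{-1}(j_{1})<j_{1}$ when $j_{1}<i_{1}$) rules out the possibility that $\{\alpha,\beta\}\subset\partial J$, so at least one of them lies in the open interval $J$. At such a point $f^{n}$ is continuous, so $F^{-1}(\alpha)=\alpha$ upgrades to $f^{n}(\alpha)=\alpha$, producing a periodic point of $f$ inside $I$ and contradicting $I\cap\overline{Per(f)}=\emptyset$.

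I expect the most delicate step to be this last verification: the fixed point one extracts from the nested intersection must lie in the open interval $J$ (not merely in $\overline{J}$), since on $\partial J\setminus\partial I$ --- points that are preimages of $c$ under some $f^{j}$, $j<n$ --- the map $f^{n}$ need not even be continuous, and the fixed-point relation would not translate into $f$-periodicity. The uncountability hypothesis (\ref{estdavi}), by forcing $\alpha<\beta$, is exactly what prevents the limit from collapsing onto such a preimage of $c$, and is the hinge of the whole argument.
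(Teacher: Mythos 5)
Your argument is correct, and it is in fact a more careful execution of the same underlying idea as the paper's. The paper disposes of Claim (b) in one line: if $I^{*}$ were a single interval $(u,v)$ with $F=f^{n}|_{(u,v)}$, it asserts $\bigcap_{j\ge0}F^{-j}(I)=\fix(f^{n}|_{(u,v)})$ and gets a contradiction with (\ref{estdavi}) because $Per(f)\cap I=\emptyset$. Taken literally that equality is too strong: a point whose $F$-orbit drifts monotonically toward a fixed point of the boundary extension also returns forever without being fixed. Your two cases cover exactly the situations the paper's identification glosses over: when $J=I$, the absence of interior fixed points forces all orbits to converge to an endpoint, producing a one-sided periodic attractor and contradicting $f\notin\cl_{Per}$ (note this case does not even need (\ref{estdavi})); when $J\subsetneq I$, the nested preimages $F^{-k}(I)$ converge to fixed points $\alpha\le\beta$ of the extended $F^{-1}$, the uncountability in (\ref{estdavi}) forces $\alpha<\beta$, and your endpoint check correctly places one of them in the open interval $J$, where continuity of $f^{n}$ turns it into a periodic point inside $I$ --- contradiction. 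So the two proofs rest on the same mechanism (a single increasing branch ties the infinitely-returning set to fixed/attracting behavior, which $Per(f)\cap I=\emptyset$ and the no-attractor hypothesis forbid), but yours actually supplies the monotone-dynamics analysis that the paper's one-line equality leaves implicit.

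Two small points. In the case $J=I$, the endpoints need not be genuine periodic points of $f$: if the orbit of $i_{\epsilon}$ hits $c$ before time $n$, what you obtain is a super-attractor rather than an attracting periodic orbit; since the paper's $\cl_{Per}$ includes super-attractors, your contradiction still stands, but the phrase ``hence $n$-periodic for $f$'' should be weakened accordingly. Also, like the paper, you take for granted that the return time is constant on the component $J$ and that $F|_{J}=f^{n}|_{J}$ is a homeomorphism; this is the same implicit assumption the paper makes in Claims (a) and (b), so it introduces no new gap relative to the original proof.
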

{\em Proof of the Claim.}
Suppose it isn't so, then $I^*$ is an interval and we will write it as $(u,v)$ and $F=f^n|{(u,v)}$ for some $n\in\NN$. This implies, then, that $\bigcap_{j\ge0}F^{-j}(I)=Fix(f^n|{(u,v)})$. 
But this is an absurd, as by equation (\ref{estdavi}) this set would be uncountable and so the set of periodic points of $f$ would also be uncountable.

$\square $(end of the proof of the Claim (b))

%%%%%%%%%%%%%%%%%%%%%%%%%%%%%%%%%%%%%%%%%%%%
\begin{figure}
\begin{center}\label{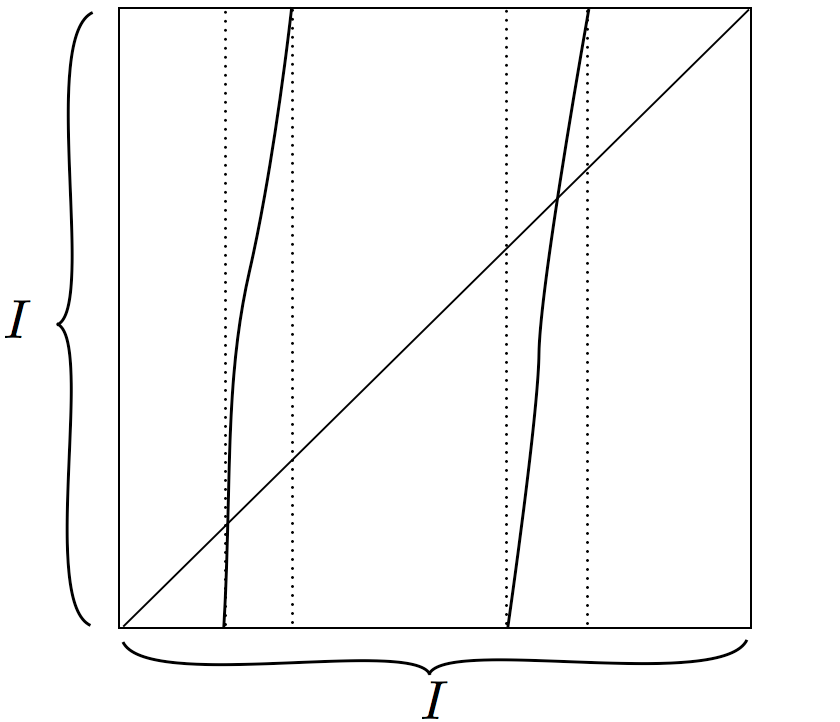}
\includegraphics[scale=.27]{PerDenso.png}\\
Figure~\ref{PerDenso.png}
\end{center}
\end{figure}
%%%%%%%%%%%%%%%%%%%%%%%%%%%%%%%%%%%%%%%%%%%%%%

As $F$ has at least two branches covering the full image $I$, we have it has infinitely many periodic points and, then, $f$ also has infinitely many periodic points in $I$, absurd.

\cqd

\section{Proof of Theorems \ref{baciastopologicas}, \ref{teoalfalim} and \ref{atratortopologico}}\label{ProofABCD}

Now, we will prove the main theorems: Theorem \ref{baciastopologicas}, \ref{teoalfalim} and \ref{atratortopologico}. 

\dem[Proof of Theorem~\ref{baciastopologicas}]
We are supposing $f$ has no attracting periodic orbit. Besides that, let's consider different situations:

\begin{enumerate}

\item Firstly, let us suppose that $\exists \varepsilon>0$ such that $B_\varepsilon(c)\cap Per(f)=\emptyset$. Then $[0,1]\setminus \overline{Per(f)}$ has a connected component $J=(a,b)$ such that  $c\in J$. 

If $\exists n$ such that $f^n(a)\in J$, $\exists \varepsilon >0$ such that $f^n(B_\varepsilon(a))\subset J$.
As $Per(f) \cap B_\varepsilon(a)\neq \emptyset$, then $Per(f)\cap J\neq\emptyset$, in contradiction with the definition of $J$.
Similarly we show that $f^j(b)\not\in J$, $\forall j \in \NN$, and so $J$ is a nice interval.
Lemma \ref{LemmaHGFGH54} states that $a\in Per(f)$ or it is accumulated by periodic points $p_j \in J$, and the same for $b$. Then, $\{a,b\}\subset Per(f)$.

We can also state that $J$ is a renormalization interval, for if $f^{period(a)}((a,c))\not\subset(a,b)$, by Lemma \ref{Lemma8388881a}, $\exists d \in (a,b)$ such that $f^{period(a)}((a,d))=(a,b)$, that is, $f^{period(a)}(d)=(b)$ and then $(d,b)$ is nice, but again by lemma \ref{LemmaHGFGH54}, $d \in Per(f)$ or $\exists p_j \in Per(f)$, $p_j \nearrow d$, which is a contradiction. In the same way, $f((c,b))\subset(a,b)$ and, so, $J$ is a renormalization interval.

As there are no attracting periodic orbits and $J$ is a renormalization interval, it follows from Lemma~\ref{noitenoite} that $\omega_{f}(x)\ni c, \,\forall x \in J$. By a renormalization and Lemma~\ref{atcher} in the Appendix, there is a compact minimal set $\Lambda$ such that $\omega_{f}(x)=\Lambda$, $\forall\,x\in J$. Then this is a Cherry map, according to the equivalency provided by \cite{GT85}, as observed when we defined Cherry maps. Also, as Lemma \ref{Remark98671oxe} assures us that $\{x\,;\,\co_{f}^{+}(x)\cap J\ne\emptyset\}$ is an open and dense set, it is not difficult to conclude that $\Lambda$ is a Cherry attractor, and that it attracts a residual set of the interval.

One can observe that all these features of the Cherry attractor could also be obtained using the semi-conjugation with an irrational rotation.

It may occur that the semi-conjugacy is not surjective, meaning the Cherry map has a gap, that is, there is a wandering interval for the considered map. 

For the remaining cases we have, then, that $\forall \varepsilon>0 \,\exists  p; p\in B_\varepsilon(c)\cap Per(f)$. Among these, the first situation to consider is the one of $\Lambda$ being a solenoidal attractor:

\item 
As we have defined, there is a set $\Lambda \subset \bigcap_{n=0}^\infty K_n$, where $K_n=\bigcup_{j=0}^{\period(p_{n})}f^{j}((p_n,c))\cup \bigcup_{j=0}^{\period(q_n)}f^{j}((c,q_n))$, $J_n=(p_{n},q_{n})$, $n\in\NN$, and $J_1 \supset J_2 \supset \cdots $ is the chain of renormalization intervals.

It follows from the construction that $c \in \Lambda$. Moreover, it follows from Lemma \ref{Remark98671oxe} that given a renormalization interval $J_n$, the set of points that eventually visit it is an open and dense set, $V_n=\{x; \exists j \text{ such that }f_j(x)\in J_n\}$. There is a residual set $\bigcap_{n=0}^{\infty}V_{n}$ of points that eventually fall into any renormalization interval, that is, $c \in \omega_f(x)$, $\forall x \in \bigcap_{n=0}^{\infty}V_{n}$ and by theorem \ref{SOLENOIDETH}, $\omega_f(x)=\Lambda$, that is, this residual set belongs to the basin of $\Lambda$, as stated.

\item Now we come to the situation that $f$ has no periodic attractor, neither Cherry attractor nor Solenoidal attractor. It follows from Theorem~\ref{cicloint} that $\exists \Lambda$ compact, $f(\Lambda)=\Lambda$, transitive set   such that $\omega_{f}(x)=\Lambda$ for a residual set of points of $\Lambda$, whose basin of attraction $\beta(\Lambda):=\{x;\omega_{f}(x)\subset \Lambda\}$, is an open and dense set. Also, $\exists \lambda >0$ such that $\lim_{n\to\infty} \frac{1}{n}\log |Df^n(x)|=\lambda$ for a dense set of points $x$ in $\Lambda$.

Theorem~\ref{cicloint} also gives two possibilities for this setting:
\begin{enumerate}

\item either $\Lambda$ is a finite union of intervals and $\omega_{f}(x)=\Lambda$ for a residual set of $x$ in $[0,1]$
\item or it is a Cantor set and there is a wandering interval.
\end{enumerate}

In both cases, all we have to do to complete the proof of the theorem is to show that any of these two is a chaotic attractor, and for this, it only remains to prove that periodic orbits are dense in it ($\overline {Per(f)\cap \Lambda}=\Lambda$) and that its topological entropy $h_{top}(f|_\Lambda)$ is positive. The condition on the periodic points follows from Lemma~\ref{perdenso}. 
The fact that the topological entropy is positive can be obtained by taking arbitrarily small nice intervals whose borders are non-periodic (e.g., pre-periodic points), and by observing that the returns to this interval provide at least two full branches, that will create shifts that have positive entropy.

\end{enumerate}

\cqd

\dem[Proof of Theorem~\ref{teoalfalim}]
The existence of a single topological attractor is given by Theorem~\ref{cicloint}. If $\Lambda$ is a Cherry attractor and it does not have a wandering interval, then there is an interval $[a,b]$, such that (identifying $a$ and $b$) the first return map to $F:[a,b]\to[a,b]$ is conjugated to an irrational rotation. In particular $\alpha_{f}(x)\supset\alpha_{F}(x)=[a,b]=\omega_{F}(x)\subset\omega_{f}(x)$, $\forall\,x\in[a,b]$. Furthermore, the attractor for the map $f$, $\Lambda$, is given by the itinerary of the interval $[a,b]$, that is, $\Lambda=[a,b]\cup \bigcup_{j=0}^{\ell-1}f^{j}([f(a),f(c_{-})])\cup \bigcup_{j=0}^{r-1}f^{j}([f(c_{+}),f(b)])$, where $\ell$ and $r$ are the smallest integers such that $f^{\ell}((a,c))\subset(a,b)\supset f^{r}((c,b))$. So, $$\alpha_{f}(x)\supset\Lambda\subset\omega_{f}(x)\,\,\forall\,x\in\Lambda.$$ In particular, $$\alpha_{f}(x)\supset(a,b)\subset\omega_{f}(x)\,\,\forall\,x\in\Lambda.$$

Considering $V_{(a,b)}=\{x\in[0,1]\,;\, \exists j \text{ such that }f^j(x)\in(a,b)  \}$, Lemma \ref{Remark98671oxe} assures us that this set is open and dense, and then we get $\alpha_{f}(x)=[0,1]$, $\forall\,x\in\Lambda$.

If $\Lambda$ is a Solenoid, then $\Lambda\subset\bigcap_{n=0}^\infty K_n$, where $$K_n=\bigg(\bigcup_{j=0}^{\period(a_{n})}f^j([a_{n},c))\bigg)\cap\bigg(\bigcup_{j=0}^{\period(b_{n})}f^{j}((c,b_{n}])\bigg)$$ and $\{J_n=(a_{n},b_{n})\}_{n}$ is an infinite nested chain of renormalization intervals. Given any $x\in K_{J_{n}}$ and  $y \in \Lambda_{J_{n}}$, there are $w\in J_n$ and $\ell \in \NN$ such that $f^\ell(w)=x$. By Lemma \ref{Lemma549164}, for any given $\varepsilon>0$, $\exists z \ \in B_\varepsilon(y)$ such that $f^k(z)=x$ for some $k>0$. Then, $\alpha_{f}(x)\supset \Lambda_{J_{n}}$, $\forall n \in \NN$. If $f$ does not have any wandering interval, it is easy to show that $\bigcup_{n\ge0}\Lambda_{J_{n}}$ is dense in $[0,1]$: suppose it isn't, then $\exists U$ open interval $U\in [0,1]\setminus \bigcup_{n\ge0}\Lambda_{J_{n}}$. If $\exists j$ such that $f^j(U)\ni c$, take $j$ minimum with this property. As $f^j(U)$ is an open neighborhood of $c$, then it contains $\overline{J_m}$ for $m$ big enough, where $J_m$ is a renormalization interval. Then, $\exists s$ and $t\in U$ such that $f^j(s)=a_m$ and $f^j(t)=b_m$, which is in contradiction with the definition of $U$, as $\{s,t\}\subset \Lambda_{J_{m}}$. As $\alpha_{f}(x)\supset \Lambda_{J_{n}}$ and $\bigcup_{n\ge0}\Lambda_{J_{n}}$ is dense in $[0,1]$, we have proved that $\alpha_{f}(x)=[0,1]$, $\forall\,x\in\Lambda$.

Finally, if $\Lambda$ is not a Cherry or a Solenoid attractor, the proof follows from Corollary~\ref{sobrealfa} and items (\ref{opcoes}) and  (\ref{cantor}) of Theorem~\ref{baciastopologicas}. Indeed, as we are assuming that $f$ does not have wandering intervals, it follows from items (\ref{opcoes}) and (\ref{cantor}) of Theorem~\ref{baciastopologicas} that $\Lambda$ is a cycle of intervals. By Corollary~\ref{sobrealfa} and the fact that $\Lambda=\overline{\UU\cap\Omega(f)}$, we get $\alpha_{f}(x)\supset\Lambda$. As $\Lambda$ contains an open neighborhood of $c$, it follows that the set of $x\in[0,1]$ such that $\co_{f}^{+}(x)\cap\Lambda\ne\emptyset$ contains an open and dense set. Thus, $\bigcup_{j\ge0}f^{-j}(\Lambda)$ is dense and so $\alpha_{f}(x)$ is dense $\forall\,x\in\UU$. As the $\alpha$-limit is a closed set, $\alpha_{f}(x)=[0,1]$ for all $x\in\UU$. If $\Lambda=\overline{\UU\cap\Omega(f)}=\UU\cap\Omega(f)\subset\UU$, the proof is done. On the other hand, if $\Lambda\not\subset\UU$, then $\Lambda\setminus\UU\subset(\co_{f}^{+}(c_{-})\cap\co_{f}^{+}(c_{+}))$. But, as it was defined in the beginning of Section~\ref{MainResults}, $c\in f^{-1}(f(c_{-}))$ and also $c\in f^{-1}(f(c_{+}))$. Thus, $\alpha_{f}(\Lambda\setminus\UU)\supset\alpha_{f}(c)=[0,1]$ (because $c\in\UU$).
\cqd

\dem[Proof of Theorem~\ref{atratortopologico}]
The first statement of the theorem follows straightforwardly from Proposition~\ref{DOISMAX}. 
Items (1),(2) and (3) repeat what is said in Theorem~\ref{baciastopologicas}. 
In the case (4), we have the existence of wandering intervals, so let's consider $V$ the union of all wandering intervals. Lemma~\ref{DenWanInt} says this set is open and dense in $[0,1]$, and Corollary~\ref{stpremovido} gives the structure of the set $\Lambda$. 
\cqd
\color{black}

\section{Appendix}

\begin{Lemma}\label{dicotomia} 
If $f:U \to \XX$ is a continuous map  defined in an open and dense subset $U$ of  compact metric space $\XX$, then either $\nexists x \in U$ such that $\omega_{f}(x)=\XX$ or $\omega (x)=\XX$ for a residual set of $x \in \XX$.
\end{Lemma}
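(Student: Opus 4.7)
My plan is to use Baire category directly. Fix a countable basis $\{V_n\}_{n\ge 1}$ of open sets for the compact metric space $\XX$, and for each pair $(n,N)\in\NN^2$ set
$$W_{n,N}=\bigcup_{k\ge N}f^{-k}(V_n),$$
where by convention $f^{-k}(V_n)=\{x\in U\,;\,f^j(x)\in U\text{ for }0\le j<k\text{ and }f^k(x)\in V_n\}$. Continuity of $f$ on the open set $U$ makes each $f^{-k}(V_n)$ open in $\XX$, so every $W_{n,N}$ is open. The first step is the identity
$$\{x\,;\,\omega_f(x)=\XX\}=\bigcap_{n\ge 1}\bigcap_{N\ge 1}W_{n,N},$$
which is immediate from the fact that $\omega_f(x)=\XX$ is equivalent to the orbit being defined for all times and visiting every basic open set $V_n$ infinitely often. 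This already exhibits the set in question as a $G_\delta$ subset of $\XX$.

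For the dichotomy, assume there exists at least one $x_0\in U$ with $\omega_f(x_0)=\XX$; I must then prove that each $W_{n,N}$ is dense in $\XX$, so that Baire applied to the compact metric space $\XX$ gives that $\bigcap_{n,N}W_{n,N}$ is residual. So fix $n$, $N$, and a non-empty open set $G\subset \XX$. Since $\omega_f(x_0)=\XX$, the forward orbit of $x_0$ enters $G$ at some time $m$, i.e.\ $y:=f^m(x_0)\in G$. The point $y$ lies on the same forward orbit as $x_0$, so $\omega_f(y)=\omega_f(x_0)=\XX$; in particular the orbit of $y$ enters $V_n$ infinitely often, so there exists $j\ge N$ with $f^j(y)\in V_n$. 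Then $y\in G\cap f^{-j}(V_n)\subset G\cap W_{n,N}$, proving density.

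Combining these two paragraphs: under the assumption that some $x_0$ realises $\omega_f(x_0)=\XX$, the set $\{x\,;\,\omega_f(x)=\XX\}$ is a countable intersection of open dense sets in the compact (hence Baire) metric space $\XX$, so it is residual in $\XX$. The contrapositive alternative is that no such $x_0$ exists, which is precisely the first horn of the stated dichotomy.

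The only delicate point is that $f$ is merely defined on the open dense set $U$, so the iterates $f^k$ are defined only on the decreasing open sets $U_k=\bigcap_{j=0}^{k-1}f^{-j}(U)$. This is automatically absorbed by the definition of $W_{n,N}$: membership in $W_{n,N}$ already forces the first $k$ iterates to be defined, and membership in every $W_{n,N}$ forces the whole forward orbit to stay in $U$. Hence no separate argument about $\tilde U=\bigcap_k U_k$ is needed beyond noting that $x_0\in\tilde U$ witnesses non-emptiness of each $W_{n,N}$; this is the step I expect to need the most care to phrase cleanly.
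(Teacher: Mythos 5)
Your argument is correct, but it takes a genuinely different route from the paper's. You exhibit $\{x\,;\,\omega_{f}(x)=\XX\}$ exactly as the $G_\delta$ set $\bigcap_{n,N}W_{n,N}$ built from preimages of a countable basis (the classical Birkhoff-transitivity scheme), and you get density of each open set $W_{n,N}$ by pushing the assumed point $x_0$ forward into a prescribed open set $G$ and using that the tail of its orbit still has full $\omega$-limit; Baire then finishes. The paper instead works with forward images: around each point $p_{\ell}=f^{\ell}(p)$ of the given dense orbit it chooses, by continuity along the finite segment $\{p_{\ell},\dots,f^{k_{n,\ell}}(p_{\ell})\}$, a radius $r_{n,\ell}$ so that every $y\in B_{r_{n,\ell}}(p_{\ell})$ shadows that segment within $1/2n$; the sets $\bigcup_{\ell}B_{r_{n,\ell}}(p_{\ell})$ are then open, dense and contained in $\XX_{n}=\{x\,;\,\co^{+}_{f}(x)\text{ is }(1/n)\text{-dense}\}$, and their intersection is the residual set. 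Your version buys two things: it avoids the continuity-modulus/shadowing step entirely (you only need that preimages of open sets under the partially defined iterates are open), and the extra index $N$ encodes ``visits at arbitrarily large times'', so you directly obtain $\omega_{f}(x)=\XX$ rather than merely a dense orbit --- a distinction the paper's construction glosses over (dense orbit gives $\omega_{f}(x)=\XX$ only after observing that $\XX$ has no isolated points, which is harmless in the intended application $\XX=[0,1]$). The paper's construction is in exchange more hands-on and quantitative, making the $(1/n)$-density of finite orbit segments explicit. One cosmetic point in yours: take the basis elements $V_n$ nonempty (or discard the empty set), since an empty $V_n$ would make $W_{n,N}$ empty and break the stated identity.
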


\dem
Suppose that $\co^{+}_{f}(p)$ is dense in $\XX$ for some $p\in \bigcap_{j\ge0}f^{-j}(U)$. Write $p_{\ell}=f^{\ell}(p)$. For each $\ell\in\NN$ there is some $k_{n,\ell}$ such that $\{p_{\ell},$ $\cdots,$ $f^{k_{n,\ell}}(p_{\ell})\}$ is $(1/2n)$-dense. As $f$ is continuous and $U$ open, there is some $r_{n,\ell}>0$ such that $f^{j}(B_{r_{n,\ell}}(p_{\ell}))\subset B_{1/2n}(f^{j}(p_{\ell}))$, $\forall\,0\le j\le k_{n,\ell}$. Thus, $\{y,\cdots,f^{k_{n,\ell}}(y)\}$ is $(1/n)$-dense $\forall\,y\in B_{r_{n,\ell}}(p_{\ell})$. Let $$\XX_{n}=\{x\in\XX\,;\,\co^{+}_{f}(x)\text{ is }(1/n)-\text{dense}\}.$$ Therefore $\bigcup_{\ell\in\NN}B_{r_{n,\ell}}(p_{\ell})\subset\XX_{n}$ is a open and dense set. Furthermore, $$\bigcap_{n\in\NN}\bigcup_{\ell\in\NN}B_{r_{n,\ell}}(p_{\ell})$$ is a residual set contained in $\bigcap_{n\in\NN}\XX_{n}=\{x\in\XX$ $;$ $\omega_{f}(x)=\XX\}$.
\cqd

\begin{Lemma}
\label{aeroporto}
Let $\XX$ be a compact metric space and $f:U\to\XX$ be a continuous map defined in a subset $U$. If $x\in\bigcap_{n\ge0}f^{-n}(U)$ and $x\in\omega_f(x)$, then either $\co_f^+(x)$ is a periodic orbit (in this case $\omega_f(x)=\co_f^+(x)$) or $\omega_{f}(x)$ is a perfect set.
\end{Lemma}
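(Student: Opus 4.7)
The plan is to split along the dichotomy stated in the conclusion and, in the non-periodic case, show directly that $\omega_f(x)$ has no isolated points. Since any $\omega$-limit set of a continuous map is closed in $\XX$, the word ``perfect'' reduces here to ``without isolated points''. The hypothesis $x\in\bigcap_{n\ge 0}f^{-n}(U)$ guarantees that the whole orbit $\co_f^+(x)$ is well defined, so all iterates $f^k(x)$ make sense and, by continuity, any sequence $f^{n_j}(x)\to x$ yields $f^{n_j+k}(x)\to f^k(x)$; in particular $\co_f^+(x)\subset\omega_f(x)$ and hence $\omega_f(x)=\overline{\co_f^+(x)}$.

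The key preliminary observation is that if $\co_f^+(x)$ is not a periodic orbit then the iterates $\{f^k(x)\}_{k\ge 0}$ are pairwise distinct. Indeed, if two of them agreed the orbit would be eventually periodic, so $\omega_f(x)$ would reduce to the finite periodic tail; but $x\in\omega_f(x)$ would then force $x$ to equal some $f^j(x)$ with $j\ge 1$, making $x$ itself periodic and $\co_f^+(x)$ a periodic orbit — the excluded case. This is the step I expect to be the main (though modest) obstacle, since the whole argument below leans on the distinctness of all iterates.

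With this in hand, the proof finishes by exhibiting, for every $y\in\omega_f(x)$, a sequence in $\omega_f(x)\setminus\{y\}$ converging to $y$. Fix $n_j\to\infty$ with $f^{n_j}(x)\to x$. If $y=f^k(x)$ lies on the orbit, then by continuity $f^{n_j+k}(x)\to f^k(x)=y$, and distinctness of the iterates forces $f^{n_j+k}(x)\ne f^k(x)$ whenever $n_j\ge 1$, producing the desired approximating sequence of points of $\omega_f(x)$ all different from $y$. If instead $y\notin\co_f^+(x)$, then $y\in\overline{\co_f^+(x)}\setminus\co_f^+(x)$, so any sequence of orbit points converging to $y$ is automatically contained in $\omega_f(x)\setminus\{y\}$. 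In either subcase $y$ is not isolated, so $\omega_f(x)$ is perfect, concluding the argument.
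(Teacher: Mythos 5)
Your proof is correct and uses essentially the same ingredients as the paper's: the key fact that $x\in\omega_f(x)$ together with continuity along the orbit gives $\co_f^+(x)\subset\omega_f(x)$, and the dichotomy between periodicity and pairwise distinct iterates. The paper merely arranges the argument contrapositively (an isolated point of $\omega_f(x)$ would be hit infinitely often by the orbit, forcing $x$ to be periodic), whereas you argue directly from non-periodicity that every point of $\omega_f(x)$ is accumulated by other points of $\omega_f(x)$; the two arguments are the same in substance.
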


\dem
Suppose $\exists p \in \omega_{f}(x)$ an isolated point, say $B_{\varepsilon}(p)\cap\omega_{f}(x)=\{p\}$, with $\varepsilon>0$. As $x\in\omega_{f}(x)$ and $f$ is continuous on $\co^{+}_{f}(x)$, we have $\co^{+}_{f}(x)\subset\omega_{f}(x)$. Thus, $\co^{+}_{f}(x)\cap(B_{\varepsilon}(p)\setminus\{p\})=\emptyset$.
As $p \in \omega_{f}(x) \Rightarrow \exists$ sequence $n_{j}\nearrow \infty$ such that $f^{n_{j}}(x)\to p$.  Taking $j$ big enough we have $f^{n_{j}}(x)\in B_{\varepsilon}(p)$, then $f^{n_{j}}(x)=p$, $\forall j$ big and, then, $f^{n_{j+1}-n_{j}}(f^{n_{j}}(x))=p=f^{n_{j}}(x)$, that is, $f^{n_{j}}(x)$ is periodic. As $x \in \omega_{f}(x)=\omega(f^{n_{j}}(x))=\co^{+}(f^{n_{j}}(x))$, we have that $x$ is periodic.
\cqd

\begin{Corollary}
\label{CORaeroporto}
Let $f:[0,1]\setminus\{c\}\to[0,1]$ be a contracting Lorenz map. If $c_-\in\omega_f(c_-)$, then either $f$ has a super-attractor containing $c_-$ or $\omega_f(c_-)$ is a perfect set. Analogously,   If $c_+\in\omega_f(c_+)$, then either $f$ has a super-attractor containing $c_+$ or $\omega_f(c_+)$ is a perfect set.
\end{Corollary}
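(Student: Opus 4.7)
The plan is to apply Lemma~\ref{aeroporto} to the genuine point $y := f(c_-) = \lim_{\varepsilon\downarrow 0}f(c-\varepsilon)$, splitting on whether the forward orbit of $c_-$ ever reaches the critical point.

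I would first dispose of the case $f^p(c_-)=c$ for some minimal $p\ge 1$. Setting $p_i := f^i(c_-)$ for $1\le i\le p-1$, the finite set $\Lambda := \{p_1,\ldots,p_{p-1},c\}$ verifies $f(p_i)=p_{i+1}$ for $i<p-1$, $f(p_{p-1})=c$, and $\lim_{\varepsilon\downarrow 0}f(c-\varepsilon)=p_1$, which matches the definition of super-attractor given in Section~\ref{MainResults}. That produces the first alternative of the corollary.

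In the remaining case, $f^j(c_-)\ne c$ for every $j\ge 1$, so the entire orbit of $y$ lies in the domain $U:=[0,1]\setminus\{c\}$ of $f$; in particular $y\in\bigcap_{n\ge 0}f^{-n}(U)$. I would read $c_-\in\omega_f(c_-)$ as the existence of a sequence $n_j\to\infty$ with $f^{n_j}(c_-)\nearrow c$. Left-continuity of $f$ at $c$ then gives $f^{n_j+1}(c_-)\to f(c_-)=y$, whence $y\in\omega_f(y)=\omega_f(c_-)$. Lemma~\ref{aeroporto} now applies to $y$ and yields either a periodic orbit with finite $\omega_f(y)$, or a perfect $\omega_f(y)$. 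The periodic alternative would force $\omega_f(c_-)=\omega_f(y)$ to be a finite subset of $U$, contradicting the fact that $c\in\omega_f(c_-)$. Hence $\omega_f(c_-)$ must be perfect, giving the second alternative. The argument for $c_+$ is completely symmetric, replacing left by right throughout.

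The main (and essentially only) delicacy is the interpretation of the symbol $c_-$, which is not a point in the domain of $f$; passing to $y=f(c_-)$ and invoking the one-sided continuity of $f$ at $c$ bypasses this issue and reduces everything to the lemma already available.
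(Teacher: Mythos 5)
Your proof is correct and takes essentially the same route as the paper: after dispensing with the case where the orbit of $c_-$ hits $c$ (which is by definition the super-attractor alternative), you pass to $y=f(c_-)$, use one-sided continuity at $c$ to get $y\in\omega_f(y)$, and apply Lemma~\ref{aeroporto}, excluding the periodic alternative because $c\in\omega_f(c_-)=\omega_f(y)$ while the orbit of $y$ avoids $c$. The paper's proof does exactly this with $v_1:=f(c_-)$ playing the role of your $y$.
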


\dem
Suppose that $f$ does not have a super-attractor containing $c_-$. Thus, $v_1:=f(c_-)\notin\co_f^-(c)$. In this case, $\co_f^+(c_-)=\{c\}\cup\co_f^+(v_1)$ (recall the definition of $\co_f^+(c_-)$ in the beginning of Section~\ref{MainResults}). Note that $v_1\in\omega_f(v_1)$, because $c_-\in\omega_f(c_-)$. As $v_1$ can not be a periodic orbit and as $v_1\in\bigcup_{n\ge0}f^{-n}([0,1]\setminus\{c\})$, it follows from Lemma~\ref{aeroporto} that $\omega_f(v_1)$ is a perfect set. As $\omega_f(c_-)=\omega_f(v_1)$  (because $c\in\overline{\omega_f(v_1)\cap(0,c)}$), we finish the proof.\cqd

\begin{Corollary}
\label{CORaeroporto2}
Let $f:[0,1]\setminus\{c\}\to[0,1]$ be a contracting Lorenz map without periodic attractors. Suppose $\omega_f(c_-)\ni c\in\omega_{f}(c_{+})$. If $\overline{\co_{f}^{+}(p)\cap(0,c)}\ni c\in\overline{(c,1)\cap\co_{f}^{+}(p)}$, $p\in(0,1)\setminus\{c\}$, then $\omega_{f}(p)$ is a perfect set and $\overline{\omega_{f}(p)\cap(0,c)}\ni c\in\overline{(c,1)\cap\omega_{f}(p)}$.\end{Corollary}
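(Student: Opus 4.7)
The plan is to bootstrap from Corollary~\ref{CORaeroporto} on the two critical orbits and then transfer the resulting structure to the $\omega$-limit of $p$. Because $f$ has no periodic attractors, it has no super-attractors, so the forward orbits $\co_f^+(c_-)$ and $\co_f^+(c_+)$ both avoid $c$. In particular, for every fixed $k\ge 1$ the iterate $f^k$ is continuous in a one-sided neighborhood $(c-\delta_k,c)$ (and in $(c,c+\delta_k)$), and $f^{k+1}(c_\pm)=\lim_{\varepsilon\downarrow 0}f^{k}(f(c\pm\varepsilon))$. Combining this with the hypothesis $\omega_f(c_-)\ni c\in \omega_f(c_+)$, Corollary~\ref{CORaeroporto} yields that $\omega_f(c_-)$ and $\omega_f(c_+)$ are both perfect sets, each containing~$c$.

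Next, I inject these perfect sets into $\omega_f(p)$. By hypothesis, there exists a subsequence $n_j\to\infty$ with $f^{n_j}(p)\in(0,c)$ and $f^{n_j}(p)\to c$. By the one-sided continuity above, for every fixed $k\ge1$ one has $f^{n_j+k}(p)\to f^k(c_-)$ as $j\to\infty$, so $f^k(c_-)\in\omega_f(p)$ for every $k\ge1$. Hence $\co_f^+(f(c_-))\subset\omega_f(p)$, and since $\omega_f(p)$ is closed, $\omega_f(c_-)\subset\omega_f(p)$. The symmetric argument with the right-side subsequence gives $\omega_f(c_+)\subset\omega_f(p)$, whence $\omega_f(c_-)\cup\omega_f(c_+)\subset\omega_f(p)$.

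To get that $\omega_f(p)$ is itself perfect, I claim $\omega_f(p)=\omega_f(c_-)\cup\omega_f(c_+)$. Given $q\in\omega_f(p)$ with $q\ne c$, write $q=\lim_j f^{n_j}(p)$; since $c\in\omega_f(p)$, one can extract indices $\ell_j<n_j$ with $f^{\ell_j}(p)\to c$ from one chosen side (say the left, after passing to a subsequence), and set $r_j=n_j-\ell_j\ge1$. If $r_j$ stays bounded, a further subsequence makes $r_j\equiv r$ constant and yields $q=f^r(c_-)$, which lies in $\omega_f(c_-)\cup\omega_f(c_+)$ because the two hypotheses $c\in\omega_f(c_\pm)$ together force $v_-:=f(c_-)\in\omega_f(c_-)\cup\omega_f(c_+)$ (depending on which side $\co_f^+(c_-)$ approaches $c$), and forward invariance then carries this inclusion along the orbit of $v_-$. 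If instead $r_j\to\infty$, then $q$ is a tail accumulation point of the orbit of $v_-$, so $q\in\omega_f(v_-)=\omega_f(c_-)$. Either way $q\in\omega_f(c_-)\cup\omega_f(c_+)$, and therefore $\omega_f(p)$ equals this union of two perfect sets and is itself perfect. Finally, the two-sided accumulation $\overline{\omega_f(p)\cap(0,c)}\ni c\in\overline{\omega_f(p)\cap(c,1)}$ is immediate: the left-hand sequence $f^{n_j}(p)\to c$ produces, via step two, a sequence in $\co_f^+(v_-)\subset\omega_f(p)$ accumulating $c$ from the left (since otherwise this accumulation on $c$ from one side only would, combined with Lemma~\ref{LemmaWI} and the absence of periodic attractors, contradict the non-existence of wandering intervals near~$c$), and the symmetric argument handles the right side.

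The main obstacle is the middle step, namely checking that all iterates $f^r(c_\pm)$ belong to $\omega_f(c_-)\cup\omega_f(c_+)$: $v_\pm$ need not a priori lie in its own $\omega$-limit, and it is only the combination of the two hypotheses $c\in\omega_f(c_-)$ and $c\in\omega_f(c_+)$ that guarantees each $v_\pm$ is placed in one of the two perfect sets, after which forward invariance does the rest. Controlling the diagonal limit along the subsequences $f^{\ell_j}(p)$ and then $f^{r_j}(\cdot)$, and ruling out spurious isolated points of $\omega_f(p)$ not captured by the two critical $\omega$-limits, is the most delicate part of the argument.
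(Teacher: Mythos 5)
Your first two steps coincide with the paper's: apply Corollary~\ref{CORaeroporto} to get that $\omega_f(c_-)$ and $\omega_f(c_+)$ are perfect, and use the two-sided accumulation of $\co_f^+(p)$ on $c$ plus one-sided continuity of the iterates at $c$ (legitimate, since absence of super-attractors keeps the critical orbits off $c$) to obtain $\omega_f(c_-)\cup\omega_f(c_+)\subset\omega_f(p)$, which is exactly how the paper derives $\overline{\omega_f(p)\cap(0,c)}\ni c\in\overline{(c,1)\cap\omega_f(p)}$. The gap is in your third step: the claimed equality $\omega_f(p)=\omega_f(c_-)\cup\omega_f(c_+)$ is neither proved nor true in general. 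In the case $r_j\to\infty$ you conclude that $q$ is a tail accumulation point of the orbit of $v_-$; this tacitly assumes that $f^{r_j}(f^{\ell_j}(p))$ stays close to $f^{r_j}(c_-)$, i.e.\ that a point merely known to be near $c^-$ shadows the left critical orbit for $r_j$ iterates. There is no such uniformity: the modulus of continuity of $f^{r_j}$ on $(c-\delta,c)$ degenerates as $r_j\to\infty$, and nothing in your construction couples how close $f^{\ell_j}(p)$ is to $c$ with how large $r_j$ is. Worse, the equality itself can fail under the stated hypotheses: if the topological attractor is a transitive cycle of intervals while the critical orbits have Fibonacci-like recurrence, $\omega_f(c_-)\cup\omega_f(c_+)$ is a proper minimal Cantor subset of the cycle, yet a point $p$ with dense orbit in the cycle satisfies every hypothesis of the corollary and has $\omega_f(p)$ equal to the whole cycle. (The corollary's conclusion still holds there, but not by your route, so this step cannot be repaired.) A secondary gap: even in the bounded case, to place $f^r(c_-)$ in the union you need $v_-=f(c_-)\in\omega_f(c_-)\cup\omega_f(c_+)$, but the hypotheses only give that each critical orbit accumulates on $c$ from \emph{some} side; if both approach $c$ only from the right, your dichotomy captures $v_+$ and says nothing about $v_-$.

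The paper does not attempt to identify $\omega_f(p)$ with the critical $\omega$-limit sets: it uses the inclusion only for the side-accumulation statement, and proves perfectness by contradiction, assuming $q$ is an isolated point of $\omega_f(p)$ and working with the connected component $J=(a,b)$ of $[0,1]\setminus\bigl(\omega_f(p)\setminus\{q\}\bigr)$ containing $q$, in the spirit of Lemma~\ref{aeroporto} and of the claim inside Lemma~\ref{atcher}: returns of $\co_f^+(p)$ to $J$ can only cluster at $q$, and one extracts from this either a periodic attractor or a point of $\omega_f(p)$ in $J\setminus\{q\}$, a contradiction (as printed the paper leaves this part unfinished, but that is the intended mechanism). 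If you rework your proof, keep your steps one and two and replace the equality claim by an isolated-point argument of this kind; also note that the corollary assumes only a contracting Lorenz map, so your parenthetical appeal to Lemma~\ref{LemmaWI} and to the ``non-existence of wandering intervals'' imports smoothness/non-flatness hypotheses that are not available here and, in any case, wandering intervals are not excluded in this setting.
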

\begin{proof}
It follows from Corollary~\ref{CORaeroporto} that $\omega_f(c_-)$ and $\omega_f(c_+)$ are perfect sets. Furthermore, $\overline{\omega_f(c_-)\cap(0,c)}\ni c\in\overline{(c,1)\cap\omega_f(c_+)}$.
If $\overline{\co_{f}^{+}(p)\cap(0,c)}\ni c\in\overline{(c,1)\cap\co_{f}^{+}(p)}$
then $\omega_f(p)\supset\omega_f(c_-)\cup\omega_f(c_+)$ and so,
$\overline{\omega_f(p)\cap(0,c)}\ni c\in\overline{(c,1)\cap\omega_f(p)}$.

Now suppose that $\omega_{f}(p)$ is not perfect. Thus, there is $q\in\omega_{f}(p)$ and $\delta>0$ such that $B_{\delta}(q)\cap\omega_{f}(p)=\{q\}$. Let $J=(a,b)$ be the connect component of $[0,1]\setminus\big(\omega_{f}(p)\setminus\{q\}\big)$ containing $q$. Note that $a,b\subset\big(\omega_{f}(x)\cup\{0,1\})$.

\end{proof}

\color{black}

\subsubsection{The attractor for Cherry maps}\label{SecAtCherry}

\begin{Lemma}\label{atcher}
Let $f:[0,1]\setminus\{c\}\to[0,1]$ be a contracting Lorenz map without super-attractors. If $c\in\omega_f(x)$, $\forall\,x\in(0,1)$, then there exists a compact set $\Lambda\subset(0,1)$ such that $\omega_f(x)=\Lambda$, $\forall\,x\in(0,1)$. In particular, $\Lambda$ is a minimal set.
\end{Lemma}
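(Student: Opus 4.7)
The plan is to observe that the hypotheses are exactly those which characterize $f$ as a Cherry map, and then transport the minimality for gap maps back to $f$. Since $f$ has no super-attractors and $c\in\omega_f(x)$ for every $x\in(0,1)$ (and consequently for every $x$ in a neighborhood of $c$, as well as for $x=c_{\pm}$ via $\omega_f(c_{-})=\omega_f(f(c_{-}))$ with $f(c_{-})\in(0,1)$, and analogously for $c_{+}$), the characterization of Cherry maps from \cite{GT85}, recalled in Section~\ref{MainResults}, applies and gives a neighborhood $J\ni c$ on which the first-return map $F=\cf_{J}$ is conjugate to a gap map $g$ on $S^{1}$ with irrational rotation number.

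By the property of such gap maps already noted in Section~\ref{MainResults}, $g$ has a compact minimal set $\widetilde\Lambda\ni c_{g}$ with $\omega_{g}(y)=\widetilde\Lambda$ for every $y\in S^{1}$ (with the one-sided convention at preimages of $c_{g}$). Transporting through the conjugacy yields a compact minimal set $\Lambda_{0}\subset\overline{J}$ for $F$ with $\omega_{F}(x)=\Lambda_{0}$ for every $x\in J$. Set
\[
\Lambda\;:=\;\overline{\bigcup_{j\ge 0}f^{j}(\Lambda_{0})}.
\]
This set is compact, forward $f$-invariant, contains $c$, and lies inside $(0,1)$, since the fixed endpoints $0$ and $1$ cannot belong to any $\omega_{f}(x)$ under the hypothesis $c\in\omega_{f}(x)$.

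To conclude I would verify that $\omega_{f}(x)=\Lambda$ for every $x\in(0,1)$. Using $c\in\omega_{f}(x)$, the orbit of $x$ enters $J$ in finite time, and I may assume $x\in J$. The $f$-orbit of $x$ is then the interlacing of the $F$-orbit $\{F^{k}(x)\}_{k\ge 0}$ with the finite excursions of length $R(F^{k}(x))$ between consecutive returns. Every accumulation point of this $f$-orbit is either a limit of $\{F^{k}(x)\}$ (which lies in $\Lambda_{0}$) or a limit $f^{i}(F^{k_{n}}(x))\to f^{i}(y)$ for some $y\in\Lambda_{0}$ and $0\le i$; conversely, for each $y\in\Lambda_{0}$ and each $i\ge 0$, continuity of $f^{i}$ at $y$ away from $c$ (with the one-sided convention when the critical point is met, which is the convention already built into the definition of $\omega_{f}(c_{\pm})$) combined with $F^{k_{n}}(x)\to y$ gives $f^{i}(y)\in\omega_{f}(x)$. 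These two inclusions yield $\omega_{f}(x)=\Lambda$, and minimality of $\Lambda$ follows from minimality of $\Lambda_{0}$ under $F$ together with the definition of $\Lambda$ as the forward $f$-saturation. The main technical point is that the return times $R(F^{k}(x))$ may be unbounded; but unbounded $R$ occurs only when $F^{k}(x)$ approaches $c$, in which case the corresponding excursion shadows the $f$-orbit of $c_{\pm}$ (both of which are captured in $\bigcup_{j\ge 0}f^{j}(\Lambda_{0})$ because $c\in\Lambda_{0}$), so such excursions contribute nothing to $\omega_{f}(x)$ that lies outside $\Lambda$.
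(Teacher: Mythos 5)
Your strategy is genuinely different from the paper's, and it is worth saying how. The paper proves this lemma from first principles: it shows $Per(f)\cap(0,1)=\emptyset$, that every orbit accumulates on $c$ from both sides, that the relevant limit sets are perfect, and then shows $\omega_f(p)=\omega_f(q)$ directly by extracting, from any putative point of $\omega_f(p)\setminus\omega_f(q)$, an interval mapped over itself and hence a periodic point in $(0,1)$ --- a contradiction. This is deliberate: in the proof of Theorem~\ref{baciastopologicas} the paper first gets the minimal attractor from this lemma and only afterwards identifies the map as a Cherry map via the equivalence of \cite{GT85}. You invert that logic, taking the \cite{GT85} characterization and the Poincar\'e-type theory of gap maps with irrational rotation number as black boxes and then transferring the conclusion from the first return map $F$ on $J$ back to $f$ on $(0,1)$. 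This is not formally circular, since both facts are quoted in Section~\ref{MainResults} as external background (and your verification of the hypothesis at $x=c_{\pm}$ via $\omega_f(c_{\pm})=\omega_f(f(c_{\pm}))$ with $f(c_{\pm})\in(0,1)$ is fine), but it makes an appendix lemma that the paper keeps elementary rest on substantially heavier imported machinery.

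The genuine weak point is the transfer step, and your treatment of it is not correct as written. The dichotomy you state for accumulation points of the $f$-orbit is only valid along subsequences on which the excursion index $i$ stays bounded, and your patch for the unbounded case rests on a false claim: return times to $J$ do \emph{not} blow up as $F^{k}(x)\to c$. Indeed, for $y$ close to $c^{-}$ the return time is bounded by $1$ plus the entry time of $f(c_{-})$ into $J$, which is finite because $c\in\omega_f\big(f(c_{-})\big)$ and the orbit of $f(c_{-})$ never meets $c$ (no super-attractors); the same holds on the right. If return times could be large anywhere, it would be near interior preimages of $\partial J$, not near $c$, and the ``shadowing'' of the orbit of $c_{\pm}$ by long excursions is asserted rather than proved. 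The gap is fillable, in fact more easily than you anticipate: since $F$ is conjugate to a gap map, it is continuous and injective on $\overline{J}/(a\sim b)$ minus the critical point, which forces $F$ to have at most two monotone branches on each side of $c$ (a second interior branch point would give two preimages of the identified endpoint), hence globally bounded return times; with that observation your interlacing argument closes, using one-sided continuity of the finitely many iterates involved. Until that is said, the key inclusion $\omega_f(x)\subset\Lambda$ is not established. Two smaller points: $\Lambda\subset(0,1)$ needs a word, since your $\Lambda$ is defined as a forward saturation and not as an $\omega$-limit set (it suffices to note $f^{j}(\Lambda_0)\subset[f(c_{+}),f(c_{-})]$ for $j\ge1$ and that this interval is forward invariant and contained in $(0,1)$); and minimality of $\Lambda$ is cleanest deduced, as in the paper, from the already proven identity $\omega_f(x)=\Lambda$ for all $x\in(0,1)$ applied to points of $\Lambda$ itself, rather than from minimality of $\Lambda_0$ alone.
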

\begin{proof}

As $f$ does not have super-attractor and $c\in\omega_f(x)$, $\forall\,x\in(0,1)$, we get $$Per(f)=\{0,1\}.$$
Note also that $f([0,c))\ni c\in f((c,1])$, because $c\in\omega_f(x)$, $\forall\,x\in(0,1)$.
Taking in Lemma~\ref{Lemma545g55} $(a,b)=(0,1)$, we conclude that $\co_f^+(x)\cap(0,c)\ne\emptyset\ne(c,1)\cap\co_f^+(x)$, $\forall\,x\in(0,1)$.
So, by Lemma \ref{omegaemc} we get
\begin{equation}\label{EqCL}
\overline{\co_f^+(x)\cap(0,c)}\ni c\in \overline{(c,1)\cap\co_f^+(x)},\,\;\forall\,x\in(0,1).
\end{equation}
As a consequence,
\begin{equation}\label{Eq323130}\omega_f(x)\supset\omega_f(c_-)\cup\omega_f(c_+),\,\;\forall\,x\in(0,1).
\end{equation}
In particular, $$c_-\in\omega_f(c_-)\text{ and }c_+\in\omega_f(c_+).$$

Thus, it follows from Corollary~\ref{CORaeroporto2} that
\begin{equation}\label{EqUU21}
\overline{\omega_f(x)\cap(0,c)}\ni c\in\overline{(c,1)\cap\omega_f(x)}\,\,\;\forall\,x\in(0,1).
\end{equation}

Now we will prove that $\omega_f(p)=\omega_f(q)$, $\forall\,p,q\in(0,1)$. If this is not true, then there exist $p,q\in(0,1)$ such that $\omega_f(p)\setminus\omega_f(q)\ne\emptyset$. Let $y\in \omega_f(p)\setminus\omega_f(q)$. Set $[\alpha,\beta]=\big[\min\omega_f(p),\max\omega_f(p)\big]$ (indeed, $[\alpha,\beta]=[f(c_{+}),f(c_{-})]$).  It is easy to see that $f([\alpha,\beta])=[\alpha,\beta]$. As $c\in (\alpha,\beta)$ (by (\ref{EqCL})) and as $c\in\omega_f(x)$, $\forall\,x$, we get $\omega_f(x)\subset[\alpha,\beta]$, $\forall\,x\in(0,1)$. As a consequence, $y\in(\alpha,\beta)$.

Let $J=(a,b)$ be the connected component of $[0,1]\setminus\omega_f(p)$ containing $q$. As $y\in(\alpha,\beta)$, we get $a,b\in\omega_f(p)$. As $y\in \omega_f(q)\cap J$, one can find $0\le n_1<n_2$ such that $f^{n_1}(q),f^{n_2}(q)\in(a,b)$. We may suppose that $f^{n_1}(q)<f^{n_2}(q)$ (the case $f^{n_1}(q)>f^{n_2}(q)$ is analogous).

Let $T:=(t,f^{n_1}(q)]$ be the maximal interval contained in $(a,f^{n_1}(q)]$ such that $f^{n_2-n_1}|_T$ is a homeomorphism and that $f^{n_2-n_1}(T)\subset (a,f^{n_2}(q)]$.

\begin{Claim}
$f^{n_2-n_1}(T)=(a,f^{n_2}(q)]$
\end{Claim}
\begin{proof}[Proof of the claim]
If not, there are two possibles cases: (1) $f^{s}(t)=c$ for some $0\le s<n_2-n_1$ or (2) $t=a$ and $a<f^{n_2-n_1}(a)<f^{n_{2}}(q)$. As $a<f^{n_{2}-n_{1}}(a)<f^{n_{2}}(q)$ will implies that $\omega_{f}(p)\cap J\ne\emptyset$, and this contradicts the fact that $J\subset[0,1]\setminus\omega_{f}(p)$, we have only to analyze the first case.

Thus, $f^s(T)\cap\omega_f(p)=(c,f^s(f^{n_2-n1})(q))\cap\omega_f(p)\ne\emptyset$ (because of (\ref{EqUU21})). But this implies that  $J\cap\omega_f(x)\supset f^{n_2-n_1}(T)\cap\omega_f(x)\supset f^{n_2-n_1-s}(f^s(T)\cap\omega_f(x))\ne\emptyset$. An absurd, as $J\subset[0,1]\setminus\omega_f(x)$.\end{proof}

It follows from the claim above that $f^{n_2-n_1}(T)=(a,f^{n_2}(q)]\supset T$. This implies that $f$ has a periodic point in $\overline{T}$ (because $f^{n_2-n_1}|_{T}$ is a homeomorphism). But this is a contradiction with the fact that $Per(f)\cap(0,1)=\emptyset$.
\end{proof}

\end{document}